\documentclass[a4paper,10pt,reqno]{amsart}
\usepackage[latin1]{inputenc}
\usepackage{amssymb}
\usepackage{amsfonts}
\usepackage{amscd}
\usepackage{mathrsfs}
\usepackage[dvips]{graphicx}
\usepackage[T1]{fontenc}
\usepackage[english]{babel}
\usepackage{marvosym}
\usepackage{amsthm}
\usepackage{multicol}
\usepackage[all]{xy}

\usepackage{wasysym}

\theoremstyle{plain}
\newtheorem{thm}{Theorem}[section]
\newtheorem{lem}[thm]{Lemma}

\newtheorem{coro}[thm]{Corollary}

\theoremstyle{definition}
\theoremstyle{remark}
\newtheorem{rk}[thm]{Remark}
\newtheorem{df}[thm]{Definition}

\numberwithin{equation}{section}

\setcounter{tocdepth}{1}

\def\ui{{\mathbf{i}}}
\def\uj{{\mathbf{j}}}

\def\bbC{\mathbb{C}}

\def\bbN{\mathbb{N}}

\def\bbQ{\mathbb{Q}}

\def\bbZ{\mathbb{Z}}

\def\scrL{\mathscr{L}}

\def\frakl{\mathfrak{l}}

\def\frakS{\mathfrak{S}}

\def\calC{\mathcal{C}}

\def\calH{\mathcal{H}}

\def\calK{\mathcal{K}}

\def\calO{\mathcal{O}}
\def\calP{\mathcal{P}}

\def\frakb{\mathfrak{b}}

\def\frakg{\mathfrak{g}}
\def\frakh{\mathfrak{h}}
\def\frakl{\mathfrak{l}}

\def\frakp{\mathfrak{p}}

\def\bfa{\mathbf{a}}

\def\bfd{\mathbf{d}}
\def\bfe{\mathbf{e}}

\def\bfk{\mathbf{k}}
\def\bfm{\mathbf{m}}

\def\bfs{\mathbf{s}}

\def\bfA{\mathbf{A}}

\def\bfP{\mathbf{P}}

\def\homo{\operatorname{\it \mathscr{H}\kern-.25em om}}
\def\ext{\operatorname{\it \mathscr{E}\kern-.25em xt}}
\def\edo{\operatorname{\it \mathscr{E}\kern-.25em nd}}
\def\der{\operatorname{\it \mathscr{D}\kern-.25em er}}

\def\mod{\mathrm{mod}}
\def\grmod{\mathrm{grmod}}

\def\Hom{\operatorname{Hom}\nolimits}

\def\End{\operatorname{End}\nolimits}

\def\soc{{\operatorname{soc}\nolimits}}
\def\rad{{\operatorname{rad}\nolimits}}

\def\Ext{\operatorname{Ext}\nolimits}

\def\Ker{\operatorname{Ker}\nolimits}

\def\Im{\operatorname{Im}\nolimits}

\def\Id{\operatorname{Id}\nolimits}

\def\IC{\mathrm{IC}}


\author{Ruslan Maksimau}
\address{Universit\' e Paris Diderot - Paris 7, Institut de Math\' ematiques de Jussieu - Paris Rive Gauche,
            UMR7586,
            B\^atiment Sophie Germain,
            Case 7012,
            75205 PARIS Cedex 13}
\email{ruslmax@gmail.com, maksimau@math.jussieu.fr}

\title
[Quiver Schur algebras and Koszul duality]
{Quiver Schur algebras and Koszul duality}

\begin{document}
\begin{abstract}
We prove that the category of graded finitely generated representations of the the cyclotomic quiver Schur algebra is a Koszul category.
\end{abstract}

\maketitle

\setcounter{tocdepth}{2}

\tableofcontents

\section{Introduction}
Let $e$, $l$ be integers, $e>1$, $l>0$. Let $\Gamma$ be the cyclic quiver of type $\widehat{A}_{e-1}$ and let $I$ be the set of its vertices. To each $d\in \bbN$ and each tuple $\bfs\in (\bbZ/e\bbZ)^l$ we associate a cyclotomic $q$-Schur algebra $S^\bfs_d$. This algebra admits a block decomposition $S^\bfs_d=\bigoplus_{\bfd}S^\bfs_\bfd$, where $\bfd$ runs over the set of elements $\bfd=\sum_{i\in I}d_i\cdot i\in\bbN I$ such that $\sum_{i\in I}d_i=d$. The algebra $S^\bfs_\bfd$ is quasi-hereditary and the standard modules are the Weyl modules. The Weyl modules are labelled by a subset $\calP^{l}_\bfd$ of the set $\calP^{l}_d$ of $l$-partitions of $d$.
By \cite{R}, \cite{RSVV} we can identify the category $\mod(S^{\bfs}_{\bfd})$ of finite dimensional $S_{\bfd}^{\bfs}$-modules with a highest weight subcategory $\bfA$ of a parabolic category $\calO$ of $\widehat{\mathfrak{gl}_N}$ at negative level for some $N\in\bbN$, see Theorem \ref{thm_equiv-Schur-O-ungr}. The category $\bfA$ has a Koszul grading by \cite[Thm.~6.4]{SVV2}. In Section \ref{subs_admis-grad} we define a class of gradings on $S^\bfs_\bfd$ that we call \emph{admissible gradings}. An example of such grading is given in \cite{SW}. We fix an admissible grading of $S^\bfs_\bfd$. It yields a grading of the category $\mod(S^{\bfs}_{\bfd})$. It is natural to ask whether this grading coincides with the Koszul grading of $\bfA$.

In Section \ref{subs_isom-gr-KLR} we define a basic graded algebra $^bS^\bfs_{\bfd}$ which is graded Morita equivalent to $S^\bfs_{\bfd}$.
Let $S^\calO$ be the endomorphism algebra of the minimal projective generator of $\bfA$ (equipped with the Koszul grading). We will prove the following result, see Theorem \ref{thm_main-theorem}.
\begin{thm}
\label{thm_main-thm}
There exists a graded algebra isomorphism $^bS_{\bfd}^{\bfs}\simeq S^\calO$. \qed
\end{thm}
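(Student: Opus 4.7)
The plan is to upgrade the ungraded equivalence $\mod(S^{\bfs}_{\bfd})\simeq\bfA$ of Theorem~\ref{thm_equiv-Schur-O-ungr} to a graded identification at the level of basic algebras. As a first step, I would transport a minimal projective generator of $\bfA$ through the equivalence to obtain a minimal projective generator of $\mod(S^{\bfs}_{\bfd})$ and pass to endomorphism rings; this produces an ungraded algebra isomorphism between $^bS^{\bfs}_{\bfd}$ and $S^\calO$. The entire content of the theorem is then to promote this ungraded isomorphism to a graded one, i.e.\ to match the admissible grading coming from Section~\ref{subs_admis-grad} with the Koszul grading of \cite[Thm.~6.4]{SVV2}.

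The main strategy is to show that the admissible grading on $^bS^{\bfs}_{\bfd}$ is itself a Koszul grading, and then to invoke the standard rigidity of Koszul gradings on basic algebras (a basic finite-dimensional algebra carries at most one Koszul grading up to graded isomorphism, in the spirit of Beilinson--Ginzburg--Soergel). Positivity of the grading and semisimplicity of the degree zero part should be built into, or a direct consequence of, the definition of an admissible grading given in Section~\ref{subs_admis-grad}, so the heart of the matter is to verify purity of $\Ext$-groups between simples, i.e.\ the actual Koszul property.

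For this purity check I would compare graded characters of the standard (Weyl) modules on the two sides. On the $S^{\calO}$-side the graded decomposition numbers are controlled by parabolic affine Kazhdan--Lusztig polynomials via \cite[Thm.~6.4]{SVV2}. On the quiver Schur side, the graded multiplicities of simples in Weyl modules should be computable from the KLR-type combinatorics (as in \cite{SW}) and are expressible via Uglov's canonical basis of the higher-level Fock space indexed by $\calP^{l}_{\bfd}$. Matching the two expressions would force purity and hence the Koszul property on $^bS^{\bfs}_{\bfd}$, and the two graded algebras would then be identified by rigidity.

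The principal obstacle is this last matching. One has to set up a compatible bijection between the two labelling sets of standard modules (partitions in $\calP^{l}_{\bfd}$ on the Schur side and highest weights of the parabolic category $\calO$ on the other), ensure it respects the graded structures coming from both admissible and Koszul gradings, and finally identify the two families of combinatorial polynomials describing graded Weyl multiplicities. This will require a careful analysis of the admissible grading (especially the explicit construction of \cite{SW}) together with the behaviour of the equivalence of Theorem~\ref{thm_equiv-Schur-O-ungr} on Weyl filtrations, and is expected to be the technical bulk of the proof.
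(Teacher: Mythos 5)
Your route is genuinely different from the paper's (the paper never proves Koszulity of the admissible grading first; it constructs the graded isomorphism directly and obtains Koszulity only as a consequence), and it contains a real gap at exactly the step you call the heart of the matter. The inference ``matching the graded characters of the Weyl modules on the two sides forces purity and hence the Koszul property of $^bS^\bfs_\bfd$'' is not valid: equality of graded decomposition numbers only fixes numerical data (graded Cartan/Hilbert matrices), and a positively graded algebra with semisimple degree-zero part and Koszul-looking numerics need not be Koszul -- purity of $\Ext$ between simples cannot be read off from graded multiplicities alone. The paper supplies precisely the structural input that is missing here: the projectives $P_\calO^\lambda$ with $\lambda\in\calK^l_\bfd$ are injective (summands of the tilting module $T$), hence very rigid (Lemma \ref{lem_P-rigid}), so on the $\calO$-side the Koszul grading filtration coincides with the radical filtration; combining this with positivity of the admissible grading (Corollary \ref{coro_bas-Sch-pos}), the numerical matching $c_{\lambda\xi}=c^\calO_{\lambda\xi}$ (Corollary \ref{coro_c=cO}) and a comparison of radical and degree filtrations, one builds an explicit homogeneous basis and a degree-preserving map which is an isomorphism by a leading-term argument (Theorem \ref{thm-isom-grad-KLR}). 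Without such a rigidity mechanism your plan stalls; only the final step (uniqueness of Koszul gradings on a basic finite-dimensional algebra, in the spirit of BGS) is unproblematic, but it is conditional on a Koszulity statement you have not actually established.

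A second concrete gap concerns the scope of the numerical matching itself. The graded multiplicities computable on the quiver Schur side via Brundan--Kleshchev/Uglov are obtained through the Schur functor, i.e. they are the numbers $[S^\lambda:D^\xi]_q$ with $\xi\in\calK^l_\bfd$ only, and even these require first identifying the BK graded lifts with the ones coming from the admissible grading (this is what the duality axioms in the definition of admissibility are for, cf. Lemma \ref{lem_lifts-D=D'}). So your proposed character comparison only controls the columns of the decomposition matrix indexed by $\calK^l_\bfd$, not all of $\calP^l_\bfd$. The paper handles this by first proving the graded isomorphism $R^\calO\simeq{^bR^\bfs_\bfd}$ (which involves only $\calK$-labels) and then bootstrapping to the full basic Schur algebra via uniqueness of graded lifts of the indecomposable Young modules, together with an argument that the resulting grading shifts $a_\lambda$ are all equal (symmetry of the graded Cartan matrix from graded BGG reciprocity plus indecomposability of the block). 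Your proposal would need an analogous device to reach the columns outside $\calK^l_\bfd$; as written, the matching you envisage does not determine the graded structure of the whole algebra $^bS^\bfs_\bfd$.
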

The proof of the theorem is inspired by \cite{HM} and \cite{SW}. It is essential to calculate the graded multiplicities of the simple modules in the parabolic Verma modules in $\calO$. This is done in Appendix \ref{app_mult}.

After this paper was written we received a copy of Webster's paper \cite{Web2} where a similar statement as Theorem \ref{thm_main-thm} is announced.

\medskip

\section{Quiver Schur algebras and Koszul duality}
Let $\bfk$ be a field of characteristic zero. All functors between additive (resp. $\bfk$-linear) categories are supposed to be additive (resp. $\bfk$-linear). Let $e$, $l$ be integers, $l>0$, $e>1$. Fix a tuple $\bfs=(s_1,\cdots,s_l)$ of elements of $\bbZ/e\bbZ$. For each $\bbZ$-graded finite dimensional $\bfk$-vector space $V$, let $\dim_q V\in\bbN[q,q^{-1}]$ be its graded dimension, i.e., $\dim_q V=\sum_{g\in \bbZ}(\dim V_g) q^g$.

\medskip

\subsection{Graded categories}
For any noetherian ring $A$, let $\mod(A)$ be the category of finitely generated left $A$-modules. For any noetherian $\bbZ$-graded ring $A$, let $\grmod(A)$ be the category of $\bbZ$-graded finitely generated left $A$-modules. The morphisms in $\grmod(A)$ are the morphisms which are homogeneous of degree zero. 

\begin{df}
A $\bbZ$-\emph{category} (or a \emph{graded category}) is an additive category $\overline\calC$ with a fixed autoequivalence $T\colon\overline\calC\to \overline\calC$. We call $T$ the shift functor. For each $X\in\overline \calC$ and $n\in\bbZ$, we set $X\langle n \rangle=T^n(X)$. A functor of $\bbZ$-categories is a functor commuting with the shift functor.
\end{df}

For a graded noetherian ring $A$ the category $\grmod(A)$ is a $\bbZ$-category where $T$ is the shift of grading, i.e., for $M=\oplus_{n\in\bbN}M_n\in \grmod(A), ~k\in\bbZ$, we have $T(M)_k=M_{k-1}$.


\begin{df}
\label{def_gr-ver}
Let $\calC$ be an abelian category. We say that an abelian $\bbZ$-category $\overline \calC$ is a \emph{graded version} of $\calC$ if there exists a functor $F_{\calC}\colon\overline \calC\to\calC$ and a graded noetherian ring $A$ such that we have the following commutative diagram, where the horizontal arrows are equivalences of categories and the top horizontal arrow is a functor of $\bbZ$-categories
$$
\begin{CD}
\overline \calC\ @>>> \grmod(A)\\
@V{F_\calC}VV                 @V\text{forget}VV\\
\calC            @>>> \mod(A).
\end{CD}
$$
\end{df}
In the setup of Definition \ref{def_gr-ver}, we say that the object $\overline X\in\overline\calC$ is a \emph{graded lift} of the object $X\in\calC$ if we have $F_\calC(\overline X)\simeq X$. For objects $X, Y\in\calC$ with fixed graded lifts $\overline{X},\overline{Y}$ the $\bbZ$-module $\Hom_\calC(X,Y)$ admits a $\bbZ$-grading given by $\Hom_\calC(X,Y)_n=\Hom_{\overline\calC}(\overline X\langle n\rangle,\overline Y)$. In the sequel we will often denote the object $X$ and its graded lift $\overline X$ by the same symbol.

\begin{df}
We say that an abelian category is \emph{gradable} if it has a graded version in sense of Definition \ref{def_gr-ver}. For two gradable abelian categories $\calC_1$, $\calC_2$ with graded versions $\overline{\calC_1}$, $\overline{\calC_2}$ we say that the functor of $\bbZ$-categories $\overline\Phi\colon\overline\calC_1\to\overline\calC_2$ is a \emph{graded lift} of a functor  $\Phi\colon\calC_1\to\calC_2$ if $F_{\calC_2}\circ\overline\Phi=\Phi\circ F_{\calC_1}$.
We say that $\calC_1$ and $\calC_2$ are \emph{equivalent as gradable categories} if there exists a commutative diagram
$$
\begin{CD}
\overline{\calC_1} @>>> \overline{\calC_2}\\
@V{F_{\calC_1}}VV       @V{F_{\calC_2}}VV\\
\calC_1            @>>> \calC_2\\
\end{CD}
$$
such that the horizontal arrows are equivalences of categories and the top horizontal arrow is a functor of $\bbZ$-categories.
\end{df}

\medskip

\subsection{Koszul duality}
For an abelian category $\bfA$, let $D^b(\bfA)$ denote the associated bounded derived category. If $\bfA$ is an abelian category with a graded version $\overline{\bfA}$, let $[\overline\bfA]$ be the Grothendieck group of $\overline\bfA$, i.e., the $\bbZ$-module with generators $[M]$, $M\in \overline\bfA$, and relations
$$
[X]+[Z]=[Y] \quad\forall \text{ exact sequence } 0\to X\to Y\to Z\to 0 \text{ in }\overline\bfA.
$$
It admits a $\bbZ[q,q^{-1}]$-module structure given by
$$
q[M]=[M\langle 1 \rangle] \quad \forall M\in \overline\bfA.
$$
Note that $[\overline\bfA]$ coincides with the Grothendieck group of the triangulated category $D^b(\overline\bfA)$.

Let $A=\bigoplus_{n\geqslant 0}A_n$ be an $\bbN$-graded $\bfk$-algebra such that $A_0$ is semisimple. We identify $A_0$ with the left graded $A$-module $A_0\simeq A/{\oplus_{n>0}A_n}$.
\begin{df}
The graded algebra $A$ is \emph{Koszul} if the left graded $A$-module $A_0$ admits a projective resolution $\cdots\to P^2\to P^1\to P^0\to A_0$ such that $P^k$ is generated by its degree $k$ component.

\medskip
If $A$ is Koszul, we consider the graded $\bfk$-algebra $A^!=\Ext^*_A(A_0,A_0)^{\rm op}$ and we call it the \emph{Koszul dual} algebra to $A$. Here $(\bullet)^\mathrm{op}$ means the algebra with the opposite product. If $\overline\bfA=\grmod(A)$, we write $\overline\bfA^!=\grmod(A^!)$ and we call it the \emph{Koszul dual category} of $\overline\bfA$. We write also $\bfA^!=\mod(A^!)$.
\end{df}

For a graded noetherian ring $A$ we abbreviate $\overline{D}^b(A)=D^b(\grmod(A))$.
Let $[\bullet]$ denote the shift of the cohomological degree.
The following is well-known, see \cite[Thm.~2.12.5,~2.12.6]{BGS}.

\begin{thm}
\label{thm_Koszul-duality}
Let $A$ be a Koszul $\bfk$-algebra. Assume that $A$ and $A^!$ are finite dimensional. Then, the following hold.

{\rm (a)} The algebra $A^!$ is also Koszul and there is a graded algebra isomorphism $(A^!)^!\simeq A$.

{\rm (b)} There is an equivalence of categories
$
K\colon \overline{D}^b(A)\to \overline{D}^b(A^!)
$
such that $K(M \langle n \rangle)=K(M)\langle -n \rangle[-n]$.
 \qed
\end{thm}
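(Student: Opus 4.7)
The plan is to follow the strategy of Beilinson--Ginzburg--Soergel. The central tool is the Koszul complex $K^\bullet(A)$, a bigraded complex built from the quadratic structure of $A$ whose cohomological and internal gradings interact in precisely the way the shift formula predicts.

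\textbf{Part (a).} First I verify that the Koszul hypothesis is equivalent to the natural augmentation $K^\bullet(A)\to A_0$ being a quasi-isomorphism, so that $K^\bullet(A)$ gives a linear minimal graded projective resolution of $A_0$ over $A$. Computing $\Ext^\bullet_A(A_0,A_0)$ through this resolution recovers $A^!$ up to taking the opposite algebra, term by term as the graded $\bfk$-dual of $A^!_k$ in cohomological degree $k$. The Koszul complex is self-dual under interchanging $A$ and $A^!$, so the same argument, now applied on the $A^!$ side, yields a linear resolution of $A^!_0$ over $A^!$, proving that $A^!$ is Koszul. The isomorphism $(A^!)^!\simeq A$ then follows from the biduality of quadratic algebras applied to $A$, using finite dimensionality to identify double duals.

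\textbf{Part (b).} I define the Koszul functor by $K(M)=R\Hom_A(A_0,M)$, computed via the Koszul resolution of $A_0$. The right $A^!$-action on $\Ext^\bullet_A(A_0,A_0)$ established in (a) endows $K(M)$ with a graded left $A^!$-module structure, and the cohomological degree combined with the internal degree assemble into the bigrading of $K(M)$; the shift formula $K(M\langle n\rangle)=K(M)\langle -n\rangle[-n]$ is then a direct bookkeeping consequence of how the two gradings interchange along the Koszul resolution. To show that $K$ is an equivalence I construct the symmetric candidate $K'$ starting from $A^!$, and verify that the unit and counit of $(K,K')$ are isomorphisms on the shifts $A\langle n\rangle$ and $A^!\langle n\rangle$. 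These objects generate $\overline{D}^b(A)$ and $\overline{D}^b(A^!)$ respectively as triangulated categories, and on them the computation reduces to the self-duality of the Koszul complex established in (a); a standard d\'evissage then extends the equivalence to the entire bounded derived categories.

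\textbf{Main obstacle.} The delicate point is showing that $K$ and $K'$ are genuinely quasi-inverse and that both preserve boundedness. For a general bounded complex $M$ the naive expression $R\Hom_A(A_0,M)$ need not be concentrated in finitely many cohomological degrees, so one must exploit the finite dimensionality of $A$ and $A^!$ together with the linearity of the Koszul resolution to bound the cohomological spread of $K(M)$; without this control the functor would land outside $\overline{D}^b(A^!)$. Additionally, the precise compatibility of internal and cohomological shifts requires pinning down sign and degree conventions carefully so that the statement $K(M\langle n\rangle)=K(M)\langle -n\rangle[-n]$ holds on the nose rather than up to a reindexing.
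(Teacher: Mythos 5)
Your sketch is correct and is precisely the Beilinson--Ginzburg--Soergel argument (Koszul complex, quadratic biduality, the duality functor $R\Hom_A(A_0,\bullet)$ with the regraded $A^!$-structure), which is exactly what the paper relies on: it gives no proof of this theorem, citing \cite[Thm.~2.12.5,~2.12.6]{BGS} instead. Note only that your d\'evissage and boundedness claims are legitimate because the finite dimensionality of $A^!=\Ext^*_A(A_0,A_0)^{\rm op}$ forces $A_0$ to have a finite (linear) projective resolution, so $A$ has finite global dimension and the shifts $A\langle n\rangle$ do generate $\overline{D}^b(A)$ up to direct summands.
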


For a $\bbZ[q,q^{-1}]$-module $L$, let $\widetilde L$ be the $\bbZ[q,q^{-1}]$-module isomorphic to $L$ as a $\bbZ$-module such that $q$ acts on $\widetilde L$ in the same way as $-q^{-1}$ acts on $L$.
\begin{coro}
\label{coro_Groth-isom-Koszul}
Assume that $A$ is Koszul and that $A$, $A^!$ are finite dimensional. There is a $\bbZ[q,q^{-1}]$-module isomorphism $[\grmod(A)]\to\widetilde{[\grmod(A^!)]}$, $[M]\mapsto [K(M)]$, $\forall M\in \overline{D}^b(A)$,
where $K$ is as in Theorem \ref{thm_Koszul-duality}.
\qed
\end{coro}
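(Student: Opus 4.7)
The plan is to deduce the corollary directly from Theorem \ref{thm_Koszul-duality} by passing to Grothendieck groups, taking care of the sign that appears from the cohomological shift.

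First, I would use the observation made just before Theorem \ref{thm_Koszul-duality} that for any abelian category with a graded version $\overline\bfA$, the Grothendieck group $[\overline\bfA]$ coincides with the Grothendieck group of the triangulated category $\overline{D}^b(\bfA)=D^b(\overline\bfA)$. Applying this to $\overline\bfA=\grmod(A)$ and $\grmod(A^!)$, the triangulated equivalence $K\colon\overline{D}^b(A)\to\overline{D}^b(A^!)$ provided by Theorem \ref{thm_Koszul-duality}(b) induces a $\bbZ$-module isomorphism $\phi\colon[\grmod(A)]\to[\grmod(A^!)]$, $[M]\mapsto[K(M)]$.

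Next I would check that $\phi$ is $\bbZ[q,q^{-1}]$-linear when the target is equipped with the twisted action, i.e., when viewed as $\widetilde{[\grmod(A^!)]}$. On $[\grmod(A)]$ the action of $q$ is the grading shift $\langle 1\rangle$, on $[\grmod(A^!)]$ similarly, while on $\widetilde{[\grmod(A^!)]}$ the action of $q$ equals the action of $-q^{-1}$ on $[\grmod(A^!)]$. Using the intertwining relation $K(M\langle 1\rangle)=K(M)\langle -1\rangle[-1]$ from Theorem \ref{thm_Koszul-duality}(b), together with the fact that the cohomological shift $[1]$ acts as multiplication by $-1$ on the Grothendieck group of a triangulated category, I get
$$
\phi(q[M])=[K(M\langle 1\rangle)]=[K(M)\langle -1\rangle[-1]]=-[K(M)\langle -1\rangle],
$$
which is precisely the action of $-q^{-1}$ on $\phi([M])=[K(M)]$ in $[\grmod(A^!)]$, hence the action of $q$ in $\widetilde{[\grmod(A^!)]}$. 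Thus $\phi$ is $\bbZ[q,q^{-1}]$-linear.

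Finally, since $\phi$ is the induced map of a triangulated equivalence, it is an isomorphism of $\bbZ$-modules, and the previous step upgrades it to a $\bbZ[q,q^{-1}]$-module isomorphism. There is no real obstacle here; the only subtlety is keeping track of the sign produced by the cohomological shift, which is exactly what makes the twisted module $\widetilde{[\grmod(A^!)]}$ (rather than $[\grmod(A^!)]$ itself) appear on the right-hand side.
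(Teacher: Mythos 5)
Your argument is correct and is exactly the standard verification that the paper leaves implicit (the corollary is stated with no proof, as an immediate consequence of Theorem \ref{thm_Koszul-duality}): the equivalence $K$ identifies the Grothendieck groups, and the relation $K(M\langle 1\rangle)=K(M)\langle -1\rangle[-1]$ together with the fact that $[1]$ acts by $-1$ gives precisely the twisted $\bbZ[q,q^{-1}]$-structure $\widetilde{[\grmod(A^!)]}$. Nothing is missing.
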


\medskip

\subsection{Graded highest weight categories}
\label{subs_graded-hw-cat}
Let $\calC$ be an abelian category which is equivalent to the category $\mod(A)$ for a finite dimensional $\bfk$-algebra $A$ and let $(\Lambda,\leqslant)$ be a finite poset. Let $\Delta=\{\Delta(\lambda);\lambda\in\Lambda\}$ be a family of objects in $\calC$.
\begin{df}
The pair $(\calC,\Delta)$ is a \emph{highest weight category} if

(a) if $\Hom_\calC(\Delta(\lambda),M)=0$ for each $\lambda\in \Lambda$, then $M=0$,

(b) for each $\lambda\in\Lambda$, there is a projective object $P(\lambda)\in\calC$ and a surjection $f\colon P(\lambda)\to\Delta(\lambda)$ such that $\Ker(f)$ has a (finite) filtration whose successive quotients are objects of the form $\Delta(\mu)$ with $\mu>\lambda$,

(c) for each $\lambda\in\Lambda$, we have $\End_\calC(\Delta(\lambda))=\bfk$,

(d) for each $\lambda,\mu\in\Lambda$ such that $\Hom_{\calC}(\Delta(\lambda),\Delta(\mu))\ne 0$, we have $\lambda\leqslant\mu$.

\noindent We call the objects $\Delta(\lambda)$'s the \emph{standard objects}.
\end{df}

For each $\lambda\in\Lambda$, the module $\Delta(\lambda)$ has a unique simple quotient $L(\lambda)$. Let $\nabla(\lambda)$ be the costandard object with parameter $\lambda$, see \cite[Prop.~2.1]{RSVV}.

Now, assume that the $\bfk$-algebra $A$ is $\bbZ$-graded.
In addition, we make the following assumption :

\smallskip

$(\text{a}_1)$ for each $\lambda\in\Lambda$, the modules $\Delta(\lambda)$, $\nabla(\lambda)$, $P(\lambda)$, $L(\lambda)$ admit graded lifts.

\smallskip

By definition, we have $\dim\Hom_\calC(\Delta(\lambda),L(\lambda))=\dim\Hom_\calC(P(\lambda),L(\lambda))=1$ for each $\lambda\in\Lambda$. Moreover, we have $\dim\Hom_{\calC}(L(\lambda),\nabla(\lambda))=1$, see \cite[Sec.~A.1]{Don}. We fix graded lifts of $P(\lambda)$, $\Delta(\lambda)$, $L(\lambda)$, $\nabla(\lambda)$ such that each morphism $P(\lambda)\to \Delta(\lambda)$, $\Delta(\lambda)\to L(\lambda)$, $L(\lambda)\to\nabla(\lambda)$ is homogeneous of degree $0$.

Set $\overline\calC=\grmod(A)$. Let $M,~L\in\overline\calC$, with $L$ irreducible. Let $[M:L]$ be the multiplicity of $L$ in a (graded) Jordan-H\"older series of $M$. Write
$$
[M:L]_q=\sum_{k\in\bbZ}[M:L\langle k\rangle]q^k\in\bbN[q,q^{-1}].
$$

Let $\overline\calC^\Delta$ be the full subcategory of objects of $\overline\calC$ that have a finite filtration by the graded modules of the form $\Delta(\lambda)\langle k\rangle$ for $\lambda\in\Lambda$, $k\in\bbZ$. We will refer to such filtration as a \emph{graded $\Delta$-filtration}. Assume that $M\in\overline\calC^\Delta$. Let $(M:\Delta(\lambda)\langle k\rangle)$ be the multiplicity of $\Delta(\lambda)\langle k\rangle$ in a graded $\Delta$-filtration of $M$. A standard argument shows that this multiplicity is independent of the choice of a graded $\Delta$-filtration. Set also
$$
(M:\Delta^\lambda)_q=\sum_{k\in\bbZ}(M:\Delta^\lambda\langle k\rangle)q^k\in\bbN[q,q^{-1}].
$$

\begin{lem}
\label{lem_Ext-delta-filtr}
Assume that $(\operatorname{a}_1)$ holds. An object $M\in\overline\calC$ is graded $\Delta$-filtered if and only if
$\Ext^1_{\calC}(M,\nabla(\lambda))=0$ for each $\lambda\in \Lambda$.
\end{lem}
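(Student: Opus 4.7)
\emph{Plan.} I would prove both directions by following the classical (ungraded) Ext criterion for $\Delta$-filtered modules in a highest weight category (see e.g.\ \cite[App.~A]{Don}, \cite[Prop.~2.1]{RSVV}) and carefully tracking the grading with the fixed graded lifts provided by assumption $(\text{a}_1)$.

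For the implication $(\Rightarrow)$, I would induct on the length of a graded $\Delta$-filtration of $M$. In the inductive step, apply $\Ext^*_{\overline\calC}(-,\nabla(\lambda)\langle j\rangle)$ to a short exact sequence $0\to M'\to M\to\Delta(\mu)\langle k\rangle\to 0$ isolating the top $\Delta$-factor. The key ingredient is the standard highest weight vanishing $\Ext^1_{\calC}(\Delta(\mu),\nabla(\lambda))=0$, which, because $\calC$-Ext decomposes as the direct sum over graded shifts of $\overline\calC$-Ext,
$$\Ext^1_{\calC}(\Delta(\mu),\nabla(\lambda))=\bigoplus_{j\in\bbZ}\Ext^1_{\overline\calC}(\Delta(\mu)\langle k\rangle,\nabla(\lambda)\langle j\rangle),$$
immediately yields the vanishing of every graded summand. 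Summing back over $j$ at the end recovers the ungraded vanishing for $M$.

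For $(\Leftarrow)$, I would first observe that the same decomposition of Ext-spaces shows the hypothesis is equivalent to $\Ext^1_{\overline\calC}(M,\nabla(\lambda)\langle j\rangle)=0$ for all $\lambda$, $j$, and that the ungraded version of the criterion then gives an ungraded $\Delta$-filtration of $M$; the task is to upgrade it to a graded one. I would induct on $\dim_{\bfk}M$, choosing $\mu\in\Lambda$ maximal with $[M:L(\mu)]\neq 0$. Using a homogeneous basis of the graded Hom-space $\Hom_\calC(P(\mu),M)=\bigoplus_{k}\Hom_{\overline\calC}(P(\mu)\langle k\rangle,M)$, together with the graded $\Delta$-filtration of $P(\mu)$ and the maximality of $\mu$, I would assemble a graded short exact sequence
$$0\longrightarrow N\longrightarrow M\longrightarrow M'\longrightarrow 0,$$
where $N$ is a direct sum of graded shifts $\Delta(\mu)\langle k_i\rangle$, one for each basis vector. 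Standard long exact sequence manipulations combined with $\Ext^i_{\calC}(\Delta(\mu),\nabla(\lambda))=0$ for $i\geq 1$ show that $M'$ inherits the Ext-vanishing hypothesis; since $\dim_{\bfk}M'<\dim_{\bfk}M$, the induction applies and produces a graded $\Delta$-filtration of $M'$, which together with $N$ gives one for $M$.

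\emph{Main obstacle.} The principal technical point is the construction of the graded exact sequence in the $(\Leftarrow)$ step: one must verify simultaneously that the assembled map from the direct sum of graded shifts $\Delta(\mu)\langle k_i\rangle$ really injects into $M$, so that the full graded $\Delta(\mu)$-multiplicity of $M$ is captured in one pass, and that the resulting cokernel still satisfies the Ext-vanishing condition. This is precisely where assumption $(\text{a}_1)$ is essential: it provides the coherent graded lifts of $P(\mu)$, $\Delta(\mu)$, $L(\mu)$ and $\nabla(\lambda)$ with respect to which all of the grading bookkeeping—and in particular the identification of the relevant shifts $k_i$—is carried out.
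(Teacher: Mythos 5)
Your plan is workable, but it takes a genuinely different route from the paper. The paper proves the ``if'' direction entirely inside the graded category and never invokes the ungraded criterion: it picks $\lambda_0$ \emph{minimal} with $\Hom_\calC(M,L(\lambda_0))\ne 0$, uses the hypothesis plus minimality to get $\Ext^1_\calC(M,L(\mu))=0$ for $\mu\leqslant\lambda_0$ and hence $\Ext^1_\calC(M,\rad\Delta(\lambda_0))=0$, lifts a nonzero map $M\to L(\lambda_0)\langle k_0\rangle$ to a \emph{surjection} $M\twoheadrightarrow\Delta(\lambda_0)\langle k_0\rangle$, and inducts on the kernel; the equivalence ``graded $\Delta$-filtered $\Leftrightarrow$ ungraded $\Delta$-filtered'' then comes out as a remark afterwards, rather than being an input. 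You instead peel off a \emph{submodule} $\bigoplus_i\Delta(\mu)\langle k_i\rangle$ with $\mu$ maximal among composition factors and pass to the cokernel, importing the classical ungraded criterion to upgrade. This can be made to work, but note that the step you flag as the main obstacle is not settled by $(\text{a}_1)$ (which only supplies the graded lifts and homogeneous bases); it is settled by maximality of $\mu$ together with the ungraded filtration: maximality forces every map $P(\mu)\langle k\rangle\to M$ to factor through $\Delta(\mu)\langle k\rangle$ (the kernel of $P(\mu)\to\Delta(\mu)$ is filtered by $\Delta(\nu)$, $\nu>\mu$, whose simple quotients $L(\nu)$ cannot occur in $M$), and once $M$ is known to be ungraded $\Delta$-filtered, the trace of $\Delta(\mu)$ in $M$ is exactly the bottom layer $\Delta(\mu)^{\oplus m}$ with $m=[M:L(\mu)]$, so your assembled map is an isomorphism onto it and the cokernel $M'$ is again ungraded $\Delta$-filtered, whence $\Ext^1_\calC(M',\nabla(\lambda))=0$ by the ``only if'' part. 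Be aware that the long exact sequence together with $\Ext^{i}_\calC(\Delta(\mu),\nabla(\lambda))=0$, $i\geqslant 1$, alone does not give this last vanishing: $\Ext^1_\calC(M',\nabla(\mu))$ is the cokernel of $\Hom_\calC(M,\nabla(\mu))\to\Hom_\calC(N,\nabla(\mu))$, so you really need the identification of $N$ with the full bottom $\Delta(\mu)$-layer. With that spelled out, your induction on dimension closes; what each approach buys is that the paper's argument is self-contained and yields the graded/ungraded comparison as a byproduct, while yours is shorter modulo the classical ungraded theorem but must be handled carefully exactly at the submodule step.
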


\begin{proof}
The only if part follows from \cite[Prop.~A2.2~(iii)]{Don}. Now, let us prove the if part. Assume that $\Ext^1_\calC(M,\nabla(\lambda))=0$ for each $\lambda\in \Lambda$. We want to prove that $M\in\overline\calC^\Delta$. We will prove the statement by induction on the length of a Jordan-H\"older series of $M$. If $M=0$, the statement is trivial. Now, assume that $M\ne 0$. Let $\lambda_0\in\Lambda$ be minimal such that $\Hom_\calC(M,L(\lambda_0))\ne 0$. Let $k_0\in\bbZ$ be such that $\Hom_{\overline\calC}(M,L(\lambda_0)\langle k_0 \rangle)\ne 0$. First, we claim that for each $\mu\leqslant\lambda_0$ we have $\Ext^1_\calC(M,L(\mu))=0$. Really, consider the short exact sequence
$$
0\to L(\mu)\to \nabla(\mu)\to\nabla(\mu)/L(\mu)\to 0
$$
in $\calC$. It yields an exact sequence
$$
\Hom_\calC(M,\nabla(\mu)/L(\mu))\to\Ext^1_\calC(M,L(\mu))\to \Ext^1_\calC(M,\nabla(\mu)).
$$
The rightmost term is zero by assumption. Moreover, each simple subquotient of $\nabla(\mu)/L(\mu)$ is of the form $L(\mu')$ with $\mu'<\mu\leqslant \lambda_0$. Thus the leftmost term is also zero by minimality of $\lambda_0$. This implies  $\Ext^1(M,L(\mu))=0$.

Let $N(\lambda_0)$ be the radical of $\Delta(\lambda_0)$.
Consider the short exact sequence
$$
0\to N(\lambda_0)\langle k_0 \rangle\to \Delta(\lambda_0)\langle k_0 \rangle\to L(\lambda_0)\langle k_0 \rangle\to 0
$$
in $\overline\calC$. It yields the exact sequence
$$
\Hom_{\overline\calC}(M,\Delta(\lambda_0)\langle k_0 \rangle)\stackrel{\delta}{\to} \Hom_{\overline\calC}(M,L(\lambda_0)\langle k_0 \rangle)\to \Ext^1_{\overline\calC}(M,N(\lambda_0)\langle k_0 \rangle).
$$
Each simple subquotient of $N(\lambda_0)$ in $\calC$ is of the form $L(\mu)$ with $\mu<\lambda_0$. By the previous discussion this implies $\Ext^1_\calC(M,N(\lambda_0))=0$. In particular, we deduce that the rightmost term is zero. Thus the morphism $\delta$ is surjective. Moreover, $\Hom_{\overline\calC}(M,L(\lambda_0)\langle k_0 \rangle)\ne 0$ by the choice of $\lambda_0$ and $k_0$. Let $f\in \Hom_{\overline\calC}(M,\Delta(\lambda_0)\langle k_0 \rangle)$ be such that $\delta(f)\ne 0$. The image of $f$ is not contained in $N(\lambda_0)$. Thus $f$ is surjective. Consider the exact sequence
$$
0\to\Ker(f)\to M\to \Delta(\lambda_0)\to 0
$$
in $\calC$.
For each $\mu\in \Lambda$ it yields an exact sequence
$$
\Ext^1_{\calC}(M,\nabla(\mu))\to\Ext^1_{\calC}(\Ker(f),\nabla(\mu))\to \Ext^2_{\calC}(\Delta(\lambda_0),\nabla(\mu)).
$$
The leftmost term is zero by assumption and the rightmost term is zero by \cite[Prop.~A2.2~(ii)]{Don}. Thus, we have $\Ext^1_{\calC}(\Ker(f),\nabla(\mu))=0$ for each $\mu\in\Lambda$. Applying the induction hypothesis to $\Ker(f)$ we get $\Ker(f)\in\overline\calC^\Delta$. This implies $M\in\overline\calC^\Delta$.
\end{proof}

\smallskip

\begin{rk}
Lemma \ref{lem_Ext-delta-filtr} and \cite[Prop.~A2.2~(iii)]{Don} imply that under the assumption
$(\operatorname{a}_1)$ an object $M\in\overline\calC$ admits a graded $\Delta$-filtration if and only if it admits a $\Delta$-filtration as an ungraded object.
\end{rk}

\smallskip

\begin{coro}
Assume that $(\operatorname{a}_1)$ holds. Then we have  $P(\lambda)\in\overline\calC^\Delta$ for each $\lambda\in\Lambda$.
\qed
\end{coro}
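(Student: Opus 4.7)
The plan is to apply Lemma \ref{lem_Ext-delta-filtr} directly to $M = P(\lambda)$. First, under assumption $(\operatorname{a}_1)$, the module $P(\lambda)$ admits a graded lift in $\overline\calC$, which we fix as chosen earlier in the text; so it makes sense to regard $P(\lambda)$ as an object of $\overline\calC$. Second, since $P(\lambda)$ is projective in the ungraded abelian category $\calC$ (by the definition of a highest weight category), the functor $\Hom_\calC(P(\lambda),-)$ is exact. Hence $\Ext^1_\calC(P(\lambda),N) = 0$ for every $N \in \calC$, and in particular
\[
\Ext^1_\calC(P(\lambda),\nabla(\mu)) = 0 \quad \text{for each } \mu\in\Lambda.
\]
This is precisely the hypothesis of Lemma \ref{lem_Ext-delta-filtr}, so applying the lemma yields $P(\lambda) \in \overline\calC^\Delta$, which is what we want.

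I do not anticipate any real obstacle here: the genuine content sits in Lemma \ref{lem_Ext-delta-filtr} itself, and the corollary merely records the observation that projectivity in $\calC$ automatically kills the relevant $\Ext^1$-groups. The only point worth noticing is that Lemma \ref{lem_Ext-delta-filtr} is phrased in terms of $\Ext^1$ taken in the \emph{ungraded} category $\calC$, so one does not need to establish any vanishing statement in $\overline\calC$ before invoking it; the projectivity of $P(\lambda)$ in $\calC$ suffices.
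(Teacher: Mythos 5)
Your argument is correct and is exactly the (omitted, immediate) argument the paper intends: the corollary is stated with \qed because it follows directly from Lemma \ref{lem_Ext-delta-filtr}, since projectivity of $P(\lambda)$ in $\calC$ gives $\Ext^1_\calC(P(\lambda),\nabla(\mu))=0$ for all $\mu$, and $(\operatorname{a}_1)$ supplies the graded lift needed to view $P(\lambda)$ as an object of $\overline\calC$. You have also correctly noted the one point worth checking, namely that the lemma only requires $\Ext^1$-vanishing in the ungraded category $\calC$.
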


\smallskip

\begin{lem}
\label{lem_grBGG1}
Assume that $(\operatorname{a}_1)$ holds.
Then we have $[\nabla(\lambda):L(\mu)]_{q^{-1}}=(P(\mu):\Delta(\lambda))_q$ for each $\lambda,\mu\in\Lambda$.
\end{lem}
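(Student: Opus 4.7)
The plan is to compute the graded dimension $\dim_q \Hom_{\overline\calC}(P(\mu), \nabla(\lambda))$ in two different ways and compare, exactly as in the classical proof of BGG reciprocity. On one side, the chosen graded lifts place the heads of $P(\mu)$ and of $L(\mu)$ both in degree $0$, and semisimplicity of $A_0$ forces each simple module to be concentrated in a single degree, so $\dim \Hom_{\overline\calC}(P(\mu), L(\nu)\langle k\rangle) = \delta_{\mu\nu}\delta_{k,0}$. Since $P(\mu)$ is projective, $\Hom_{\overline\calC}(P(\mu)\langle n\rangle, -)$ is exact; applied to a graded Jordan--H\"older filtration of $\nabla(\lambda)$ it reads off $[\nabla(\lambda):L(\mu)\langle n\rangle]$, and summing with weight $q^n$ yields
$$\dim_q \Hom_{\overline\calC}(P(\mu), \nabla(\lambda)) = [\nabla(\lambda):L(\mu)]_q.$$

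On the other side, the corollary immediately above the lemma (itself a consequence of Lemma \ref{lem_Ext-delta-filtr}) supplies a graded $\Delta$-filtration of $P(\mu)$. The normalization that $\Delta(\lambda)\to L(\lambda) \to \nabla(\lambda)$ are all of degree $0$ forces the unique (up to scalar) nonzero morphism $\Delta(\lambda)\to\nabla(\lambda)$ to live in degree $0$, which combined with $\End_\calC(\Delta(\lambda))=\bfk$ gives $\dim_q \Hom_{\overline\calC}(\Delta(\nu)\langle k\rangle, \nabla(\lambda)) = \delta_{\nu\lambda}\, q^{-k}$. The classical vanishing $\Ext^1_\calC(\Delta(\nu), \nabla(\lambda)) = 0$ (e.g.\ \cite[Prop.~A2.2~(ii)]{Don}), refined to the graded setting, makes $\Hom_{\overline\calC}(-, \nabla(\lambda))$ exact on this filtration, so summing over the graded $\Delta$-subquotients gives
$$\dim_q \Hom_{\overline\calC}(P(\mu), \nabla(\lambda)) = (P(\mu):\Delta(\lambda))_{q^{-1}}.$$
Equating the two expressions and substituting $q \mapsto q^{-1}$ yields the claim.

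None of the ingredients is individually deep; the result is essentially the graded refinement of classical BGG reciprocity. The one point that requires care, and the only place where one can easily slip, is the bookkeeping of grading conventions: a shift of the source of a graded Hom by $\langle k\rangle$ contributes a factor $q^{-k}$ to the graded dimension, whereas a shift of the target (or of a composition factor thereof) by $\langle n\rangle$ contributes $q^{+n}$. This asymmetry is precisely what produces the $q \leftrightarrow q^{-1}$ swap between the two sides of the identity, and one must verify that the normalizations of $P(\lambda), \Delta(\lambda), L(\lambda), \nabla(\lambda)$ fixed in the subsection are compatible with this sign discipline.
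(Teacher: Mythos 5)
Your argument is correct and is essentially the paper's own proof: both sides arise from computing the graded dimension of $\Hom_\calC(P(\mu),\nabla(\lambda))$, once using projectivity of $P(\mu)$ and a graded Jordan--H\"older filtration of $\nabla(\lambda)$, and once using the graded $\Delta$-filtration of $P(\mu)$ together with $\Ext^1_\calC(\Delta(\nu),\nabla(\lambda))=0$, with the same $q\leftrightarrow q^{-1}$ bookkeeping. One small caveat: the appeal to semisimplicity of $A_0$ is neither assumed in this subsection nor needed, since $\dim\Hom_{\overline\calC}(P(\mu),L(\nu)\langle k\rangle)=\delta_{\mu\nu}\delta_{k,0}$ already follows from $\dim\Hom_{\calC}(P(\mu),L(\nu))=\delta_{\mu\nu}$ and the degree-zero normalization of $P(\mu)\to L(\mu)$.
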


\smallskip

\begin{proof}
For each $\lambda,\mu\in\Lambda$ and each $k\in\bbZ$, we have $\dim_q\Hom_{\calC}(P(\lambda),L(\mu)\langle k \rangle)=\delta_{\lambda,\mu}q^{k}$. Moreover, the functor $\Hom_{\calC}(P(\lambda),\bullet)\colon\overline\calC\to\grmod(\bfk)$ is exact. Hence, for each $M\in\overline\calC$, we have $[M:L(\lambda)]_q=\dim_{q}\Hom_{\calC}(P(\lambda),M)$.

Next, we have $\dim_q\Hom_\calC(\Delta(\lambda)\langle k \rangle,\nabla(\mu))=\delta_{\lambda,\mu}q^{-k}$, see \cite[Prop.~2.1]{RSVV}. Moreover, the functor $\Hom_\calC(\bullet,\nabla(\mu))\colon \overline\calC^\Delta\to \grmod(\bfk)$ is exact because $\Ext^1_{\calC}(\Delta(\lambda),\nabla(\mu))=0$ for each $\lambda,\mu\in\Lambda$. Thus, for each $M\in\overline\calC^\Delta$, we have
$(M:\Delta(\lambda))_q=\dim_{q^{-1}}\Hom_{\calC}(M,\nabla(\lambda))$.
Therefore, for each $\lambda,\mu\in\Lambda$, we get
$$
[\nabla(\lambda):L(\mu)]_{q^{-1}}=\dim_{q^{-1}}\Hom_{\calC}(P(\mu),\nabla(\lambda))=(P(\mu):\Delta(\lambda))_q.
$$
\end{proof}

Now, we add one more assumption:

\smallskip

$(\text{a}_2)$ there is an equivalence of categories $D\colon\calC^{\rm op}\to\calC$ such that
\begin{itemize}
    \item[\textbullet] $D$ admits a graded lift $\overline D\colon\overline\calC^{\rm op}\to\overline\calC$ that is also an equivalence of categories,
    \item[\textbullet] $D^2=\Id_{\calC}$, $\overline D^2=\Id_{\overline\calC}$,
    \item[\textbullet] $\overline D(L(\lambda))=L(\lambda)$ for each $\lambda\in\Lambda$,
    \item[\textbullet] $\overline D(M\langle 1\rangle)=D(M)\langle -1\rangle$ for each $M\in\overline\calC$.
\end{itemize}

\smallskip

\begin{coro}
\label{coro_grBGG2}
Assume that $(\operatorname{a}_1)$ and $(\operatorname{a}_2)$ hold.
Then, for each $\lambda,\mu\in\Lambda$, we have $[\Delta(\lambda):L(\mu)]_q=(P(\mu):\Delta(\lambda))_q$.
\end{coro}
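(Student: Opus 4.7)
The plan is to reduce to Lemma \ref{lem_grBGG1} by using the duality $\overline D$ from $(\operatorname{a}_2)$ to pass between $\Delta(\lambda)$ and $\nabla(\lambda)$. Concretely, I would establish the intermediate identity
$$
[\Delta(\lambda):L(\mu)]_q=[\nabla(\lambda):L(\mu)]_{q^{-1}}
$$
and then combine it with Lemma \ref{lem_grBGG1}, which gives $[\nabla(\lambda):L(\mu)]_{q^{-1}}=(P(\mu):\Delta(\lambda))_q$.

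The key intermediate step is to show $\overline D(\Delta(\lambda))\simeq\nabla(\lambda)$ as graded objects with the chosen normalisations. First I would observe that $\overline D$ is an exact $\bbZ$-category equivalence with $\overline D^2=\Id_{\overline\calC}$ fixing every $L(\mu)$; hence it sends a projective cover of $L(\lambda)$ to an injective hull of $L(\lambda)$ and, more generally, exchanges the highest weight structure of $\calC$ with a dual one. Using the standard characterisation of $\nabla(\lambda)$ as the object with simple socle $L(\lambda)$ satisfying $\Ext^1_\calC(\Delta(\mu),\nabla(\lambda))=0$ for all $\mu$ (cf.\ \cite[Sec.~A.1]{Don}), and dualising via $D$, one verifies that $D\Delta(\lambda)\simeq\nabla(\lambda)$ in $\calC$. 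Because $\overline D$ fixes $L(\lambda)$ and the fixed graded lifts of $\Delta(\lambda)$ and $\nabla(\lambda)$ are both normalised so that the canonical morphism to/from $L(\lambda)$ is homogeneous of degree $0$, this ungraded isomorphism lifts uniquely to $\overline D(\Delta(\lambda))\simeq\nabla(\lambda)$ in $\overline\calC$.

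Granted this, the multiplicity identity is immediate. Applying the exact functor $\overline D$ to a graded Jordan--H\"older filtration of $\Delta(\lambda)$ produces such a filtration of $\nabla(\lambda)$; since the last two bullets of $(\operatorname{a}_2)$ give $\overline D(L(\mu)\langle k\rangle)=L(\mu)\langle -k\rangle$, the graded shifts get inverted, so
$$
[\Delta(\lambda):L(\mu)\langle k\rangle]=[\nabla(\lambda):L(\mu)\langle -k\rangle]\qquad\forall k\in\bbZ.
$$
Multiplying by $q^k$ and summing yields $[\Delta(\lambda):L(\mu)]_q=[\nabla(\lambda):L(\mu)]_{q^{-1}}$, and Lemma \ref{lem_grBGG1} completes the proof.

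The main obstacle I expect is the graded lift step: upgrading the ungraded identification $D\Delta(\lambda)\simeq\nabla(\lambda)$ to a graded one matching the fixed normalisations. Once the normalisation at $L(\lambda)$ is pinned down, this is essentially forced by the bullet $\overline D(M\langle 1\rangle)=\overline D(M)\langle -1\rangle$ together with $\overline D(L(\lambda))=L(\lambda)$; everything else is bookkeeping on filtrations.
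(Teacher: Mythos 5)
Your proposal is correct and follows essentially the same route as the paper: show that the duality exchanges $\Delta(\lambda)$ and $\nabla(\lambda)$ (the paper cites \cite[Prop.~2.6]{RSVV} for the ungraded statement, while you sketch the Ext-characterisation argument), use the degree-$0$ normalisation at $L(\lambda)$ together with $\overline D(L(\lambda))=L(\lambda)$ and the shift-inversion property to upgrade this to graded lifts, deduce $[\Delta(\lambda):L(\mu)]_q=[\nabla(\lambda):L(\mu)]_{q^{-1}}$, and conclude by Lemma \ref{lem_grBGG1}.
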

\begin{proof}
By \cite[Prop.~2.6]{RSVV} the functor $D$ must exchange $\Delta(\lambda)$ and $\nabla(\lambda)$ for each $\lambda\in\Lambda$ because the category $\calC^{\rm op}$ is a highest weight category with standard modules $\nabla(\lambda)$, $\lambda\in\Lambda$, see the discussion after \cite[Lem.~A3.5]{Don}. Thus by the normalization of grading the functor $\overline D$ exchanges the graded objects $\Delta(\lambda)$ and $\nabla(\lambda)$. For each $\lambda,\mu\in\Lambda$ and each $k\in\bbZ$, we have
$$
[\Delta(\lambda):L(\mu)\langle k \rangle]=[\overline D(\nabla(\lambda)):\overline D(L(\mu)\langle -k \rangle)]=[\nabla(\lambda):L(\mu)\langle -k \rangle].
$$
This implies $[\Delta(\lambda):L(\mu)]_q=[\nabla(\lambda):L(\mu)]_{q^{-1}}$. Now, apply Lemma \ref{lem_grBGG1}.
\end{proof}


\medskip

\subsection{Combinatorics}
\label{subs_comb}

A \emph{composition} of an integer $n\geqslant 0$ is a tuple of positive integers $(\lambda_1,\cdots,\lambda_k)$ such that $\sum_{t=1}^k\lambda_t=n$. Note that this is not the usual definition of a composition because we do not allow zero components. We set $l(\lambda)=k$, $|\lambda|=n$.
A \emph{partition} of an integer $n\geqslant 0$ is a composition $(\lambda_1,\cdots,\lambda_k)$ of $n$ such that $\lambda_1\geqslant \lambda_2\geqslant\cdots\geqslant \lambda_k$. To each partition of $n$ we associate a Young diagram $Y(\lambda)$ such that $Y(\lambda)$ has $\lambda_t$ boxes in the $t$th row for each $t\in [1,k]$.  The empty partition is represented by the empty diagram.
An $l$-\emph{composition} (resp. $l$-\emph{partition}) of an integer $n\geqslant 0$ is an $l$-tuple $\lambda=(\lambda^{1},\cdots,\lambda^{l})$ of compositions (resp. partitions) of integers $n_1,\cdots,n_l\geqslant 0$ such that $\sum_{t=1}^ln_t=n$. We associate with each $l$-partition an $l$-tuple of Young diagrams $Y(\lambda)=(Y(\lambda^1),\cdots,Y(\lambda^l))$.
Let $C^l_n$ (resp. $\calP^l_n$) be the set of $l$-compositions (resp. $l$-partitions) of $n$.
Set $\calP^l=\coprod_{n\in\bbN}\calP^l_n$.

Let $\Gamma$ be the quiver with the vertex set $I=\bbZ/e\bbZ$ and the arrow set $H=\{i\to i+1; i\in I\}$. Let $(a_{ij})_{i,j\in I}$ be the associated Cartan matrix, i.e., we put $a_{i,j}=2\delta_{i,j}-\delta_{i,j+1}-\delta_{i,j-1}$.
For $\bfd=\sum_{i\in I}d_i\cdot i$ in $\bbN I$ set$|\bfd|=\sum_{i\in I}d_i$. Let $I^\bfd$ be the set of tuples $\ui=(i_1,\cdots,i_{|\bfd|})$ in $I^{|\bfd|}$ such that $\ui$ has $d_i$ entries equal to $i$ for each $i\in I$. We have $I^d=\coprod_{|\bfd|=d}I^\bfd$.
Let $\frakS_d$ be the symmetric group of rank $d$. It acts on $I^d$ by permutation of the components. This action preserves each subset $I^\bfd\subset I^d$ with $|\bfd|=d$.

\begin{df}
Let $\lambda\in \calP^l$. The \emph{residue} $r(b)\in I$ of the box $b\in Y(\lambda)$ situated in the spot $(i,j)$ of the $p$th diagram is $s_p+j-i$ mod $e$. Write
$
\calP^l_\bfd=\{\lambda\in\calP^l; \sum_{b\in Y(\lambda)}r(b)=\bfd\},
$
where the sum is taken in $\bbN I$.
\end{df}

\medskip

\subsection{Rigid modules}
\label{subs_ridid-modules}
Let $A$ be a noetherian $\bfk$-algebra and let $M$ be an $A$-module of finite length.
Let
$
M\supset\rad^1 M\supset\rad^2M\supset\cdots\rad^rM=0
$
be the \emph{radical filtration} of $M$, i.e., $\rad^1M=\rad M$ and $\rad^{a+1}M=\rad(\rad^a M)$, for $a\geqslant 1$, $\rad^{r-1} M\ne 0$. Let
$
M=\soc^s M\supset\cdots\soc^2 M\supset\soc^1M\supset 0
$
be the \emph{socle filtration} of $M$, i.e., $\soc^1M=\soc M$ and $\soc^{a+1}M$ is the inverse image of $\soc(M/\soc^a M)$ under the natural morphism $M\to M/\soc^aM$ for $a\geqslant 1$, $\soc^{s-1} M\subsetneq M$.
An $A$-module $M$ of finite length is \emph{rigid} if its radical and socle filtrations coincide, i.e., $r=s$ and $\rad^a M=\soc^{r-a} M$ for each $a\in[1,r-1]$.

Now, assume that $A$ is an $\bbN$-graded $\bfk$-algebra and let $M$ be a graded $A$-module of finite length. Let $z\in\bbZ$ be minimal such that $M_z\ne 0$.
The \emph{grading filtration} of $M$ is the filtration
$
M=\mathcal{G}r_{z}M \supset\mathcal{G}r_{z+1} M\supset\cdots
$
such that $\mathcal{G}r_a M=\bigoplus_{b\geqslant a}M_b$.
A graded $A$-module $M$ of finite length is \emph{very rigid} if its graded filtration coincides with its radical and socle filtrations, i.e., if it is rigid and
 $\rad^t M=\mathcal{G}r_{z+t}M$ for each $t\in[1,r]$.

We will need the following lemma, see \cite[Prop.~2.4.1]{BGS} and \cite[Lem.~2.13]{HM}.
\begin{lem}
\label{lem_rigidity}
Suppose that $A$ is an $\bbN$-graded $\bfk$-algebra, that $A_0$ is semisimple and $A$ is generated by $A_0$ and $A_1$. Let $M$ be any graded finite dimensional $A$-module.

{\rm(a)} The radical filtration of $M$ coincides with the grading filtration of $M$, up to shift, if $M/\rad M$ is irreducible.

{\rm(b)} The socle filtration of $M$ coincides with the grading filtration of $M$, up to shift, if $\soc M$ is irreducible.

Consequently, the module M is very rigid if $\soc M$ and $M/\rad M$ are both irreducible. \qed
\end{lem}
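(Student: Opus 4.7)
The plan hinges on two preliminary identities: $\rad A = A_{\geqs 1}$ (since $A/A_{\geqs 1}\simeq A_0$ is semisimple, while $A_{\geqs 1}$ acts nilpotently on any finite-dimensional graded module), and $(\rad A)^k = A_{\geqs k}$ (an easy verification, using that $A$ is generated in degrees $0,1$). Together these yield $\rad^k N = A_{\geqs k}\cdot N$ and $\soc N = \{n\in N : A_1 n = 0\}$ for any graded $A$-module $N$, and in particular both filtrations are by graded submodules. A second basic input, again from $A_0$ being semisimple, is that every graded simple $A$-module is concentrated in a single degree.

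For (a), I let $z$ be the degree in which the simple graded module $M/\rad M$ lives, lift a nonzero element to $m\in M_z$, and apply Nakayama to conclude $M = A\cdot m$. This forces $z = \min\{n : M_n\neq 0\}$ and $M_{z+k} = A_k \cdot m$, whence
\[
\rad^k M \;=\; A_{\geqs k}\cdot m \;=\; \bigoplus_{n\geqs z+k} M_n \;=\; \calG r_{z+k} M,
\]
which is the grading filtration up to the shift $k\mapsto z+k$.

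For (b), I would argue dually but directly, by induction on $k$. Let $w = \max\{n : M_n\neq 0\}$. The base case uses $A_1 M_w\subset M_{w+1}=0$ to see $M_w\subset \soc M$; simplicity of $\soc M$ (so it is concentrated in one degree, and that degree must be $w$ since $(\soc M)_w\neq 0$) promotes this to $\soc M = M_w$. For the inductive step, given $\soc^k M = M_{\geqs w-k+1}$, any class in degree $n<w-k$ of $M/\soc^k M$ killed by $A_1$ lifts to $m\in M_n$ with $A_1 m\subset M_{n+1}\cap M_{\geqs w-k+1} = 0$, forcing $m\in\soc M = M_w$, which is zero in degree $n<w$. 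Hence $\soc(M/\soc^k M) = M_{w-k}$, and $\soc^{k+1} M = M_{\geqs w-k}$, closing the induction.

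Finally, combining (a) and (b) gives $\rad^a M = \soc^{r-a} M = M_{\geqs z+a}$ with common length $r = w-z+1$, so $M$ is rigid; part (a) additionally identifies the radical filtration with the grading filtration up to shift, so $M$ is in fact very rigid. There is no substantial obstacle: the whole argument is degree bookkeeping, and the only real move of content is the use of Nakayama (plus graded-simples-sit-in-one-degree) to force $M$ to be generated by its lowest-degree component, together with the dual degree-chase that locates the socle.
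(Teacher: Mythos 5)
Your proof is correct, and in fact the paper contains no internal proof of Lemma \ref{lem_rigidity}: it is quoted with references to \cite[Prop.~2.4.1]{BGS} and \cite[Lem.~2.13]{HM}, so your self-contained argument (Nakayama for the top, a dual degree chase for the socle) is essentially the standard argument underlying those citations rather than a deviation from anything in the text. One imprecision is worth fixing in the write-up: the literal identities $\rad A=A_{\geqslant 1}$ and $(\rad A)^k=A_{\geqslant k}$ for the Jacobson radical of $A$ can fail when $A$ is infinite dimensional (e.g.\ $A=\bfk[x]$ with $\deg x=1$ has $\rad A=0$); but what you actually use is the module-level statement $\rad N=A_{\geqslant 1}N$, hence $\rad^k N=A_{\geqslant k}N$, for a finite-dimensional graded module $N$, and this is exactly what your parenthetical justifies: semisimplicity of $A/A_{\geqslant 1}\simeq A_0$ gives $\rad N\subset A_{\geqslant 1}N$, while nilpotence of the $A_{\geqslant 1}$-action on $N$ kills every simple quotient and gives the reverse inclusion, with $A_{\geqslant 1}A_{\geqslant k-1}=A_{\geqslant k}$ coming from generation in degrees $0,1$. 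Likewise, the identification $\soc N=\{n:A_1n=0\}$ uses the factorization $A_k=A_{k-1}A_1$ (not only $A_k=A_1A_{k-1}$), which again follows from generation in degrees $0,1$, plus the fact that this annihilator is an $A_0$-module and hence semisimple. With these two points made explicit, your degree bookkeeping in (a) and (b), the observation that $M/\rad M$ and $\soc M$ are graded and therefore concentrated in a single degree, and the final comparison $\rad^a M=\soc^{r-a}M=\mathcal{G}r_{z+a}M$ giving very rigidity are all complete and correct.
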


\medskip

\subsection{Cyclotomic Hecke and $q$-Schur algebra}
\label{subs_def-Hecke-Schur}
Let $\zeta$ be an $e$th primitive root of unity. Set $Q_k=\zeta^{s_k}$ for $k=1,\cdots,l$. Fix $d\in\bbN$.
\begin{df}
The \emph{cyclotomic Hecke algebra} $H^{\bfs}_d$ is the $\bfk$-algebra with generators $T_0,T_1,\cdots,T_{d-1}$ modulo the following defining relations:
\begin{align*}
(T_0-Q_1)\cdots(T_0-Q_l)&=0,\\
T_0T_1T_0T_1&=T_1T_0T_1T_0,\\
(T_k+1)(T_k-\zeta)&=0\quad &\text{for }& 1\leqslant k\leqslant d-1,\\
T_kT_{k+1}T_k&=T_{k+1}T_kT_{k+1}&\text{for }& 1\leqslant k\leqslant d-2,\\
T_iT_j&=T_jT_i&\text{for }&0\leqslant i\leqslant j-1\leqslant d-2.\\
\end{align*}
\end{df}

For each $k\in[1,d]$, set $L_k=q^{1-k}T_{k-1}\cdots T_1T_0T_1\cdots T_{k-1}$. For a composition $\lambda=(\lambda_1,\cdots,\lambda_k)$ of $d$, let $\frakS_\lambda$ be the Young subgroup $\frakS_{\lambda_1}\times\cdots\times\frakS_{\lambda_k}\subset\frakS_d$. Similarly, for an $l$-composition $\lambda=(\lambda^1,\cdots,\lambda^l)$ of $d$ let $\frakS_\lambda$ be the Young subgroup $\frakS_{\lambda^1}\times\cdots\times\frakS_{\lambda^l}\subset\frakS_d$. Let $t_1,\cdots,t_{d-1}\in\frakS_d$ be the transpositions $(1,2),(2,3), \cdots, (d-1,d)$. For a permutation $w\in\frakS_d$ with reduced expression $w=t_{j_1}\cdots t_{j_r}$ we set $T_w=T_{j_1}\cdots T_{j_r}$.
Suppose that $\bfa=(a_1,\cdots,a_l)$ is an $l$-tuple of integers in $[1,d]$. Set $u_\bfa^+=u_{\bfa,1}u_{\bfa,2}\cdots u_{\bfa,l}$, where $u_{\bfa,k}=\prod_{r=1}^{\bfa_k}(L_r-Q_k)$.
Let $\lambda$ be an $l$-composition of $d$. We associate with it the tuple $\bfa=(a_1,\cdots,a_l)$ such that $a_k=\sum_{r=1}^{k-1}|\lambda^r|$. Set $x_\lambda=\sum_{w\in \frakS_\lambda}T_w$ and $m_\lambda=u_\bfa^+x_\lambda$.

\begin{df}
The cyclotomic $q$-Schur algebra $S^{\bfs}_d$ is the $\bfk$-algebra
$$
\End_{H^{\bfs}_d}(\bigoplus_{\lambda\in C^l_d}m_\lambda H^{\bfs}_d).
$$
\end{df}

Let $\lambda\in\calP_d^l$. Let $\Delta^\lambda$ be the Weyl module over $S^{\bfs}_d$, see, e.g., \cite[Def.~4.12]{Mat}.
Let $L^\lambda$ be the top of $\Delta^\lambda$ and let $P^\lambda$ be its projective cover in $\mod(S_{d}^{\bfs})$.
The following is proved in \cite[Thm.~4.14]{Mat}.

\begin{lem}
The category  $\mod(S^{\bfs}_d)$ is a highest weight category with standard modules $\Delta^\lambda$, where $\lambda\in \calP^l_{d}$. \qed
\end{lem}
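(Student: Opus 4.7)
The plan is to deduce the statement from the cellular algebra structure of $S^\bfs_d$ established by Dipper--James--Mathas. First I would recall that $S^\bfs_d$ admits a cellular basis indexed by pairs $(\mathfrak{S},\mathfrak{T})$ of semistandard $\lambda$-tableaux with $\lambda\in\calP^l_d$, the partial order on the indexing set being the dominance order on $l$-partitions. From this cellular basis one extracts for each $\lambda\in\calP^l_d$ a cell module $C^\lambda$ equipped with a canonical bilinear form; the Weyl module $\Delta^\lambda$ of \cite[Def.~4.12]{Mat} is by definition identified with $C^\lambda$.

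Second, I would apply the Graham--Lehrer criterion that a cellular algebra is quasi-hereditary if and only if each cell module has non-zero simple head. In the present setting, working over a field $\bfk$ of characteristic zero and with the cellular basis indexed by $\calP^l_d$, one checks that the canonical bilinear form on each $\Delta^\lambda$ is non-zero, so that $\End_{S^\bfs_d}(\Delta^\lambda)=\bfk$ and $\Delta^\lambda$ has simple head $L^\lambda$. This yields axiom (c) of the highest weight category definition immediately, and the classification of simples $\{L^\lambda\mid\lambda\in\calP^l_d\}$.

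Third, I would verify the remaining axioms: axiom (a) (the $\Delta(\lambda)$ generate in the sense that any nonzero module admits a nonzero map from some $\Delta(\lambda)$) follows from the existence of enough projectives whose $\Delta$-filtration layers are cellular cell modules; axiom (d) (incidence condition) is built into the definition of a cellular algebra via the two-sided ideal filtration indexed by the poset $(\calP^l_d,\trianglelefteq)$; and axiom (b) (the projective cover $P^\lambda$ surjects onto $\Delta^\lambda$ with kernel filtered by $\Delta^\mu$ for $\mu\rhd\lambda$) comes from the standard translation between cellular and quasi-hereditary structures applied to the chain of two-sided cell ideals.

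The only delicate step is the second one: ensuring that $\End_{S^\bfs_d}(\Delta^\lambda)=\bfk$ for all $\lambda\in\calP^l_d$, which requires the non-degeneracy of the canonical form on the cell module. Over $\bfk$ of characteristic zero this is where the hypothesis on the ground field is used, and it is handled in \cite[Thm.~4.14]{Mat} via an explicit description of the Gram matrix in terms of semistandard tableaux; the rest of the verification is then a formal consequence of cellularity.
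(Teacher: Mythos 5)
Your proof is correct and takes essentially the same route as the paper: the paper simply invokes \cite[Thm.~4.14]{Mat}, and your cellular-algebra argument (the Dipper--James--Mathas semistandard basis indexed by $(\calP^l_d,\trianglelefteq)$, the Graham--Lehrer criterion, and the non-vanishing of the Gram form on each Weyl module $\Delta^\lambda$) is precisely the proof underlying that citation. The only quibble is your final paragraph: the bilinear form on each $\Delta^\lambda$ for the cyclotomic $q$-Schur algebra is non-zero over an arbitrary field (the semistandard $\lambda$-tableau of type $\lambda$ already gives a non-zero value), so quasi-heredity does not actually use the characteristic-zero hypothesis, which the paper assumes for other purposes.
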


In \cite[Sec.~5.1]{Mat}, an idempotent $\bfe\in S^{\bfs}_d$ such that $\bfe S^{\bfs}_d\bfe\simeq H^{\bfs}_d$ is defined. Let $F\colon \mod(S_{d}^{\bfs})\to\mod(H_{d}^{\bfs}), M\mapsto \bfe M$ be the Schur functor. It is a quotient functor, see \cite[Prop.~III.2.5]{Gab} and \cite[Sec.~2.4]{HM}.
The \emph{Specht module} $S^\lambda$ is the $H_{d}^{\bfs}$-module $F(\Delta^\lambda)$. The \emph{Young module} $Y^\lambda$ is the $H_{d}^{\bfs}$-module
$F(P^\lambda)$. Set $D^\lambda=F(L^\lambda)$. The module $D^\lambda$ is irreducible or zero because $F$ is a quotient functor. Set $\calK_{d}^l=\{\lambda\in\calP_{d}^l; D^\lambda\ne\{0\}\}$.

There exists a decomposition into a direct sum of unital indecomposable $\bfk$-algebras $H^\bfs_d=\bigoplus_{\bfd\in\bbN I, |\bfd|=d}H^\bfs_\bfd$, see \cite[Thm.~A]{LM} and \cite[Prop.~3.1]{RSVV}. Let $1_\bfd$ be the unit of $H^\bfs_\bfd$.
Let $\overline 1_\bfd$ be the element of $S^\bfs_d$ given by the multiplication by $1_\bfd$. We get a decomposition of the cyclotomic $q$-Schur algebra $S^\bfs_d=\bigoplus_{|\bfd|=d}S^\bfs_\bfd$, where $S^\bfs_\bfd=\overline 1_\bfd S^\bfs_d$. Each summand $S^\bfs_\bfd$ is indecomposable, because the numbers of blocks of $S_d^\bfs$ and $H_d^\bfs$ are the same since $S_d^\bfs$, $H_d^\bfs$ have isomorphic centers by the double centralizer property, see \cite[Prop.~4.33]{R} and \cite[Cor.~5.18]{GGOR}.
For each $\bfd$, we consider the idempotent $\bfe_\bfd=\bfe \overline 1_\bfd\in S^\bfs_\bfd$. The Schur functor restricts to a quotient functor $F\colon \mod(S^\bfs_\bfd)\to \mod(H^\bfs_\bfd), M\mapsto \bfe_\bfd M$.

\begin{rk}
\label{rk_full-faith-proj}
(a) The algebra $S^\bfs_\bfd$ is quasi-hereditary with the set of standard modules $\{\Delta^\lambda;\lambda\in\calP^l_\bfd\}$, see \cite[Section 5.2]{Mat}.

(b) The functor $F$ is fully faithful on projective modules. See, for example, \cite[Prop.~4.33]{R}.
\end{rk}

For each $\lambda\in\calP^l_\bfd$, let $\nabla^\lambda\in\mod(S^\bfs_\bfd)$ be the costandard object  with  socle $L^\lambda$.

\medskip

\subsection{Cyclotomic KLR-algebra}
\label{subs_KLR}
\begin{df}
The \emph{KLR-algebra} $\tilde R_d$ is the $\bfk$-algebra with the set of generators $\psi_1,\cdots,\psi_{d-1},y_1,\cdots,y_d,e(\ui)$, where $\ui=(i_1,\cdots,i_d)\in I^d$,
modulo the following defining relations
\begin{align*}
e(\ui)e(\uj)&=\delta_{\ui,\uj}e(\ui), &\sum_{\ui\in I^d}e(\ui)&=1,\\
y_re(\ui)&=0,e(\ui)y_r,&\psi_re(\ui)&=e(t_r(\ui))\psi_r, &y_ry_s&=y_sy_r,\\
\end{align*}
\begin{align*}
\psi_ry_{r+1}e(\ui)&=(y_r\psi_r+\delta_{i_r,i_{r+1}})e(\ui), &y_{r+1}\psi_re(\ui)&=(\psi_ry_r+\delta_{i_r,i_{r+1}})e(\ui),\\
\psi_ry_s&=y_s\psi_r, &\text{if }&s\ne r,r+1,\\
\psi_r\psi_s&=\psi_s\psi_r, &\text{if }&|r-s|>1,\\
\end{align*}
$$
\psi_r^2e(\ui)=
\begin{cases}
0, &\text{if } i_r=i_{r+1},\\
(y_{r+1}-y_r)e(\ui), &\text{if } i_r=i_{r+1}-1, e\ne 2,\\
(y_r-y_{r+1})e(\ui), &\text{if } i_r=i_{r+1}+1, e\ne 2,\\
(y_r-y_{r+1})(y_{r+1}-y_r)e(\ui), &\text{if } i_r\ne i_{r+1},e=2,\\
e(\ui), &\text{\emph{otherwise}},
\end{cases}
$$
$$
\psi_r\psi_{r+1}\psi_re(\ui)=
\begin{cases}
0, &\text{if } i_r=i_{r+1},\\
(\psi_{r+1}\psi_r\psi_{r+1}+1)e(\ui), &\text{if } i_r=i_{r+2}=i_{r+1}-1, e\ne 2,\\
(\psi_{r+1}\psi_r\psi_{r+1}-1)e(\ui), &\text{if } i_r=i_{r+2}=i_{r+1}+1, e\ne 2,\\
(\psi_{r+1}\psi_r\psi_{r+1}+y_r-2y_{r+1}+y_{r+2})e(\ui), &\text{if } i_r=i_{r+2}\ne i_{r+1}, e=2,\\
\psi_{r+1}\psi_r\psi_{r+1}e(\ui), &\text{\emph{otherwise}},
\end{cases}
$$
for each $\ui,\uj\in I^d$ and each admissible $r$ and $s$. The algebra $\tilde R_d$ admits a $\bbZ$-grading such that $\deg e(\ui)=0$, $\deg y_r=2$, $\deg\psi_se(\ui)=-a_{i_s,i_{s+1}}$, for each $1\leqslant r\leqslant d$, $1\leqslant s< d$ and $\ui\in I^d$.   Here $(a_{i,j})$ is the Cartan matrix defined in Section \ref{subs_comb}.
\end{df}

\begin{df}
\label{def_KLR-cycl}
For each $i\in I$, let $c_i$ be the number of elements of $\bfs$ equal to $i$. The \emph{cyclotomic KLR-algebra} $R^{\bfs}_d$ is the quotient of the algebra $\tilde R_d$ by the ideal generated by $y_1^{c_{i_1}}e(\ui)$ for $\ui=(i_1,\cdots,i_d)\in I^d$. The algebra $R^\bfs_d$ inherits the grading from the grading of $\tilde R_d$.
\end{df}

For each $\bfd\in\bbN I$ such that $|\bfd|=d$ set $e(\bfd)=\sum_{\ui\in I^\bfd} e(\ui)\in \tilde R_d$. It is a homogeneous idempotent concentrated in degree zero. We will also denote by $e(\bfd)$ the image of $e(\bfd)$ in $R^\bfs_d$. We have the following decomposition in sums of unital $\bfk$-algebras
$\tilde R_d=\bigoplus_{|\bfd|=d}\tilde R_\bfd$, $R^\bfs_d=\bigoplus_{|\bfd|=d}R^\bfs_\bfd$, where $\tilde R_\bfd=e(\bfd)\tilde R_d$, $R^\bfs_\bfd=e(\bfd)R^\bfs_d$.

The following theorem is proved in \cite[Sec.~4.5]{BK09} and \cite[Sec.~3.2.6]{R0}.
\begin{thm}
\label{thm_isom-KLR-Hecke}
There is a $\bfk$-algebra isomorphism $R^\bfs_d\simeq H^\bfs_d$. Moreover, under this isomorphism the idempotent $1_\bfd\in H^\bfs_d$ corresponds to $e(\bfd)\in R^\bfs_d$. In particular for each $\bfd\in \bbN I$ such that $|\bfd|=d$ we get an algebra isomorphism $R^\bfs_\bfd\simeq H^\bfs_\bfd$. \qed
\end{thm}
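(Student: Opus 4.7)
The plan is to follow the strategy of Brundan--Kleshchev. The construction proceeds by exhibiting an explicit homomorphism $R^\bfs_d \to H^\bfs_d$ and then arguing it is an isomorphism by a dimension count, since both algebras have $\bfk$-dimension $l^d\cdot d!$.

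First, I would build the idempotents. For each $\ui=(i_1,\ldots,i_d)\in I^d$, let $e(\ui)\in H^\bfs_d$ be the projector onto the common generalized eigenspace of the Jucys--Murphy elements $L_1,\ldots,L_d$ with eigenvalues $\zeta^{i_1},\ldots,\zeta^{i_d}$ respectively; these exist because each $L_k$ has finite-order action on $H^\bfs_d$ and they can be written as polynomials in the $L_k$. One checks that $\{e(\ui)\}_{\ui\in I^d}$ is a complete set of orthogonal idempotents, and that $e(\ui)=0$ unless $\ui$ is \emph{admissible} in the sense that it arises from a residue sequence of some multipartition; moreover $e(\bfd)=\sum_{\ui\in I^\bfd}e(\ui)$ is the $1_\bfd$ appearing in the block decomposition of $H^\bfs_d$ coming from \cite{LM}.

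Next, for each $r\in[1,d]$ define the nilpotent element
\[
y_r \;=\; \sum_{\ui\in I^d}\bigl(1-\zeta^{-i_r}L_r\bigr)e(\ui),
\]
and for $r\in[1,d-1]$ define $\psi_r$ by a formula of the form
\[
\psi_r \;=\; \sum_{\ui\in I^d}\bigl(T_r + P_r(\ui)\bigr)\,Q_r(\ui)^{-1}\,e(\ui),
\]
where $P_r(\ui),Q_r(\ui)$ are carefully chosen rational functions in $y_r,y_{r+1}$ (invertible on the corresponding block) engineered so that the KLR quadratic relation $\psi_r^2 e(\ui)$ holds. The correction terms depend on whether $i_r=i_{r+1}$, $i_r=i_{r+1}\pm 1$, or neither, matching the four cases in the KLR quadratic relation. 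Then I would verify, block by block on $e(\ui)$, each of the defining relations of $\tilde R_d$: the commutations of the $y$'s, the straightening $\psi_r y_{r+1}e(\ui)=(y_r\psi_r+\delta_{i_r,i_{r+1}})e(\ui)$, far commutativity, the quadratic $\psi_r^2$ relation and finally the braid relation on $\psi_r\psi_{r+1}\psi_r$. The cyclotomic relation $y_1^{c_{i_1}}e(\ui)=0$ follows from $(L_1-Q_1)\cdots(L_1-Q_l)=0$ combined with the definition of $y_1$ and the fact that on the block indexed by $\ui$ only the factors $(L_1-Q_k)$ with $s_k=i_1$ are nonzero, giving exactly $c_{i_1}$ such factors. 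This produces a well-defined algebra homomorphism $\varphi\colon R^\bfs_d\to H^\bfs_d$ sending $e(\bfd)\mapsto 1_\bfd$.

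To conclude, I would prove that $\varphi$ is surjective: the $T_r$'s are recovered from $\psi_r$ and $y_r,y_{r+1}$ by inverting the construction above, and $T_0$ is recovered since $L_1=\sum_{\ui}\zeta^{i_1}(1-y_1)^{-1}e(\ui)$ lies in $\Im(\varphi)$ and $T_0 = L_1$. Surjectivity plus equality of dimensions (the graded dimension of $R^\bfs_d$ is computed by Brundan--Kleshchev--Wang and matches $\dim H^\bfs_d=l^d\,d!$) forces $\varphi$ to be an isomorphism. The identification of idempotents $1_\bfd\leftrightarrow e(\bfd)$ is then automatic from the construction, and restricting to the blocks yields $R^\bfs_\bfd\simeq H^\bfs_\bfd$.

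The main obstacle is the combinatorial bookkeeping in verifying the $\psi_r$ relations: especially the braid relation, where one must pick the correction rational functions $P_r,Q_r$ so that \emph{all} KLR relations hold simultaneously on \emph{each} $e(\ui)$-block, and handle separately the $e=2$ case where the quadratic and braid relations change form. The dimension lower bound for $R^\bfs_d$ (needed to upgrade surjectivity to bijectivity) is also nontrivial and is typically established via a faithful polynomial representation of $\tilde R_d$ together with a basis theorem for $R^\bfs_d$.
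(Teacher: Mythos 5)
The paper does not actually prove this statement: it is quoted from \cite[Sec.~4.5]{BK09} and \cite[Sec.~3.2.6]{R0}, and your sketch is essentially a reconstruction of the Brundan--Kleshchev argument (Jucys--Murphy eigenprojections $e(\ui)$, the nilpotent elements $y_r=\sum_{\ui}(1-\zeta^{-i_r}L_r)e(\ui)$, intertwiner-type $\psi_r$ with power-series corrections, block-by-block verification of the KLR relations, and the cyclotomic relation from $(L_1-Q_1)\cdots(L_1-Q_l)=0$). That part of your plan is the right one, and the identification $1_\bfd=\sum_{\ui\in I^\bfd}e(\ui)=e(\bfd)$ via the Lyle--Mathas block classification \cite{LM} is exactly how the block statement is obtained.

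The genuine weak point is your final bijectivity step. You propose ``surjectivity plus equality of dimensions,'' quoting the graded dimension of $R^\bfs_d$ from Brundan--Kleshchev--Wang; but that dimension formula is itself derived \emph{from} the isomorphism $R^\bfs_d\simeq H^\bfs_d$ (the grading on $H^\bfs_d$ and graded Specht theory are transported through it), so as stated the argument is circular. Moreover your closing remark has the inequality backwards: to upgrade a surjection $\varphi\colon R^\bfs_d\twoheadrightarrow H^\bfs_d$ to an isomorphism you need an \emph{upper} bound $\dim R^\bfs_d\leqslant l^d\,d!$, i.e.\ a spanning-set theorem for the cyclotomic quotient; a faithful polynomial representation of $\tilde R_d$ only gives lower bounds (and only for the non-cyclotomic algebra), and no independent upper bound was available by these means. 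Brundan--Kleshchev avoid this entirely by also expressing the Ariki--Koike generators $T_0,T_1,\cdots,T_{d-1}$ as explicit elements of $R^\bfs_d$, checking the Hecke relations for them, and verifying that the two homomorphisms are mutually inverse on generators; if you want a self-contained proof you should do the same rather than count dimensions. A small slip in the surjectivity paragraph: from your definition of $y_1$ one gets $L_1=\sum_{\ui}\zeta^{i_1}(1-y_1)e(\ui)$, not $\zeta^{i_1}(1-y_1)^{-1}e(\ui)$ (harmless, since $T_0=L_1$ is in the image either way, but the formula as written is wrong).
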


We fix the isomorphism $R^\bfs_d\simeq H^\bfs_d$ as in \cite[Thm.~4.1]{BK11}. It yields a grading of the cyclotomic Hecke algebra $H^\bfs_d$ and on each direct factor $H^\bfs_\bfd$.

The category $\mod(R^\bfs_\bfd)$ admits a graded duality defined in the beginning of \cite[Sec.~4.5]{BK11}. We denote by $D'\colon\mod(R^\bfs_\bfd)\to\mod(R^\bfs_\bfd)$ the ungraded version of this duality and we denote by $\overline D'\colon\grmod(R^\bfs_\bfd)\to\grmod(R^\bfs_\bfd)$ the graded lift of $D'$.

\medskip

\subsection{Quiver Schur algebra\label{subs_QSchur}}
Let $\Gamma$ be the quiver as in Section \ref{subs_comb}. Let $V=\bigoplus V_i$ be an $I$-graded finite dimensional $\bbC$-vector space with dimension vector $\bfd=\sum_{i\in I}d_i\cdot i\in\mathbb NI$. Set
$
E_V=\bigoplus_{i\in I}{\rm Hom}(V_i,V_{i+1})$ and $|\bfd|=\sum_{i\in I}d_i$.
The group $G_V=\prod_{i\in I}GL(V_i)$ acts on $E_V$ by conjugation.

A \emph{vector composition} of weight $\bfd$ is a tuple $\mu=(\mu^{(1)},\cdots, \mu^{(k)})$ of nonzero elements of $\bbN I$ such that $\mu^{(1)}+\mu^{(2)}+\cdots+\mu^{(k)}=\bfd$. Let $\mathrm{VComp}(\bfd)$ be the set of vector compositions of weight $\bfd$. Set also $\mathrm{VComp}=\coprod_{\bfd\in \mathbb NI}\mathrm{VComp}(\bfd)$.
For $\mu=(\mu^{(1)},\cdots, \mu^{(k)})\in \mathrm{VComp(\bfd)}$ we denote by $F_\mu$ the variety of all flags of $I$-graded vector spaces
$
\phi=(0=V^0\subset V^1\subset\cdots\subset V^k=V)
$
in $V=\bigoplus_{i\in I}V_i$ such that the $I$-graded vector space $V^k/V^{k-1}$ has graded dimension $\mu^{(r)}$ for $r\in\{1,2,\cdots,k\}$. Let $\widetilde{F}_\mu$ be the variety of pairs $(x,\phi)\in E_V\times F_\mu$ that are compatible, i.e., we have $x(V^r)\subset V^{r-1}$ for $r\in\{0,1,\cdots,k\}$. Let $\pi_{\mu}$ be the projection $\widetilde{F}_\mu\to E_V$ such that
$
(x,\phi)\mapsto x.
$

We call an $(l+1)$-tuple $\tilde\mu=(\mu_0,\mu_1,\cdots,\mu_l)$ of elements of $\mathrm{VComp}$ a \emph{weight data}. The \emph{weight} of a weight data $\tilde\mu=(\mu_0,\mu_1,\cdots,\mu_l)$ is the sum of the weights of $\mu_0,\mu_1,\cdots,\mu_l$. Let $\mathrm{VDat}(\bfd)$ be the set of weight data of weight $\bfd$.
Let $\tilde\mu=(\mu_0,\mu_1,\cdots,\mu_l)$ be in $\mathrm{VDat}(\bfd)$. Let $\mu$ be the vector composition associated with $\tilde\mu$, i.e., $\mu$ is the tuple obtained by taking together all the components $\mu_0,\cdots,\mu_l$ of $\tilde\mu$ (ordered from $\mu_0$ to $\mu_l$).

For each $\nu\in\bbN I$ and $i\in I$ we will write $\nu(i)$ for the $I$-component of $\nu$, i.e., we have $\nu=\sum_{i\in I}\nu(i)\cdot i$.
Let $\pmb\nu=(\nu_1,\cdots,\nu_l)$ be an $l$-tuple of elements of $\mathbb NI$. Set $\nu=\nu_1+\cdots+\nu_l$.
Let $U=\bigoplus_{i\in I}U_i$ be an $I$-graded $\bbC$-vector space with $\dim U_i=\nu(i)$. Fix an $I$-graded flag $0=U^0\subset U^1\subset\cdots\subset U^l=U$ such that the $I$-graded vector space $U^k/U^{k-1}$ has graded dimension $\nu_k$ for each $k\in[1,l]$. For $i\in I, k\in[0,l]$ let $U_i^k$ be the $i$-component of $U^k$, i.e., $U^k_i=U_i\cap U^k$.

Let $\mathfrak O(\pmb\nu,\tilde\mu)$ be the set of pairs
$
((x,\phi),(\gamma_i)_{i\in I})$ in $\widetilde{F}_\mu\times \bigoplus_{i\in I}\Hom(V_i,U_i)
$
such that we have
$\gamma_i(\widetilde W_i(k))\subset U_i^k$ for each $i\in I$ and $k\in[0,l]$.
Here
$\widetilde W_i(0)\subset\widetilde W_i(1)\subset\cdots\subset\widetilde W_i(l)=V_i$
is the $I$-graded flag in $V$ whose components are among the components of the flag $\phi$ and the graded dimensions of
$$
\widetilde W_i(0),~ \widetilde W_i(1)/\widetilde W_i(0),\cdots, ~\widetilde W_i(l)/\widetilde W_i(l-1)
$$
are the weights of the components $\mu_0,\mu_1,\cdots,\mu_l$ of $\tilde \mu$ respectively.
Set $E_{V,\nu}=E_V\times \bigoplus_{i\in I}\Hom(V_i,U_i)$.
For each $\tilde\mu\in\mathrm{VDat}(\bfd)$, let $\pi_{\tilde\mu,\pmb\nu}$ be the projection $$
\pi_{\tilde\mu,\pmb\nu}\colon \mathfrak O(\pmb\nu,\tilde\mu)\to E_{V,\nu},\quad ((x,\phi),(\gamma_i)_{i\in I})\to (x,(\gamma_i)_{i\in I}).
$$
For each $\tilde\mu,\tilde\lambda\in\mathrm{VDat}(\bfd)$ set
$
Z^{\pmb\nu}_{\tilde\mu,\tilde\lambda}=\mathfrak O(\pmb\nu,\tilde\mu)\times_{E_{V,\nu}}\mathfrak O(\pmb\nu,\tilde\lambda).
$
Set also
$$
Z^{\pmb\nu}_\bfd=\coprod\limits_{\tilde\mu,\tilde\lambda\in\mathrm{VDat}(\bfd)}Z^{\pmb\nu}_{\tilde\mu,\tilde\lambda},\qquad \mathfrak O^{\pmb\nu}_{\bfd}=\coprod\limits_{\tilde\lambda\in\mathrm{VDat}(\bfd)}\mathfrak O(\pmb\nu,\tilde\lambda).
$$

\begin{df}
\label{def_QSchur}
Let $\tilde A^{\pmb\nu}_{\bfd}$ be the $\bfk$-algebra $\mathrm{H}_*^{G_V}(Z^{\pmb\nu}_\bfd)$, where $\mathrm{H}_*^{G_V}(\bullet)$ is the $G_V$-equivariant Borel-Moore homology with coefficients in $\bfk$ and the multiplication on $\tilde A^{\pmb\nu}_{\bfd}$ is given by the convolution product with respect to the inclusion $Z^{\pmb\nu}_\bfd\subset \mathfrak O^{\pmb\nu}_\bfd\times \mathfrak O^{\pmb\nu}_\bfd$, see \cite[Section 2.7]{CG}.
\end{df}

\begin{rk}
\label{rem_def-QS}
(a)
We can identify the $\bfk$-algebra $\tilde A^{\pmb\nu}_{\bfd}$ with the Yoneda extension algebra of
$\bigoplus_{\tilde\mu\in\mathrm{VDat}(\bfd)}(\pi_{\tilde\mu,\pmb\nu})_*\underline{\bfk}_{\mathfrak O(\pmb\nu,\tilde\mu)}[\dim \mathfrak O(\pmb\nu,\tilde\mu)]$
(in the $G_V$-equivariant derived category of constructible complexes on $E_{V,\nu}$), see \cite[Lem.~8.6.1,~Thm.~8.6.7]{CG}. Here $\underline\bfk$ is the constant sheaf. We equip $\tilde A^{\pmb\nu}_{\bfd}$ with the grading such that $n$-graded component is the $n$th extension group.

(b)
For each $\bfd,\bfd'\in\bbN I$ and for any vector compositions $\pmb{\nu}$, $\pmb{\nu}'$, there is a bilinear map
$
\tilde A^{\pmb\nu}_\bfd\times \tilde A^{\pmb{\nu}'}_{\bfd'}\to \tilde A^{\pmb\nu\cup\pmb\nu'}_{\bfd+\bfd'},~(a,b)\mapsto a|b,
$
where $\pmb\nu\cup\pmb\nu'$ is the concatenation of $\pmb\nu$ with $\pmb\nu'$, see \cite[Sec.~4.1]{SW}. It is associative and respects the grading, i.e., we have $\deg (a|b)=\deg a+\deg b$.
\end{rk}
Let $I^{\pmb\nu}_\bfd\subset\tilde A^{\pmb\nu}_\bfd$ be the ideal generated by the elements of the form $e_i|a$, where $i\in I$ is such that $\bfd-i\in\bbN I$, $a\in \tilde A^{\pmb\nu}_{\bfd-i}$ and $e_i$ is the unit of $\tilde A^\emptyset_{i}$. Set $A^{\pmb\nu}_\bfd=\tilde A^{\pmb\nu}_\bfd/ I^{\pmb\nu}_\bfd$. We call $A^{\pmb\nu}_\bfd$ the \emph{cyclotomic quiver Schur algebra}.
The ideal $I_\bfd^{\pmb\nu}$ is homogeneous because each element $e_i\in \tilde A^\emptyset_{i}$ is homogeneous of degree zero and the bilinear map $\tilde A^\emptyset_{i}\times \tilde A^{\pmb\nu}_{\bfd-i}\to \tilde A^{\pmb\nu}_{\bfd}$ respects the grading. Thus $A^{\pmb\nu}_\bfd$ inherits the grading from $\tilde A^{\pmb\nu}_\bfd$.

\begin{rk}
\label{rk_cycl-ideal}
In \cite[Sec.~4.2]{SW}, a diagrammatic presentation of $\tilde A^{\pmb\nu}_\bfd$ is introduced, where $\tilde A^{\pmb\nu}_\bfd$ is generated by some \emph{diagrams}. In particular, the idempotent $e_i\in \tilde A^{\emptyset}_i$ is represented by the diagram consisting of a
black line labelled by the element $i$.
The bilinear map in Remark \ref{rem_def-QS} (b) corresponds to the horizontal concatenation of diagrams. The product in $\tilde A^{\pmb\nu}_\bfd$ corresponds to the vertical concatenation of diagrams (the product of two diagrams is zero if the vertical concatenation is not possible). Hence, the ideal $I^{\pmb\nu}_\bfd\subset \tilde A^{\pmb\nu}_\bfd$ is generated by the diagrams having a black vertical line on the left labelled by an element of $I$. In particular, each diagram having a piece of a black line labelled by an element of $I$ on the left of all other lines is in $I^{\pmb\nu}_\bfd$.
\end{rk}

From now, we assume that $\pmb\nu=\bfs$.
The following is proved in \cite[Thm.~6.3]{SW}.

\begin{thm}
\label{thm_q-Sch-QSch}
The algebra $A^\bfs_\bfd$ is a graded version of the cyclotomic
$q$-Schur algebra $S^\bfs_\bfd$.
\qed
\end{thm}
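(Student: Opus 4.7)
The plan is to take the graded noetherian ring $A$ of Definition \ref{def_gr-ver} to be $A^\bfs_\bfd$ itself, equipped with its natural $\bbN$-grading coming from the Yoneda Ext-algebra description recalled in Remark \ref{rem_def-QS}(a). With this choice the statement reduces to producing an (ungraded) Morita equivalence $\mod(A^\bfs_\bfd) \simeq \mod(S^\bfs_\bfd)$: the commutative diagram of Definition \ref{def_gr-ver} is then obtained by taking the top horizontal arrow to be the identity on $\grmod(A^\bfs_\bfd)$, the bottom horizontal arrow to be the Morita equivalence just constructed, and $F_\calC$ to be the forgetful functor $\grmod(A^\bfs_\bfd) \to \mod(A^\bfs_\bfd)$ followed by the equivalence.

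To construct the Morita equivalence, I would use the diagrammatic generators-and-relations presentation of $\tilde A^\bfs_\bfd$ from \cite{SW}. The sub-diagrams whose strands are all $I$-colored span a copy of the affine KLR algebra $\tilde R_\bfd$ inside $\tilde A^\bfs_\bfd$, and by Remark \ref{rk_cycl-ideal} the ideal $I^\bfs_\bfd$ is generated by diagrams containing a leftmost $I$-strand. Under the KLR-Hecke isomorphism of Theorem \ref{thm_isom-KLR-Hecke} this matches the cyclotomic ideal generated by $y_1^{c_{i_1}}e(\ui)$, i.e., with the Hecke relation $(T_0-Q_1)\cdots(T_0-Q_l)=0$. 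Hence the quotient $A^\bfs_\bfd = \tilde A^\bfs_\bfd/I^\bfs_\bfd$ contains a natural subalgebra isomorphic to $R^\bfs_\bfd \simeq H^\bfs_\bfd$, matching the corner algebra $\bfe_\bfd S^\bfs_\bfd \bfe_\bfd \subset S^\bfs_\bfd$ on the Schur side.

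To upgrade this to a Morita equivalence I would match idempotents. The diagrammatic idempotents $e_{\tilde\mu}$ of $A^\bfs_\bfd$ indexed by weight data $\tilde\mu \in \mathrm{VDat}(\bfd)$ are, after selecting and summing an appropriate sub-family, in bijection with the Schur idempotents associated to $l$-compositions $\lambda$. Applying the Schur functor (i.e., restricting the convolution action to the KLR subalgebra) to the corresponding projective $A^\bfs_\bfd$-module should yield an $H^\bfs_\bfd$-module isomorphic to $\bigoplus_\lambda m_\lambda H^\bfs_\bfd$, whose endomorphism algebra is $S^\bfs_\bfd$ by definition. By Remark \ref{rk_full-faith-proj}(b) the Schur functor is fully faithful on projectives, which allows one to transport the endomorphism ring back to $A^\bfs_\bfd$.

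The main obstacle is precisely this matching of idempotents: the indexing set $\mathrm{VDat}(\bfd)$ is strictly finer than $C^l_{|\bfd|}$, so one must identify and combine the correct sub-family of diagrammatic idempotents and verify, via the convolution algebra description on one side and the explicit formula $m_\lambda = u^+_\bfa x_\lambda$ on the other, that the endomorphism ring of the resulting projective equals $S^\bfs_\bfd$ on the nose and not merely up to derived equivalence. A secondary technical point is to check that the grading on the KLR subalgebra inherited from the Ext-grading on $\tilde A^\bfs_\bfd$ agrees with the standard KLR grading of Definition \ref{def_KLR-cycl}, which is needed to ensure that the $\bbZ$-category structure obtained on $\grmod(A^\bfs_\bfd)$ is the expected one.
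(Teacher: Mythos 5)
The first thing to note is that the paper does not prove this theorem at all: it is quoted from \cite[Thm.~6.3]{SW}, and the extra information the paper needs from it later (for instance that $\bfe_\bfd=\bfe_\bfd'$, Lemma \ref{lem_idemp-Schur-gr}(a)) is again extracted from the proof in \cite{SW}. So your proposal has to be judged as an attempted independent reproof of the Stroppel--Webster theorem, not against an argument in the text.

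Judged that way, it is a strategy with the decisive steps left open rather than a proof. The points you defer --- selecting and summing the diagrammatic idempotents of $A^\bfs_\bfd$ indexed by $\mathrm{VDat}(\bfd)$ so as to match the idempotents of $S^\bfs_\bfd$ coming from the summands $m_\lambda H^\bfs_d$ with $\lambda\in C^l_d$, verifying that the resulting endomorphism ring is $S^\bfs_\bfd$ on the nose, and comparing the Ext-grading with the KLR grading --- are exactly the content of \cite[Thm.~6.3]{SW}, and nothing in the present paper supplies them. Moreover, the inputs you do treat as available are circular in this context: the claim that $I^\bfs_\bfd$ cuts out precisely the cyclotomic ideal generated by the $y_1^{c_{i_1}}e(\ui)$, equivalently that $\bfe_\bfd A^\bfs_\bfd\bfe_\bfd\simeq R^\bfs_\bfd\simeq H^\bfs_\bfd$, is Lemma \ref{lem_idemp-Schur-gr}(b), whose proof in the paper is a dimension count resting on the identification $S^\bfs_\bfd\simeq A^\bfs_\bfd$ of the very theorem you are proving (the containment of one ideal in the other is cheap via the relations of \cite{Web}; the equality is not). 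Finally, while an arbitrary equivalence $\mod(S^\bfs_\bfd)\simeq\mod(A^\bfs_\bfd)$ would satisfy Definition \ref{def_gr-ver} literally, the paper later works with the genuine algebra isomorphism $S^\bfs_\bfd\simeq A^\bfs_\bfd$ produced by the proof in \cite{SW} (Lemma \ref{lem_idemp-Schur-gr} is phrased ``under the identification $S^\bfs_\bfd\simeq A^\bfs_\bfd$''), so a Morita-equivalence-only conclusion would be too weak for the way the theorem is used downstream; your idempotent-matching step would in effect have to produce the isomorphism itself anyway.
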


\medskip

\subsection{Admissible gradings}
\label{subs_admis-grad}
In this section we equip $H^\bfs_\bfd$ with the grading induced by the isomorphism $H^\bfs_\bfd\simeq R^\bfs_\bfd$ in Section \ref{subs_KLR}. Recall the duality $D'$ and its graded lift $\overline D'$ defined in Section \ref{subs_KLR}.
We want to introduce a class of gradings on $S^\bfs_\bfd$.
\begin{df}
\label{def_adm-grad}
A $\bbZ$-grading of the algebra $S^\bfs_\bfd$ is \emph{admissible} if the Schur functor $F\colon\mod(S^\bfs_\bfd)\to \mod(H^\bfs_\bfd)$ admits a graded lift $\overline F$ and if for each $\lambda\in\calP^l_\bfd$, the modules $\Delta^\lambda$, $L^\lambda$ have graded lifts such that the following conditions hold:
\begin{itemize}
    \item[(a)] for each $\lambda,\xi\in \calP^l_\bfd$, we have $[\Delta^{\lambda}:L^\xi]_q=\delta_{\lambda\xi}+q\bbN[q]$,
    \item[(b)] there is an equivalence of categories $D\colon\mod(S^\bfs_\bfd)^{\rm op}\to\mod(S^\bfs_\bfd)$ such that
\begin{itemize}
       \item[($\rm b_1$)] $D$ admits a graded lift $\overline D\colon\grmod(S^\bfs_\bfd)^{\rm op}\to\grmod(S^\bfs_\bfd)$ that is also an equivalence of categories,
       \item[($\rm b_2$)] $D^2=\Id_{\mod(S^\bfs_\bfd)}$, $\overline D^2=\Id_{\grmod(S^\bfs_\bfd)}$,
       \item[($\rm b_3$)] $\overline D(L^\lambda)=L^\lambda$ for each $\lambda\in\calP^l_\bfd$,
       \item[($\rm b_4$)] $\overline D(M\langle 1\rangle)=\overline D(M)\langle -1\rangle$ for each $M\in\grmod(S^\bfs_\bfd)$,
       \item[($\rm b_5$)] $D'F=FD$, $\overline D'\overline F={\overline F}\,{\overline D}$.
\end{itemize}
\end{itemize}
\end{df}

From now on, we fix an admissible grading of $S^\bfs_\bfd$. For each $\lambda\in\calP^l_\bfd$ the $S^\bfs_\bfd$-modules $\Delta^\lambda$, $\nabla^\lambda$, $P^\lambda$, $L^\lambda$ are indecomposable. Thus their graded lifts are unique modulo the shift of grading if they exist, see \cite[Lem.~2.5.3]{BGS}. Fix the graded lifts of $\Delta^\lambda$ and $L^\lambda$ as in Definition \ref{def_adm-grad}. Note that by (a) each morphism $\Delta^\lambda\to L^\lambda$ is homogeneous of degree zero.
The projective cover of $L^\lambda$ in $\grmod(S^\bfs_\bfd)$ is a graded lift of $P^\lambda$. We will always consider $P^\lambda$ with this graded lift. We also get graded lifts of $D^\lambda$, $S^\lambda$ and $Y^\lambda$ obtained by taking the images by $\overline F$ of the graded lifts of $L^\lambda$, $\Delta^\lambda$ and $P^\lambda$. Moreover, the module $\overline D(\Delta^\lambda)\in\grmod(S^\bfs_\bfd)$ is a graded lift of $\nabla^\lambda$, see \cite[Prop.~2.6]{RSVV}.

\medskip

\subsection{Quiver Schur grading is admissible} The goal of this section is to prove that the graded version $A^\bfs_\bfd$ of $S^\bfs_\bfd$ has an admissible grading. We will see a finite dimensional $\bbZ$-graded $\bfk$-vector space $U=\bigoplus_{n\in\bbZ}U_n$ as a complex with
zero differentials
 $\cdots\to U_{-1}\to U_0\to U_1\to\cdots$. The dual $U^*=\Hom_\bfk(U,\bfk)$ is also $\bbZ$-graded, with $U^*_n=\Hom_{\grmod(\bfk)}(U\langle n \rangle,\bfk)$. 

Write
$
\scrL_{\tilde\mu,\bfs}=(\pi_{\tilde\mu,\bfs})_*\underline{\bfk}_{\mathfrak O(\bfs,\tilde\mu)}[\dim \mathfrak O(\bfs,\tilde\mu)].$
By Remark \ref{rem_def-QS} we have
$$
\tilde A^\bfs_\bfd=\Ext_{G_V}^*(\bigoplus_{\tilde\mu\in\mathrm{VDat}(\bfd)}\scrL_{\tilde\mu,\bfs},\bigoplus_{\tilde\mu\in\mathrm{VDat}(\bfd)}\scrL_{\tilde\mu,\bfs}).
$$

Set $s=\sum_{k=1}^ls_l\in\bbN I$. Let $\Lambda_{V,\bfs}$ be the set of isomorphism classes of simple $G_V$-equivariant perverse sheaves on $E_{V,s}$ that appear as shifts of direct summands of $\scrL_{\tilde\mu,\bfs}$ for some $\tilde\mu\in\mathrm{VDat}(\bfd)$. For each $\lambda\in\Lambda_{V,\bfs}$, fix a perverse sheaf $\IC_\lambda$ on $E_{V,s}$ in the class $\lambda$.
For each $\tilde\mu\in\mathrm{VDat}(\bfd)$, we have an isomorphism
$$
\scrL_{\tilde\mu,\bfs}\simeq\bigoplus_{\lambda\in\Lambda_{V,\bfs}}\IC_\lambda\otimes U_{\tilde\mu,\bfs,\lambda}
$$
for some $\bbZ$-graded vector spaces $U_{\tilde\mu,\bfs,\lambda}$. Note that we have a graded $\bfk$-vector space isomorphism $U_{\tilde\mu,\bfs,\lambda}\simeq(U_{\tilde\mu,\bfs,\lambda})^*$, because $\scrL_{\tilde\mu,\bfs}$ is self-dual with respect to the Verdier duality.
Set also $\scrL_\bfs=\bigoplus_{\tilde\mu\in\mathrm{VDat}(\bfd)}\scrL_{\tilde\mu,\bfs}$
and $U_{\bfs,\lambda}=\bigoplus_{\tilde\mu\in\mathrm{VDat}(\bfd)}U_{\tilde\mu,\bfs,\lambda}$.
Then we have $\scrL_\bfs=\bigoplus_{\lambda\in\Lambda_{V,\bfs}}\IC_\lambda\otimes U_{\bfs,\lambda}$.

The graded algebra
$$
^b\tilde A^\bfs_\bfd=\Ext_{G_V}^*(\bigoplus_{\lambda\in\Lambda_{V,\bfs}}\IC_\lambda,\bigoplus_{\lambda\in\Lambda_{V,\bfs}}\IC_\lambda)
$$
is Morita equivalent to $\tilde A^\bfs_\bfd$. The corresponding equivalence of (ungraded) module categories is given by the functor $Y\colon\mod(\tilde A^\bfs_\bfd)\to\mod(^b\tilde A^\bfs_\bfd)$, $M\mapsto R\otimes_{\tilde A^\bfs_\bfd}M$, where $R$ is the graded $(^b\tilde A^\bfs_\bfd,\tilde A^\bfs_\bfd)$-bimodule $$
\Ext^*_{G_V}(\bigoplus_{\lambda\in\Lambda_{V,\bfs}}\IC_\lambda\otimes U_{\bfs,\lambda},\bigoplus_{\lambda\in\Lambda_{V,\bfs}}\IC_\lambda).
$$
Let $\overline Y\colon\grmod(\tilde A^\bfs_\bfd)\to\grmod(^b\tilde A^\bfs_\bfd)$ be the obvious graded lift of $Y$.

The algebra $\tilde A^\bfs_\bfd$ admits a graded anti-involution $i\colon\tilde A^\bfs_\bfd\to ({\text{}{\tilde A}_\bfd^{\bfs}})^\mathrm{op}$ coming from the Verdier duality.

\begin{lem}
The graded anti-involution $i$ factors to the graded anti-involution $i$ of $A^\bfs_\bfd$.
\end{lem}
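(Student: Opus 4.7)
The plan is to show that $i(I^\bfs_\bfd)\subseteq I^\bfs_\bfd$, which will imply that $i$ descends to an anti-involution of the quotient $A^\bfs_\bfd=\tilde A^\bfs_\bfd/I^\bfs_\bfd$. Since $i$ is an anti-homomorphism of $\bfk$-algebras and $I^\bfs_\bfd$ is a two-sided ideal, it is enough to verify this containment on a generating set. By Remark \ref{rk_cycl-ideal}, a convenient set of generators is the collection of diagrams having a black vertical line on the left labelled by some element of $I$.

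The next step is to unwind how $i$ acts on diagrams. Via the isomorphism of $\tilde A^\bfs_\bfd$ with the Yoneda extension algebra recalled in Remark \ref{rem_def-QS}~(a), Verdier duality induces an anti-involution on $\tilde A^\bfs_\bfd$ which, under the diagrammatic presentation of \cite[Sec.~4.2]{SW}, should coincide with the reflection of a diagram through a horizontal axis: top and bottom boundaries are swapped, while the horizontal order of strands is preserved. The properties I would need from this description are that the reflection preserves the grading, that it is compatible with horizontal concatenation, $i(a|b)=i(a)|i(b)$, and that it reverses vertical concatenation, $i(ab)=i(b)i(a)$.

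With this dictionary in hand, the containment is essentially immediate: if $D$ is a diagram having a black vertical line on the left, then its horizontal reflection $i(D)$ also has a black vertical line on the left, because a leftmost strand remains leftmost after reflecting through a horizontal axis. Hence $i(D)\in I^\bfs_\bfd$, and since such diagrams generate the ideal, $i(I^\bfs_\bfd)\subseteq I^\bfs_\bfd$. Applying $i$ once more and using $i^2=\id$ upgrades this to an equality, which gives the well-defined induced anti-involution of $A^\bfs_\bfd$.

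The main obstacle I anticipate is the careful verification that the Verdier-duality anti-involution genuinely matches the horizontal reflection on the diagrammatic side. Concretely this means identifying the convolution product on $\mathrm{H}_*^{G_V}(Z^\bfs_\bfd)$ with the vertical stacking of diagrams in \cite{SW} and checking that Verdier duality interchanges the two projections of the convolution diagram $Z^\bfs_\bfd\subset\mathfrak O^\bfs_\bfd\times\mathfrak O^\bfs_\bfd$. Once this identification is in place, the rest of the argument is purely combinatorial and proceeds as above.
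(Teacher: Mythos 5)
Your proposal is correct and follows essentially the same route as the paper: identify the Verdier-duality anti-involution $i$ with the reflection of diagrams, and then observe via Remark \ref{rk_cycl-ideal} that a reflected diagram with a leftmost black line still has a leftmost black line, so $I^\bfs_\bfd$ is preserved. The "main obstacle" you flag is handled in the paper exactly as you anticipate: $i$ is matched with the automorphism of $Z^\bfs_\bfd$ swapping the two factors of the fiber product (using the chain of equalities in \cite[(8.6.4)]{CG}), and then the symmetry in the definitions of splits/merges and left/right shifts in \cite{SW} shows that $i$ reflects each generating diagram.
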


\begin{proof}
Recall that $\tilde A^\bfs_\bfd=\mathrm{H}_*^{G_V}(Z^\bfs_\bfd)$, where $Z^{\bfs}_\bfd=\coprod_{\tilde\mu,\tilde\lambda\in\mathrm{VDat}(\bfd)}Z^{\bfs}_{\tilde\mu,\tilde\lambda}$ and $Z^{\bfs}_{\tilde\mu,\tilde\lambda}=\mathfrak O(\bfs,\tilde\mu)\times_{E_{V,s}}\mathfrak O(\bfs,\tilde\lambda)$.
Exchanging $\mathfrak O(\bfs,\tilde\mu)$ with $\mathfrak O(\bfs,\tilde\lambda)$ and summing up over all $\tilde\lambda,\tilde\mu\in\mathrm{VDat}(\bfd)$ yields an automorphism of $Z^\bfs_\bfd$. This automorphism gives an anti-involution of $\tilde A^\bfs_\bfd$. We can see from the chain of equalities in \cite[(8.6.4)]{CG} that this anti-involution coincides with $i$.

Recall the diagrammatic presentation in Remark \ref{rk_cycl-ideal}.
The symmetry in the definition of \emph{splits} and \emph{merges} \cite[Def.~3.1]{SW} and of \emph{left} and \emph{right shifts} \cite[Def.~4.4]{SW}
implies that the anti-involution $i$ inverses each diagram in the presentation of $\tilde A^\bfs_\bfd$ above.

Now, the diagrammatic characterization of the ideal $I^\bfs_\bfd$ is Remark \ref{rk_cycl-ideal} implies that $I^\bfs_\bfd$ is preserved by $i$.
\end{proof}

In the same way as above we can define an anti-involution $i\colon{^b\tilde A^\bfs_\bfd}\to ({^b\tilde A^\bfs_\bfd})^\mathrm{op}$.
Now, for each $A\in\{\tilde A^\bfs_\bfd,A^\bfs_\bfd,{^b\tilde A^\bfs_\bfd}\}$, we have a duality $D\colon\mod(A)^{\rm op}\to \mod(A)$ such that, for each
$M\in \mod(A)$, the vector space $D(M)$ is isomorphic to $\Hom_\bfk(M,\bfk)$ and the $A$-action is twisted by $i$. Let $\overline D\colon\grmod(A)^{\rm op}\to \grmod(A)$ be the obvious graded
 lift of $D$.
 Consider the graded $(\tilde A^\bfs_\bfd,{^b\tilde A^\bfs_\bfd})$-bimodule
$$
R'=\Ext_{G_V}^*(\bigoplus_{\lambda\in\Lambda_{V,\bfs}}\IC_\lambda,\bigoplus_{\lambda\in\Lambda_{V,\bfs}}\IC_\lambda\otimes U_{\bfs,\lambda}).
$$

\begin{lem}
\label{lem_funct-isom-R-R'}
{\rm(a)} There is an $(^b\tilde A^\bfs_\bfd,\tilde A^\bfs_\bfd)$-bimodule isomorphism $R\simeq \Hom_{\tilde A^\bfs_\bfd}(R',\tilde A^\bfs_\bfd)$.

{\rm(b)} There is an isomorphism $
R\otimes_{\tilde A^\bfs_\bfd}\bullet\simeq \Hom_{\tilde A^\bfs_\bfd}(R',\bullet)
$ of functors $\mod(\tilde A^\bfs_\bfd)\to\mod(^b\tilde A^\bfs_\bfd)$.

{\rm(c)} The twist by $i$ yields an $(\tilde A^\bfs_\bfd,{^b\tilde A^\bfs_\bfd})$-bimodule structure on $R$ which is isomorphic to $R'$.

{\rm(d)} The duality $D$ on $\mod(A^\bfs_\bfd)$ satisfies the properties $\rm (b_1)$, $\rm (b_2)$, $\rm (b_4)$.
\end{lem}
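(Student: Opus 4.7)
The plan is to reduce all four parts to standard Morita-theoretic identities, combined with the Verdier self-dualities already built into the setup. Since by definition each $U_{\bfs,\lambda}\ne 0$, the sum $\bigoplus_\lambda\IC_\lambda$ occurs (up to grading shifts) as a direct summand of $\scrL_\bfs$; choose a homogeneous idempotent $e\in\tilde A^\bfs_\bfd$ realizing this summand, so that $e\tilde A^\bfs_\bfd e\simeq {^b\tilde A^\bfs_\bfd}$, and $R$ (respectively $R'$) is identified with $e\tilde A^\bfs_\bfd$ (respectively $\tilde A^\bfs_\bfd e$) as a graded bimodule.

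Part (a) then reduces to the classical bimodule identification $\Hom_{\tilde A^\bfs_\bfd}(\tilde A^\bfs_\bfd e,\tilde A^\bfs_\bfd)\simeq e\tilde A^\bfs_\bfd$, $\varphi\mapsto\varphi(e)$. Part (b) follows from (a) once one observes that the natural transformation
$$
\Hom_{\tilde A^\bfs_\bfd}(R',\tilde A^\bfs_\bfd)\otimes_{\tilde A^\bfs_\bfd}M\to \Hom_{\tilde A^\bfs_\bfd}(R',M)
$$
is an isomorphism, which is a consequence of $R'\simeq \tilde A^\bfs_\bfd e$ being finitely generated projective as a left $\tilde A^\bfs_\bfd$-module.

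For part (c), recall that $i$ is induced by Verdier duality $\bbD$ and that $\bbD\scrL_\bfs\simeq\scrL_\bfs$ (because $\scrL_\bfs$ is a shifted pushforward of a constant sheaf by a proper map from a smooth variety) and $\bbD\IC_\lambda\simeq\IC_\lambda$. The identity $\bbD[n]=[-n]\bbD$ implies that $\bbD$ induces a degree-preserving $\bfk$-linear isomorphism $R=\Ext^*(\scrL_\bfs,\bigoplus_\lambda\IC_\lambda)\to R'=\Ext^*(\bigoplus_\lambda\IC_\lambda,\scrL_\bfs)$. Its contravariance swaps pre- and post-composition, and combined with the fact that $i$ on both $\tilde A^\bfs_\bfd$ and $^b\tilde A^\bfs_\bfd$ comes from the same $\bbD$, this is exactly what is needed for $\bbD$ to be a $(\tilde A^\bfs_\bfd,{^b\tilde A^\bfs_\bfd})$-bimodule isomorphism from the $i$-twist of $R$ to $R'$. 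I expect this to be the most delicate step, since one must carefully track the compatibility of the Verdier-induced action on each Ext-algebra with the defining description of $i$.

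Finally, part (d) is a routine unwinding of the definition of $D$. The graded lift $\overline D\colon\grmod(A^\bfs_\bfd)^{\rm op}\to \grmod(A^\bfs_\bfd)$ is $M\mapsto \Hom_\bfk(M,\bfk)$ with the $A^\bfs_\bfd$-action twisted by the graded anti-involution $i$; it is manifestly an equivalence because the graded $\bfk$-dual is, which yields $(\mathrm{b}_1)$. Since $i^2=\Id$ and the canonical double-dual map is an isomorphism for finite-dimensional graded vector spaces, $\overline D^2=\Id$, which gives $(\mathrm{b}_2)$. Property $(\mathrm{b}_4)$ is just the observation that the graded $\bfk$-dual sends $\langle 1\rangle$ to $\langle -1\rangle$.
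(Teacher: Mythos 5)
Your proof is correct and takes essentially the same route as the paper: part (b) is exactly the paper's argument (since $R'$ is a finitely generated projective left $\tilde A^\bfs_\bfd$-module, $\Hom_{\tilde A^\bfs_\bfd}(R',\bullet)\simeq\Hom_{\tilde A^\bfs_\bfd}(R',\tilde A^\bfs_\bfd)\otimes_{\tilde A^\bfs_\bfd}\bullet$, then apply (a)), and your fills for (a), (c), (d), which the paper dismisses as easy, are the natural ones. One small caveat: as the $\IC_\lambda$ may occur in $\scrL_\bfs$ only with nonzero shifts, your idempotent yields $e\tilde A^\bfs_\bfd e\simeq{^b\tilde A^\bfs_\bfd}$ and $R\simeq e\tilde A^\bfs_\bfd$, $R'\simeq\tilde A^\bfs_\bfd e$ only up to grading shifts, i.e.\ as ungraded objects -- this suffices for the lemma as stated, but should not be invoked verbatim when the graded analogues are needed later.
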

\begin{proof}
Parts (a), (c) and (d) are easy. Let us concentrate on (b). Assume that $M\in \tilde A^\bfs_\bfd$. The $\tilde A^\bfs_\bfd$-module $R'$ is projective and finitely generated. Thus, we have  $\Hom_{\tilde A^\bfs_\bfd}(R',M)\simeq\Hom_{\tilde A^\bfs_\bfd}(R',\tilde A^\bfs_\bfd)\otimes_{\tilde A^\bfs_\bfd}M$. Now, part (b) follows from (a).
\end{proof}

\begin{lem}
\label{lem_YD=DY}
We have  isomorphisms of functors $YD\simeq DY$ and $\overline Y\,\overline D\simeq\overline D\,\overline Y$.
\end{lem}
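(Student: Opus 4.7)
The plan is to chain a few standard natural isomorphisms, using Lemma \ref{lem_funct-isom-R-R'} parts (b) and (c). Write $A = \tilde A^\bfs_\bfd$ and $B = {^b\tilde A^\bfs_\bfd}$, both equipped with the anti-involution $i$. For $M \in \mod(A)$, the duality $D$ sends $M$ to $\Hom_\bfk(M,\bfk)$ with left $A$-action twisted by $i$, and analogously for $B$-modules. I will give the argument for the ungraded statement $YD\simeq DY$; the graded version $\overline Y\,\overline D\simeq\overline D\,\overline Y$ follows by replacing every object and functor by its evident graded lift, since the bimodules $R$ and $R'$, the anti-involutions, and the $\bfk$-dual are all naturally graded.

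First, using Lemma \ref{lem_funct-isom-R-R'}(b) I would rewrite
\[
DY(M) \;=\; D\bigl(R \otimes_A M\bigr) \;\simeq\; D\bigl(\Hom_A(R', M)\bigr) \;=\; \Hom_\bfk\bigl(\Hom_A(R', M),\bfk\bigr)^{i_B},
\]
where the superscript denotes the standard twist that converts the natural right $B$-action into a left $B$-action.

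Second, I would analyse $YD(M) = R \otimes_A D(M)$. Absorbing the $i_A$-twist appearing in the definition of $D(M) = (M^*)^{i_A}$ into the right $A$-action on $R$ turns $R$ into an $(A,B)$-bimodule (via twists by $i$ on both sides). By Lemma \ref{lem_funct-isom-R-R'}(c), this twisted bimodule is isomorphic to $R'$. Tracking also the remaining $B$-action through the $i_B$-twist, this gives a natural left $B$-module isomorphism
\[
YD(M) \;\simeq\; \bigl(M^* \otimes_A R'\bigr)^{i_B}.
\]

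Finally, I would use the tensor-hom adjunction map
\[
M^* \otimes_A R' \;\longrightarrow\; \Hom_\bfk\bigl(\Hom_A(R',M),\bfk\bigr),\qquad \phi \otimes r' \mapsto \bigl(f \mapsto \phi(f(r'))\bigr),
\]
which is a natural right $B$-module isomorphism because $R'$ is finitely generated projective as a left $A$-module (as already used in the proof of part (b)) and $M$ is finite dimensional. Applying the $i_B$-twist and composing the three steps yields the desired isomorphism $YD(M) \simeq DY(M)$, natural in $M$. The main bookkeeping burden lies in the second step, where one must carefully trace how the $i$-twists from the definition of $D$ interact with the bimodule twist of Lemma \ref{lem_funct-isom-R-R'}(c); once this has been pinned down, the remaining steps are routine adjunction formalities, and the graded version is obtained verbatim using the graded lifts $\overline R$, $\overline{R'}$, $\overline F$ and the graded duality $\overline D$.
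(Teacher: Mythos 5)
Your proposal is correct and takes essentially the same route as the paper: both proofs are a short formal chain of natural isomorphisms built from Lemma \ref{lem_funct-isom-R-R'}\,(b) and (c) together with a tensor--hom adjunction, the only difference being the order of composition (the paper dualizes $R\otimes_{\tilde A^\bfs_\bfd}M$ first and uses the general adjunction $\Hom_\bfk(R\otimes_{\tilde A^\bfs_\bfd}M,\bfk)\simeq\Hom_{(\tilde A^\bfs_\bfd)^{\rm op}}(R,\Hom_\bfk(M,\bfk))$, whereas you first rewrite $Y$ via (b) and then use an evaluation map). One small remark: your hypothesis that $M$ be finite dimensional is superfluous, since the evaluation map $M^*\otimes_{\tilde A^\bfs_\bfd}R'\to\Hom_\bfk\bigl(\Hom_{\tilde A^\bfs_\bfd}(R',M),\bfk\bigr)$ is already an isomorphism because $R'$ is finitely generated projective over $\tilde A^\bfs_\bfd$.
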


\begin{proof}
For each $M\in \mod(\tilde A^\bfs_\bfd)$ we have the following chain of ${^b\tilde A^\bfs_\bfd}$-module isomorphisms
\begin{align*}
DY(M)&\simeq\Hom_\bfk(R\otimes_{\tilde A^\bfs_\bfd} M,\bfk)\\
&\simeq\Hom_{{\text{}\tilde A^\bfs_\bfd}^\mathrm{op}}(R,\Hom_\bfk(M,\bfk))\\
&\simeq\Hom_{{\text{}\tilde A^\bfs_\bfd}}(R',\Hom_\bfk(M,\bfk))\\
&\simeq\Hom(R\otimes_{\tilde A^\bfs_\bfd}\Hom_\bfk(M,\bfk))\\
&\simeq YD(M).
\end{align*}
Here, the third isomorphism is Lemma \ref{lem_funct-isom-R-R'} (c) and the fourth one is Lemma \ref{lem_funct-isom-R-R'} (b).
Thus we get $YD\simeq DY$. We can prove that $\overline Y\,\overline D\simeq\overline D\,\overline Y$ in a similar way.
\end{proof}

\begin{lem}
\label{lem_QS-gr-lift-simple}
For each $\lambda\in\calP^l_\bfd,$ the simple $S^\bfs_\bfd$-module $L^\lambda$ admits a unique graded lift
in $\grmod(A^\bfs_\bfd)$ which is stable by the duality $\overline D$.
\end{lem}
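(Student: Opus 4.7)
The plan is to exploit that $A^\bfs_\bfd$ is $\bbN$-graded with semisimple degree zero component, which gives each simple module a canonical lift concentrated in a single degree. The $\bbN$-grading comes from Remark \ref{rem_def-QS}(a), and semisimplicity of $(A^\bfs_\bfd)_0$ follows by noting that $(\tilde A^\bfs_\bfd)_0 = \Hom_{G_V}(\scrL_\bfs,\scrL_\bfs)$ is a product of matrix algebras, using the $\IC$-decomposition $\scrL_\bfs = \bigoplus_\lambda \IC_\lambda\otimes U_{\bfs,\lambda}$ together with the fact that distinct $\IC_\lambda$ are non-isomorphic simple perverse sheaves; the quotient $(A^\bfs_\bfd)_0$ is then semisimple as well. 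Consequently, every simple $A^\bfs_\bfd$-module admits a graded lift concentrated in a single degree, and by \cite[Lem.~2.5.3]{BGS} applied to the indecomposable $L^\lambda$, any two graded lifts of $L^\lambda$ differ by a grading shift $\langle n \rangle$.

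Let $\overline L^\lambda$ be the graded lift of $L^\lambda$ concentrated in degree $0$. I would show that $\overline L^\lambda$ is $\overline D$-stable. First, at the ungraded level, $D(L^\lambda)\simeq L^\lambda$: the duality $D$ is induced by the anti-involution $i$ coming from Verdier duality, and since each $\IC_\lambda$ is Verdier self-dual, $i$ fixes each equivalence class of primitive idempotents of $A^\bfs_\bfd$, so permutes the simples trivially. Next, by construction $\overline D(\overline L^\lambda) = \Hom_\bfk(\overline L^\lambda,\bfk)$ with the $A$-action twisted by $i$, and the vector space dual of a module concentrated in degree $0$ is again concentrated in degree $0$; hence $\overline D(\overline L^\lambda)$ is a graded lift of $L^\lambda$ concentrated in degree $0$, and must therefore be isomorphic to $\overline L^\lambda$.

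For uniqueness, using property $(\text{b}_4)$ from Lemma \ref{lem_funct-isom-R-R'}(d), any other graded lift $\overline L^\lambda\langle n\rangle$ of $L^\lambda$ satisfies
$$
\overline D(\overline L^\lambda\langle n\rangle)\simeq \overline D(\overline L^\lambda)\langle -n\rangle\simeq \overline L^\lambda\langle -n\rangle,
$$
which coincides with $\overline L^\lambda\langle n\rangle$ only for $n=0$. Hence $\overline L^\lambda$ is the unique $\overline D$-stable graded lift. The main obstacle is the ungraded identification $D(L^\lambda)\simeq L^\lambda$; justifying it rigorously requires analyzing how the Verdier-type anti-involution $i$ acts on the primitive idempotents of $A^\bfs_\bfd$, and ultimately rests on the self-duality of the $\IC$ sheaves entering the quiver Schur construction.
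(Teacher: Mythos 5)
Your uniqueness argument (indecomposability gives uniqueness of the lift up to shift, and property $(\mathrm{b}_4)$ pins the shift down) is fine and is exactly what the paper does. The gap is in the existence half: your starting claim that $A^\bfs_\bfd$ is $\bbN$-graded with semisimple degree-zero component is false, and with it the assertion that every simple module has a graded lift concentrated in a single degree. The degree-zero part of $\tilde A^\bfs_\bfd$ is $\Hom_{G_V}(\scrL_\bfs,\scrL_\bfs)$ computed in the derived category, and since the multiplicity spaces $U_{\bfs,\lambda}$ in $\scrL_\bfs\simeq\bigoplus_\lambda\IC_\lambda\otimes U_{\bfs,\lambda}$ are genuinely graded (the decomposition theorem produces shifts), this $\Hom$-space contains terms $\Ext^{a-b}_{G_V}(\IC_\lambda,\IC_\mu)$ with $a-b>0$; it is therefore not a product of matrix algebras, and $\tilde A^\bfs_\bfd$ also has nonzero components in negative degrees. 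These features survive the cyclotomic quotient: by Lemma \ref{lem_idemp-Schur-gr} one has $\bfe_\bfd A^\bfs_\bfd\bfe_\bfd\simeq R^\bfs_\bfd$, which contains elements of degree $-2$ (e.g.\ $\psi_re(\ui)$ with $i_r=i_{r+1}$), and the simple modules are not concentrated in one degree: for $\lambda\in\calK^l_\bfd$ the module $\bfe_\bfd L^\lambda=D^\lambda$ has graded dimension a nontrivial bar-invariant Laurent polynomial. So both your construction of $\overline L^\lambda$ in degree $0$ and the step ``$\overline D(\overline L^\lambda)$ is concentrated in degree $0$, hence isomorphic to $\overline L^\lambda$'' collapse.

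The paper's route around this is precisely the ingredient you are missing: it does not work over $A^\bfs_\bfd$ directly, but passes to the basic algebra $^b\tilde A^\bfs_\bfd=\Ext^*_{G_V}(\bigoplus_\lambda\IC_\lambda,\bigoplus_\lambda\IC_\lambda)$, which \emph{is} non-negatively graded with semisimple degree-zero part (negative Ext's between perverse sheaves vanish, and $\Hom(\IC_\lambda,\IC_\mu)=\delta_{\lambda\mu}\bfk$ — this is where your observation about non-isomorphic simple perverse sheaves is actually valid). Over this algebra $Y(L^\lambda)$ is one-dimensional and lifts in degree $0$, and that lift is $\overline D$-stable (here your remark on Verdier self-duality of the $\IC_\lambda$, hence $i$ fixing the primitive idempotents, is the right justification). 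One then transports the $\overline D$-stable lift back to $\tilde A^\bfs_\bfd$ using the Morita equivalence $\overline Y$ together with the commutation $\overline Y\,\overline D\simeq\overline D\,\overline Y$ of Lemma \ref{lem_YD=DY}, and finally to $A^\bfs_\bfd$ by pullback along the quotient map; note that the resulting lift of $L^\lambda$ is in general spread over several degrees, only its graded dimension is bar-symmetric. Without this reduction to the basic algebra (or some substitute argument for the existence of a $\overline D$-stable lift over $A^\bfs_\bfd$ itself), the proposal does not prove existence.
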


\begin{proof}
To prove the existence of the graded lift, it is enough to check that $L^\lambda$ has a $\overline D$-stable graded lift, where $L^\lambda$ is viewed as an $\tilde A^\bfs_\bfd$-module.
Further, by Lemma \ref{lem_YD=DY}, it is enough to show that the $^b\tilde A^\bfs_\bfd$-module $Y(L^\lambda)$ admits a $\overline D$-stable graded lift.
The last statement is obvious because the algebra $^b\tilde A^\bfs_\bfd$ is non negatively graded, and thus $Y(L^\lambda)$ admits a graded lift concentrated in degree zero. This graded lift is obviously $\overline D$-stable. The graded lift of $L^\lambda$ is unique up to a shift of the grading because $L^\lambda$ is indecomposable. Only one of the graded lifts of $L^\lambda$ is fixed by $\overline D$ by the property $\rm (b_4)$.
\end{proof}

Lemma \ref{lem_QS-gr-lift-simple} shows that the condition $\rm (b_3)$ holds.
From now on, let $L^\lambda$ denote also the graded $A^\bfs_\bfd$-module in Lemma \ref{lem_QS-gr-lift-simple}. Let $P^\lambda$ be the projective cover of $L^\lambda$ in $\grmod(A^\bfs_\bfd)$.
The $A^\bfs_\bfd$-module $\Delta^\lambda$ admits graded lift, see the proof of \cite[Cor.~4]{MO}. It is unique up to a shift of the grading because $\Delta^\lambda$ is indecomposable.
Let $\Delta^\lambda$ denote also the unique graded $A^\bfs_\bfd$-module which is a graded lift of $\Delta^\lambda$ such that
the obvious map $\Delta^\lambda\to L^\lambda$ is homogeneous of degree $0$.
Now, we can check the condition $\rm (a)$.

\begin{lem}
\label{lem_coef-pos}
We have $[\Delta^\lambda:L^\xi]_q\in\delta_{\lambda\xi}+q\bbN[q]$ for each $\lambda,\xi\in\calP^l_\bfd$.
\end{lem}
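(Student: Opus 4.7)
The plan is to deduce positivity from the fact that $A^\bfs_\bfd$ is non-negatively graded with semisimple degree zero part and simples concentrated in degree zero, combined with a straightforward analysis of the degree zero piece of $\Delta^\lambda$.

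First I would record the grading structure. The algebra $\tilde A^\bfs_\bfd = \Ext^*_{G_V}(\scrL_\bfs,\scrL_\bfs)$ is non-negatively graded since its grading is the cohomological (extension) degree, and $(\tilde A^\bfs_\bfd)_0$ is semisimple (it is Morita equivalent, via the bimodule $R$, to $(^b\tilde A^\bfs_\bfd)_0 = \bigoplus_\lambda \Hom(\IC_\lambda,\IC_\lambda) = \bfk^{\Lambda_{V,\bfs}}$). Passing to the graded quotient $A^\bfs_\bfd = \tilde A^\bfs_\bfd / I^\bfs_\bfd$ preserves non-negative grading and semisimplicity of the degree zero piece. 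By Lemma \ref{lem_QS-gr-lift-simple} the chosen graded lift of each $L^\xi$ is concentrated in degree zero.

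Next I would identify $(\Delta^\lambda)_0$. The graded projective cover $P^\lambda$ of $L^\lambda$ in $\grmod(A^\bfs_\bfd)$ is non-negatively graded with $(P^\lambda)_0 \simeq L^\lambda$ as an $A_0$-module. By the normalization fixed after Lemma \ref{lem_QS-gr-lift-simple}, the composition $P^\lambda \twoheadrightarrow \Delta^\lambda \twoheadrightarrow L^\lambda$ is homogeneous of degree $0$, so the projection $P^\lambda \twoheadrightarrow \Delta^\lambda$ is itself homogeneous of degree $0$. Hence $\Delta^\lambda$ is non-negatively graded and $(\Delta^\lambda)_0$ is a nonzero quotient of $L^\lambda$, forcing $(\Delta^\lambda)_0 \simeq L^\lambda$ as $A_0$-modules.

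Finally I would read off the multiplicities. Pick any graded Jordan-H\"older filtration of $\Delta^\lambda$ with subquotients $L^{\xi_i}\langle n_i\rangle$. Since $\Delta^\lambda$ is non-negatively graded and each $L^{\xi_i}$ is concentrated in degree $0$, we have $n_i\geqslant 0$ for all $i$. Taking degree zero parts of the filtration and using that $A_0$ is semisimple gives an isomorphism of $A_0$-modules
$$
(\Delta^\lambda)_0 \simeq \bigoplus_{i\,:\, n_i=0} L^{\xi_i}.
$$
Combined with $(\Delta^\lambda)_0 \simeq L^\lambda$, this forces a unique index $i_0$ with $n_{i_0}=0$ and $\xi_{i_0}=\lambda$. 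The classical ungraded identity $[\Delta^\lambda:L^\lambda]=1$ (highest weight theory for $S^\bfs_\bfd$ from Section~\ref{subs_def-Hecke-Schur}) then pins down $[\Delta^\lambda:L^\lambda]_q=1$, and for $\xi\neq\lambda$ every contribution must have $n_i\geqslant 1$, giving $[\Delta^\lambda:L^\xi]_q\in q\bbN[q]$.

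The only nontrivial step is the second paragraph, namely the clean identification $(\Delta^\lambda)_0 \simeq L^\lambda$; this depends on the normalization of the graded lift of $\Delta^\lambda$ and on the non-negativity of the grading on $A^\bfs_\bfd$. Once that is in place the rest is routine bookkeeping with graded composition factors.
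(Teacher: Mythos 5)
Your argument rests on the claim that $\tilde A^\bfs_\bfd$ (and hence its quotient $A^\bfs_\bfd$) is non-negatively graded with semisimple degree-zero part, and that the chosen graded lift of each $L^\xi$ over $A^\bfs_\bfd$ is concentrated in degree zero. All three claims are false in general, and this is precisely the difficulty the paper's proof is designed to circumvent. The sheaf $\scrL_\bfs$ is a direct sum of \emph{shifted} simple perverse sheaves, $\scrL_\bfs\simeq\bigoplus_\lambda\IC_\lambda\otimes U_{\bfs,\lambda}$ with the multiplicity spaces $U_{\bfs,\lambda}$ spread over several degrees; consequently $\Ext^n_{G_V}(\scrL_\bfs,\scrL_\bfs)$ is nonzero for negative $n$ (a degree-zero map $\IC_\lambda[a]\to\IC_\lambda[a']$ with $a<a'$ contributes in degree $a-a'<0$), and the degree-zero part contains nilpotents. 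A concrete witness inside the quotient: by Lemma \ref{lem_idemp-Schur-gr}, $\bfe_\bfd A^\bfs_\bfd\bfe_\bfd\simeq R^\bfs_\bfd$, which contains the elements $\psi_re(\ui)$ of degree $-a_{i_r,i_{r+1}}=-2$ when $i_r=i_{r+1}$; so $A^\bfs_\bfd$ genuinely has negative-degree components. Your parenthetical justification of semisimplicity of $(\tilde A^\bfs_\bfd)_0$ is also a non sequitur: a graded Morita equivalence between $\tilde A^\bfs_\bfd$ and $^b\tilde A^\bfs_\bfd$ does not identify (nor transfer semisimplicity of) the degree-zero subalgebras. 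Likewise, Lemma \ref{lem_QS-gr-lift-simple} does not say that $L^\xi$ is concentrated in degree zero as an $A^\bfs_\bfd$-module; it only fixes the $\overline D$-stable lift, whose graded dimension is symmetric under $q\mapsto q^{-1}$ but in general supported in several degrees (it is $\overline Y$ of it, over the basic algebra $^b\tilde A^\bfs_\bfd$, that is one-dimensional in degree zero).

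Once these premises fail, the identification $(\Delta^\lambda)_0\simeq L^\lambda$ and the counting of graded Jordan--H\"older factors by degree both collapse. The correct home for your positivity mechanism is the basic algebra $^b\tilde A^\bfs_\bfd=\Ext^*_{G_V}(\bigoplus_\lambda\IC_\lambda,\bigoplus_\lambda\IC_\lambda)$, which \emph{is} non-negatively graded with semisimple degree-zero part; the substantive content of the paper's proof is the transfer of the statement from there back to $A^\bfs_\bfd$. This is done by reducing from $\Delta^\lambda$ to $P^\lambda$, comparing $P^\lambda$ with the projective cover $\tilde P^\lambda$ over $\tilde A^\bfs_\bfd$ via $\pi_!(\tilde P^\lambda)\simeq P^\lambda$ together with the surjection $\Hom_{\tilde A^\bfs_\bfd}(\tilde P^\xi,\tilde P^\lambda)\to\Hom_{A^\bfs_\bfd}(P^\xi,P^\lambda)$, and only then applying the Morita functor $\overline Y$ to land in $\grmod(^b\tilde A^\bfs_\bfd)$ where the degree argument you have in mind is valid. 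Your proposal skips exactly this transfer step, so it has a genuine gap rather than being a shortcut.
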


\begin{proof}
It is enough to prove that $[P^\lambda:L^\xi]_q\in\delta_{\lambda\xi}+q\bbN[q],$ because $\Delta^\lambda$ is a quotient of $P^\lambda$.
Let $\pi$ be the quotient morphism $\pi\colon \tilde A^\bfs_\bfd\to A^\bfs_\bfd$.
Let $\pi^*\colon\grmod(A^\bfs_\bfd)\to \grmod(\tilde A^\bfs_\bfd)$ be the pull-back by $\pi$, and
$\pi_!\colon \grmod(\tilde A^\bfs_\bfd)\to\grmod(A^\bfs_\bfd)$ be the left adjoint functor to $\pi^*$, i.e.,
 we have $\pi_!(N)=A^\bfs_\bfd\otimes_{\tilde A^\bfs_\bfd}N$ for each $N$.

 Let $\tilde P^\lambda$ be the projective cover of $\pi^*(L^\lambda)$ in $\grmod(\tilde A^\bfs_\bfd)$.
 We first claim that $\pi_!(\tilde P^\lambda)=P^\lambda$. Note that, since $\pi_!$ is left adjoint of an exact functor, it takes projectives to projectives.
Now, by adjunction, for each $\lambda,\xi\in\calP^l_\bfd$, we have
$$
\dim_q\Hom_{\tilde A^\bfs_\bfd}(\pi_!(\tilde P^\lambda),L^\xi)=\dim_q\Hom_{A^\bfs_\bfd}(\tilde P^\lambda,\pi^*(L^\xi))=\delta_{\lambda\xi}.
$$
Thus $\pi_!(\tilde P^\lambda)$ is the projective cover of $L^\lambda$ in $\grmod(A^\bfs_\bfd)$. This proves the claim.

The proof of Lemma \ref{lem_grBGG1} implies that
$[P^\lambda:L^\xi]_q=\dim_q\Hom_{A^\bfs_\bfd}(P^\xi,P^\lambda)$ and
$[\tilde P^\lambda:\pi^*(L^\xi)]_q=\dim_q\Hom_{\tilde A^\bfs_\bfd}(\tilde P^\xi,\tilde P^\lambda)$.
We claim that the functor $\pi_!$ yields a surjection
$\Hom_{\tilde A^\bfs_\bfd}(\tilde P^\xi,\tilde P^\lambda)\to\Hom_{A^\bfs_\bfd}(P^\xi,P^\lambda)$.
Indeed, since the right hand side is equal to $\Hom_{\tilde A^\bfs_\bfd}(\tilde P^\xi,\pi^*\pi_!(\tilde P^\lambda))$ by adjunction and since
$\Hom_{\tilde A^\bfs_\bfd}(\tilde P^\xi,\bullet)$ is exact, the claim follows from the surjectivity of the unit morphism $\tilde P^\lambda\to \pi^*\pi_!(\tilde P^\lambda)$.

Now, it is enough to prove that
 $[\tilde P^\lambda:\pi^*(L^\xi)]_q\in \delta_{\lambda\xi}+q\bbN[q]$, or, equivalently, that $[\overline Y(\tilde P^\lambda):\overline Y(\pi^*(L^\xi))]_q\in \delta_{\lambda\xi}+q\bbN[q]$. We have $(^b\tilde A^\bfs_\bfd)_n=0$ if $n<0$ and $(^b\tilde A^\bfs_\bfd)_0$ is semisimple and basic.
 The module $\overline Y(\pi^*(L^\lambda))$ has dimension $1$ and it is concentrated in degree zero.
 Thus we get $\overline Y(\tilde P^\lambda)_n$ if $n<0$ and $\dim Y(\tilde P^\lambda)_0=1$. This implies the statement.
\end{proof}

Let $\mathrm{VDat}^0(\bfd)$ be the set of elements $\tilde\mu=(\mu_0,\cdots,\mu_l)\in\mathrm{VDat}(\bfd)$ such that $\mu_0,\cdots,\mu_{l-1}$ are empty and each component of $\mu_l$ is in $I$ (viewed as a subset of $\bbN I$). Let $\mathrm{VDat}^1(\bfd)$ be the set of elements $\tilde\mu=(\mu_0,\cdots,\mu_l)\in\mathrm{VDat}(\bfd)$ such that each component of $\mu_0,\cdots,\mu_{l-1},\mu_l$ is in $I$ (viewed as a subset of $\bbN I$).

The following theorem is proved in \cite{Rouq2011} and \cite{VV}.
\begin{thm}
\label{thm_KLR-geom}
There is a graded $\bfk$-algebra isomorphism
$$
\tilde R_\bfd\simeq \Ext_{G_V}^*(\bigoplus_{\tilde\mu\in\mathrm{VDat}^0(\bfd)}\scrL_{\tilde\mu,\bfs},\bigoplus_{\tilde\mu\in\mathrm{VDat}^0(\bfd)}\scrL_{\tilde\mu,\bfs}).
$$
\qed
\end{thm}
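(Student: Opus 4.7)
The plan is to reduce the statement to the Varagnolo--Vasserot / Rouquier geometric realization of the (non-cyclotomic) KLR algebra by showing that, for $\tilde\mu\in\mathrm{VDat}^0(\bfd)$, the extra data of maps to $U$ contributes only an affine-bundle factor that does not alter the equivariant extension algebra.

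First, I would unwind the definition of $\mathfrak O(\bfs,\tilde\mu)$ when $\tilde\mu=(\emptyset,\ldots,\emptyset,\mu_l)\in\mathrm{VDat}^0(\bfd)$. In this case the partial flag $\widetilde W_i(k)$ is $0$ for $k<l$ and equals $V_i$ for $k=l$, so the compatibility constraints $\gamma_i(\widetilde W_i(k))\subset U_i^k$ become trivial for $k<l$ and automatic for $k=l$. Consequently
$$
\mathfrak O(\bfs,\tilde\mu)\;\simeq\;\widetilde F_{\mu_l}\times\bigoplus_{i\in I}\Hom(V_i,U_i),
$$
and under this identification $\pi_{\tilde\mu,\bfs}$ becomes $\pi_{\mu_l}\times\Id$, where $\pi_{\mu_l}\colon \widetilde F_{\mu_l}\to E_V$ is the Lusztig/KLR projection associated with the vector composition $\mu_l$ whose components all lie in $I\subset\bbN I$.

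Second, let $p\colon E_{V,s}\to E_V$ be the projection along $\bigoplus_i\Hom(V_i,U_i)$; this is a $G_V$-equivariant vector bundle, hence (on the level of the equivariant derived category) pullback by $p$ is fully faithful and shifts degrees by a constant coming from the rank of the bundle. Under this pullback,
$$
\scrL_{\tilde\mu,\bfs}\;\simeq\; p^*\bigl((\pi_{\mu_l})_*\underline\bfk_{\widetilde F_{\mu_l}}[\dim\widetilde F_{\mu_l}]\bigr)
$$
up to an overall dimension shift. Moreover, as $\tilde\mu$ ranges over $\mathrm{VDat}^0(\bfd)$, the composition $\mu_l$ ranges over exactly those vector compositions of $\bfd$ whose components lie in $I$ (i.e., those used in the Varagnolo--Vasserot construction).

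Third, I would invoke the standard fact that for an equivariant affine bundle $p$, pullback induces an isomorphism of graded equivariant extension algebras
$$
\Ext^*_{G_V,E_V}(\scrF,\scrG)\;\simeq\;\Ext^*_{G_V,E_{V,s}}(p^*\scrF,p^*\scrG)
$$
(the overall shift by the rank cancels on both sides). Combining this with the Varagnolo--Vasserot / Rouquier theorem (\cite{VV}, \cite{Rouq2011}), which identifies $\tilde R_\bfd$ with
$\Ext^*_{G_V}\bigl(\bigoplus_\mu(\pi_\mu)_*\underline\bfk[\dim\widetilde F_\mu],\bigoplus_\mu(\pi_\mu)_*\underline\bfk[\dim\widetilde F_\mu]\bigr)$ summed over vector compositions $\mu$ with components in $I$, yields the desired graded algebra isomorphism.

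The main technical obstacle is bookkeeping: checking that the affine-bundle shift is uniform across all summands so that the ring structure (convolution / Yoneda product) transports correctly, and verifying that the bijection between $\mathrm{VDat}^0(\bfd)$ and the set of vector compositions of $\bfd$ with components in $I$ really matches $\mathfrak O(\bfs,\tilde\mu)$ with $\widetilde F_{\mu_l}$ equivariantly. Modulo this verification, the result is a direct corollary of the cited geometric realization of the KLR algebra.
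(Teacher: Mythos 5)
Your argument is correct and is essentially the paper's: the paper gives no proof beyond citing \cite{VV} and \cite{Rouq2011}, and your reduction --- noting that for $\tilde\mu\in\mathrm{VDat}^0(\bfd)$ the framing conditions $\gamma_i(\widetilde W_i(k))\subset U_i^k$ are vacuous, so each $\scrL_{\tilde\mu,\bfs}$ is, up to a shift independent of $\tilde\mu$, the pullback along the $G_V$-equivariant vector bundle $p\colon E_{V,s}\to E_V$ of the Lusztig--KLR sheaf attached to $\mu_l$ --- is exactly the routine bridge from the cited unframed statement to the framed one stated here. The bookkeeping you flag does go through: the shift is the rank $\sum_{i\in I}d_i\dim U_i$, uniform over all summands, and the fully faithful functor $p^*$ preserves Yoneda products, so the graded algebra structure transports as claimed.
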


Let $\tilde \bfe_\bfd$, $\overline \bfe_\bfd$ be the idempotent of $\tilde A^\bfs_\bfd$ concentrated in degree $0$ given by:
$$
\tilde\bfe_\bfd=\sum_{\tilde\mu\in\mathrm{VDat}^0(\bfd)}\Id_{\scrL_{\tilde\mu,\bfs}},\quad \overline\bfe_\bfd=\sum_{\tilde\mu\in\mathrm{VDat}^1(\bfd)}\Id_{\scrL_{\tilde\mu,\bfs}}.
$$
Let $\bfe_\bfd'$ be the image of $\tilde \bfe_\bfd$ in $A^\bfs_\bfd$ by the canonical map. Recall the idempotent $\bfe_\bfd$ in Section \ref{subs_def-Hecke-Schur}.
Theorem \ref{thm_KLR-geom} implies that there is a graded algebra isomorphism $\tilde \bfe_\bfd \tilde A^\bfs_\bfd \tilde \bfe_\bfd\simeq \tilde R_\bfd$. Recall the definition of the $c_i$ in Definition \ref{def_KLR-cycl}.
\begin{lem}
\label{lem_idemp-Schur-gr}
{\rm(a)} Under the identification $S^\bfs_\bfd\simeq A^\bfs_\bfd$, we have $\bfe_\bfd=\bfe_\bfd'$.

{\rm(b)} The ideal $\tilde \bfe_\bfd I^\bfs_\bfd \tilde \bfe_\bfd$ of $\tilde R_\bfd$ coincides with the ideal generated by $y_1^{c_{i_1}}e(\ui)$ for $\ui=(i_1,\cdots,i_d)\in I^\bfd$. In particular, we have a graded $\bfk$-algebra isomorphism $\bfe_\bfd A^\bfs_\bfd \bfe_\bfd\simeq R^\bfs_\bfd$.
\end{lem}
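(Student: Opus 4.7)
The two parts are closely intertwined, so I would prove them together using a chain of algebra isomorphisms followed by a diagrammatic verification. The structural skeleton combines three ingredients. First, Theorem \ref{thm_KLR-geom} gives $\tilde R_\bfd\simeq \tilde\bfe_\bfd \tilde A^\bfs_\bfd \tilde\bfe_\bfd$. Second, the definition of $A^\bfs_\bfd$ yields $\bfe_\bfd' A^\bfs_\bfd \bfe_\bfd' \simeq (\tilde\bfe_\bfd \tilde A^\bfs_\bfd \tilde\bfe_\bfd)/(\tilde\bfe_\bfd I^\bfs_\bfd \tilde\bfe_\bfd)$. Third, the definition of $\bfe_\bfd$ in Section \ref{subs_def-Hecke-Schur} together with Theorem \ref{thm_q-Sch-QSch} gives $\bfe_\bfd S^\bfs_\bfd \bfe_\bfd\simeq H^\bfs_\bfd$, which Theorem \ref{thm_isom-KLR-Hecke} identifies with $R^\bfs_\bfd=\tilde R_\bfd/J$, where $J$ denotes the cyclotomic ideal generated by the $y_1^{c_{i_1}}e(\ui)$.

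For part (a), I would match the idempotents by characterizing each as the projection onto the summand carrying the regular Hecke-algebra action. The idempotent $\bfe_\bfd$ cuts out from the permutation module $\bigoplus_\lambda m_\lambda H^\bfs_d\cdot 1_\bfd$ the direct summand corresponding to the $l$-composition with all parts equal to $1$, which as an $H^\bfs_\bfd$-module is isomorphic to $H^\bfs_\bfd$. Under the SW construction underlying Theorem \ref{thm_q-Sch-QSch}, this finest-composition summand is precisely the summand $\bigoplus_{\tilde\mu\in\mathrm{VDat}^0(\bfd)}\scrL_{\tilde\mu,\bfs}$ of the defining complex for $\tilde A^\bfs_\bfd$, since the weight data in $\mathrm{VDat}^0(\bfd)$ parametrize exactly the complete $I$-flags, i.e.\ the tuples in $I^\bfd$. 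The projection onto this summand is $\tilde\bfe_\bfd$, whose image in $A^\bfs_\bfd$ is $\bfe_\bfd'$, so $\bfe_\bfd=\bfe_\bfd'$ under the identification.

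Combining the three ingredients with part (a), I obtain an algebra isomorphism
\[
\tilde R_\bfd/(\tilde\bfe_\bfd I^\bfs_\bfd \tilde\bfe_\bfd)\simeq \bfe_\bfd' A^\bfs_\bfd \bfe_\bfd' \simeq \bfe_\bfd S^\bfs_\bfd \bfe_\bfd \simeq R^\bfs_\bfd = \tilde R_\bfd/J.
\]
To conclude the second assertion of (b), it suffices to show that this isomorphism is induced by the identity on $\tilde R_\bfd$ via the identification of Theorem \ref{thm_KLR-geom}; equivalently, that $\tilde\bfe_\bfd I^\bfs_\bfd \tilde\bfe_\bfd = J$ as ideals of $\tilde R_\bfd$.

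The main obstacle is this last step, which I would handle by a diagrammatic computation using Remark \ref{rk_cycl-ideal}. Elements of $\tilde\bfe_\bfd I^\bfs_\bfd \tilde\bfe_\bfd$ are spanned by SW-diagrams with top and bottom boundary in $\mathrm{VDat}^0(\bfd)$ and having a black vertical line on the far left labelled by some $i\in I$. Applying the SW splits-and-merges relations together with the explicit form of the isomorphism of Theorem \ref{thm_KLR-geom} (under which a dot $y_1$ is realized as a short merge-split on the leftmost strand), one can straighten any such diagram into an expression of the form $y_1^{c_{i_1}}e(\ui)\cdot x$ for some $x\in\tilde R_\bfd$, giving the inclusion $\tilde\bfe_\bfd I^\bfs_\bfd \tilde\bfe_\bfd\subseteq J$. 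The reverse inclusion then follows from the algebra isomorphism displayed above by a dimension comparison on each weight-graded component, yielding the desired equality of ideals and hence the isomorphism $\bfe_\bfd A^\bfs_\bfd \bfe_\bfd\simeq R^\bfs_\bfd$.
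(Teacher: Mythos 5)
Your overall skeleton for (b) — the chain $\tilde R_\bfd/(\tilde\bfe_\bfd I^\bfs_\bfd\tilde\bfe_\bfd)\simeq\bfe_\bfd' A^\bfs_\bfd\bfe_\bfd'\simeq\bfe_\bfd S^\bfs_\bfd\bfe_\bfd\simeq H^\bfs_\bfd\simeq R^\bfs_\bfd=\tilde R_\bfd/J$, together with one inclusion of ideals and a comparison of (finite) codimensions — is sound and is essentially the paper's, and your sketch of (a) is consistent with the paper, which simply quotes the proof of \cite[Thm.~6.3]{SW}. The genuine gap is in the inclusion you elected to prove diagrammatically, $\tilde\bfe_\bfd I^\bfs_\bfd\tilde\bfe_\bfd\subseteq J$. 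Its formulation is already off: an element of $\tilde\bfe_\bfd I^\bfs_\bfd\tilde\bfe_\bfd$ has top and bottom boundary in $\mathrm{VDat}^0(\bfd)$, where the leftmost strands are the $l$ red ones, so it does not ``have a black vertical line on the far left''; it only factors, at some intermediate horizontal slice, through a configuration with a black strand to the left of all red strands. Showing that every such morphism, once compressed to the KLR idempotent, lies in the cyclotomic ideal $J$ is precisely the hard content of the lemma, and your one-sentence ``straightening'' provides no argument for it. Moreover, the mechanism you invoke is not correct as stated: under the isomorphism of Theorem \ref{thm_KLR-geom} the generator $y_1$ corresponds to an equivariant Chern class, i.e.\ literally a dot on the leftmost thin black strand in the Stroppel--Webster calculus, while merge--split composites produce symmetric polynomials in dots on thick strands; so ``$y_1$ as a short merge-split'' is not available as a tool.

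The repair is to notice that your own dimension argument only requires the opposite, much cheaper inclusion. Since your chain of isomorphisms already gives $\dim\bigl(\tilde R_\bfd/(\tilde\bfe_\bfd I^\bfs_\bfd\tilde\bfe_\bfd)\bigr)=\dim R^\bfs_\bfd<\infty$, it suffices to check $J\subseteq\tilde\bfe_\bfd I^\bfs_\bfd\tilde\bfe_\bfd$, i.e.\ that each generator $y_1^{c_{i_1}}e(\ui)$ lies in $\tilde\bfe_\bfd I^\bfs_\bfd\tilde\bfe_\bfd$; equality of the two ideals then follows at once. This is a single diagram computation, and it is what the paper does: it first identifies $\overline\bfe_\bfd A^\bfs_\bfd\overline\bfe_\bfd$ with Webster's tensor product algebra via \cite[Prop.~4.9]{SW}, so that the relations \cite[(2.2)]{Web} apply; these let the black strand labelled $i_1$ carrying $c_{i_1}$ dots slide to the left of the $l$ red strands labelled $s_1,\cdots,s_l$, producing a diagram with a black line at the far left, which lies in $I^\bfs_\bfd$ by Remark \ref{rk_cycl-ideal}. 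So your proposal is repairable by reversing the roles of the two inclusions, but as written its only substantive diagrammatic step is both misstated and unproven.
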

\begin{proof}
Part (a) follows from the proof of \cite[Thm.~6.3]{SW}.
Now, we prove part (b). By \cite[Prop.~4.9]{SW}, the algebra $\overline\bfe_\bfd A^\bfs_\bfd\overline\bfe_\bfd$ is isomorphic to
the \emph{tensor product algebra} with parameters $\bfs,\bfd$ which is defined in \cite[Sec. 2.1]{Web}.
Now, we identify $y_1^{c_{i_1}}\bfe(\ui)$ with an element of $\overline\bfe_\bfd A^\bfs_\bfd\overline\bfe_\bfd$ under the isomorphism
$\tilde R_\bfd\simeq \tilde \bfe_\bfd \tilde A^\bfs_\bfd \tilde \bfe_\bfd$ above.
The diagrammatic presentation mentioned in Remark  \ref{rk_cycl-ideal}
identifies $y_1^{c_{i_1}}\bfe(\ui)$ with a diagram which
starts from the left from $l$ vertical red lines with labels $s_1,\cdots,s_l$ followed by a black vertical
line labelled by $i_1$ and containing ${c_{i_1}}$ dots. Hence, the relations \cite[(2.2)]{Web} imply that
$y_1^{c_{i_1}}\bfe(\ui)\in \tilde \bfe_\bfd I^\bfs_\bfd \tilde \bfe_\bfd$, see also Remark \ref{rk_cycl-ideal}.
So, it suffices to show that both ideals in the lemma have the same codimension in $\tilde R_\bfd$ or, equivalently, that
$\dim R^\bfs_\bfd=\dim \bfe_\bfd A^\bfs_\bfd \bfe_\bfd$. This is true because the algebra $\bfe_\bfd A^\bfs_\bfd \bfe_\bfd$ is
isomorphic to the cyclotomic Hecke algebra $H^\bfs_\bfd$, see Section \ref{subs_def-Hecke-Schur}. See also Theorem \ref{thm_isom-KLR-Hecke}.
\end{proof}

Lemma \ref{lem_idemp-Schur-gr} implies that the functor $\overline F\colon\grmod(A^\bfs_\bfd)\to \grmod(R^\bfs_\bfd),~M\mapsto \bfe_\bfd M$ is a graded lift of the Schur functor $F$. Let $i'\colon \tilde R_\bfd\to (\tilde R_\bfd)^{\rm op}$ be the graded anti-involution coming from the Verdier duality, see Theorem \ref{thm_KLR-geom}. The same argument as for $i$ shows that $i'$ factors to a graded anti-involution $i'$ of $R^\bfs_\bfd$. It is easy to see that the duality $D'$ coincides with the duality functor $\mod(R^\bfs_\bfd)^{\rm op}\to\mod(R^\bfs_\bfd),~M\mapsto \Hom_\bfk(M,\bfk)$, where the $R^\bfs_\bfd$-action on $\Hom_\bfk(M,\bfk)$ is twisted by $i'$. In the same way, the graded lift $\overline D'$ of $D'$ coincides with the duality given by $\grmod(R^\bfs_\bfd)^{\rm op}\to\grmod(R^\bfs_\bfd),~M\mapsto \Hom_\bfk(M,\bfk)$. Hence, the condition $\rm (b_5)$ holds.

\medskip

\subsection{Lie algebras}
\label{subs_Lie-alg}
Let $N$ be an integer $>0$. Consider the Lie algebra $\frakg=gl_N(\bbC)$. Let $\frakh\subset\frakb\subset\frakg$ be its usual Cartan and Borel subalgebras respectively.

Let $\widehat\frakg$ be the affine Kac-Moody Lie algebra $\widehat\frakg=\frakg\otimes \bbC[t,t^{-1}]\oplus\bbC\mathbf{1}\oplus\bbC\partial$. Consider the Lie subalgebras in $\widehat{\frakg}$ given by
$$
\widehat{\frakh}=\bbC\partial\oplus\frakh\oplus\bbC\mathbf{1},\quad \widehat{\frakb}=\frakb\oplus\frakg\otimes t\bbC[t]\oplus\bbC\partial\oplus\bbC\mathbf{1}.
$$


We identify $\frakh^*$ with the vector subspace of elements of $\widehat\frakh^*$ that are zero on $\mathbf{1}$ and $\partial$. Let $\widehat\Pi$ be the set of roots of $\widehat\frakg$. Denote by $\widehat\Phi=\{\alpha_k; k\in[0,N]\}$ the subset of simple roots. For each $\alpha\in\widehat\Pi$, let $\alpha^\vee\in \widehat\frakh$ be the affine coroot associated with $\alpha$.
Let $P=\bbZ\Lambda_1\oplus\cdots\bbZ\Lambda_N\subset \frakh^*$ be the weight lattice of $\frakg$.  Let $\Lambda_0$ and $\delta$ be the elements of $\widehat\frakh^*$ defined by
$
\delta(\partial)=\Lambda_0(\mathbf{1})=1
$
and
$\delta(\frakh\oplus \bbC\mathbf{1})=\Lambda_0(\frakh\oplus\bbC\partial)=0.
$
Let $\widehat P=\bbZ\Lambda_0\oplus P\oplus\bbZ\delta$ be the weight lattice of $\widehat\frakg$. 

There exists a $\bbZ$-bilinear form $(,)\colon \widehat P\times \widehat P\to \bbQ$ such that $\lambda(\alpha_i^\vee)=2(\lambda,\alpha_i)/(\alpha_i,\alpha_i)$.
The weight $\lambda\in \widehat P$ is \emph{dominant} (resp. \emph{antidominant}) if  $\lambda(\alpha_k^\vee)\geqslant 0$ for each $k\in[0,N]$ (resp. $\lambda(\alpha_k^\vee)\leqslant 0$ for each $k\in[0,N]$).

\medskip

\subsection{Category $\bfA$}
\label{subs_catA}
Let $N$ be as above. Fix an $l$-tuple of positive integers $\bfm=(m_1,\cdots, m_l)$ such that $\sum_{t=1}^lm_t=N$. Let $\widehat\frakp_{\bfm}$ be the parabolic subalgebra $\widehat\frakb\subset\widehat\frakp_\bfm\subset\widehat\frakg$ of parabolic type $\bfm$.
Set
$$
\widehat \Phi_\bfm=\widehat\Phi\backslash \{\alpha_k; k=0,m_1,m_1+m_2,\cdots,m_1+m_2+\cdots+m_l\}.
$$
The weight $\lambda\in \widehat P$ is $\bfm$-\emph{dominant} (resp. $\bfm$-\emph{antidominant}) if $\lambda(\alpha_k^\vee)\geqslant 0$ for each $\alpha_k\in \widehat\Phi_\bfm$ (resp. $\lambda(\alpha_k^\vee)\leqslant 0$ for each $\alpha_k\in \widehat\Phi_\bfm$). Let $\widehat P^\bfm\subset \widehat P$ be the set of $\bfm$-dominant weights of $ \widehat\frakg$. Set $P^\bfm=P\cap \widehat P^\bfm$.

We will identify the element $\sum_{k=1}^Na_k\Lambda_k$, $a_k\in\bbZ$, of the weight lattice $P$ with the $N$-tuple $(a_1,a_2,\cdots,a_N)$.
Let $\rho,\rho_\bfm\in P$ be given by
$$
\rho=(0,-1,-2,\cdots,-(N-1)),
$$
$$ \rho_\bfm=(m_1,m_1-1,\cdots,1,m_2,m_2-1,\cdots,1,\cdots,m_l,m_l-1,\cdots,1).
$$
Set also $\widehat\rho=\rho+N\Lambda_0\in\widehat P$.
For each $\lambda\in P^\bfm$, consider the following element $\widehat\lambda=(z_\lambda,\lambda,-e-N)\in\widehat P^\bfm$,
where $z_\lambda=(\lambda,2\rho+\lambda)/2e$.

\begin{df}
Let $\calO^\bfm$ be the category of the finitely generated $U(\widehat\frakg)$-modules  which  are semisimple $\widehat\frakh$-modules such that the $\widehat\frakp_\bfm$-action is locally finite and the highest weights of each subquotient are of the form $\widehat\lambda$ for some $\lambda$ in $P^\bfm$. Then $\calO^\bfm$ is an abelian category.
\end{df}
For each $\lambda\in P^\bfm$, let $M^{\bfm}(\lambda)$ be the parabolic Verma module in $\calO^\bfm$ with highest weight $\widehat\lambda$.
Let $\calP^\bfm$ be the set of $l$-partitions $\lambda=(\lambda^1,\cdots,\lambda^l)$ such that $Y(\lambda^a)$ contains at most $m_a$ boxes for each $a\in[1,l]$.

We identify an $l$-partition $\lambda$ as above with the weight
$$
(\lambda^1_1,\cdots,\lambda^1_{l(\lambda^1)},0^{m_1-l(\lambda^1)},\lambda^2_1,\cdots,\lambda^2_{l(\lambda^2)},0^{m_2-l(\lambda^2)},\cdots,\lambda^l_1,\cdots,\lambda^l_{l(\lambda^l)},0^{m_l-l(\lambda^l)}),
$$
where $0^a$ is the row of $a$ zeros for each $a\in\bbN$.
Consider the map $\omega\colon\calP^\bfm\to P^\bfm$ such that $\omega(\lambda)=\lambda-\rho+\rho_\bfm$.

Assume that $\lambda\in\calP^\bfm$. Set ${\Delta}^*_\calO(\lambda)=M(\omega(\lambda))\in \calO^\bfm$.
Let $\lambda^*$ be the $l$-partition such that $Y(\lambda^*)=(Y(\lambda^l)^t,Y(\lambda^{l-1})^t,\cdots,Y(\lambda^1)^t)$, where $(\bullet)^t$ means the transposed Young diagram.
We abbreviate $\Delta_\calO^\lambda=\Delta^*_\calO(\lambda^*)$. (This notation is motivated by Theorem \ref{thm_equiv-Schur-O-ungr}.) For each $\lambda\in \calP^\bfm$ let $L_\calO^\lambda$ be the top of $\Delta^\lambda_\calO$.
\begin{df}
Let $\bfA^\bfm$ be the Serre subcategory of $\calO^\bfm$ generated by the set
$
\{{L}_\calO^\lambda;\lambda\in\calP^\bfm\}.
$
\end{df}
The category $\bfA^\bfm$ is a highest weight category with the set of standard modules $\{{\Delta}_\calO^\lambda$; $\lambda\in\calP^\bfm\}$, see \cite[Sec.~5.5]{RSVV}.
Fix $\bfd=\sum_{i\in I}d_i\cdot i\in\bbN I$ and set $\calP^\bfm_\bfd=\calP^\bfm\cap\calP_\bfd^l$.
\begin{df}
Let $\bfA^\bfm_\bfd$ be the Serre subcategory of $\bfA^\bfm$ generated by the set $\{{L}_\calO^\lambda; \lambda\in\calP^\bfm_\bfd\}$.
\end{df}
The category $\bfA^\bfm_\bfd$ is a highest weight category with the set of standard modules $\{{\Delta}_\calO^\lambda; \lambda\in\calP^\bfm_\bfd\}$, see \cite[Sec.~5.5]{RSVV}.
For each $\lambda\in\calP^\bfm_\bfd$, let $\nabla_\calO^\lambda$ be the costandard object in $\bfA^\bfm_\bfd$ that has simple socle $L_\calO^\lambda$ and let $P_{\calO}^\lambda$ be the projective cover of $L_\calO^\lambda$ in $\bfA^\bfm_\bfd$.

\medskip

\subsection{The Koszul grading}
\label{subs_categ-equiv}
Choose an $l$-tuple $\bfm=(m_1,\cdots,m_l)$ of positive integers such that $m_p\equiv -s_{l+1-p}$ mod $e$ and $m_p\geqslant|\bfd|$ for each $p\in[1,l]$.
Set $N=m_1+\cdots+m_l$. We say that $\bfm$ is \emph{dominant} (resp. \emph{antidominant}) if $m_1\geqslant m_2\geqslant\cdots\geqslant m_l$ (resp. $m_1\leqslant m_2\leqslant\cdots\leqslant m_l$).

For simplicity, we abbreviate $\bfA=\bfA^\bfm_\bfd$.
Note that $\calP^l_{\bfd}=\calP^\bfm_{\bfd}$.
Set $P_\calO=\bigoplus_{\lambda\in \calP^l_\bfd} P_\calO^\lambda$ and $S^\calO=\End_{\bfA}(P_\calO)^{\rm op}$. The functor $\Hom_{\bfA}(P_\calO,\bullet)\colon \bfA\to \mod(S^\calO)$ is an equivalence of categories.
We have the following theorem, see \cite[Thm.~6.4]{SVV2}.
\begin{thm}
\label{thm_Kos-gr-on-A}

The $\bfk$-algebra $S^\calO$ admits a Koszul grading. \qed
\end{thm}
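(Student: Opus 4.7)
The plan is to realize the category $\bfA=\bfA^\bfm_\bfd$ geometrically through Kashiwara--Tanisaki localization at negative level, identifying $\bfA$ with a block of a category of (mixed) perverse sheaves on a partial affine flag variety associated to the parabolic $\widehat{\frakp}_\bfm\subset\widehat{\frakg}$. Under this identification, the simple objects $L_\calO^\lambda$ correspond to intersection cohomology complexes $\IC_\lambda$ on Schubert subvarieties indexed by $\calP^\bfm_\bfd$, standard modules $\Delta_\calO^\lambda$ correspond to standard sheaves $j_{!}\underline{\bfk}[\dim]$ on the corresponding cells, and the indecomposable projective covers $P_\calO^\lambda$ correspond to the projective covers in the perverse-sheaf heart (tilting objects in the mixed category, after suitable renormalization).

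Next, following the Beilinson--Ginzburg--Soergel (BGS) formalism, one reinterprets the algebra $S^\calO\simeq\End_\bfA(P_\calO)^{\op}$ as the $\Ext$-algebra of the direct sum $\bigoplus_\lambda \IC_\lambda$ in the (mixed) derived category of constructible sheaves on the partial flag variety. The cohomological degree of $\Ext^*$ then equips $S^\calO$ with a natural $\bbN$-grading. The equivalence between the abelian block $\bfA$ and $\mod(S^\calO)$ promotes to an equivalence between its graded version and $\grmod(S^\calO)$, so it suffices to establish the Koszul property of this Ext-algebra.

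The Koszul property is obtained from the purity of the IC sheaves on the Kac--Moody partial flag variety: Kazhdan--Lusztig purity (in the mixed Hodge or $\ell$-adic setting) ensures that the weight filtration on $\Ext^n(\IC_\lambda,\IC_\mu)$ is concentrated in weight $n$. Together with the parity vanishing that follows from Bott--Samelson resolutions and the decomposition theorem, this furnishes for each simple $L_\calO^\lambda$ a linear projective resolution in $\grmod(S^\calO)$, which is precisely the BGS criterion for Koszulity of $S^\calO$.

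The main obstacle is the geometric identification itself: one must make Kashiwara--Tanisaki localization compatible with the parabolic truncation defining $\bfA$ and with the passage to a single block labelled by $\bfd$, so that the resulting perverse-sheaf category has the correct highest-weight structure and matches the combinatorial indexing by $\calP^\bfm_\bfd$. A secondary subtlety is to promote the ungraded equivalence $\bfA\simeq\mod(S^\calO)$ to the mixed level so that the cohomological grading on the Ext-algebra really becomes a grading on $S^\calO$ as an algebra; in the approach of SVV2 this is done inside the mixed Hodge module framework, which provides the weight filtration and the necessary functorialities without recourse to reduction modulo $p$.
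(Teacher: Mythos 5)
You should first note that the paper does not prove this theorem: it is quoted from \cite[Thm.~6.4]{SVV2}, so your sketch is in effect an outline of a proof of that external result, along the general lines (localization at negative level, purity, the formalism of \cite{BGS}) that \cite{SVV2} and \cite{Shan} indeed follow. The sketch, however, has a genuine gap at its pivot. You identify $S^\calO=\End_{\bfA}(P_\calO)^{\rm op}$ with the Ext-algebra $\Ext^*\bigl(\bigoplus_{\lambda}\IC_\lambda,\bigoplus_{\lambda}\IC_\lambda\bigr)$ on the same partial affine flag variety that localizes $\bfA$ itself, where $\IC_\lambda$ corresponds to the simple $L_\calO^\lambda$. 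That algebra is the Yoneda algebra of the simple objects of $\bfA$, i.e.\ (once gradings are in place) the Koszul dual $(S^\calO)^!$, not $S^\calO$: we have $\dim S^\calO=\sum_{\lambda,\mu}\dim\Hom_{\bfA}(P_\calO^\lambda,P_\calO^\mu)$, whereas the Ext-algebra of the simples has dimension $\sum_{\lambda,\mu}\dim\Ext^*(L_\calO^\lambda,L_\calO^\mu)$, and the two algebras are not isomorphic in general --- the very duality the paper uses exchanges them, cf.\ Section \ref{subs_cond-BGG-verify}, where $S^\calO$ is realized as $\bigoplus_{\lambda,\mu}\Ext^*_{\bfA^!}(L^\calO_\lambda,L^\calO_\mu)^{\rm op}$, an Ext-algebra of simples of the \emph{dual} category $\bfA^!$, which lives at positive level. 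Consequently, purity and parity vanishing on the $\bfA$-side give, at best, Koszulity of the Yoneda algebra $E=\Ext^*_{D^b(\bfA)}(\bigoplus_\lambda L_\calO^\lambda,\bigoplus_\lambda L_\calO^\lambda)$ with its cohomological grading; they do not by themselves put any grading on $S^\calO$.

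To repair this you would still need the genuinely nontrivial step: an isomorphism of ungraded algebras $S^\calO\simeq E^!$ (equivalently, an equivalence $\bfA\simeq\mod(E^!)$, equivalently that $S^\calO$ is isomorphic to its associated graded for the radical filtration). This is not a formal consequence of Koszulity of $E$; in the finite-type case of \cite{BGS} it is the content of their parabolic--singular duality, and in the present setting it is what \cite{SVV2} establish by constructing the dual category as a truncated parabolic/singular affine category $\calO$ at the opposite level. The two points you flag as ``obstacles'' are also not routine: (i) $\bfA$ contains only finitely many of the simples of the localized block, so one must check that Ext-groups computed in $D^b(\bfA)$ agree with those computed geometrically (a Cline--Parshall--Scott-type full-faithfulness statement for the truncation to the finite poset $\calP^\bfm_\bfd$); and (ii) Koszulity has to be proved for the truncated algebra itself, which is exactly what \cite[Thm.~3.12]{SVV2} does for the algebras $^{v}A^{\nu}_{\mu,\pm}$ and is the bulk of that paper rather than a compatibility to be waved through. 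As written, then, your argument establishes (modulo the standard purity machinery) Koszulity of the wrong algebra and leaves the identification with $S^\calO$ open.
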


Here and for the rest of the paper, the grading of $S^\calO$ is the Koszul grading from Theorem \ref{thm_Kos-gr-on-A}. Set $\overline \bfA=\grmod(S^\calO)$. Let $\lambda\in\calP^l_\bfd$. The $S^\calO$-modules $\Delta_\calO^\lambda$, $\nabla_\calO^\lambda$, $P_\calO^\lambda$, $L_\calO^\lambda$ are indecomposable. Thus their graded lifts are unique modulo a shift if they exist, see \cite[Lem.~2.5.3]{BGS}. We can consider each  $L_\calO^\lambda$ as a graded $S^\calO$-module with the grading concentrated in degree zero. The projective cover of $L_\calO^\lambda$ in $\grmod(S^\calO)$ is a graded lift of $P_\calO^\lambda$. From now we will always consider $P_\calO^\lambda$ as a $\bbZ$-graded  $S^\calO$-module with this grading. By \cite[Cor.~4]{MO}, there are graded lifts of $\Delta_\calO^\lambda$ and $\nabla_\calO^\lambda$. Fix them such that the morphisms $\Delta_\calO^\lambda\to L_\calO^\lambda$ and $L_\calO^\lambda\to \nabla_\calO^\lambda$ are homogeneous of degree $0$.

The following is proved in \cite[Thm.~7.9]{RSVV}.
\begin{thm}
\label{thm_equiv-Schur-O-ungr}
Assume that $\bfm$ is dominant or antidominant. There is an equivalence of categories $E_\mathrm{S}\colon\mod(S_{\bfd}^\bfs)\to\bfA$ that takes $L^\lambda$ to $L_\calO^\lambda$ and $\Delta^\lambda$ to  $\Delta_\calO^\lambda$ for each $\lambda\in\calP^l_\bfd$. \qed
\end{thm}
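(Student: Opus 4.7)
The plan is to realize both $\mod(S^{\bfs}_{\bfd})$ and $\bfA$ as highest weight covers of $\mod(H^{\bfs}_{\bfd})$ and then invoke Rouquier's uniqueness theorem for $1$-faithful covers. I would proceed as follows.

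First, on the Schur side, the Schur functor $F\colon\mod(S^{\bfs}_{\bfd})\to\mod(H^{\bfs}_{\bfd})$ from Section \ref{subs_def-Hecke-Schur} is a quotient functor, and by Remark \ref{rk_full-faith-proj}(b) it is fully faithful on projectives. Combined with the quasi-heredity of $S^{\bfs}_{\bfd}$ with standards $\{\Delta^{\lambda}\}_{\lambda\in\calP^{l}_{\bfd}}$, this exhibits $\mod(S^{\bfs}_{\bfd})$ as a $1$-faithful highest weight cover of $\mod(H^{\bfs}_{\bfd})$, with partial order the dominance order on $l$-partitions labelling the standards.

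Second, on the affine side, the categorical construction relating $\bfA$ to the cyclotomic rational Cherednik category $\mathcal{O}$ via a Suzuki-type functor (the $\widehat{\mathfrak{gl}}_{N}$ analogue of Kazhdan--Lusztig) produces a highest weight equivalence between $\bfA$ and a block of the cyclotomic Cherednik category $\mathcal{O}_{\bfm,\bfd}$. Composing with the KZ functor from the latter to $\mod(H^{\bfs}_{\bfd})$ gives a quotient functor that, by the results of \cite{RSVV}, is again a $1$-faithful highest weight cover. Here the ordering on $\calP^{l}_{\bfd}$ comes from the linkage on $\widehat{P}^{\bfm}$ via $\omega$.

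Third, I would invoke Rouquier's uniqueness theorem: two $1$-faithful covers of the same category over a fixed quasi-hereditary algebra are equivalent, provided the two highest weight orderings are compatible in the sense that one refines the other. This is the point where the hypothesis that $\bfm$ is dominant or antidominant enters in an essential way: only in these two extreme cases does the affine linkage order on $\{\omega(\lambda);\lambda\in\calP^{\bfm}_{\bfd}\}$ refine (or get refined by) the dominance order on partitions used on the Schur side. For general $\bfm$ the two orders are incomparable, and no such equivalence can exist directly. One then checks that the resulting equivalence $E_{\mathrm{S}}$ sends $\Delta^{\lambda}$ to $\Delta^{\lambda}_{\calO}$, since both correspond to the same Specht module $S^{\lambda}$ under the Schur/KZ quotient functors, and consequently sends $L^{\lambda}$ to $L^{\lambda}_{\calO}$ by taking heads.

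The step I expect to be the main obstacle is the ordering comparison in the third step: translating the combinatorics of affine highest weights $\widehat{\lambda}=(z_{\lambda},\lambda,-e-N)$ through the shift $\omega(\lambda)=\lambda-\rho+\rho_{\bfm}$ into the dominance order on $\calP^{l}_{\bfd}$. The verification that $m_{1}\geqslant\cdots\geqslant m_{l}$ (or the reverse) is exactly what is needed for this refinement is the technical heart of the argument and is the reason one cannot drop the (anti)dominance hypothesis.
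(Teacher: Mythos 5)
The paper offers no argument for this statement at all: it is quoted, with a \qed, from \cite[Thm.~7.9]{RSVV}, and your outline is essentially a reconstruction of the strategy of that reference (and of \cite{R}): exhibit $\mod(S^{\bfs}_{\bfd})$ and $\bfA$ as highest weight covers of $\mod(H^{\bfs}_{\bfd})$, then invoke Rouquier's uniqueness theorem for faithful covers with compatible orders, and finally match standards through the two quotient functors. So the route is the right one in outline.

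The genuine gap is at the decisive step: you apply the uniqueness theorem directly over the field $\bfk$, asserting that both covers are $1$-faithful. That is not available, and it is exactly the hard point. Over $\bfk$, at a root of unity, these functors are in general only $0$-faithful --- the present paper itself only ever uses $0$-faithfulness of $F'$ (quoting \cite[Thm.~6.6]{R}) and full faithfulness on projectives (\cite[Prop.~4.33]{R}) --- and $1$-faithfulness can fail (already for $e=2$ in small rank). The actual proof in \cite{RSVV}, following \cite{R}, deforms both categories over a regular local deformation ring, establishes $1$-faithfulness of the \emph{deformed} covers by reduction to codimension-one points (where everything splits into generic and level-one pieces), matches the Hecke parameters and the highest weight orders there, and only then specializes to $\bfk$; the hypothesis that $\bfm$ is dominant or antidominant enters in this parameter/order matching, not merely in a field-level comparison of orders as in your sketch. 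Relatedly, your claim that for general $\bfm$ ``no such equivalence can exist'' is overstated: what fails is the compatibility needed to run the uniqueness criterion with this labelling of standards, not the existence of an abstract equivalence. The last step of your plan (deducing $E_{\mathrm{S}}(\Delta^{\lambda})\simeq\Delta^{\lambda}_{\calO}$ and then $E_{\mathrm{S}}(L^{\lambda})\simeq L^{\lambda}_{\calO}$ by matching Specht modules and taking heads) is fine once the equivalence of covers is actually in place.
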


\begin{rk}
\label{rk_dualBGG-A!}
Since $\bfA$ is a highest weight subcategory of $\calO^\bfm$, the BGG duality yields a duality on $\bfA$ which fixes the simple modules. Similarly, since $\bfA^!$ is a quotient of an affine parabolic category $\calO$ (at positive level), it admits also a duality. See \cite{SVV2} for details.
\end{rk}

\medskip

\subsection{Analogue of the cyclotomic KLR-algebra}
\label{subs_KLR-O}

Set $\calK_{\bfd}^l=\calK^l_{|\bfd|}\cap\calP^l_\bfd$ where $\calK^l_d$ is as in Section \ref{subs_def-Hecke-Schur}.
Now, we define an analogue of the cyclotomic KLR-algebra. Consider the idempotent of $S^\calO$ given by $\bfe^\calO=\bigoplus_{\lambda\in\calK_{\bfd}^l}\Id_{P_\calO^\lambda}$. It is homogeneous of degree zero. Set $R^\calO=\bfe^\calO S^\calO \bfe^\calO$. Then, $R^\calO$ is a $\bbZ$-graded $\bfk$-algebra.
We have the functor
$$
F^\calO\colon \mod(S^\calO)\to\mod(R^\calO),\quad M\mapsto \bfe^\calO M.
$$
Let $\overline F^\calO\colon\grmod(S^\calO)\to\grmod(R^\calO)$ be the obvious graded lift of $F^\calO$.
Consider the modules in $\mod(R^\calO)$ given by $D_\calO^\lambda=F^\calO(L_\calO^\lambda)$, $S_\calO^\lambda=F^\calO(\Delta_\calO^\lambda)$ and $Y_\calO^\lambda=F^\calO(P_\calO^\lambda)$. They admit the graded lifts given by  $\overline F^\calO(L_\calO^\lambda)$, $\overline F^\calO(\Delta_\calO^\lambda)$, $\overline F^\calO(P_\calO^\lambda)$.
The functor $F^\calO$ is a quotient functor, see \cite[Prop.~III.2.5]{Gab}, \cite[Sec.~2.4]{HM}. Thus the module $D_\calO^\lambda$ is either zero or irreducible.

By \cite[Thm.~5.34]{RSVV}, there is a projective module $T\in\bfA$ such that we have an ungraded algebra isomorphism $\End_\bfA(T)^\mathrm{op}\simeq R_\bfd^{\bfs}$. Consider the functor $F'\colon \bfA\to \mod(R_\bfd^{\bfs})$ given by $M\mapsto \Hom_{\bfA}(T,M)$. By \cite[Thm.~7.9]{RSVV} we have $F'\circ E_S=F$, where $E_S$ is as in Theorem \ref{thm_equiv-Schur-O-ungr}. In particular we get $F'(L_\calO^\lambda)=D^\lambda$ for each $\lambda\in \calP^l_\bfd$. This implies that, for each $\lambda\in \calP_\bfd^l$, the projective module $P_\calO^\lambda$ is a direct factor of $T$ if and only if $\lambda\in \calK_\bfd^l$, because $\dim\Hom_{S^\calO}(P_\calO^\lambda,L_\calO^\mu)=\delta_{\lambda,\mu}$. Hence, there is an equivalence of categories
$
E_{\mathrm{H}}\colon \mod(R_{\bfd}^\bfs)\to \mod(R^\calO)
$
such that
the following diagram of functors is commutative
$$
\begin{CD}
\mod(S_{\bfd}^\bfs) @>{E_S}>> \mod(S^\calO)\\
@VFVV                                @V{F^\calO}VV\\
\mod(R_{\bfd}^\bfs) @>{E_H}>> \mod(R^\calO).\\
\end{CD}
$$
Moreover, we have
$$
E_{\mathrm{H}}(S^\lambda)=S_\calO^\lambda, \quad E_{\mathrm{H}}(D^\lambda)=D_\calO^\lambda, \quad E_{\mathrm{H}}(Y^\lambda)=Y_\calO^\lambda, \quad \forall \lambda\in\calP^l_\bfd.
$$

\begin{rk}
Remark \ref{rk_full-faith-proj} and the commutative diagram above imply that
the functor $F^\calO$ is fully faithful on projective modules.
\end{rk}

\medskip

\subsection{Graded BGG reciprocity}
\label{subs_cond-BGG-verify}
We want to show that graded categories $\grmod(S^\bfs_\bfd)$ and $\overline\bfA$ satisfy the conditions
$(\operatorname{a}_1)$, $(\operatorname{a}_2)$ from Section \ref{subs_graded-hw-cat}.

The condition ($\operatorname{a}_1$) for $\grmod(S^\bfs_\bfd)$ and $\overline\bfA$ is already mentioned in Sections \ref{subs_admis-grad} and \ref{subs_categ-equiv}. The condition ($\operatorname{a}_2$) for $\grmod(S^\bfs_\bfd)$ follows from the
condition (c) in the definition of an admissible grading.

We must check the condition ($\operatorname{a}_2$) for $\overline\bfA$. We construct the duality functor $D$ on $\bfA$ in the same way as in \cite[Sec.~3.11]{BGS}. More precisely, for each $\lambda\in\calP^l_\bfd$, let $L^\calO_\lambda$ be the simple object of $\bfA^!$ such that $K(P_\calO^\lambda)=L^\calO_\lambda$, where $K$ is the equivalence in Theorem \ref{thm_Koszul-duality}.
Since $\bfA\simeq \mod(S^\calO)$ and $S^\calO$ is a basic algebra, by Theorem \ref{thm_Koszul-duality} (a) we have a graded algebra isomorphism $S^\calO\simeq \bigoplus_{\lambda,\mu\in\calP^l_\bfd}\Ext^*_{\bfA^!}(L^\calO_\lambda,L^\calO_\mu)^{\rm op}$. We deduce that there is a graded $\bfk$-algebra anti-involution $i\colon S^\calO\to (S^\calO)^{\rm op}$, because the duality on $\bfA^!$ from Remark \ref{rk_dualBGG-A!} gives an isomorphism $\bigoplus_{\lambda,\mu\in\calP^l_\bfd}\Ext^*_{\bfA^!}(L^\calO_\lambda,L^\calO_\mu)^{\rm op}=\bigoplus_{\lambda,\mu\in\calP^l_\bfd}\Ext^*_{\bfA^!}(L^\calO_\mu,L^\calO_\lambda)$. Hence, we have the duality $D\colon\mod(S^\calO)^{\rm op}\to\mod(S^\calO)$ such that $M\mapsto \Hom_\bfk(M,\bfk)$, where the $S^\calO$ action on $\Hom_\bfk(M,\bfk)$ is twisted by $i$.
Thus part ($\operatorname{a}_2$) holds for $\overline\bfA$.

\medskip

\subsection{Decomposition numbers}
\label{subs_dec-num}
Recall the graded multiplicities $[\bullet:\bullet]_q$ and $(\bullet:\bullet)_q$ defined in Section \ref{subs_graded-hw-cat}.
\begin{df}
\label{df_grad-mult}
For each $\lambda,\xi\in\calP_{\bfd}^l$ we have $d_{\lambda\xi}(q)=[\Delta^\lambda:L^\xi]_q$, $c_{\lambda\xi}(q)=[P^\lambda:L^\xi]_q$, $d^\calO_{\lambda\xi}(q)=[\Delta_\calO^\lambda:L_\calO^\xi]_q$ and $c^\calO_{\lambda\xi}(q)=[P^\lambda_\calO:L^\xi_\calO]_q$.
\end{df}
\begin{lem}
\label{lem_multKLR=Sch}
For each $\lambda,\xi\in\calP^l_\bfd$ we have
$
[\Delta^\lambda:L^\xi]_q=(P^\xi:\Delta^\lambda)_{q}$ and $[\Delta_\calO^\lambda:L_\calO^\xi]_q=(P_\calO^\xi:\Delta_\calO^\lambda)_{q}.
$
\end{lem}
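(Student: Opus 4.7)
The plan is to apply Corollary \ref{coro_grBGG2} directly to each of the two graded highest weight categories $\grmod(S^\bfs_\bfd)$ and $\overline\bfA$. The whole content of the lemma is a graded BGG reciprocity, which is precisely the conclusion of that corollary once the hypotheses $(\operatorname{a}_1)$ and $(\operatorname{a}_2)$ are in place.

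First I would recall that, as observed at the beginning of Section \ref{subs_cond-BGG-verify}, both categories satisfy $(\operatorname{a}_1)$: for $\grmod(S^\bfs_\bfd)$ this is built into Definition \ref{def_adm-grad} of an admissible grading together with the graded lifts of $P^\lambda$ and $\nabla^\lambda$ constructed right after that definition, and for $\overline\bfA$ this is recorded in Section \ref{subs_categ-equiv}, where the graded lifts of $L^\lambda_\calO$, $\Delta^\lambda_\calO$, $\nabla^\lambda_\calO$, $P^\lambda_\calO$ are fixed with the normalisations required by Section \ref{subs_graded-hw-cat}.

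Next I would verify $(\operatorname{a}_2)$ in both cases. For $\grmod(S^\bfs_\bfd)$ this is exactly condition (b) of Definition \ref{def_adm-grad} (properties $(\mathrm{b}_1)$--$(\mathrm{b}_4)$), once one notices that condition (a) of that definition together with the uniqueness of indecomposable graded lifts forces $\overline D(L^\lambda)=L^\lambda$. For $\overline\bfA$ this is exactly the construction carried out in Section \ref{subs_cond-BGG-verify}, where the duality $D$ on $\mod(S^\calO)$ is built via the Koszul duality equivalence $K$ and the BGG-type duality on $\bfA^!$ mentioned in Remark \ref{rk_dualBGG-A!}; the four bullets of $(\operatorname{a}_2)$ then follow from the construction of $D$ as twisting $\Hom_\bfk(\bullet,\bfk)$ by a graded anti-involution.

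With $(\operatorname{a}_1)$ and $(\operatorname{a}_2)$ established for both categories, Corollary \ref{coro_grBGG2} gives $[\Delta^\lambda:L^\xi]_q=(P^\xi:\Delta^\lambda)_q$ and $[\Delta_\calO^\lambda:L_\calO^\xi]_q=(P_\calO^\xi:\Delta_\calO^\lambda)_q$ for all $\lambda,\xi\in\calP^l_\bfd$, which is the statement of the lemma. There is no real obstacle here; the only point that requires a small amount of care is checking that the normalisations of the graded lifts of the standard, costandard, projective and simple objects agree with those required in the proof of Lemma \ref{lem_grBGG1} and Corollary \ref{coro_grBGG2}, but this has already been arranged in Sections \ref{subs_admis-grad} and \ref{subs_categ-equiv}.
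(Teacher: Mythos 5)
Your proposal is correct and is essentially identical to the paper's argument: the paper's proof simply cites Corollary \ref{coro_grBGG2} together with Section \ref{subs_cond-BGG-verify}, where $(\operatorname{a}_1)$ and $(\operatorname{a}_2)$ are verified for $\grmod(S^\bfs_\bfd)$ (via the admissible-grading axioms, with $\overline D(L^\lambda)=L^\lambda$ already imposed as condition $(\mathrm{b}_3)$, so it need not be re-derived) and for $\overline\bfA$ (via the Koszul-dual construction of the duality).
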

\begin{proof}
The statement follows from Corollary \ref{coro_grBGG2} and Section \ref{subs_cond-BGG-verify}.
\end{proof}
\begin{coro}
\label{coro_mat-sym}
We have $c(q)=d(q)^td(q)$ and $c^\calO(q)=d^\calO(q)^td^\calO(q).$ \qed
\end{coro}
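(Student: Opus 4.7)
The plan is to derive the matrix identity from the standard graded \emph{BGG-type reciprocity} by combining Lemma \ref{lem_multKLR=Sch} with the additivity of graded simple-multiplicities across a graded $\Delta$-filtration. Both statements are formally identical (one for $\mod(S^\bfs_\bfd)$, one for $\bfA$), so I would state the argument once and note that it applies verbatim in both settings.

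First, I would recall from Section \ref{subs_cond-BGG-verify} (and the Corollary following Lemma \ref{lem_Ext-delta-filtr}) that $P^\lambda \in \overline\calC^\Delta$, so $P^\lambda$ admits a graded $\Delta$-filtration. Because graded simple-multiplicity is additive on short exact sequences in $\overline\calC$, summing over the successive quotients of such a filtration gives the identity
\[
[P^\lambda : L^\xi]_q \;=\; \sum_{\mu \in \calP^l_\bfd} (P^\lambda : \Delta^\mu)_q \cdot [\Delta^\mu : L^\xi]_q
\]
in $\bbN[q,q^{-1}]$, where one uses that for a graded shift we have $[\Delta^\mu\langle k\rangle : L^\xi]_q = q^k [\Delta^\mu : L^\xi]_q$. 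In the notation of Definition \ref{df_grad-mult} this reads
\[
c_{\lambda\xi}(q) \;=\; \sum_{\mu} (P^\lambda : \Delta^\mu)_q \, d_{\mu\xi}(q).
\]

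Next, I would apply Lemma \ref{lem_multKLR=Sch} to substitute $(P^\lambda : \Delta^\mu)_q = [\Delta^\mu : L^\lambda]_q = d_{\mu\lambda}(q) = (d(q)^t)_{\lambda\mu}$. Plugging this into the identity above produces
\[
c_{\lambda\xi}(q) \;=\; \sum_\mu (d(q)^t)_{\lambda\mu}\, d_{\mu\xi}(q) \;=\; \bigl(d(q)^t d(q)\bigr)_{\lambda\xi},
\]
which is exactly the claim $c(q) = d(q)^t d(q)$. The same argument carried out in $\overline\bfA = \grmod(S^\calO)$, using the second identity of Lemma \ref{lem_multKLR=Sch}, yields $c^\calO(q) = d^\calO(q)^t d^\calO(q)$.

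There is essentially no obstacle here: the corollary is a direct bookkeeping consequence of graded BGG reciprocity (Lemma \ref{lem_multKLR=Sch}) and the existence of a graded $\Delta$-filtration of projectives. The only point to double-check is the bookkeeping with the shift $\langle k\rangle$, but this is automatic from the definitions of $[\bullet:\bullet]_q$ and $(\bullet:\bullet)_q$ in Section \ref{subs_graded-hw-cat}.
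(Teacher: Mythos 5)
Your argument is correct and is exactly what the paper intends: the corollary is stated with no proof because it follows immediately from Lemma \ref{lem_multKLR=Sch} together with the graded $\Delta$-filtration of $P^\lambda$ (guaranteed by condition $(\operatorname{a}_1)$ and the corollary after Lemma \ref{lem_Ext-delta-filtr}), which is precisely the bookkeeping you carry out. No discrepancies to report.
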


\begin{rk}
\label{rem_mult-Sch-KLR}
The functors $F$ and $F^\calO$ are quotient functors. Thus, we have
$
d_{\lambda\xi}(q)=[S^\lambda:D^\xi]_q$ and $d^\calO_{\lambda\xi}(q)=[S_\calO^\lambda:D_\calO^\xi]_q$
for each $\lambda\in \calP_{\bfd}^l, \xi\in\calK_{\bfd}^l.
$
\end{rk}

Recall that the graded lifts of $D^\lambda$,
$S^\lambda$, $Y^\lambda$ are the graded modules $\overline F(L^\lambda)$, $\overline F(\Delta^\lambda)$, $\overline F(P^\lambda)$. Another definition of the graded lifts of these modules is given in \cite[Thm.~4.11,~Sec.~4.10,~5.2]{BK11}. Denote the graded lifts from \cite{BK11} by $D'^\lambda$,
$S'^\lambda$, $Y'^\lambda$ respectively. We want to compare these two choices of graded lifts.

\begin{lem}
\label{lem_lifts-D=D'}
Assume that $\lambda\in\calK^l_\bfd$. Then we have the graded $R^\bfs_\bfd$-module isomorphisms $D^\lambda\simeq D'^\lambda$, $S^\lambda\simeq S'^\lambda$
and $Y^\lambda\simeq Y'^\lambda$.
\end{lem}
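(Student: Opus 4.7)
The overall plan is to exploit the fact that graded lifts of indecomposable modules are unique up to grading shift (\cite[Lem.~2.5.3]{BGS}), and then match the normalizations used to fix those shifts on both sides. All three modules $D^\lambda$, $S^\lambda$, $Y^\lambda$ are indecomposable (as irreducible, standard and indecomposable projective modules respectively), and so are their primed counterparts, so the question is entirely one of identifying the correct normalizing condition.

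First I would handle the simple modules. The Brundan--Kleshchev graded simple $D'^\lambda$ is, by its construction in \cite[Sec.~5.2]{BK11}, stable under the graded duality $\overline D'$. On our side, $D^\lambda=\overline F(L^\lambda)$ inherits self-duality from $L^\lambda$: Lemma \ref{lem_QS-gr-lift-simple} gives $\overline D(L^\lambda)\simeq L^\lambda$, and condition $\rm (b_5)$ of the admissible grading yields $\overline D'\,\overline F\simeq\overline F\,\overline D$, so $\overline D'(D^\lambda)\simeq D^\lambda$. A short argument shows that an indecomposable graded module has at most one self-dual graded lift up to isomorphism: if $M_1\simeq M_2\langle k\rangle$ are two self-dual lifts, then applying $\overline D'$ and using the shift-anti-commutation property $\rm (b_4)$ forces $M_2\langle k\rangle\simeq M_2\langle -k\rangle$, hence $k=0$. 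This yields $D^\lambda\simeq D'^\lambda$ in $\grmod(R^\bfs_\bfd)$.

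Next I would handle $S^\lambda$ and $Y^\lambda$ using the identification $D^\lambda\simeq D'^\lambda$ just obtained. The lift $S'^\lambda$ in \cite[Thm.~4.11]{BK11} is pinned down by the requirement that the canonical surjection $S'^\lambda\twoheadrightarrow D'^\lambda$ be homogeneous of degree zero, and $S^\lambda=\overline F(\Delta^\lambda)$ inherits the same property by applying $\overline F$ to the degree-zero surjection $\Delta^\lambda\twoheadrightarrow L^\lambda$ fixed in Section \ref{subs_admis-grad}. Uniqueness of graded lifts up to shift then forces $S^\lambda\simeq S'^\lambda$. For $Y^\lambda$, I would note that $\overline F(P^\lambda)$ is the graded projective cover of $D^\lambda$ in $\grmod(R^\bfs_\bfd)$: since $\overline F(M)=\bfe_\bfd M$ is a quotient functor (Lemma \ref{lem_idemp-Schur-gr} and Remark \ref{rk_full-faith-proj}) and sends the graded projective $P^\lambda$ to a projective with top $\overline F(L^\lambda)=D^\lambda$, which is nonzero since $\lambda\in\calK^l_\bfd$. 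The BK module $Y'^\lambda$ from \cite[Sec.~4.10]{BK11} is also the graded projective cover of $D'^\lambda$, and uniqueness of projective covers then gives $Y^\lambda\simeq Y'^\lambda$.

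The main obstacle is the first step, namely verifying that the self-duality of BK's $D'^\lambda$ really is the normalization that pins it down, and then cleanly chaining the identification $\overline D'\,\overline F\simeq\overline F\,\overline D$ with the self-duality of $L^\lambda$ established in Lemma \ref{lem_QS-gr-lift-simple}. The subsequent steps are essentially formal consequences once the simples have been identified, because top-normalization and projective-cover-normalization propagate automatically under the quotient functor $\overline F$.
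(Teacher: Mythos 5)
Your proposal is correct and follows essentially the same route as the paper: reduce everything, via indecomposability and uniqueness of graded lifts up to shift together with the degree-zero normalization of the maps onto the simple top, to the single identification $D^\lambda\simeq D'^\lambda$, which is then obtained from the self-duality characterization of $D'^\lambda$ and the computation $\overline D'(D^\lambda)=\overline D'\,\overline F(L^\lambda)=\overline F\,\overline D(L^\lambda)=\overline F(L^\lambda)=D^\lambda$ using conditions $(\mathrm{b}_5)$ and $(\mathrm{b}_3)$. Your projective-cover argument for $Y^\lambda$ is just a reformulation of the paper's top-normalization of $Y^\lambda\to D^\lambda$ and $Y'^\lambda\to D'^\lambda$, so there is no substantive difference.
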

\begin{proof}
The ungraded $R^\bfs_\bfd$-modules $D^\lambda$, $S^\lambda$, $Y^\lambda$ are indecomposable because each of them has simple top $D^\lambda$, see \cite[Prop.~2.2~(3)]{cell}. Thus their graded lifts are unique up to a shift of the grading. The graded lifts are normalised such that each morphism $Y'^\lambda\to D'^\lambda$, $S'^\lambda\to D'^\lambda$, $Y^\lambda\to D^\lambda$, $S^\lambda\to D^\lambda$ is homogeneous of degree zero. Hence, it suffice to show that there is a graded $R^\bfs_\bfd$-module isomorphism $D^\lambda\simeq D'^\lambda$. The graded lift $D'^\lambda$ of $D^\lambda$ is caracterized by stability by the duality $\overline D'$, see \cite[Thm.~4.11]{BK11}. On the other hand we have
$$
\overline D'(D^\lambda)=\overline D'\overline F(L^\lambda)=\overline F\,\overline D(L^\lambda)=\overline F(L^\lambda)=D^\lambda.
$$
Here, the second equality is the condition $\rm (b_5)$ in Definition \ref{def_adm-grad} and the third equlity is the condition $\rm (b_3)$ in Definition \ref{def_adm-grad}.
\end{proof}

The level $l$ Fock space $F^\bfs$ associated with the parameter $\bfs=(s_1,\cdots,s_l)$ is a $\bbQ(q)$-vector space with basis $\{M_\lambda; \lambda\in\calP^l\}$ called \emph{standard basis}. It is equipped with a representation of $U_q(\widehat{sl_e})$ defined as in \cite[(3.22),~(3.23)]{BK11}. Let $V^\bfs$ be the simple $U_q(\widehat{sl_e})$-module with highest weight $\Lambda_{s_1}+\cdots+\Lambda_{s_l}$. Consider the surjection $F^\bfs\to V^\bfs$ as in \cite[(3.28)]{BK11}. Let $M'_\lambda$ be the image of $M_\lambda$ in $V^\bfs$.

For each $\lambda\in\calP^l_\bfd$ and $\xi\in\calK^l_\bfd$ set $d'_{\lambda\xi}(q)=[S'^\lambda:D'^\xi]_q$ and $c'_{\lambda\xi}=[Y'^\lambda:D'^\xi]_q$.

\begin{lem}
\label{lem_mult-O-Schur-coin}
Assume that $\lambda\in\calP_{\bfd}^l$ and $\xi\in\calK_{\bfd}^l$. Then we have $d'_{\lambda\xi}(q)=d_{\lambda\xi}^\calO(q)$.
\end{lem}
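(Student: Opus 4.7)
The plan is to identify both sides with the same canonical basis coefficient in the level $l$ Fock space $F^\bfs$, more precisely in its simple quotient $V^\bfs$. On the Hecke/KLR side this identification is classical and on the $\calO$ side it is the content of the appendix.

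First I would recall Brundan--Kleshchev's graded categorification theorem. The graded Grothendieck group $[\grmod(R^\bfs_\bfd)]$, with the $\bbZ[q,q^{-1}]$-action coming from the grading shift, is identified with the weight space of $V^\bfs$ of weight $\Lambda_{s_1}+\cdots+\Lambda_{s_l}-\bfd$, in such a way that $[S'^\lambda]$ is sent to the image $M'_\lambda$ of the standard basis vector, and the classes $[D'^\xi]$ for $\xi\in\calK^l_\bfd$ are sent to the dual canonical (or canonical, depending on the normalisation) basis vectors. Under this identification one has
\[
M'_\lambda=\sum_{\xi\in\calK^l_\bfd} d'_{\lambda\xi}(q)\,[D'^\xi]
\]
and the coefficients $d'_{\lambda\xi}(q)$ are the transition coefficients between the standard and canonical bases in $V^\bfs$, see \cite[Thm.~5.14]{BK11}.

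Next I would do the analogous thing for the category $\bfA=\bfA^\bfm_\bfd$. Using the functor $F^\calO$ of Section \ref{subs_KLR-O} (which is a quotient functor, fully faithful on projectives), the graded Jordan--H\"older multiplicities $d^\calO_{\lambda\xi}(q)=[\Delta_\calO^\lambda:L_\calO^\xi]_q$ agree with $[S^\lambda_\calO:D^\lambda_\calO]_q$ by Remark \ref{rem_mult-Sch-KLR}. The key input here is the graded analogue, proved in the appendix (Appendix~\ref{app_mult}), which computes the graded composition factors of the parabolic Verma modules $\Delta_\calO^\lambda$ in terms of parabolic affine Kazhdan--Lusztig polynomials at a negative level. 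By the main results of \cite{VV} and \cite{SVV2}, together with the categorification of $V^\bfs$ on the parabolic affine category $\calO$ side as developed in \cite{RSVV}, the graded Grothendieck group $[\overline\bfA]$ is identified with the same weight space of $V^\bfs$, sending $[\Delta_\calO^\lambda]\mapsto M'_\lambda$ and $[L_\calO^\xi]$ ($\xi\in\calK^l_\bfd$) to the same (dual) canonical basis vectors. Hence
\[
M'_\lambda=\sum_{\xi\in\calK^l_\bfd} d^\calO_{\lambda\xi}(q)\,[L_\calO^\xi]
\]
in $V^\bfs$, which by uniqueness of the expansion in the canonical basis forces $d^\calO_{\lambda\xi}(q)=d'_{\lambda\xi}(q)$.

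The main obstacle is to make the second identification rigorous: one must ensure that the grading coming from the Koszul grading of $S^\calO$ on the $\calO$ side matches, under the $[\overline\bfA]\simeq V^\bfs$ isomorphism, the $q$-shift coming from the Fock space. This is exactly what the computation of graded multiplicities of simples in parabolic Verma modules in Appendix~\ref{app_mult} provides: it shows that the affine parabolic Kazhdan--Lusztig polynomials giving $d^\calO_{\lambda\xi}(q)$ coincide with the canonical basis transition polynomials in $V^\bfs$. Once this is in hand, combining with the Brundan--Kleshchev identification of $d'_{\lambda\xi}(q)$ yields the equality of the two graded decomposition polynomials immediately.
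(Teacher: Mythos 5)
Your overall strategy is the one the paper uses (Brundan--Kleshchev on the KLR side, the Appendix computation on the category $\calO$ side, then a comparison), but as written there is a concrete missing ingredient. The pivot of your argument is the claim that $[\overline\bfA]$ can be identified with the relevant weight space of $V^\bfs$ so that $[\Delta^\lambda_\calO]\mapsto M'_\lambda$ and $[L^\xi_\calO]$ goes to a canonical basis vector, and then to conclude by uniqueness of the canonical-basis expansion. No such \emph{graded} categorification statement is available to cite: \cite{RSVV} is an ungraded equivalence, and asserting that the Koszul grading on $\bfA$ matches the $q$-structure of $V^\bfs$ on the classes of simples is essentially equivalent to the lemma you are trying to prove. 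You acknowledge this obstacle, but then attribute its resolution to Appendix \ref{app_mult}, and that attribution is not correct: the appendix (Lemma \ref{lem_inv-sing-par-neg}) only computes the graded multiplicities $d^\calO_{\lambda\xi}(q)$ in terms of parabolic affine Kazhdan--Lusztig polynomials. It says nothing about canonical bases of the higher-level Fock space. The bridge between the two is a separate, nontrivial external input, namely Uglov's explicit Kazhdan--Lusztig formula for the canonical basis of $V^\bfs$ (\cite[Thm.~3.26~(i)]{U}); combined with \cite[(3.35),~Cor.~5.15]{BK11} on the KLR side, it is exactly this comparison of two explicit polynomial formulas that closes the argument in the paper. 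Without citing Uglov (or proving an equivalent statement), your ``uniqueness of the expansion'' step has nothing to anchor the $\calO$-side classes to canonical basis vectors.

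Two smaller points you gloss over, both handled in the paper's remark following the lemma: the variable $q$ in \cite{BK11} corresponds to $q^{-1}$ in \cite{U} (so positive versus negative canonical bases must be matched carefully), and the appendix works in a larger parabolic category than $\bfA$, so one must also check that the inclusion of $\bfA$ into that category is compatible with the Koszul gradings (\cite[Lem.~2.2]{SVV2}, \cite[Prop.~A3.3]{Don}) before the appendix formulas can be read as computing $d^\calO_{\lambda\xi}(q)$.
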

\begin{proof}
By \cite[(3.35),~Cor.~5.15]{BK11} the elements $d'_{\lambda\xi}(q)\in\bbN[q,q^{-1}]$ are the coefficients of the decomposition of $M'_\lambda$ in the canonical basis of $V^\bfs$. Further, \cite[Thm.~3.26 (i)]{U} gives an explicit formula for these coefficients in terms of Kazhdan-Lusztig polynomials. Finally, we compute the decomposition numbers in the parabolic category $\calO$ with respect to the Koszul grading in Appendix \ref{app_mult}. Now, the statement follows by comparing Lemma \ref{lem_inv-sing-par-neg} with \cite[Thm.~3.26 (i)]{U}.
\end{proof}

\begin{rk}
 (a) The element $q$ in \cite{BK11} corresponds to $q^{-1}$ in \cite{U}. In particular, the positive canonical basis in \cite{BK11} corresponds to the negative canonical basis in \cite{U}.

 (b) The category concidered in Appendix \ref{app_mult} is larger then the category $\bfA$. But the inclusion of $\bfA$ in this category respects the grading by \cite[Lem.~2.2]{SVV2} and \cite[Prop.~A3.3]{Don}.
\end{rk}


\begin{coro}
\label{coro_c=cO}
Assume that $\lambda,\xi\in\calK_{\bfd}^l$. Then we have $c_{\lambda\xi}(q)=c_{\lambda\xi}^\calO(q)$.
\end{coro}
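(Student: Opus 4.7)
The plan is to show that both $c_{\lambda\xi}(q)$ and $c^\calO_{\lambda\xi}(q)$ coincide with the common expression $\sum_{\mu\in\calP^l_\bfd}d'_{\mu\lambda}(q)d'_{\mu\xi}(q)$, through independent computations on each side.

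On the parabolic $\calO$ side, applying Corollary \ref{coro_mat-sym} to $\overline\bfA$ yields $c^\calO_{\lambda\xi}(q)=\sum_\mu d^\calO_{\mu\lambda}(q)d^\calO_{\mu\xi}(q)$; Lemma \ref{lem_mult-O-Schur-coin} then replaces each $d^\calO_{\mu\bullet}(q)$ by $d'_{\mu\bullet}(q)$, which is legitimate because both $\lambda,\xi\in\calK^l_\bfd$, giving the desired expression.

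For the Schur side, the first step is to invoke the graded version of the fully faithfulness of the Schur functor on projectives (Remark \ref{rk_full-faith-proj}) to write
$$c_{\lambda\xi}(q)=\dim_q\Hom_{S^\bfs_\bfd}(P^\xi,P^\lambda)=\dim_q\Hom_{R^\bfs_\bfd}(Y^\xi,Y^\lambda)=[Y^\lambda:D^\xi]_q,$$
where the last equality uses that $Y^\xi=\overline F(P^\xi)$ is the graded projective cover of $D^\xi$ in $\grmod(R^\bfs_\bfd)$, a fact which holds because $\xi\in\calK^l_\bfd$. Lemma \ref{lem_lifts-D=D'} then supplies graded isomorphisms $Y^\lambda\simeq Y'^\lambda$ and $D^\xi\simeq D'^\xi$ (since $\lambda,\xi\in\calK^l_\bfd$), so $c_{\lambda\xi}(q)=[Y'^\lambda:D'^\xi]_q$. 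Finally, the graded BGG reciprocity for the cyclotomic Hecke algebra from \cite{BK11} provides a graded Specht filtration of the Young module $Y'^\lambda$ with multiplicities $(Y'^\lambda:S'^\mu)_q=d'_{\mu\lambda}(q)$, from which $[Y'^\lambda:D'^\xi]_q=\sum_\mu d'_{\mu\lambda}(q)d'_{\mu\xi}(q)$, matching the expression from the $\calO$ side.

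The main obstacle is this last step, invoking the graded BGG reciprocity of \cite{BK11}; the remaining manipulations are direct consequences of Corollary \ref{coro_mat-sym}, Lemmas \ref{lem_mult-O-Schur-coin} and \ref{lem_lifts-D=D'}, and the graded fully faithfulness of the Schur functor on projectives.
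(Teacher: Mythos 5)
Your argument is correct and follows essentially the same route as the paper: both sides are reduced to the common expression $\sum_{\mu\in\calP^l_\bfd}d'_{\mu\lambda}(q)d'_{\mu\xi}(q)$, using Corollary \ref{coro_mat-sym} together with Lemma \ref{lem_mult-O-Schur-coin} on the category $\calO$ side and the graded identity $[Y'^\lambda]=\sum_\mu d'_{\mu\lambda}(q)[S'^\mu]$ from \cite{BK11} together with Lemma \ref{lem_lifts-D=D'} on the Hecke side, exactly as in the paper. The only (harmless) deviation is that you pass from $c_{\lambda\xi}(q)$ to $[Y'^\lambda:D'^\xi]_q$ via graded full faithfulness of $\overline F$ on projectives and the fact that $Y^\xi$ is the graded projective cover of $D^\xi$ for $\xi\in\calK^l_\bfd$, whereas the paper invokes the graded BGG reciprocity of Lemma \ref{lem_multKLR=Sch}; both justifications are valid.
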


\begin{proof}
By \cite[(3.45),~Thm.~4.18,~Thm.~5.14]{BK11}, for each $\lambda\in\calK^l_\bfd$, we have $[Y'^\lambda]=\sum_{\xi\in\calP^l_\bfd}d'_{\xi\lambda}(q)[S'^\xi]$ in $[\grmod(R^\bfs_\bfd)]$. Thus, for each $\lambda,\xi\in\calK^l_\bfd$, we have $c'_{\lambda\xi}=\sum_{\mu\in\calP^l_\bfd}d'_{\lambda\mu}d'_{\mu\xi}$. On the other hand, Corollary \ref{coro_mat-sym} implies that $c^\calO_{\lambda\xi}=\sum_{\mu\in\calP^l_\bfd}d^\calO_{\lambda\mu}d^\calO_{\mu\xi}$. Hence, by Lemma \ref{lem_mult-O-Schur-coin}, we have $c'_{\lambda\xi}=c^\calO_{\lambda\xi}$. This implies the statement, because Lemma \ref{lem_multKLR=Sch} and Lemma \ref{lem_lifts-D=D'} yield $c_{\lambda\xi}=c'_{\lambda\xi}$.
\end{proof}

\begin{lem}
\label{lem_positivity-coef}
For each $\lambda,\xi\in\calP^l_\bfd$ we have $c_{\lambda\mu}^\calO(q)\in\bbN[q]$.
\end{lem}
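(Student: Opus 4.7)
The plan is to deduce the statement directly from the Koszulity of $S^\calO$. By Theorem \ref{thm_Kos-gr-on-A}, the grading on $S^\calO$ is Koszul, which forces $S^\calO = \bigoplus_{n \geqs 0}(S^\calO)_n$ to be non-negatively graded with $(S^\calO)_0$ semisimple. Since each indecomposable projective $P_\calO^\lambda$ in $\grmod(S^\calO)$ is a direct summand of $_{S^\calO}S^\calO$ up to a shift, I would first observe that $P_\calO^\lambda$ is itself non-negatively graded once one fixes the correct grading normalization.

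Next I would pin down that normalization, which is already recorded in Section \ref{subs_categ-equiv}: the graded lift of $L_\calO^\mu$ is concentrated in degree $0$, and $P_\calO^\lambda$ is taken to be the projective cover of $L_\calO^\lambda$ in $\grmod(S^\calO)$. With these choices, the surjection $P_\calO^\lambda \twoheadrightarrow L_\calO^\lambda$ is homogeneous of degree $0$, so the top of $P_\calO^\lambda$ sits in degree $0$; combined with the $\bbN$-grading of $S^\calO$, this yields $(P_\calO^\lambda)_k = 0$ for all $k < 0$.

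Finally, for each $\mu \in \calP^l_\bfd$ and $k \in \bbZ$ the graded simple $L_\calO^\mu\langle k \rangle$ is concentrated in degree $k$, so a non-zero contribution of $L_\calO^\mu\langle k\rangle$ to a graded Jordan--H\"older filtration of $P_\calO^\lambda$ requires $(P_\calO^\lambda)_k \ne 0$. Hence $[P_\calO^\lambda : L_\calO^\mu\langle k \rangle] = 0$ for $k < 0$, and consequently
$$
c^\calO_{\lambda\mu}(q) = \sum_{k \in \bbZ}[P_\calO^\lambda : L_\calO^\mu\langle k \rangle]\, q^k = \sum_{k \geqs 0}[P_\calO^\lambda : L_\calO^\mu\langle k \rangle]\, q^k \in \bbN[q].
$$

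There is no serious obstacle in this argument; the only point that requires care is the normalization of the grading on $P_\calO^\lambda$, which has already been fixed in Section \ref{subs_categ-equiv} in a way compatible with the Koszul structure. The entire lemma is a formal consequence of the fact that projective covers of simples in a positively graded category with semisimple degree-zero part live in non-negative degrees.
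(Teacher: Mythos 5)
Your argument is correct and is essentially the paper's own proof: both deduce from the nonnegativity of the Koszul grading on $S^\calO$, together with the normalizations that $L_\calO^\mu$ sits in degree $0$ and that the top of $P_\calO^\lambda$ is $L_\calO^\lambda$ in degree $0$, that $P_\calO^\lambda$ has no negative graded components, whence $c^\calO_{\lambda\mu}(q)\in\bbN[q]$. No gap to report.
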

\begin{proof}
The $S^\calO$-module $L_\calO^\xi$ is concentrated in degree zero. The top of the $S^\calO$-module $P_\calO^\lambda$ is isomorphic to $L_\calO^\lambda$. Hence, since the grading of $S^\calO$ is nonnegative, all negative graded components of $P_\calO^\lambda$ are zero. We deduce that $c_{\lambda\xi}^\calO(q)=[P_\calO^\lambda:L_\calO^\xi]_q\in\bbN[q]$.
\end{proof}

For each $\lambda,\xi\in\calP_{\bfd}^l$, we write
$c_{\lambda\xi}(q)=\sum_{g\in\bbZ}c_{\lambda\xi}^gq^g$ and $c_{\lambda\xi}^\calO(q)=\sum_{g\in\bbZ}c_{\lambda\xi}^{\calO g}q^g.$

\medskip

\subsection{Identification of grading of the KLR-algebra}
\label{subs_isom-gr-KLR}
We identify $\bfA\simeq \mod(S^\calO)$ as in Section \ref{subs_categ-equiv}. Under this identification, we can view the functor $E_\mathrm{S}$ from Theorem \ref{thm_equiv-Schur-O-ungr} as a functor $E_\mathrm{S}\colon \mod(S_{\bfd}^\bfs) \to \mod(S^\calO)$.
Recall that the algebra $S^\bfs_\bfd$ is equipped with an admissible grading, see Section \ref{subs_admis-grad}. We consider the graded basic algebra $^bS_{\bfd}^\bfs$ associated with $S_{\bfd}^\bfs$, i.e., we set
$$
^bS_{\bfd}^\bfs=\bigoplus_{\lambda,\xi\in \calP_{\bfd}^l}\Hom_{S_{\bfd}^\bfs}(P^\lambda,P^\xi)^{\rm op}=\bigoplus_{\lambda,\xi\in \calP_{\bfd}^l}\Hom_{R_{\bfd}^\bfs}(Y^\lambda,Y^\xi)^{\rm op},
$$
see Remark \ref{rk_full-faith-proj} (b).
The grading of $^bS^\bfs_\bfd$ is induced from the grading of the module $P^\lambda$, see Section \ref{subs_admis-grad}.
By definition, the categories $\mod(^bS_{\bfd}^\bfs)$ and $\mod(S_{\bfd}^\bfs)$ are equivalent.
Further, we have an ungraded algebra isomorphism $^bS_{\bfd}^\bfs\simeq S^\calO$, because the categories $\mod(^bS_{\bfd}^\bfs)$ and $\mod(S^\calO)$ are equivalent and both algebras $^bS_{\bfd}^\bfs$ and $S^\calO$ are basic.

Now, let $^bR_{\bfd}^\bfs$ be the graded basic algebra associated with $R_{\bfd}^\bfs$, i.e., we set
$$
^bR_{\bfd}^\bfs=\bigoplus_{\lambda,\xi\in \calK_{\bfd}^l}\Hom_{S_{\bfd}^\bfs}(P^\lambda,P^\xi)^{\mathrm{op}}=\bigoplus_{\lambda,\xi\in \calK_{\bfd}^l}\Hom_{R_{\bfd}^\bfs}(Y^\lambda,Y^\xi)^{\mathrm{op}}.
$$
The categories $\mod(^bR_{\bfd}^\bfs)$ and $\mod(R_{\bfd}^\bfs)$ are equivalent.
Further, we have an ungraded algebra isomorphism $^bR_{\bfd}^\bfs\simeq R^\calO$, because the categories $\mod(^bR_{\bfd}^\bfs)$ and $\mod(R^\calO)$ are equivalent via $E_H$ and both algebras $^bR_{\bfd}^\bfs$ and $R^\calO$ are basic. Let us prove that these algebras are also isomorphic as graded algebras.
To do so, we will construct a homogeneous basis of $R^\calO=\bigoplus_{\lambda,\xi\in\calK_{\bfd}^l} \End_{S^\calO}(P_\calO^\lambda,P_\calO^\xi)^{\mathrm{op}}$.

For each $\xi\in \calK_{\bfd}^l$ and each $g,s\in\bbN$, write 
\begin{align*}
\rad^g P_\calO^\xi/\rad^{g+1} P_\calO^\xi&=\bigoplus_{\lambda\in \calP_{\bfd}^l,s\in\bbN}L_\calO^\lambda\langle s\rangle^{\oplus c_{\xi\lambda}^{\calO{gs}}},\\
\rad^g P^\xi/\rad^{g+1} P^\xi&=\bigoplus_{\lambda\in \calP_{\bfd}^l,s\in\bbN}L^\lambda\langle s\rangle^{\oplus c_{\xi\lambda}^{{gs}}}.
\end{align*}
For each $\lambda\in \calK_{\bfd}^l$, $g,s\in\bbN$ and $t\in[1,c_{\xi\lambda}^{\calO gs}]$, we consider the chain of maps
$$
\rad^g P_\calO^\xi\to\rad^g P_\calO^\xi/\rad^{g+1} P_\calO^\xi\to L_\calO^\lambda\langle s\rangle,
$$
where the right hand arrow is the projection to the $t$th copy of $L_\calO^\lambda\langle s\rangle$. Since $P_\calO^\lambda$ is projective, there exists a homogeneous map $\theta_{\lambda\xi}^{gst}$ of degree $s$ such that the following diagram commutes
$$
\begin{CD}
P_\calO^\lambda @>\Id>>P_\calO^\lambda\langle s\rangle\\
@V\theta_{\lambda\xi}^{gst}VV  @VVV\\
\rad^g P_\calO^\xi @>>> L_\calO^\lambda\langle s\rangle.
\end{CD}
$$

\begin{lem}
\label{lem_basis-KLR-O}
The set $\Theta=\{\theta_{\lambda\xi}^{gst}; \lambda,\xi\in \calK_{\bfd}^l, g,s\in \bbN,t\in[1,c_{\xi\lambda}^{\calO gs}]\}$
is a basis of $R^\calO$.
\end{lem}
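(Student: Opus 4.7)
The plan is to establish $\Theta$ as a basis of $R^\calO$ by matching its cardinality with $\dim R^\calO$ and then verifying linear independence by reading off the top radical layer of $P_\calO^\xi$.

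First I would compute both sides. By definition $R^\calO = \bigoplus_{\lambda,\xi \in \calK^l_\bfd}\Hom_{S^\calO}(P_\calO^\lambda, P_\calO^\xi)$. Since $P_\calO^\lambda$ is the projective cover of $L_\calO^\lambda$ in $\mod(S^\calO)$ and $\End(L_\calO^\lambda) = \bfk$, the functor $\Hom_{S^\calO}(P_\calO^\lambda,\bullet)$ computes Jordan--H\"older multiplicities of $L_\calO^\lambda$, so
$$
\dim_\bfk \Hom_{S^\calO}(P_\calO^\lambda, P_\calO^\xi) \;=\; [P_\calO^\xi:L_\calO^\lambda] \;=\; c^\calO_{\xi\lambda}(1) \;=\; \sum_{g,s\in\bbN} c^{\calO gs}_{\xi\lambda}.
$$
Summing over $\lambda,\xi \in \calK^l_\bfd$ yields $\dim R^\calO = |\Theta|$.

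Next I would prove linear independence. Since morphisms belonging to distinct pairs $(\lambda,\xi)$ lie in distinct summands of $R^\calO$, I fix $\lambda,\xi \in \calK^l_\bfd$ and suppose that
$$
\sum_{g,s,t} a_{gst}\,\theta_{\lambda\xi}^{gst} \;=\; 0
$$
holds in $\Hom_{S^\calO}(P_\calO^\lambda, P_\calO^\xi)$. Let $g_0$ be the smallest $g$ for which some $a_{g_0 s t} \ne 0$. Every $\theta_{\lambda\xi}^{gst}$ has image contained in $\rad^g P_\calO^\xi$, so all terms with $g > g_0$ vanish modulo $\rad^{g_0+1}P_\calO^\xi$, while by construction each $\theta_{\lambda\xi}^{g_0 s t}$ descends to the canonical surjection of $P_\calO^\lambda$ onto the $t$th copy of $L_\calO^\lambda\langle s\rangle$ in the semisimple quotient
$$
\rad^{g_0}P_\calO^\xi / \rad^{g_0+1}P_\calO^\xi \;\simeq\; \bigoplus_{\mu \in \calP^l_\bfd,\, s' \in \bbN} L_\calO^\mu\langle s'\rangle^{\oplus c_{\xi\mu}^{\calO g_0 s'}}.
$$
Composing the reduced relation with the projection onto the $(s, t)$-indexed copy of $L_\calO^\lambda\langle s\rangle$ extracts $a_{g_0 s t}$ times a nonzero canonical surjection $P_\calO^\lambda \twoheadrightarrow L_\calO^\lambda\langle s\rangle$; hence $a_{g_0 s t} = 0$ for all $s, t$, contradicting the choice of $g_0$.

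Combining the dimension match with linear independence gives that $\Theta$ is a basis of $R^\calO$. No step is genuinely hard; the only care needed is to observe that the images of the various $\theta_{\lambda\xi}^{g_0 s t}$ in the semisimple layer $\rad^{g_0}/\rad^{g_0+1}$ land in pairwise distinct isotypic summands indexed by the pair $(s,t)$, which is immediate from the defining commutative diagram.
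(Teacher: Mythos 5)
Your proposal is essentially the paper's own argument: linear independence is proved by taking the minimal radical degree $g_0$ with a nonzero coefficient and projecting onto the semisimple layer $\rad^{g_0}P_\calO^\xi/\rad^{g_0+1}P_\calO^\xi$, and the count $|\Theta_{\lambda\xi}|=c^{\calO}_{\xi\lambda}(1)=[P_\calO^\xi:L_\calO^\lambda]=\dim\Hom_{S^\calO}(P_\calO^\lambda,P_\calO^\xi)$ finishes the proof, exactly as in the paper. Your closing remark that the images of the $\theta_{\lambda\xi}^{g_0st}$ in that layer lie in pairwise distinct isotypic pieces is slightly overstated (the defining diagram only controls the $(s,t)$-component, and homogeneity of degree $s$ only kills components with a different shift $s'$), but the paper's proof relies on the same point at the same level of detail, so this is not a departure from it.
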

\begin{proof}
Fix $\lambda,\xi\in \calK^l_\bfd$. Set $\Theta_{\lambda\xi}=\{\theta_{\lambda\xi}^{gst}; g,s\in \bbN,t\in[1,c_{\xi\lambda}^{\calO gs}]\}$. Let $\theta=\sum_{g,s,t} a_{\lambda\xi}^{gst}\theta_{\lambda\xi}^{gst}$ with $a_{\lambda\xi}^{gst}\in\bfk$ be a nontrivial linear combination of elements of $\Theta_{\lambda\xi}$. Let $g_0$ be minimal such that $a_{\lambda\xi}^{g_0s_0t_0}\ne 0$ for some $s_0,t_0$. Then we have $\Im \theta\subset \rad^{g_0}P_\calO^\xi$ and the composition
$$
P_\calO^\lambda\stackrel{\theta}{\to}\rad^{g_0}P_\calO^\xi \to \rad^{g_0}P_\calO^\xi/\rad^{g_0+1}P_\calO^\xi\to L_\calO^\lambda\langle s_0\rangle
$$
is nonzero, where the right hand map is the projection on the $t_0$th copy of $L_\calO^\lambda\langle s_0\rangle$. Thus $\theta$ is not zero. Hence, the elements of the set $\Theta_{\lambda\xi}$ are linearly independent.
Now, to show that they form a basis of  $\Hom_{S^\calO}(P_\calO^\lambda,P_\calO^\xi)$, we count the dimension. We have
$$
|\Theta_{\lambda\xi}|=c^{\calO}_{\xi\lambda}(1)=[P_\calO^\xi:L_\calO^\lambda]_1=\dim\Hom_{S^\calO}(P_\calO^\lambda,P_\calO^\xi).
$$
\end{proof}
\begin{lem}
\label{lem_basis-rad-KLR}
Assume that $f\in \bbN$ and $\lambda,\xi\in\calK^l_\bfd$. The set $\Theta_{\lambda\xi}^{\geqslant f}=\{\theta_{\lambda\xi}^{gst}; g,s\in \bbN,g\geqslant f,t\in[1,c_{\xi\lambda}^{\calO gs}]\}$
is a basis of $\Hom_{S^\calO}(P_\calO^\lambda,\rad^f P_\calO^\xi)$.
\end{lem}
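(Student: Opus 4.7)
The plan is to leverage Lemma \ref{lem_basis-KLR-O} together with a dimension count. First, I will check that $\Theta^{\geqs f}_{\lambda\xi}$ is contained in $\Hom_{S^\calO}(P_\calO^\lambda,\rad^f P_\calO^\xi)$: by construction, for each $g\geqs f$ the map $\theta_{\lambda\xi}^{gst}$ has image in $\rad^g P_\calO^\xi\subseteq \rad^f P_\calO^\xi$. Since $P_\calO^\lambda$ is projective, the inclusion $\rad^f P_\calO^\xi\hookrightarrow P_\calO^\xi$ induces an inclusion of Hom spaces $\Hom_{S^\calO}(P_\calO^\lambda,\rad^f P_\calO^\xi)\hookrightarrow \Hom_{S^\calO}(P_\calO^\lambda,P_\calO^\xi)$. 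In particular, linear independence of $\Theta^{\geqs f}_{\lambda\xi}$ is inherited from Lemma \ref{lem_basis-KLR-O}, since $\Theta^{\geqs f}_{\lambda\xi}\subseteq \Theta_{\lambda\xi}$.

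It remains to show that $\Theta^{\geqs f}_{\lambda\xi}$ spans $\Hom_{S^\calO}(P_\calO^\lambda,\rad^f P_\calO^\xi)$, and this I will do by dimension count. Because $P_\calO^\lambda$ is the projective cover of the simple module $L_\calO^\lambda$ in $\mod(S^\calO)$, for any finite-length $S^\calO$-module $M$ we have $\dim\Hom_{S^\calO}(P_\calO^\lambda,M)=[M:L_\calO^\lambda]$. Applying this to $M=\rad^f P_\calO^\xi$ and summing over the layers of its radical filtration (with the decomposition of successive quotients recalled just before the lemma) yields
$$
\dim\Hom_{S^\calO}(P_\calO^\lambda,\rad^f P_\calO^\xi)=\sum_{g\geqs f}[\rad^g P_\calO^\xi/\rad^{g+1}P_\calO^\xi:L_\calO^\lambda]=\sum_{g\geqs f}\sum_{s\in\bbN}c_{\xi\lambda}^{\calO gs}=|\Theta^{\geqs f}_{\lambda\xi}|.
$$
Combined with the linear independence above, this finishes the argument.

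I do not anticipate any substantive obstacle: the content is entirely built into Lemma \ref{lem_basis-KLR-O} and the construction of the $\theta_{\lambda\xi}^{gst}$. The key point is that the basis produced in the preceding lemma is already compatible, layer by layer, with the radical filtration of $P_\calO^\xi$, so truncating to those morphisms whose image lies in $\rad^f P_\calO^\xi$ amounts precisely to discarding the basis vectors with $g<f$.
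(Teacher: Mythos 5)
Your proof is correct, and it differs from the paper's in how the spanning step is handled. The paper argues on coefficients: writing an arbitrary $\theta\in\Hom_{S^\calO}(P_\calO^\lambda,P_\calO^\xi)$ in the basis $\Theta_{\lambda\xi}$ of Lemma \ref{lem_basis-KLR-O}, it shows that if some $\theta_{\lambda\xi}^{g_0s_0t_0}$ with $g_0<f$ occurs with nonzero coefficient and $g_0$ is minimal, then (exactly as in the independence argument of Lemma \ref{lem_basis-KLR-O}) the composition of $\theta$ with the projection $\rad^{g_0}P_\calO^\xi\to\rad^{g_0}P_\calO^\xi/\rad^{g_0+1}P_\calO^\xi$ is nonzero, so $\Im\theta\not\subset\rad^{g_0+1}P_\calO^\xi\supset\rad^fP_\calO^\xi$; hence any morphism landing in $\rad^fP_\calO^\xi$ is supported on $\Theta_{\lambda\xi}^{\geqslant f}$. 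You instead establish spanning by a dimension count, $\dim\Hom_{S^\calO}(P_\calO^\lambda,\rad^fP_\calO^\xi)=[\rad^fP_\calO^\xi:L_\calO^\lambda]=\sum_{g\geqslant f}\sum_{s\in\bbN}c_{\xi\lambda}^{\calO gs}=|\Theta_{\lambda\xi}^{\geqslant f}|$, combined with the linear independence inherited from Lemma \ref{lem_basis-KLR-O}. Both routes rest only on facts the paper already uses (exactness of $\Hom_{S^\calO}(P_\calO^\lambda,\bullet)$ and $\dim\Hom_{S^\calO}(P_\calO^\lambda,L_\calO^\mu)=\delta_{\lambda\mu}$, invoked in Lemmas \ref{lem_grBGG1} and \ref{lem_basis-KLR-O}, plus additivity of Jordan--H\"older multiplicities along the radical filtration); yours is marginally shorter and reuses the counting trick of Lemma \ref{lem_basis-KLR-O}, while the paper's leading-term argument pinpoints exactly which basis vectors can appear in a morphism with image in $\rad^fP_\calO^\xi$, which is the sharper statement it implicitly reuses later (e.g.\ when comparing with the filtration $P^\xi(f)$ on the Schur side). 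No gap in your argument.
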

\begin{proof}
We must check that the image of a linear combination of elements of $\Theta_{\lambda\xi}$ that contains an element of $\Theta_{\lambda\xi}\backslash\Theta_{\lambda\xi}^{\geqslant f}$ with a nonzero coefficient is not in $\rad^{f} P^\xi$. Let
$$
\theta=\sum_{g,s,t} a_{\lambda\xi}^{gst}\theta_{\lambda\xi}^{gst},~\quad a_{\lambda\xi}^{gst}\in\bfk,
$$
be such a linear combination. Let $g_0$ be minimal such that $a_{\lambda\xi}^{g_0s_0t_0}\ne 0$ for some $s_0,t_0$. We have $g_0<f$. We have $\Im \theta\subset \rad^{g_0}P_\calO^\xi$. In the same way as in the proof of Lemma \ref{lem_basis-KLR-O} we can show that the composition
$$
P_\calO^\lambda\stackrel{\theta}{\to}\rad^{g_0}P_\calO^\xi \to \rad^{g_0}P_\calO^\xi/\rad^{g_0+1}P_\calO^\xi
$$
is nonzero. Thus, we have $\Im\theta\not\subset \rad^{g_0+1}P_\calO^\xi$. This implies $\Im\theta\not\subset \rad^f P_\calO^\xi$ because $f\geqslant g_0+1$.
\end{proof}
\begin{rk}
\label{rem_basis}
We construct a homogeneous basis for $^bR_{\bfd}^\bfs$ in a similar way. It satisfies an analogue of Lemma \ref{lem_basis-rad-KLR}.
\end{rk}

\begin{lem}
\label{lem_P-rigid}
For each $\lambda\in \calK_{\bfd}^l$, the graded $S^\calO$-module $P_\calO^\lambda$ is very rigid.
\end{lem}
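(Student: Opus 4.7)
The strategy is to invoke Lemma~\ref{lem_rigidity} (more precisely, its ``consequently'' clause). Since $S^\calO$ is Koszul by Theorem~\ref{thm_Kos-gr-on-A}, its degree-zero part is semisimple and it is generated by components of degrees $0$ and $1$, so the hypotheses of Lemma~\ref{lem_rigidity} are met. The top $P_\calO^\lambda/\rad P_\calO^\lambda\simeq L_\calO^\lambda$ is irreducible by the definition of a projective cover; hence it remains to establish that $\soc P_\calO^\lambda$ is irreducible for each $\lambda\in\calK^l_\bfd$.

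The idea is to argue that, for such $\lambda$, the module $P_\calO^\lambda$ is in fact \emph{projective-injective}: up to a grading shift it coincides with the injective envelope of some simple module. The key input is that the cyclotomic Hecke algebra $R^\bfs_\bfd$ is a graded symmetric $\bfk$-algebra, so the graded left regular representation of the basic algebra $R^\calO$ is self-dual (up to a grading shift) under the Brundan--Kleshchev duality $\overline D'$ of Section~\ref{subs_KLR}. Setting $T_\calO=\bigoplus_{\mu\in\calK^l_\bfd}P_\calO^\mu$, the commutative diagram in Section~\ref{subs_KLR-O} identifies $\overline F^\calO(T_\calO)$ with the graded left regular representation of $R^\calO$, so combining its self-duality with the graded compatibility $\overline F^\calO\overline D\simeq\overline D'\overline F^\calO$ yields an isomorphism $\overline F^\calO(\overline D\,T_\calO)\simeq\overline F^\calO(T_\calO)\langle d\rangle$ for some $d\in\bbZ$.

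To pass from equality of $\overline F^\calO$-images back to an isomorphism in $\overline\bfA$, I use the full faithfulness of $\overline F^\calO$ on projective modules (the remark after the commutative diagram in Section~\ref{subs_KLR-O}) and the analogous property on injective modules obtained by dualizing via $\overline D$. A Krull--Schmidt comparison in $\grmod(R^\calO)$ then matches the indecomposable summands $I_\calO^\mu$ of $\overline D\,T_\calO$ with indecomposable projective summands of $T_\calO\langle d\rangle$. Each $P_\calO^\lambda$ with $\lambda\in\calK^l_\bfd$ is therefore graded-isomorphic to an indecomposable injective module, and its socle is consequently simple.

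The main obstacle is establishing the graded compatibility $\overline F^\calO\overline D\simeq\overline D'\overline F^\calO$ on $\overline\bfA$: the ungraded analogue passes formally through the equivalences $E_S$, $E_H$ and the corresponding property of the admissible grading on $S^\bfs_\bfd$, but the graded refinement requires identifying the graded lift of the BGG duality on $\bfA$ (coming from the Koszul structure, Section~\ref{subs_cond-BGG-verify}) with the transport along $E_S$ of the admissible graded duality, using uniqueness of graded lifts of indecomposable modules up to shift together with the normalization on simple modules.
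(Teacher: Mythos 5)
Your overall reduction is the same as the paper's: by Koszulity of $S^\calO$ the hypotheses of Lemma \ref{lem_rigidity} hold, the top of $P_\calO^\lambda$ is $L_\calO^\lambda$, and everything hinges on the socle being simple, which should come from $P_\calO^\lambda$ being injective, ultimately because the cyclotomic Hecke algebra is Frobenius. The genuine gap is in your lifting step. From an isomorphism $\overline F^\calO(\overline D\,T_\calO)\simeq\overline F^\calO(T_\calO)\langle d\rangle$ in $\grmod(R^\calO)$ you cannot conclude $\overline D\,T_\calO\simeq T_\calO\langle d\rangle$ in $\overline\bfA$: full faithfulness of $F^\calO$ on projectives, together with its dual statement on injectives, says nothing about morphisms between an injective object and a projective one, and $F^\calO$ kills the simples $L_\calO^\mu$ with $\mu\notin\calK_\bfd^l$, so it neither lifts nor reflects isomorphisms between such objects; a Krull--Schmidt matching of the images does not transport back through the quotient functor. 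What is missing is precisely the stronger faithfulness input the paper uses: $F'$ is $0$-faithful by \cite[Thm.~6.6]{R}, the algebra $R^\bfs_\bfd\simeq\End_\bfA(T)^{\rm op}$ is Frobenius by \cite{MM}, and then \cite[Lem.~2.14]{RSVV} shows the projective module $T$ is tilting; since the BGG duality on $\bfA$ fixes the simples, $T$ is then injective, hence so is each summand $P_\calO^\lambda$ with $\lambda\in\calK_\bfd^l$, which is therefore self-dual with simple socle. A purely formal duality-plus-full-faithfulness argument cannot substitute for this $0$-faithfulness.

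There are also two problems with your graded packaging. First, the compatibility $\overline F^\calO\overline D\simeq\overline D'\overline F^\calO$, which you flag as the main obstacle, cannot be obtained by transporting the admissible graded duality along $E_S$: at this stage $E_S$ and $E_H$ are only \emph{ungraded} equivalences, and the graded identification of $R^\calO$ with ${}^bR_\bfd^\bfs$ is Theorem \ref{thm-isom-grad-KLR}, proved later and depending on the present lemma through Corollaries \ref{coro_basis-d=s} and \ref{coro_g=s}; so phrasing the self-duality of the regular representation of $R^\calO$ via the Brundan--Kleshchev duality $\overline D'$ (or via a graded symmetric structure on $R^\bfs_\bfd$, which the paper never establishes) is circular. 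Second, none of this graded machinery is needed: irreducibility of the socle is an ungraded statement, and the grading enters only through Lemma \ref{lem_rigidity}, which you already invoke. The correct repair is therefore the paper's route: prove ungraded injectivity of $P_\calO^\lambda$ via the tilting property of $T$, and only then apply Lemma \ref{lem_rigidity}.
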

\begin{proof}
Recall the projective module $T\in \bfA$ and the functor $F'\colon \bfA\to \mod(R_\bfd^ \bfs)$ defined in Section \ref{subs_KLR-O}. The algebra $R_\bfd^{\bfs}$ is Frobenius by the theorem in the introduction of \cite{MM} and the functor $F'$ is $0$-faithful by \cite[Thm.~6.6]{R}. So, by \cite[Lem.~2.14]{RSVV}, the module $T$ is tilting. Thus it is injective. This implies that, for each $\lambda\in \calK_\bfd^l$, the module $P^\lambda_\calO$ is injective because it is a direct factor of $T$, see Section \ref{subs_KLR-O}. In particular it is autodual with respect to the BGG duality.

Now, by construction, we have $P_\calO^\lambda/\rad P_\calO^\lambda=L_\calO^\lambda$. Since $P_\calO^\lambda$ is autodual, we deduce that $\soc P_\calO^\lambda$ is irreducible. Thus $P_\calO^\lambda$ is very rigid by Lemma \ref{lem_rigidity}.
\end{proof}
Recall that $c_{\xi\lambda}^{\calO s}$ is the multiplicity of $L_\calO^\lambda\langle s \rangle$ in $P_\calO^\xi$ and that $c_{\xi\lambda}^{\calO gs}$ is the multiplicity of $L_\calO^\lambda\langle s \rangle$ in $\rad^gP_\calO^\xi/\rad^{g+1}P_\calO^\xi$.
\begin{coro}
\label{coro_basis-d=s}
For each $\lambda\in\calP^l_\bfd,\xi\in\calK_{\bfd}^l$ and each $g,s\in\bbN$, we have $c_{\xi\lambda}^{\calO gs}=c_{\xi\lambda}^{\calO s}$ if  $g=s$ and $c_{\xi\lambda}^{\calO gs}=0$ else.
\qed
\end{coro}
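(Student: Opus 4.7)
The plan is to read Corollary \ref{coro_basis-d=s} as an immediate consequence of the very rigidity of $P_\calO^\xi$ established in Lemma \ref{lem_P-rigid}. Recall that Lemma \ref{lem_P-rigid} is valid precisely under the hypothesis $\xi\in\calK^l_\bfd$, which is what the corollary assumes.

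First I would observe that the grading of $S^\calO$ is non-negative and that $P_\calO^\xi$ has simple top $L_\calO^\xi$ concentrated in degree $0$; in particular the minimal degree $z$ in which $P_\calO^\xi$ is nonzero is $z=0$. By Lemma \ref{lem_P-rigid} the module $P_\calO^\xi$ is very rigid, so the radical filtration coincides with the grading filtration:
$$
\rad^g P_\calO^\xi \;=\; \mathcal{G}r_{g}\,P_\calO^\xi \;=\; \bigoplus_{b\geqslant g}(P_\calO^\xi)_b
\qquad\forall g\in\bbN.
$$
Taking successive quotients yields a graded $(S^\calO)_0$-module isomorphism
$$
\rad^g P_\calO^\xi\bigl/\rad^{g+1} P_\calO^\xi \;\simeq\; (P_\calO^\xi)_g,
$$
so the $g$-th radical layer is concentrated in graded degree $g$.

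Now the graded simple module $L_\calO^\lambda\langle s\rangle$ is concentrated in degree $s$ (since $L_\calO^\lambda$ itself is concentrated in degree $0$). A homogeneous composition factor of a module concentrated in degree $g$ must itself be concentrated in degree $g$, hence $c_{\xi\lambda}^{\calO gs}=0$ whenever $g\neq s$. For $g=s$ one has
$$
c_{\xi\lambda}^{\calO s} \;=\; \sum_{g\in\bbN} c_{\xi\lambda}^{\calO gs} \;=\; c_{\xi\lambda}^{\calO ss},
$$
by the vanishing just established, giving the remaining equality.

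There is no real obstacle: everything reduces to the very rigidity statement of Lemma \ref{lem_P-rigid}, which in turn relies on the non-negativity of the Koszul grading together with the injectivity of $P_\calO^\xi$ for $\xi\in\calK^l_\bfd$ coming from \cite[Thm.~6.6]{R}. The only point requiring a small check is the translation between ``very rigid'' and the statement that each radical layer sits in a single graded degree, which is immediate from the definition recalled in Section \ref{subs_ridid-modules}.
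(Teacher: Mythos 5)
Your proof is correct and is exactly the argument the paper intends: the corollary is stated with no separate proof precisely because it follows immediately from the very rigidity of $P_\calO^\xi$ (Lemma \ref{lem_P-rigid}), together with the observations that $P_\calO^\xi$ lives in degrees $\geqslant 0$ with top $L_\calO^\xi$ in degree $0$, so each radical layer $\rad^g P_\calO^\xi/\rad^{g+1}P_\calO^\xi$ is concentrated in degree $g$ while $L_\calO^\lambda\langle s\rangle$ sits in degree $s$. Your check that the hypothesis $\xi\in\calK^l_\bfd$ is what makes Lemma \ref{lem_P-rigid} applicable is the right point to flag.
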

Further, since $S^\calO$ and $S^\bfs_\bfd$ are Morita equivalent, we have also the following.
\begin{coro}
\label{coro_P-rigid}
For each $\lambda\in \calK_{\bfd}^l$, the $S_{\bfd}^\bfs$-module $P^\lambda$ is rigid. \qed
\end{coro}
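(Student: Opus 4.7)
The plan is to deduce this as an immediate consequence of Lemma \ref{lem_P-rigid} via the Morita equivalence between $S_\bfd^\bfs$ and $S^\calO$. Concretely, Theorem \ref{thm_equiv-Schur-O-ungr} gives an equivalence $E_\mathrm{S}\colon\mod(S_\bfd^\bfs)\to\bfA\simeq\mod(S^\calO)$ sending $L^\lambda$ to $L_\calO^\lambda$. Since a projective cover is characterized up to isomorphism by the simple module of which it is a cover, this equivalence must send $P^\lambda$ to $P_\calO^\lambda$ for each $\lambda\in\calP^l_\bfd$.

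Next, I would invoke the fact that rigidity is a categorical invariant. The radical of an object $M$ in an abelian category with enough projectives is the intersection of the kernels of all morphisms from $M$ to simple objects, and dually for the socle. Both constructions are preserved by any equivalence of abelian categories, so the radical and socle filtrations of $P^\lambda$ correspond, step by step, to those of $P_\calO^\lambda=E_\mathrm{S}(P^\lambda)$. Consequently, $P^\lambda$ is rigid if and only if $P_\calO^\lambda$ is rigid.

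By Lemma \ref{lem_P-rigid}, the graded $S^\calO$-module $P_\calO^\lambda$ is very rigid; forgetting the grading, this in particular says that its radical and socle filtrations coincide, i.e., it is rigid in the ungraded sense. Transporting across $E_\mathrm{S}$ yields that $P^\lambda$ is rigid, which is exactly the content of the corollary. No genuine obstacle arises here; the only point worth emphasizing is that we only need the ungraded part of the very rigidity asserted in Lemma \ref{lem_P-rigid}, so the argument does not require comparing the chosen admissible grading on $S^\bfs_\bfd$ with the Koszul grading on $S^\calO$.
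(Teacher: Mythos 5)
Your argument is correct and is essentially the paper's own proof: the paper deduces the corollary in one line from Lemma \ref{lem_P-rigid} precisely by invoking the (Morita) equivalence $\mod(S^\bfs_\bfd)\simeq\bfA\simeq\mod(S^\calO)$, which matches $P^\lambda$ with $P_\calO^\lambda$ and preserves radical and socle filtrations. Your added remark that only the ungraded content of very rigidity is needed is accurate and consistent with the paper's intent.
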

Now, we have the following.
\begin{lem}
\label{lem_dec-num-pos}
For each $\lambda,\xi\in\calP^l_\bfd$ we have $c_{\lambda\xi}(q)\in \delta_{\lambda\xi}+q\bbN[q]$.
\end{lem}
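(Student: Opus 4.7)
The plan is to deduce this directly from the matrix identity in Corollary \ref{coro_mat-sym}, combined with the positivity of the graded decomposition numbers established in Lemma \ref{lem_coef-pos}. By Corollary \ref{coro_mat-sym} we have $c(q) = d(q)^t d(q)$, i.e.\
$$
c_{\lambda\xi}(q) \;=\; \sum_{\mu\in\calP^l_\bfd} d_{\mu\lambda}(q)\, d_{\mu\xi}(q).
$$
By Lemma \ref{lem_coef-pos}, each factor satisfies $d_{\mu\nu}(q) \in \delta_{\mu\nu}+q\bbN[q]\subset \bbN[q]$. Taking products and summing immediately gives $c_{\lambda\xi}(q) \in \bbN[q]$, so only the constant term must be pinned down.

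For $\lambda = \xi$, the single summand $\mu = \lambda$ contributes $d_{\lambda\lambda}(q)^2 \in 1 + q\bbN[q]$, while every other summand $d_{\mu\lambda}(q)^2$ with $\mu\neq\lambda$ is the square of something in $q\bbN[q]$, hence lies in $q^2\bbN[q]$. Thus $c_{\lambda\lambda}(q) \in 1 + q\bbN[q]$.

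For $\lambda\neq\xi$, in each summand $d_{\mu\lambda}(q)\,d_{\mu\xi}(q)$ at least one of the equalities $\mu=\lambda$, $\mu=\xi$ must fail, so at least one factor lies in $q\bbN[q]$; hence each summand lies in $q\bbN[q]$ and the total is in $q\bbN[q]$, as required.

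There is no genuine obstacle here: the heavy lifting has already been done in establishing the BGG-type identity $c(q)=d(q)^t d(q)$ (Corollary \ref{coro_mat-sym}, resting on the BGG reciprocity Lemma \ref{lem_grBGG1} and the duality-based Corollary \ref{coro_grBGG2}) and the positivity of the graded decomposition numbers (Lemma \ref{lem_coef-pos}, which was itself proved by reduction via $\overline Y$ to the non-negatively graded basic algebra $^b\tilde A^\bfs_\bfd$). The present lemma is simply a bookkeeping consequence of those two inputs.
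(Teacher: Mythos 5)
Your argument is correct and is essentially the paper's own proof: the paper deduces the lemma in one line from condition (a) of Definition \ref{def_adm-grad} together with graded BGG reciprocity (Corollary \ref{coro_mat-sym}), and your constant-term bookkeeping with $c_{\lambda\xi}(q)=\sum_{\mu}d_{\mu\lambda}(q)d_{\mu\xi}(q)$ simply makes that explicit. One small citation point: since $d_{\lambda\xi}(q)$ here is taken with respect to the fixed admissible grading of $S^\bfs_\bfd$, the input $d_{\mu\nu}(q)\in\delta_{\mu\nu}+q\bbN[q]$ should be quoted as condition (a) of Definition \ref{def_adm-grad} (an assumption built into admissibility), rather than Lemma \ref{lem_coef-pos}, which only verifies that condition for the particular quiver Schur grading $A^\bfs_\bfd$.
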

\begin{proof}
The statement follows from the condition $\rm (a)$ in Definition \ref{def_adm-grad} and the graded BGG-resiprocity.
\end{proof}
\begin{coro}
\label{coro_bas-Sch-pos}
We have $(^bS^\bfs_\bfd)_n=0$ for each $n<0$ and the algebra $(^bS^\bfs_\bfd)_0$ is semisimple.
\end{coro}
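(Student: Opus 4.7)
The plan is to read off the positivity of the grading directly from the graded decomposition numbers $c_{\lambda\xi}(q)$, using the identification of graded Hom spaces between projectives with graded composition multiplicities.

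First, I would recall the key identity from the proof of Lemma \ref{lem_grBGG1}: for each $\lambda,\xi \in \calP^l_\bfd$, the functor $\Hom_{S^\bfs_\bfd}(P^\lambda, \bullet) \colon \grmod(S^\bfs_\bfd) \to \grmod(\bfk)$ is exact, and $\dim_q \Hom_{S^\bfs_\bfd}(P^\lambda, L^\mu\langle k\rangle) = \delta_{\lambda\mu} q^k$. Therefore
$$
\dim_q \Hom_{S^\bfs_\bfd}(P^\lambda, P^\xi) = [P^\xi : L^\lambda]_q = c_{\xi\lambda}(q).
$$
Combined with the grading convention $\Hom(X,Y)_n = \Hom_{\grmod}(X\langle n\rangle, Y)$ from Section \ref{subs_graded-hw-cat}, this identifies the $n$-th graded component of $^bS^\bfs_\bfd$ as
$$
(^bS^\bfs_\bfd)_n = \bigoplus_{\lambda,\xi \in \calP^l_\bfd} \Hom_{\grmod(S^\bfs_\bfd)}(P^\lambda\langle n\rangle, P^\xi),
$$
whose dimension is $\sum_{\lambda,\xi} [\text{coefficient of } q^n \text{ in } c_{\xi\lambda}(q)]$.

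Next, I would invoke Lemma \ref{lem_dec-num-pos}, which gives $c_{\lambda\xi}(q) \in \delta_{\lambda\xi} + q\bbN[q]$. This immediately yields both conclusions: for $n < 0$ the coefficient of $q^n$ in every $c_{\xi\lambda}(q)$ vanishes, so $(^bS^\bfs_\bfd)_n = 0$; and for $n = 0$ only the diagonal terms $\lambda = \xi$ contribute, each contributing a one-dimensional space.

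Finally, to deduce that $(^bS^\bfs_\bfd)_0$ is semisimple, I would note that the one-dimensional space $\Hom_{\grmod(S^\bfs_\bfd)}(P^\lambda, P^\lambda)$ is spanned by $\id_{P^\lambda}$ (since the top $L^\lambda$ of $P^\lambda$ is simple, any degree-zero endomorphism reducing to a nonzero scalar on $L^\lambda$ is invertible, while a nilpotent one lies in $\rad$ and hence in strictly positive degree, contradicting $c_{\lambda\lambda}(q) \in 1 + q\bbN[q]$). Thus $(^bS^\bfs_\bfd)_0 \simeq \bigoplus_{\lambda \in \calP^l_\bfd} \bfk \cdot \id_{P^\lambda}$ is a direct sum of copies of $\bfk$, which is semisimple. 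There is no real obstacle here: every step is either a direct application of Lemma \ref{lem_dec-num-pos}, the graded BGG reciprocity machinery already established, or the elementary fact that the endomorphism algebra of an indecomposable projective is local.
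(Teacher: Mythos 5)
Your proposal is correct and follows essentially the same route as the paper: both identify $\dim_q\Hom_{S^\bfs_\bfd}(P^\lambda,P^\xi)$ with $c_{\xi\lambda}(q)$ and then read off both claims from Lemma \ref{lem_dec-num-pos}, with the degree-zero part being $\bigoplus_{\lambda}\bfk\Id_{P^\lambda}$. Your extra remarks justifying why degree-zero endomorphisms are scalars are a harmless elaboration of the same argument.
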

\begin{proof}
The first claim follows from Lemma \ref{lem_dec-num-pos} because $\dim_q\Hom_{S^\bfs_\bfd}(P^\lambda,P^\xi)=c_{\xi\lambda}(q)$. The second claim is true because by Lemma \ref{lem_dec-num-pos} we have $(^bS^\bfs_\bfd)_0=\bigoplus_{\lambda\in\calP^l_\bfd}\bfk\Id_{P^\lambda}$.
\end{proof}
The grading of each simple module $L^\lambda$, viewed as an $^bS^\bfs_\bfd$-module, is concentrated in degree $0$. Hence, since $(^bS^\bfs_\bfd)_0$ is semisimple by Corollary \ref{coro_bas-Sch-pos}, we have the following.
\begin{coro}
\label{coro-Ext1gr=0}
For each $\lambda,\xi\in\calP^l_\bfd$ we have $\Ext^1_{\grmod(S^\bfs_\bfd)}(L^\lambda,L^\xi)=0$. \qed
\end{coro}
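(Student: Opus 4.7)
My plan is to argue directly from the grading structure of the basic algebra $^bS^\bfs_\bfd$, using the graded Morita equivalence between $S^\bfs_\bfd$ and $^bS^\bfs_\bfd$ established in Section \ref{subs_isom-gr-KLR} to transport the vanishing. Concretely, an $\Ext^1$ in the graded category corresponds to an isomorphism class of short exact sequences in $\grmod$, so I want to show every short exact sequence $0\to L^\xi\to M\to L^\lambda\to 0$ in $\grmod(^bS^\bfs_\bfd)$ splits as a graded sequence.

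First, I would invoke the observation immediately preceding the corollary: each simple $^bS^\bfs_\bfd$-module $L^\nu$ is concentrated in a single graded degree (namely degree $0$, after the fixed normalization of graded lifts). This is the crucial input, and it is essentially free once we know that $(^bS^\bfs_\bfd)_0$ is semisimple, because the simples then coincide with simple $(^bS^\bfs_\bfd)_0$-modules placed in degree $0$.

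Second, given any short exact sequence $0\to L^\xi\to M\to L^\lambda\to 0$ in $\grmod(^bS^\bfs_\bfd)$, all maps are homogeneous of degree $0$, so $M$ is forced to be concentrated in degree $0$ as well (its graded pieces in degrees $\ne 0$ fit into exact sequences between zero modules). Now Corollary \ref{coro_bas-Sch-pos} tells us that $^bS^\bfs_\bfd$ is nonnegatively graded and that $(^bS^\bfs_\bfd)_0$ is semisimple. Because $M$ sits in degree $0$, the positively graded part of $^bS^\bfs_\bfd$ annihilates $M$, so the $^bS^\bfs_\bfd$-action factors through the semisimple quotient $(^bS^\bfs_\bfd)_0$. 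Hence $M$ is semisimple as an $(^bS^\bfs_\bfd)_0$-module and therefore the sequence splits in $\grmod(^bS^\bfs_\bfd)$, giving $\Ext^1_{\grmod(^bS^\bfs_\bfd)}(L^\lambda,L^\xi)=0$.

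Since Corollary \ref{coro_bas-Sch-pos} has already supplied both ingredients (nonnegativity of the grading and semisimplicity of the degree $0$ part), there is no real obstacle; the argument is a standard consequence of the graded structure and the corollary reduces to unpacking definitions, which explains why the authors state it without further comment beyond citing Corollary \ref{coro_bas-Sch-pos} and the observation about degree $0$ concentration.
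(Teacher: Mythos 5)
Your argument is correct and is exactly the paper's reasoning: the remark preceding the corollary records that each $L^\lambda$, viewed as an $^bS^\bfs_\bfd$-module, is concentrated in degree $0$, and combined with Corollary \ref{coro_bas-Sch-pos} (nonnegative grading, semisimple degree-zero part) any graded extension is annihilated by the positive part and hence splits. You have simply unpacked the same two ingredients the paper cites, so there is nothing to add.
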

Let $\xi\in\calP_{\bfd}^l$. Consider the filtration of $P^\xi$ given by
$$
P^\xi=P^\xi(0)\supset P^\xi(1)\supset P^\xi(2)\supset\cdots,
$$
where $P^\xi(f)=\sum_\theta\Im \theta$, the map $\theta$ runs over the set of homogeneous maps in $\Hom_{S^\bfs_\bfd}(P^\lambda,P^\xi)$ of degree $\geqslant f$ (for the grading in Section \ref{subs_admis-grad}) and $\lambda$ varies in $\calP^l_\bfd$.
Note that $P^\xi(f)$ is zero for sufficiently large $f$.

\begin{lem}
\label{lem_new-filtration}
Let $\lambda,\xi\in\calP_{\bfd}^l$ and $g,s\in\bbN$. The following hold.

{\rm(a)} if $s<g$ then $L^\lambda\langle s\rangle$ is not a constituent of $P^\xi(g)$,

{\rm(b)} if $s\geqslant g$ then $L^\lambda\langle s\rangle$ is not a constituent of $P^\xi/P^\xi(g)$.
\end{lem}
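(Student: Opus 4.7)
The plan is to show that the filtration $P^\xi(\bullet)$ coincides with the grading filtration on $P^\xi$, after which both (a) and (b) become immediate statements about which graded degrees host composition factors of a given graded module.

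First, I would transfer the question to the basic algebra $^bS^\bfs_\bfd$ via the graded Morita equivalence of Section \ref{subs_isom-gr-KLR}: since Hom spaces (and their gradings) are preserved, both the filtration $P^\xi(\bullet)$ and the graded composition multiplicities are unchanged. Under this equivalence, $P^\xi$ is identified with a graded indecomposable projective $^bS^\bfs_\bfd$-module generated by a homogeneous idempotent in degree $0$. Using Corollary \ref{coro_bas-Sch-pos}, the algebra $^bS^\bfs_\bfd$ is nonnegatively graded with semisimple degree-zero part, so $P^\xi$ (and likewise every $P^\lambda$) is nonnegatively graded with top $L^\xi$ concentrated in degree $0$.

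Second, I would prove the identification $P^\xi(g)=\bigoplus_{b\geqs g}(P^\xi)_b$. The inclusion $\subseteq$ is immediate: a homogeneous map $\theta\colon P^\lambda\to P^\xi$ of degree $d\geqs g$ sends the degree-$0$ generator of $P^\lambda$ (and hence all of $P^\lambda$) into $\bigoplus_{b\geqs d}(P^\xi)_b \subset \bigoplus_{b\geqs g}(P^\xi)_b$. For the reverse inclusion, I would take a homogeneous element $v\in (P^\xi)_d$ with $d\geqs g$. The cyclic submodule $(^bS^\bfs_\bfd)v$ has top concentrated in degree $d$, and this top is a semisimple module over $(^bS^\bfs_\bfd)_0$. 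Taking a graded projective cover $\bigoplus_i P^{\mu_i}\langle d\rangle$ of this top and composing its lift with the inclusion into $P^\xi$ produces a homogeneous map $\bigoplus_i P^{\mu_i}\to P^\xi$ of degree $d\geqs g$ whose image contains $v$, so $v\in P^\xi(g)$.

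Finally, the conclusion follows from the general fact that in a graded module each graded simple factor $L^\lambda\langle s\rangle$ contributes only to the degree-$s$ piece of the graded dimension, since $L^\lambda$ itself is concentrated in degree $0$. As $P^\xi(g)=\bigoplus_{b\geqs g}(P^\xi)_b$ is concentrated in degrees $\geqs g$, any composition factor $L^\lambda\langle s\rangle$ of it must satisfy $s\geqs g$, proving (a); and $P^\xi/P^\xi(g)=\bigoplus_{b<g}(P^\xi)_b$ is concentrated in degrees $<g$, forcing $s<g$ for any composition factor, proving (b). The main obstacle is the reverse inclusion in the second step, where the nonnegativity of the grading and the semisimplicity of $(^bS^\bfs_\bfd)_0$ are essential to produce, from an arbitrary homogeneous element of $P^\xi$, a homogeneous map from a projective whose image captures it.
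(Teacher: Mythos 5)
Your proof is correct, and it rests on the same two pillars as the paper's argument -- the positivity of the grading on projectives (Lemma \ref{lem_dec-num-pos}, repackaged as Corollary \ref{coro_bas-Sch-pos}) and a projectivity/lifting step -- but it is organized differently. The paper works at the level of composition factors and never identifies $P^\xi(g)$ with a degree truncation: part (a) is deduced from $c_{\mu\lambda}(q)\in\bbN[q]$ (the image of a homogeneous map $P^\mu\to P^\xi$ of degree $d\geqslant g$ is a quotient of $P^\mu\langle d\rangle$, all of whose constituents are shifts $L^\lambda\langle s\rangle$ with $s\geqslant d\geqslant g$), and part (b) is a contradiction argument: a constituent $L^\lambda\langle s\rangle$ of $P^\xi/P^\xi(g)$ with $s\geqslant g$ would, by projectivity of $P^\lambda$, lift to a homogeneous map $\theta\colon P^\lambda\to P^\xi$ of degree $s\geqslant g$ with $\Im\theta\not\subset P^\xi(g)$, contradicting the definition of $P^\xi(g)$. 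You instead pass to the basic algebra $^bS^\bfs_\bfd$ and prove the stronger identification $P^\xi(g)=\bigoplus_{b\geqslant g}(P^\xi)_b$: your forward inclusion is exactly the paper's (a), and your reverse inclusion (projective cover of the top of the cyclic submodule generated by a homogeneous element, using that the grading is nonnegative and $(^bS^\bfs_\bfd)_0$ is semisimple) plays precisely the role of the paper's lifting step in (b). What your packaging buys is a cleaner, reusable statement -- over $^bS^\bfs_\bfd$ the filtration $P^\xi(\bullet)$ is literally the grading filtration, from which (a), (b), and also Lemma \ref{coro_filtr-comp} drop out immediately -- at the modest cost of the Morita bookkeeping, which is genuinely needed in your argument so that the simple modules are concentrated in single degrees and your final degree count is valid.
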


\begin{proof}
Part (a) follows from the fact that $c_{\xi\lambda}(q)=[P^\xi:L^\lambda]_q$ is in $\bbN[q]$, see Lemma \ref{lem_dec-num-pos}. Now, we prove (b).
Suppose that there exists a submodule $P'^\xi\subset P^\xi$ containing $P^\xi(g)$ such that there exists a nonzero homogeneous morphism $P'^\xi/P^\xi(g)\to L^\lambda\langle s\rangle$. By the projectivity of $P^\lambda$, the canonical map $P^\lambda\langle s\rangle\to L^\lambda\langle s\rangle$ factors as follows
$$
P^\lambda\langle s\rangle\to P'^\xi\to P'^\xi/P^\xi(g)\to  L^\lambda\langle s\rangle.
$$
The left arrow yields a morphism $\theta\in\Hom_{S_{\bfd}^\bfs}(P^\lambda,P'^\xi)$ of degree $s$ such that $\Im\theta\not\subset P^\xi(g)$. This contradicts to the definition of $P^\xi(g)$.
\end{proof}

\begin{lem}
\label{coro_filtr-comp}
For each $f\in\bbN$ and each $\xi\in\calP^l_\bfd$, we have $\rad^f P^\xi\subset P^\xi(f)$.
\end{lem}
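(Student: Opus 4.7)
I would argue by induction on $f$. The base case $f=0$ is immediate, since $P^\xi(0)=P^\xi$: the identity map $\mathrm{id}_{P^\xi}$ is homogeneous of degree $0$ and has image all of $P^\xi$, so $P^\xi \subset P^\xi(0)$, while the reverse inclusion is tautological.

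For the inductive step, assume $\rad^f P^\xi \subset P^\xi(f)$. The crucial observation is that the quotient $P^\xi(f)/P^\xi(f+1)$ is semisimple. By Lemma \ref{lem_new-filtration}(a), every graded composition factor of $P^\xi(f)$ is of the form $L^\mu\langle s \rangle$ with $s \geqs f$; by Lemma \ref{lem_new-filtration}(b) applied with $g = f+1$, every composition factor of $P^\xi/P^\xi(f+1)$ satisfies $s \leqs f$. Hence every composition factor of the subquotient $P^\xi(f)/P^\xi(f+1)$ satisfies $s=f$, so this module is a successive extension of modules of the form $L^\mu\langle f\rangle$. Since the shift functor is an auto-equivalence of $\grmod(S^\bfs_\bfd)$, Corollary \ref{coro-Ext1gr=0} gives $\Ext^1_{\grmod(S^\bfs_\bfd)}(L^\mu\langle f\rangle,L^\nu\langle f\rangle)=0$ for all $\mu,\nu\in\calP^l_\bfd$, so $P^\xi(f)/P^\xi(f+1)$ is semisimple.

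Now consider the composition
$$
\varphi\colon \rad^f P^\xi \hookrightarrow P^\xi(f)\twoheadrightarrow P^\xi(f)/P^\xi(f+1).
$$
The target is semisimple, so any quotient of $\rad^f P^\xi$ mapping to it is semisimple, and therefore the kernel of $\varphi$ contains the radical $\rad(\rad^f P^\xi)=\rad^{f+1}P^\xi$. On the other hand, $\ker\varphi = \rad^f P^\xi \cap P^\xi(f+1)\subset P^\xi(f+1)$. Combining these,
$\rad^{f+1}P^\xi \subset P^\xi(f+1)$, which closes the induction.

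The only delicate point is the semisimplicity of $P^\xi(f)/P^\xi(f+1)$; once that is in hand, the containment $\rad \subset \ker(\text{map to semisimple})$ does all the work. Everything else is a direct application of Lemma \ref{lem_new-filtration} and Corollary \ref{coro-Ext1gr=0}, both already at our disposal.
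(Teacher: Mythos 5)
Your proof is correct and follows the paper's own argument: both establish that each layer $P^\xi(f)/P^\xi(f+1)$ has all composition factors of the form $L^\mu\langle f\rangle$ via Lemma \ref{lem_new-filtration} and deduce semisimplicity from Corollary \ref{coro-Ext1gr=0}. The only difference is that you spell out the final standard step (a filtration with semisimple layers contains the radical filtration) by induction, which the paper leaves implicit.
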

\begin{proof}
By Lemma \ref{lem_new-filtration}, for each $f\in\bbN$, any simple subquotient of $P^\xi(f)/P^\xi(f+1)$ is of the form $L^\lambda\langle f \rangle$ for some $\lambda\in\calP^l_\bfd$. Thus, by Corollary \ref{coro-Ext1gr=0}, the module $P^\xi(f)/P^\xi(f+1)$ is semisimple. This implies the statement.
\end{proof}

\begin{lem}
\label{lem_coeff-triang}
For each $\lambda,\xi\in\calK_{\bfd}^l$ and each $g, s\in\bbN$ such that $s<g$, we have $c^{gs}_{\xi\lambda}=0$.
\end{lem}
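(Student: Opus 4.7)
The plan is to deduce the vanishing from the two filtrations of $P^\xi$ already at our disposal: the radical filtration $\rad^\bullet P^\xi$ and the auxiliary ``degree filtration'' $P^\xi(\bullet)$ built from images of homogeneous morphisms of degree $\geqslant f$.

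First I would apply Lemma \ref{coro_filtr-comp}, which yields the inclusion $\rad^g P^\xi \subset P^\xi(g)$. Next, Lemma \ref{lem_new-filtration}(a) tells us that $L^\lambda\langle s\rangle$ is not a composition factor of $P^\xi(g)$ whenever $s<g$. Combining these two facts, $L^\lambda\langle s\rangle$ cannot be a composition factor of the submodule $\rad^g P^\xi \subset P^\xi(g)$, and therefore it cannot appear in its subquotient $\rad^g P^\xi / \rad^{g+1}P^\xi$ either. This is exactly the desired vanishing $c_{\xi\lambda}^{gs}=0$.

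No real obstacle is anticipated: the statement is a short formal consequence of the preceding two lemmas, and the only mild subtlety is to recognize that the degree filtration $P^\xi(\bullet)$ concentrates each shift $L^\lambda\langle s\rangle$ in a single layer (namely $P^\xi(s)/P^\xi(s+1)$ by Lemma \ref{lem_new-filtration}), while Lemma \ref{coro_filtr-comp} says the radical filtration is coarser in a way that respects this concentration. Note that the hypothesis $\lambda,\xi\in\calK_\bfd^l$ is not actually used in this step; it is presumably imposed for consistency with the downstream applications of the lemma.
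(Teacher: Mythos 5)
Your argument is correct and is essentially the paper's own proof: it combines Lemma \ref{coro_filtr-comp} ($\rad^g P^\xi\subset P^\xi(g)$) with Lemma \ref{lem_new-filtration}(a) to conclude that every constituent of $\rad^g P^\xi$, hence of $\rad^g P^\xi/\rad^{g+1}P^\xi$, has shift $\geqslant g$. Your side remark is also accurate: only $\xi\in\calK^l_\bfd$ is needed (to have $c^{gs}_{\xi\lambda}$ defined), while the restriction on $\lambda$ plays no role in the argument.
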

\begin{proof}
By Lemma \ref{lem_new-filtration} and Corollary \ref{coro_filtr-comp} each simple subquotient of $\rad^gP^\xi$ is of the form $L^\lambda\langle f \rangle$ for some $\lambda\in\calP^l_\bfd$ and $f\geqslant g$. This implies the statement.
\end{proof}

\begin{coro}
\label{lem_P-semirigid}
For each $\lambda,\xi\in\calK_{\bfd}^l$ we have $[\mathrm{rad}^gP^\xi/\mathrm{rad}^{g+1}P^\xi:L^\lambda]_q=c_{\xi\lambda}^gq^g$.
\end{coro}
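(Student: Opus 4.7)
The goal is to show $c_{\xi\lambda}^{gs} = c_{\xi\lambda}^g\,\delta_{g,s}$ for $\lambda,\xi\in\calK^l_\bfd$, since by exactness of the functor $\Hom_{S^\bfs_\bfd}(P^\lambda,-)$ one has $[\rad^g P^\xi/\rad^{g+1}P^\xi : L^\lambda]_q = \sum_s c_{\xi\lambda}^{gs}q^s$. Lemma \ref{lem_coeff-triang} already gives the triangular vanishing $c_{\xi\lambda}^{gs}=0$ for $s<g$, so the task reduces to the reverse vanishing $c_{\xi\lambda}^{gs}=0$ for $s>g$. The plan is to apply Lemma \ref{lem_coeff-triang} to a suitably dualized copy of these multiplicities, using that $P^\xi$ is both rigid (Corollary \ref{coro_P-rigid}) and graded-autodual up to a shift.

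First I would establish the graded autoduality. The module $P^\xi_\calO$ is tilting in $\bfA$ (as in the proof of Lemma \ref{lem_P-rigid}), hence fixed by the BGG duality on $\bfA$; transporting along the ungraded equivalence $E_{\mathrm S}$ of Theorem \ref{thm_equiv-Schur-O-ungr}, the ungraded $S^\bfs_\bfd$-module $P^\xi$ is also fixed by $D$. Uniqueness of graded lifts \cite[Lem.~2.5.3]{BGS} combined with property $(\mathrm{b}_4)$ of the admissible grading then forces $\overline D(P^\xi)\simeq P^\xi\langle -k\rangle$, where $k$ is the degree at which $L^\xi$ appears in $\soc P^\xi$. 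Using Corollary \ref{coro_c=cO} together with the very rigidity of $P^\xi_\calO$ (Corollary \ref{coro_basis-d=s}), the polynomial $c_{\xi\xi}(q)=c^\calO_{\xi\xi}(q)$ has top degree $r-1$, where $r$ is the common Loewy length of $P^\xi$ and $P^\xi_\calO$; combined with the palindromy $c_{\xi\xi}^s=c_{\xi\xi}^{k-s}$ coming from the autoduality this pins down $k=r-1$.

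Next, the rigidity identity $\rad^g P^\xi = \soc^{r-g}P^\xi$ of Corollary \ref{coro_P-rigid}, the contravariance of $\overline D$, and the formula $\overline D(P^\xi)=P^\xi\langle -k\rangle$ yield an isomorphism $\overline D(P^\xi/\rad^g P^\xi)\simeq (\rad^{r-g}P^\xi)\langle -k\rangle$ by identifying both sides as the maximal subobject of $\overline D(P^\xi)$ of socle length $\leqs g$. Passing to graded composition multiplicities via $[M:L^\lambda\langle s\rangle]=[\overline D(M):L^\lambda\langle -s\rangle]$ and differencing in the parameter $g$, I would extract the key symmetry
\[
c_{\xi\lambda}^{g,\,s}\;=\;c_{\xi\lambda}^{r-g-1,\,k-s}.
\]
With $k=r-1$, applying Lemma \ref{lem_coeff-triang} to the right-hand side kills $c_{\xi\lambda}^{r-g-1,\,r-1-s}$ whenever $r-1-s<r-g-1$, i.e.\ whenever $s>g$. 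Combined with Lemma \ref{lem_coeff-triang} itself this collapses $c_{\xi\lambda}^{gs}$ to $c_{\xi\lambda}^{gg}\delta_{gs}$, and the identity $\sum_h c_{\xi\lambda}^{hs}=c_{\xi\lambda}^s$ then identifies $c_{\xi\lambda}^{gg}$ with $c_{\xi\lambda}^g$, giving the formula in the statement.

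The step I expect to be the main obstacle is the identification of the shift $k$ with $r-1$: this is where the graded autoduality, the palindromic structure of $c_{\xi\xi}(q)$, and Corollary \ref{coro_c=cO} all have to be combined, and the Loewy length must be tracked carefully across the ungraded Morita equivalence between $S^\bfs_\bfd$ and $S^\calO$. Everything else is essentially bookkeeping once autoduality and rigidity are in place.
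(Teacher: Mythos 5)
Your proposal is correct, but it proves the statement by a genuinely different mechanism than the paper. The paper argues by contradiction: taking $g$ maximal where the formula fails, it computes $[P^\xi/\rad^{g+1}P^\xi:L^\lambda]_q=\sum_{t\leqslant g}c^t_{\xi\lambda}q^t$, uses Lemma \ref{lem_coeff-triang} to see that the $g$th layer is concentrated in degree $g$ with coefficient $c'^g_{\xi\lambda}\leqslant c^g_{\xi\lambda}$ (strictly smaller if the statement fails), and then gets a contradiction by evaluating at $q=1$: the ungraded layer multiplicity is transported through the Morita equivalence to $\calO$, where very rigidity (Lemma \ref{lem_P-rigid}) and Corollary \ref{coro_c=cO} force it to equal $c^g_{\xi\lambda}$. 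You instead obtain the complementary vanishing $c^{gs}_{\xi\lambda}=0$ for $s>g$ directly, by showing $\overline D(P^\xi)\simeq P^\xi\langle-(r-1)\rangle$ and combining the resulting symmetry $c^{g,s}_{\xi\lambda}=c^{r-1-g,\,r-1-s}_{\xi\lambda}$ (via the rigidity of Corollary \ref{coro_P-rigid}) with Lemma \ref{lem_coeff-triang} applied to the dual layer; the comparison with $\calO$ (Corollaries \ref{coro_c=cO}, \ref{coro_basis-d=s}) enters only to pin the shift $k=r-1$, using the palindromy of $c_{\xi\xi}(q)$ together with Lemma \ref{lem_dec-num-pos}. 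What your route buys is a structural statement (graded self-duality of $P^\xi$ up to an explicit shift, hence very rigidity of $P^\xi$ for the admissible grading) rather than a pure multiplicity count; what the paper's route buys is brevity and the avoidance of any discussion of how the duality interacts with projectives. One point you should tighten: the phrase ``transporting along $E_{\mathrm S}$, $P^\xi$ is fixed by $D$'' conflates the BGG duality carried over by Theorem \ref{thm_equiv-Schur-O-ungr} with the duality $D$ of Definition \ref{def_adm-grad}, and no compatibility between these two functors is assumed. The fix is immediate: since $D$ fixes all simples by $(\mathrm b_3)$, $D(P^\xi)$ is the injective envelope of $L^\xi$, and $P^\xi$ is injective with socle $L^\xi$ (transport of the injectivity and BGG-self-duality of $P^\xi_\calO$ established in the proof of Lemma \ref{lem_P-rigid}), so $D(P^\xi)\simeq P^\xi$ for the admissible duality as well; the rest of your argument then goes through as written.
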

\begin{proof}
Assume that the statement is false. Let $g$ be maximal such that the statement fails. Thus, we have
\begin{eqnarray*}
[P^\xi/\mathrm{rad}^{g+1}P^\xi:L^\lambda]_q&=&c_{\xi\lambda}(q)-\sum_{t\geqslant g+1}[\mathrm{rad}^{t}P^\xi/\mathrm{rad}^{t+1}P^\xi:L^\lambda]_q\\
&=&c_{\xi\lambda}(q)-\sum_{t\geqslant g+1}c_{\xi\lambda}^{t}q^t\\
&=&\sum_{t=0}^gc_{\xi\lambda}^tq^t.
\end{eqnarray*}
Here the second equality follows from the maximality of $g$.
By Lemma \ref{lem_coeff-triang}, each power of $q$ in $[\rad^g P^\xi/\rad^{g+1}P^\xi:L^\lambda]_q$ is $\geqslant g$. We deduce that $[\mathrm{rad}^gP^\xi/\mathrm{rad}^{g+1}P^\xi:L^\lambda]_q={c'}_{\xi\lambda}^gq^g$ for some ${c'}_{\xi\lambda}^g< {c}_{\xi\lambda}^g$. Now, we have
$$
{c'}_{\xi\lambda}^g=[\mathrm{rad}^gP^\xi/\mathrm{rad}^{g+1}P^\xi:L^\lambda]_1=c^{\calO g}_{\xi\lambda}={c}_{\xi\lambda}^g.
$$
Here the second equality is Lemma \ref{lem_P-rigid}.
We come to a contradiction.
\end{proof}
Since $[\mathrm{rad}^gP^\xi/\mathrm{rad}^{g+1}P^\xi:L^\lambda]_q=\sum_{s\in\bbN} c_{\xi\lambda}^{gs}q^s$, we deduce from Corollary \ref{lem_P-semirigid} the following.
\begin{coro}
\label{coro_g=s}
For each $\lambda,\xi\in\calK^l_\bfd$ we have
$c_{\xi\lambda}^{gs}=c_{\xi\lambda}^{s}$ if  $g=s$ and $c_{\xi\lambda}^{gs}=0$ else.\qed
\end{coro}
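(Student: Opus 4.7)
The plan is to prove this corollary by simply comparing coefficients of powers of $q$ on both sides of the identity established by Corollary \ref{lem_P-semirigid}. By the definition of the refined multiplicities $c_{\xi\lambda}^{gs}$ introduced in Section \ref{subs_isom-gr-KLR}, we may rewrite the graded multiplicity as
$$
[\mathrm{rad}^g P^\xi/\mathrm{rad}^{g+1} P^\xi : L^\lambda]_q \;=\; \sum_{s\in\bbN} c_{\xi\lambda}^{gs}\, q^s.
$$
Corollary \ref{lem_P-semirigid} asserts that this same polynomial equals the single monomial $c_{\xi\lambda}^{g}\, q^g$. Extracting the coefficient of $q^s$ on both sides therefore yields $c_{\xi\lambda}^{gs} = 0$ for every $s \neq g$, and $c_{\xi\lambda}^{gg} = c_{\xi\lambda}^{g} = c_{\xi\lambda}^{s}$ when $s = g$, which is exactly the claim.

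There is no genuine obstacle here: the actual content lies entirely in Corollary \ref{lem_P-semirigid}, which says that the homogeneous contributions to the layer $\mathrm{rad}^g P^\xi / \mathrm{rad}^{g+1} P^\xi$ are concentrated in a single internal degree equal to $g$. That concentration is precisely the statement that the radical filtration of $P^\xi$ coincides with its grading filtration up to shift, i.e.\ that $P^\xi$ is very rigid, which was derived from the rigidity of $P_\calO^\lambda$ (Lemma \ref{lem_P-rigid}), itself a consequence of BGG self-duality of the projectives in $\bfA$ combined with Lemma \ref{lem_rigidity}. Thus the present corollary is just the bookkeeping translation of that rigidity into the language of the numbers $c_{\xi\lambda}^{gs}$, which is why the author writes it as an immediate consequence.
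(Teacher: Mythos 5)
Your proof is correct and is exactly the paper's own argument: the paper deduces Corollary \ref{coro_g=s} in one line by writing $[\mathrm{rad}^gP^\xi/\mathrm{rad}^{g+1}P^\xi:L^\lambda]_q=\sum_{s}c_{\xi\lambda}^{gs}q^s$ and comparing coefficients with the monomial $c_{\xi\lambda}^gq^g$ from Corollary \ref{lem_P-semirigid}. Your surrounding commentary on rigidity matches the paper's chain of reasoning leading up to that corollary, so nothing further is needed.
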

Recall the equivalence of categories $E_S$ defined in Theorem \ref{thm_equiv-Schur-O-ungr}. Recall also the definition of the elements $\theta_{\lambda\xi}^{gst}\in\Hom_{\bfA}(P^\lambda_\calO,P^\xi_\calO)$. Set
$$
v_{\lambda\xi}^{gst}=E_S^{-1}(\theta_{\lambda\xi}^{gst})\in \Hom_{S_{\bfd}^\bfs}(P^\lambda,P^\xi)\subset {^bR}_{\bfd}^\bfs.
$$
The set $\{v_{\lambda\xi}^{gst};\lambda,\xi\in\calK^l_\bfd,g,s\in\bbN,t\in [1,c_{\xi\lambda}^{gs}]\}$ is a basis of $^bR_{\bfd}^\bfs$ because $E_S$ is an equivalence of categories. We have $\Im v_{\lambda\xi}^{gst}\subset \rad^g P^\xi$ and $\Im v_{\lambda\xi}^{gst}\not\subset \rad^{g+1} P^\xi$. Thus the analogue of Lemma \ref{lem_basis-rad-KLR} mentioned in Remark \ref{rem_basis} and the Corollary \ref{coro_g=s} imply that the minimal nonzero homogeneous component in $v_{\lambda\xi}^{gst}$ has degree $g$. Denote this homogeneous component of degree $g$ by ${v'}_{\lambda\xi}^{gst}$.

Consider the $\bfk$-linear map $\Phi\colon R^\calO\to R_{\bfd}^\bfs$ such that $\theta_{\lambda\xi}^{gst}\mapsto {v'}_{\lambda\xi}^{gst}$.
\begin{thm}
\label{thm-isom-grad-KLR}
The map $\Phi$ is a graded $\bfk$-algebra isomorphism.
\end{thm}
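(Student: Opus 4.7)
The plan is to exploit the ungraded algebra isomorphism $E_S^{-1}\colon R^\calO\to{}^bR^\bfs_\bfd$ and then use the radical/grading filtration comparison to extract the graded structure. Recall that by Corollary \ref{coro_basis-d=s} the basis element $\theta_{\lambda\xi}^{gst}$ only exists when $g=s$, and then $\theta_{\lambda\xi}^{ggt}$ is homogeneous of degree $g$; by construction ${v'}_{\lambda\xi}^{ggt}$ is also homogeneous of degree $g$, so $\Phi$ is manifestly degree preserving.

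To prove bijectivity I would first upgrade rigidity to very rigidity: Corollary \ref{lem_P-semirigid} combined with Corollary \ref{coro_g=s} shows that every simple constituent of $\rad^g P^\xi/\rad^{g+1}P^\xi$ lies in degree $g$, hence $\rad^g P^\xi=\bigoplus_{h\geqs g}(P^\xi)_h$. Because $P^\lambda$ is generated in degree $0$ (using Corollary \ref{coro_bas-Sch-pos}), a homogeneous map $P^\lambda\to P^\xi$ lands in $\rad^gP^\xi$ if and only if it has degree $\geqs g$. Consequently the radical filtration on $\Hom_{S^\bfs_\bfd}(P^\lambda,P^\xi)$ agrees with the filtration $F^g=\bigoplus_{h\geqs g}\Hom(P^\lambda,P^\xi)_h$. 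By the analogue of Lemma \ref{lem_basis-rad-KLR} noted in Remark \ref{rem_basis}, $\{v_{\lambda\xi}^{g'g't'}:g'\geqs g\}$ is a basis of $\Hom(P^\lambda,\rad^gP^\xi)=F^g$, and passing to $F^g/F^{g+1}=\Hom(P^\lambda,P^\xi)_g$ the classes of $\{v_{\lambda\xi}^{ggt}\}_t$ form a basis. Since $v_{\lambda\xi}^{ggt}\equiv {v'}_{\lambda\xi}^{ggt}\pmod{F^{g+1}}$ and ${v'}_{\lambda\xi}^{ggt}\in\Hom(P^\lambda,P^\xi)_g$, the family $\{{v'}_{\lambda\xi}^{ggt}\}_t$ is a basis of the graded piece of degree $g$. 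Taking the union over $g,t,\lambda,\xi$ produces a homogeneous basis of ${}^bR^\bfs_\bfd$, so $\Phi$ is a bijection. (Alternatively one can conclude by graded dimension counting using Corollary \ref{coro_c=cO}.)

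For multiplicativity, take two homogeneous basis elements $\theta_1=\theta_{\lambda\xi}^{g_1 g_1 t_1}$ and $\theta_2=\theta_{\xi\eta}^{g_2 g_2 t_2}$ in $R^\calO$. Since $R^\calO$ is graded and $\theta_1\theta_2$ is homogeneous of degree $g_1+g_2$, only basis elements with $g=s=g_1+g_2$ appear in its expansion: $\theta_1\theta_2=\sum_t c_t\theta_{\lambda\eta}^{(g_1+g_2)(g_1+g_2)t}$. Applying the ungraded algebra map $E_S^{-1}$ yields the identity $v_1v_2=\sum_t c_t v_{\lambda\eta}^{(g_1+g_2)(g_1+g_2)t}$ in ${}^bR^\bfs_\bfd$. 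Now take the homogeneous component of degree $g_1+g_2$ on both sides. On the left, writing $v_i={v'}_i+\text{(higher degree)}$ for $i=1,2$, the degree $g_1+g_2$ component of the product is exactly ${v'}_1{v'}_2$; on the right, since $g_1+g_2$ is the minimal degree occurring in each summand, the component is $\sum_t c_t{v'}_{\lambda\eta}^{(g_1+g_2)(g_1+g_2)t}$. Comparing gives $\Phi(\theta_1)\Phi(\theta_2)=\Phi(\theta_1\theta_2)$, completing the proof.

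The main obstacle is the identification $F^g\Hom(P^\lambda,P^\xi)=\Hom(P^\lambda,\rad^gP^\xi)$, since this comparison between the intrinsic radical filtration (preserved by the ungraded equivalence $E_S^{-1}$) and the filtration coming from the admissible grading (defined only on ${}^bS^\bfs_\bfd$) is precisely what makes $\Phi$ graded; this rests on the delicate interplay between Corollary \ref{lem_P-semirigid}, Corollary \ref{coro_g=s}, and the non-negativity of the admissible grading established in Corollary \ref{coro_bas-Sch-pos}.
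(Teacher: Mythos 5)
Your multiplicativity argument and your treatment of the leading terms coincide with the paper's own proof: the paper also writes $\Phi(\theta)\equiv E_S^{-1}(\theta)$ modulo the ideal of components of degree $>\deg\theta$ and concludes by homogeneity, so that part is fine. The problem is in your justification of the filtration identification that you yourself single out as the crux. You claim that Corollary \ref{lem_P-semirigid} together with Corollary \ref{coro_g=s} gives $\rad^g P^\xi=\bigoplus_{h\geqs g}(P^\xi)_h$, i.e.\ that \emph{every} simple constituent of $\rad^gP^\xi/\rad^{g+1}P^\xi$ sits in degree $g$. Both of those corollaries are proved only for $\lambda,\xi\in\calK^l_\bfd$: they control the constituents $L^\lambda$ with $\lambda\in\calK^l_\bfd$ and say nothing about the constituents $L^\lambda$ with $\lambda\in\calP^l_\bfd\setminus\calK^l_\bfd$, which do occur in $P^\xi$. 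Controlling those factors would require $c_{\xi\lambda}(q)=c^\calO_{\xi\lambda}(q)$ for \emph{all} $\lambda\in\calP^l_\bfd$, whereas Corollary \ref{coro_c=cO} only gives this for $\lambda\in\calK^l_\bfd$ (the Brundan--Kleshchev input via $D^\lambda$, $Y^\lambda$ is unavailable when $D^\lambda=0$); in fact the full graded very-rigidity of $P^\xi$ over $S^\bfs_\bfd$ is essentially a consequence of Theorem \ref{thm_main-theorem}, so invoking it here is close to circular. Concretely, the direction ``degree $\geqs g$ implies image in $\rad^g P^\xi$'' is exactly where this unproved statement is used, and your argument for it would fail as written.

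The statement you actually need, namely $\bigoplus_{h\geqs g}\Hom_{S^\bfs_\bfd}(P^\lambda,P^\xi)_h=\Hom_{S^\bfs_\bfd}(P^\lambda,\rad^gP^\xi)$ for $\lambda,\xi\in\calK^l_\bfd$, is true and can be recovered from the paper's results without the overreach: one inclusion, $\Hom(P^\lambda,\rad^gP^\xi)\subseteq\bigoplus_{h\geqs g}\Hom(P^\lambda,P^\xi)_h$, follows from Lemma \ref{lem_new-filtration} and Lemma \ref{coro_filtr-comp} (every constituent of $\rad^gP^\xi$ is of the form $L^\mu\langle f\rangle$ with $f\geqs g$, for arbitrary $\mu\in\calP^l_\bfd$), and the reverse inclusion then follows by a dimension count: by Remark \ref{rem_basis} the left-hand side has dimension $\sum_{g'\geqs g}\sum_s c^{g's}_{\xi\lambda}$, which equals $\sum_{h\geqs g}c^h_{\xi\lambda}$ by Corollary \ref{coro_g=s} (valid since $\lambda,\xi\in\calK^l_\bfd$). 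This is in effect the dimension-counting route you mention parenthetically via Corollary \ref{coro_c=cO}, and it is also how the paper proceeds implicitly: it never asserts very rigidity of $P^\xi$ itself, only that the minimal nonzero homogeneous component of $v^{gst}_{\lambda\xi}$ has degree $g$, which is all that is needed for both injectivity and the comparison modulo higher degrees. With that repair your proof is correct and follows the same route as the paper.
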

\begin{proof}
For each $f\in\bbN$ set $J^{>f}=\bigoplus_{n>f}(^bR^\bfs_\bfd)_n$.
By construction, for each $\theta_{\lambda\xi}^{gst}\in\Theta$ we have $\Phi(\theta_{\lambda\xi}^{gst})\equiv v_{\lambda\xi}^{gst} ~\mod~J^{>g}$.
This implies that for each homogeneous element $\theta\in {^bR^l_\bfd}$
we have $\Phi(\theta)\equiv E_S^{-1}(\theta)~\mod ~J^{>\deg\theta}$.
Now, for each $\theta_{\lambda\xi}^{gst},\theta_{\lambda'\xi'}^{g's't'}\in\Theta$ we have
\begin{align*}
\Phi(\theta_{\lambda\xi}^{gst})\Phi(\theta_{\lambda'\xi'}^{g's't'})&\equiv v_{\lambda\xi}^{gst}v_{\lambda'\xi'}^{g's't'}\\
&\equiv E_S^{-1}(\theta_{\lambda\xi}^{gst}\theta_{\lambda'\xi'}^{g's't'})\\
&\equiv\Phi(\theta_{\lambda\xi}^{gst}\theta_{\lambda'\xi'}^{g's't'})~\mod~J^{>g+g'}.
\end{align*}
On the other hand the map $\Phi$ is homogeneous of degree zero. Thus $\Phi$ is a graded algebra isomorphism.
\end{proof}

\medskip

\subsection{Identification of grading of the Schur algebra}
The goal of this section is to show that the algebras $^bS_{\bfd}^\bfs$ and $S^\calO$ are isomorphic as graded algebras.

\begin{lem}
\label{lem_Young-indec}
For each $\lambda\in\calP^l_\bfd$, the $R^\bfs_\bfd$-module $Y^\lambda$ is indecomposable.
\end{lem}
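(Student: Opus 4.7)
The plan is to reduce the indecomposability of $Y^\lambda$ to the indecomposability of $P^\lambda$, which is already known (as $P^\lambda$ is the projective cover of the simple module $L^\lambda$ in $\mod(S^\bfs_\bfd)$, so $\End_{S^\bfs_\bfd}(P^\lambda)$ is local). The bridge between the two is Remark \ref{rk_full-faith-proj} (b), which asserts that the Schur functor $F$ is fully faithful on projectives.

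First I would recall that, by definition, $Y^\lambda = F(P^\lambda)$. Applying full-faithfulness of $F$ on projectives, I obtain a $\bfk$-algebra isomorphism
\[
\End_{R^\bfs_\bfd}(Y^\lambda) \;=\; \End_{R^\bfs_\bfd}(F(P^\lambda)) \;\simeq\; \End_{S^\bfs_\bfd}(P^\lambda).
\]
Since $P^\lambda$ is indecomposable, the right-hand side is a local $\bfk$-algebra, hence so is the left-hand side. Because $Y^\lambda$ is finitely generated over the finite-dimensional algebra $R^\bfs_\bfd$, having a local endomorphism ring is equivalent to being indecomposable, which gives the claim.

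There is essentially no obstacle: the statement is a formal consequence of what has already been proved. The only subtlety to mention is that the local-endomorphism-ring criterion for indecomposability requires the finite-dimensionality setting, which holds here since $R^\bfs_\bfd \simeq H^\bfs_\bfd$ is a finite-dimensional $\bfk$-algebra by Theorem \ref{thm_isom-KLR-Hecke}, so every finitely generated module is of finite length and the Krull--Schmidt theorem applies.
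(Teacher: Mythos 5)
Your argument is exactly the paper's proof: both deduce the locality of $\End_{R^\bfs_\bfd}(Y^\lambda)$ from the isomorphism $\End_{R^\bfs_\bfd}(Y^\lambda)\simeq\End_{S^\bfs_\bfd}(P^\lambda)$ given by Remark \ref{rk_full-faith-proj} (b), using that $P^\lambda$ is indecomposable with simple top $L^\lambda$. Your extra remark on finite-dimensionality and Krull--Schmidt is a harmless addition; the proof is correct.
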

\begin{proof}
The $S^\bfs_\bfd$-module $P^\lambda$ is indecomposable because it has simple top $L^\lambda$. Thus the ring $\End_{S^\bfs_\bfd}(P^\lambda)$ is local. By Remark \ref{rk_full-faith-proj} (b) we have $\End_{R^\bfs_\bfd}(Y^\lambda)=\End_{S^\bfs_\bfd}(P^\lambda)$. Thus the module $Y^\lambda$ is also indecomposable.
\end{proof}

By Theorem \ref{thm-isom-grad-KLR} we can identify the graded categories $\grmod(R_{\bfd}^\bfs)\simeq \grmod(R^\calO)$.
For each $\lambda\in\calP_{\bfd}^l$, there is an integer $a_\lambda$ such that $Y^\lambda=Y_\calO^\lambda\langle a_\lambda\rangle$, because the modules $Y^\lambda$ and $Y_\calO^\lambda$ are indecomposable and are isomorphic as ungraded modules, see the construction of the functor $E_H$ and Lemma \ref{lem_Young-indec}.

\begin{lem}
For each $\lambda,\xi\in \calP_{\bfd}^l$ we have $a_\lambda=a_\xi$.
\end{lem}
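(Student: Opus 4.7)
The plan is to compute $\dim_q\Hom_{R^\bfs_\bfd}(Y^\lambda,Y^\xi)$ in two different ways, and to combine this with the symmetry of the Cartan matrices (Corollary~\ref{coro_mat-sym}) and the indecomposability of $S^\bfs_\bfd$ (recorded in Section~\ref{subs_def-Hecke-Schur}) to force all the shifts $a_\lambda$ to coincide.

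First I would combine the graded version of Remark~\ref{rk_full-faith-proj}~(b) with the standard identity $\dim_q\Hom(P^\lambda,M)=[M:L^\lambda]_q$ (which holds because $(\,{^bS_\bfd^\bfs})_0$ is semisimple by Corollary~\ref{coro_bas-Sch-pos} and each $L^\lambda$ is concentrated in degree $0$) to obtain, for every $\lambda,\xi\in\calP^l_\bfd$,
$$
\dim_q\Hom_{R^\bfs_\bfd}(Y^\lambda,Y^\xi)=\dim_q\Hom_{S^\bfs_\bfd}(P^\lambda,P^\xi)=c_{\xi\lambda}(q),
$$
and analogously $\dim_q\Hom_{R^\calO}(Y_\calO^\lambda,Y_\calO^\xi)=c^\calO_{\xi\lambda}(q)$. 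Under the identification $\grmod(R^\bfs_\bfd)\simeq\grmod(R^\calO)$ from Theorem~\ref{thm-isom-grad-KLR}, the relation $Y^\lambda=Y_\calO^\lambda\langle a_\lambda\rangle$ together with the shift rule $\dim_q\Hom(M\langle a\rangle,N\langle b\rangle)=q^{b-a}\dim_q\Hom(M,N)$ yields
$$
c_{\xi\lambda}(q)=q^{a_\xi-a_\lambda}\,c^\calO_{\xi\lambda}(q)\qquad\text{for all }\lambda,\xi\in\calP^l_\bfd.
$$

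Second, I would exchange the roles of $\lambda$ and $\xi$ and invoke Corollary~\ref{coro_mat-sym}, which exhibits both $c(q)=d(q)^td(q)$ and $c^\calO(q)=d^\calO(q)^td^\calO(q)$ as symmetric matrices. Substituting $c_{\lambda\xi}(q)=c_{\xi\lambda}(q)$ and $c^\calO_{\lambda\xi}(q)=c^\calO_{\xi\lambda}(q)$ into the two resulting identities forces
$$
q^{a_\xi-a_\lambda}\,c^\calO_{\xi\lambda}(q)=q^{-(a_\xi-a_\lambda)}\,c^\calO_{\xi\lambda}(q),
$$
so that $a_\xi=a_\lambda$ whenever $c^\calO_{\xi\lambda}(q)\neq 0$. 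To upgrade this pairwise implication to a universal equality I would argue by indecomposability: if the symmetric relation ``$c^\calO_{\xi\lambda}(q)\neq 0$'' partitioned $\calP^l_\bfd$ into two disjoint nonempty sets $X$ and $Y$, then $\Hom_{\bfA}(P_\calO^\lambda,P_\calO^\xi)=0$ for all $\lambda\in X$ and $\xi\in Y$, so the algebra $S^\calO=\bigoplus_{\lambda,\xi}\Hom_{\bfA}(P_\calO^\lambda,P_\calO^\xi)^{\rm op}$ would split as a direct sum of two proper subalgebras, contradicting the indecomposability of $S^\calO$ (which follows from that of $S^\bfs_\bfd$ via the Morita equivalence $\bfA\simeq\mod(S^\calO)$). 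Hence all $a_\lambda$ must be equal.

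The one delicate point is to justify that the ungraded full-faithfulness of Remark~\ref{rk_full-faith-proj}~(b) passes to the graded setting used above; this follows from the fact that $\overline F$ is a graded lift of $F$ commuting with the shift functor, so that every graded morphism between graded projectives decomposes into homogeneous pieces, each of which is detected by the underlying ungraded isomorphism on $\Hom$-spaces.
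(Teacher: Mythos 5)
Your proposal is correct and follows essentially the same route as the paper: both compute $\dim_q\Hom$ between the (graded) Young/projective modules in two ways to get $c_{\xi\lambda}(q)=q^{a_\xi-a_\lambda}c^{\calO}_{\xi\lambda}(q)$, use the symmetry of the Cartan matrix from Corollary \ref{coro_mat-sym} to conclude $a_\lambda=a_\xi$ whenever $c_{\lambda\xi}(q)\neq 0$, and then invoke indecomposability of the (Morita-equivalent) algebra to propagate the equality to all of $\calP^l_\bfd$. Your phrasing of the last step via a splitting of $S^\calO$ is just the contrapositive of the paper's chain argument using indecomposability of $S^\bfs_\bfd$, so the two proofs coincide in substance.
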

\begin{proof}
We have
\begin{eqnarray*}
c_{\lambda\xi}(q) & = & [P^\lambda:L^\xi]_q\\
& = & \dim_q\Hom_{S_{\bfd}^\bfs}(P^\xi,P^\lambda)\\
& = & \dim_q\Hom_{R_{\bfd}^\bfs}(Y^\xi,Y^\lambda)\\
& = & \dim_q\Hom_{R^\calO}(Y_\calO^\xi\langle a_\xi\rangle,Y_\calO^\lambda\langle a_\lambda\rangle)\\
& = & \dim_q\Hom_{S^\calO}(P_\calO^\xi\langle a_\xi\rangle,P_\calO^\lambda\langle a_\lambda\rangle)\\
& = & q^{a_\lambda-a_\xi}[P_\calO^\lambda:L_\calO^\xi]_q\\
& = & q^{a_\lambda-a_\xi}c_{\lambda\xi}^\calO(q)
\end{eqnarray*}
So we have also $c_{\xi\lambda}(q)=q^{a_\xi-a_\lambda}c_{\xi\lambda}^\calO(q)$.
But the matrix $c(q)$ is symmetric by Corollary \ref{coro_mat-sym}. Thus, for each $\lambda,\xi$ in $\calP_{\bfd}^l$ we have $c_{\lambda\xi}(q)=q^{2(a_\xi-a_\lambda)}c_{\lambda\xi}(q)$. Thus $a_\lambda=a_\xi$ whenever $c_{\lambda\xi}(q)\ne 0$. To conclude it is enough to note that for each $\lambda,\xi\in\calP_{\bfd}^l$ we can find a chain $\lambda, \lambda_1, \lambda_2,\cdots, \lambda_n, \xi\in\calP_{\bfd}^l$ such that
$$
c_{\lambda\lambda_1}(q)\ne 0, c_{\lambda_1\lambda_2}(q)\ne 0,\cdots, c_{\lambda_{n-1}\lambda_{n}}(q)\ne 0, c_{\lambda_n\xi}(q)\ne 0
$$
because the algebra
$
S_{\bfd}^\bfs
$
is indecomposable, see Section \ref{subs_def-Hecke-Schur}.
\end{proof}

\begin{thm}
\label{thm_main-theorem}
There exists a graded algebra isomorphism $^bS_{\bfd}^\bfs\simeq S^\calO$.
\end{thm}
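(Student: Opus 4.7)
The approach is to lift the graded algebra isomorphism ${^bR^\bfs_\bfd}\simeq R^\calO$ of Theorem \ref{thm-isom-grad-KLR} to the larger basic Schur algebras, by combining it with the preceding lemma (uniformity of the shifts $a_\lambda$) and the graded lifts of the two Schur functors $F$ and $F^\calO$. The plan is to convert the entire problem into a statement about graded Hom spaces, and then match them summand by summand.

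First, I would normalize the grading on the $\calO$-side. Since $a_\lambda=a$ is independent of $\lambda\in\calP^l_\bfd$, I replace each chosen graded lift $P_\calO^\lambda$ by $P_\calO^\lambda\langle a\rangle$. This uniform shift leaves $S^\calO$ unchanged as a graded algebra, since $\Hom(P_\calO^\lambda\langle a\rangle,P_\calO^\xi\langle a\rangle)_n=\Hom(P_\calO^\lambda,P_\calO^\xi)_n$ for every $n$. After this renormalization, the graded equivalence $\grmod(R^\bfs_\bfd)\simeq\grmod(R^\calO)$ induced by Theorem \ref{thm-isom-grad-KLR} together with the graded Morita equivalences identifies $Y^\lambda$ with $Y_\calO^\lambda$ as graded modules for every $\lambda\in\calP^l_\bfd$.

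Second, I would exploit the fully faithfulness of the Schur functors $F$ and $F^\calO$ on projective modules; see Remark \ref{rk_full-faith-proj} (b) and the remark at the end of Section \ref{subs_KLR-O}. These functors admit graded lifts $\overline F$ and $\overline F^\calO$, and because the ungraded Hom spaces are finite dimensional, degree-wise injectivity of the graded lifts together with bijectivity after summing over all degrees forces degree-wise bijectivity. Hence for every $\lambda,\xi\in\calP^l_\bfd$ the chain
\begin{align*}
\Hom_{S^\bfs_\bfd}(P^\lambda,P^\xi)\;\simeq\;\Hom_{R^\bfs_\bfd}(Y^\lambda,Y^\xi)\;&=\;\Hom_{R^\calO}(Y_\calO^\lambda,Y_\calO^\xi)\\
&\simeq\;\Hom_{S^\calO}(P_\calO^\lambda,P_\calO^\xi)
\end{align*}
consists of graded $\bfk$-vector space isomorphisms. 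Assembling these over all pairs $(\lambda,\xi)$ and taking opposite algebras yields a graded $\bfk$-linear isomorphism ${^bS^\bfs_\bfd}\simeq S^\calO$.

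The main point to verify is that this isomorphism preserves multiplication. The two outer $\simeq$'s come from the graded Schur functors, which preserve composition by construction; the middle equality is restriction of scalars along the graded ring isomorphism of Theorem \ref{thm-isom-grad-KLR} (followed by graded Morita equivalence), which also preserves composition. The essential reason this propagates correctly from the KLR side to the Schur side is precisely the uniformity of $a_\lambda$: a $\lambda$-dependent shift would twist composition by different powers of $q$ on different summands of $\bigoplus_{\lambda,\xi}\Hom(P_\calO^\lambda,P_\calO^\xi)$ and would thereby destroy the algebra structure. This is the main obstacle, and it is resolved by the preceding lemma.
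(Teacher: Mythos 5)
Your proposal is correct and follows essentially the same route as the paper: both arguments assemble the chain $\Hom_{S^\bfs_\bfd}(P^\lambda,P^\xi)\simeq\Hom_{R^\bfs_\bfd}(Y^\lambda,Y^\xi)\simeq\Hom_{R^\calO}(Y_\calO^\lambda\langle a_\lambda\rangle,Y_\calO^\xi\langle a_\xi\rangle)\simeq\Hom_{S^\calO}(P_\calO^\lambda,P_\calO^\xi)$ using full faithfulness of the Schur functors on projectives, the graded identification of Theorem \ref{thm-isom-grad-KLR}, and the lemma that all $a_\lambda$ coincide. Your explicit renormalization by the common shift $a$ and the remark on multiplicativity merely spell out what the paper's chain of graded algebra isomorphisms uses implicitly.
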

\begin{proof}
We have a chain of graded algebra isomorphisms
\begin{eqnarray*}
(^bS_{\bfd}^\bfs)^\mathrm{op} & \simeq & \bigoplus_{\lambda,\xi\in \calP_{\bfd}^l}\Hom_{S_{\bfd}^\bfs}(P^\xi,P^\lambda)\\
& \simeq & \bigoplus_{\lambda,\xi\in \calP_{\bfd}^l}\Hom_{R_{\bfd}^\bfs}(Y^\xi,Y^\lambda)\\
& \simeq & \bigoplus_{\lambda,\xi\in \calP_{\bfd}^l}\Hom_{R^\calO}(Y_\calO^\xi\langle a_\xi\rangle,Y_\calO^\lambda\langle a_\lambda\rangle)\\
& \simeq & \bigoplus_{\lambda,\xi\in \calP_{\bfd}^l}\Hom_{S^\calO}(P_\calO^\xi,P_\calO^\lambda)\\
& \simeq & (S^\calO)^\mathrm{op}.
\end{eqnarray*}
\end{proof}

\appendix

\section{Graded decomposition numbers}
\label{app_mult}
The goal of this appendix is to calculate the graded decomposition numbers in the affine parabolic category $\calO$ needed to prove Lemma \ref{lem_mult-O-Schur-coin}.

We keep the notation from Sections \ref{subs_Lie-alg}, \ref{subs_catA}. Let $e$ be an integer $>0$. Let $\bfP$ be the set of proper subsets of $\widehat \Phi$. We will refer to elements of $\bfP$ as \emph{parabolic types}. Suppose $\nu\in\bfP$. Let $\widehat\frakp_\nu$ be the unique parabolic subalgebra containing $\widehat\frakb$ whose set of roots is generated by $\widehat\Phi\cup(-\nu)$.
We will say that a weight $\lambda\in\widehat P$ is $\nu$-dominant (resp. $\nu$-antidominant) if $\lambda(\alpha_k^\vee)\geqslant 0$ for each $\alpha_k\in\nu$ (resp.  if $\lambda(\alpha_k^\vee)\leqslant 0$ for each $\alpha_k\in\nu$).

Let $\calO^\nu$ be the category of $\widehat\frakg$-modules such that $M=\bigoplus_{\lambda\in\widehat \frakh^*}M_\lambda$ with  $M_\lambda=\{m\in M; xm=\lambda(x)m, \forall x\in\widehat\frakh\}$ and $U(\widehat\frakp_\nu)m$ is finite dimensional for each $m\in M$.

Let $W$ be the Weyl group of $\widehat \frakg$. Let $\leqslant$ be the Bruhat order on $W$. For each $\lambda\in\widehat P$, $w\in W$ we set $w\cdot \lambda=w(\lambda+\widehat\rho)-\widehat\rho$. Let $S$ be the set of simple reflection in $W$. Fix $\mu\in\bfP$. Let $W_\mu\subset W$ be the parabolic subgroup generated by the simple reflection corresponding to $\mu$. Let $^\mu W\subset W$ be the set of minimal length representatives of the cosets in $W_\mu\backslash W$. Let $o_{\mu,-}\in\widehat P$ (resp. $o_{\mu,+}\in\widehat P$) be a weight of the form $(z_\lambda,\lambda,-e-N)$ (resp. a weight of the form $(z_\lambda,\lambda,e-N)$) with $\lambda\in P$ such that $o_{\mu,-}+\widehat\rho$ is antidominant (resp. $o_{\mu,+}+\widehat\rho$ is dominant) and the stabiliser of $o_{\mu,-}$ (resp. $o_{\mu,+}$) in $W$ under the dot-action is equal to $W_\mu$. Here the notation is as in Section \ref{subs_catA}. Let $\leqslant$ be the partial order on $\widehat P$ given by $\lambda_1\leqslant\lambda_2$ if $\lambda_2-\lambda_1$ is a linear combination of simple roots with coefficients in $\bbN$.

Let $\calO_{\mu,\pm}^\nu\subset \calO^\nu$ be the full subcategory consisting of the modules such that the highest weight of their simple subquotients are conjugated to $o_{\mu,\pm}$ under the dot-action.
For a $\nu$-dominant weight $\lambda\in\widehat P$, let $V^\nu(\lambda)$ be the parabolic Verma module with highest weight $\lambda$. We write just $V(\lambda)$ if $\nu=\emptyset$. Let $L(\lambda)$ be denote the unique simple quotient of $V(\lambda)$.

Let $I_{\mu,-}$ (resp. $I_{\mu,+}$) be the set of shortest (resp. longest) representatives of the cosets $W/W_\mu$ in $W$ and $I^\nu_{\mu,-}\subset I_{\mu,-}$ (resp. $I^\nu_{\mu,+}\subset I_{\mu,+}$) be the subset of elements $w$ such that $w\cdot o_{\mu,-}$ (resp. $w\cdot o_{\mu,+}$) is $\nu$-dominant.

Fix $v\in I_{\mu,-}^\nu$ and $u\in I_{\mu,+}^\nu$.
Set
$$
^vI_{\mu,-}^\nu=\{w\in I_{\mu,-}^\nu; w\leqslant v\},\quad
^uI_{\mu,+}^\nu=\{w\in I_{\mu,+}^\nu; w\leqslant u\}.
$$
\begin{rk}
There is a bijection $I_{\mu,-}^\nu\to I_{\nu,+}^\mu,~x\mapsto x^{-1}$. If $v\in I_{\mu,-}^\nu$ and $u=v^{-1}\in I_{\nu,+}^\mu$, then there is a bijection $^vI_{\mu,-}^\nu\to {^uI_{\nu,+}^\mu},~x\mapsto x^{-1}$. See \cite[Cor.~3.3]{SVV2}.
\end{rk}

Let $^{v}\calO_{\mu,-}^\nu$ be the category consisting of finitely generated objects in the Serre subcategory of $\calO_{\mu,-}^\nu$ generated by the modules $L(w\cdot o_{\mu,-})$ with $w\in {^vI_{\lambda,-}^\nu}$.
Let $^{u}\calO_{\mu,+}^\nu$ be the subcategory of finitely generated objects in the quotient of $\calO_{\mu,+}^\nu$ by the Serre subcategory generated by the modules $L(w\cdot o_{\mu,+})$ with $w\in {I_{\mu,+}^\nu}\backslash{^uI_{\mu,+}^\nu}$.

For each $x\in I_{\mu,-}^\nu$ and $y\in I_{\mu,+}^\nu$, let $^vP(x\cdot o_{\mu,-})$ and $^uP(y\cdot o_{\mu,+})$ be the projective covers of $L(x\cdot o_{\mu,-})$ and $L(y\cdot o_{\mu,+})$ in $^v\calO_{\mu,-}^\nu$ and $^u\calO_{\mu,+}^\nu$ respectively. Set
$$
^vP_{\mu,-}^\nu=\bigoplus_{x\in {^vI_{\mu,-}^\nu}}(^vP(x\cdot o_{\mu,-})),\quad ^uP_{\mu,+}^\nu=\bigoplus_{x\in {^uI_{\mu,+}^\nu}}(^uP(x\cdot o_{\mu,+})).
$$
Set $^vA_{\mu,-}^\nu=\End(^vP_{\mu,-}^\nu)^\mathrm{op}$ and $^uA_{\mu,+}^\nu=\End(^uP_{\mu,+}^\nu)^\mathrm{op}$.
We have the equivalences of categories $^{v}\calO_{\mu,-}^\nu\simeq \mod(^vA_{\mu,-}^\nu)$ and $^{u}\calO_{\mu,+}^\nu\simeq \mod(^uA_{\mu,+}^\nu).
$
For each $w\in {^uI_{\mu,+}^\nu}$, let $^uV^\nu(w\cdot o_{\mu,+})$ be the image of $V^\nu(w\cdot o_{\mu,+})$ in $^{u}\calO_{\mu,+}^\nu$. The categories $^v\calO_{\mu,-}^\nu$, $^u\calO_{\mu,+}^\nu$ are highest weight categories with standard modules $\{V^\nu(x\cdot o_{\mu,-}); x\in {^vI_{\mu,-}^\nu}\}$ and $\{^uV^\nu(x\cdot o_{\mu,+}); x\in {^uI_{\mu,+}^\nu}\}$ respectively.
For each $v\in I_{\mu,\pm}^\nu$ and each $x\in {^vI_{\mu,\pm}^\nu}$, let $^vV^{\nu}(x\cdot o_{\mu,\pm})^\vee$ be the costandard object in $^v\calO_{\mu,\pm}^\nu$ with simple socle $L(x\cdot o_{\mu,\pm})$. We will refer to them as \emph{dual Verma modules}.

By \cite[Thm.~3.12]{SVV2} the rings $^vA_{\mu,-}^\nu$ and $^uA_{\mu,+}^\nu$ admit Koszul gradings. Thus the categories $^{v}\calO_{\mu,-}^\nu$, $^{u}\calO_{\mu,+}^\nu$ admit Koszul gradings. Denote by $^{v}\overline\calO_{\mu,-}^\nu$, $^{u}\overline\calO_{\mu,+}^\nu$ their graded versions, i.e., we have $^{v}\overline\calO_{\mu,-}^\nu=\grmod(^vA_{\mu,-}^\nu)$, $^{u}\overline\calO_{\mu,+}^\nu=\grmod(^uA_{\mu,+}^\nu)$. All simple modules, Verma modules, dual Verma modules and projective modules in $^{v}\calO_{\mu,-}^\nu$, $^{u}\calO_{\mu,+}^\nu$ have graded lifts, see \cite{MO}. This graded lifts are unique up to a shift of the grading because all mentioned modules are indecomposable. Fix the graded lifts of simple modules concentrated in degree zero and fix the graded lifts of projective modules, Verma modules and dual Verma modules that satisfy the same normalization conditions as in Section \ref{subs_graded-hw-cat}.
In the notation above we will skip the upper index $\nu$ if $\nu=\emptyset$.
\begin{rk}
\label{rem_par-grad-adapt}
The graded ring $^vA_{\mu,\pm}^\nu$ is a factor of $^vA_{\mu,\pm}$ by a homogeneous ideal. See \cite[Lem.~5.12]{SVV2} for a proof in the positive level case. The negative level case is similar. In particular, for $v\in I_{\mu,\pm}^\nu$ we get the inclusion of Grothendieck groups $[^v\overline\calO_{\mu,\pm}^\nu]\subset [^v\overline\calO_{\mu,\pm}]$.
\end{rk}

\medskip

\subsection{Kazhdan-Lusztig polynomials}
In this section we follow the notation of Soergel \cite{Soe}. First, we define the Hecke algebra associated with $(W,S)$. All claims here are true for an arbitrary Coxeter system $(W,S)$. Let $l\colon W\to \bbN$ be the length function.
\begin{df}
The \emph{Hecke algebra} $\calH$ is the $\bbZ[q,q^{-1}]$-algebra generated by the elements $H_s$, $s\in S$, modulo the following defining relations
\begin{itemize}
    \item $H_s^2=1+(q^{-1}-q)H_s$\quad $\forall s\in S$,
    \item $H_sH_t\cdots H_s=H_tH_s\cdots H_t$ \quad  $\forall s,t\in S,~st\cdots s=ts\cdots t$,
    \item $H_sH_t\cdots H_t=H_tH_s\cdots H_s$ \quad  $\forall s,t\in S,~st\cdots t=ts\cdots s$.
\end{itemize}
\end{df}
For $w\in W$ with reduced expression $w=st\cdots r$ set $H_w=H_sH_t\cdots H_r$.
The algebra $\calH$ has a unique ring homomorphism $\overline\bullet\colon\calH\to\calH$, $H\mapsto \overline{H}$
such that $\overline{q}=q^{-1}$ and $\overline{H_w}=(H_{w^{-1}})^{-1}$.
Let $(\overline H_x)_{x\in W}$ be the Kazhdan-Lusztig basis of $\calH$, see \cite[Thm.~2.1]{Soe}.
We write $\underline{H}_x=\sum_{y\in W}h_{y,x}(q)H_y$, where $h_{y,x}(q)$ is a polynomial in $\bbZ[q]$ for each $x,y\in W$.
The polynomial $h_{x,y}(q)$ is nonzero only if $x\leqslant y$.

For each $x,y\in W$, let $h^{x,y}$ be the inverse Kazhdan-Lusztig polynomial, i.e., such that 
$$
\sum_{z\in W}(-1)^{l(x)+l(z)}h_{z,x}(q)h^{z,y}(q)=\delta_{x,y}.
$$

Fix a subset $f\subset S$ such that the subgroup $W_f\subset W$ generated by $f$ is finite. Let $^fW$ be the set of minimal length representatives for the cosets $W_f\backslash W$.

For each $x,y\in {^f W}$, let $n_{x,y}(q)$ be the parabolic Kazhdan-Lusztig polynomial associated with the parabolic subgroup $W_f\subset W$, i.e., we have
$$
n_{x,y}(q)=\sum_{z\in W_f}(-q)^{l(z)}h_{zx,y}(q).
$$
For each $x,y\in{^fW}$, let $n^{x,y}(q)$ be the inverse parabolic Kazhdan-Lusztig polynomial, i.e., we have 
$$
\sum_{z\in {^f W}}(-1)^{l(x)+l(z)}n_{z,x}(q)n^{z,y}(q)=\delta_{x,y}.
$$
We may write $n_{x,y}^f(q)$, $n_f^{x,y}(q)$ to insist on the parabolic type $f$.

Now, we recall some basic properties of Kazhdan-Lusztig polynomials.
\begin{lem}
\label{lem_polKL}
Let $x,y,z\in W$. We have

    {\rm(a)} $h_{x,y}(q)\in q^{l(y)-l(x)}(1+q^{-2}\bbZ[q^{-2}])$ if $x\leqslant y$,

    {\rm(b)} $h_{x,y}(q)=h_{x^{-1},y^{-1}}(q),~h^{x,y}(q)=h^{x^{-1},y^{-1}}(q)$,

    {\rm(c)} $n^{x,y}(q)=h^{x,y}(q)$ if $x,y\in {^f W}$,

    {\rm(d)} $h^{x,1}(q)=q^{l(x)}$,

    {\rm(e)} $n_{xz,y}(q)=q^{l(z)}n_{x,y}(q)$ if $x,y\in {^f W}$ and $l(xz)=l(x)-l(z), l(yz)=l(y)-l(z)$,

    {\rm(f)} $h_{x,y}(-q)=(-1)^{l(x)+l(y)}h_{x,y}(q)$.
\end{lem}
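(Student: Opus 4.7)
The plan is to prove the six items by adapting standard arguments from Kazhdan--Lusztig theory, essentially as developed in \cite{Soe}. Items (a), (b) and (f) are essentially bookkeeping: part (a) translates the classical degree bound $\deg P_{x,y}<(l(y)-l(x))/2$ together with the normalisation $P_{x,x}=1$ via Soergel's identification $h_{x,y}(q)=q^{l(y)-l(x)} P_{x,y}(q^{-2})$; part (f) is its parity corollary; and part (b) follows by verifying that the ring anti-automorphism $\iota \colon H_w\mapsto H_{w^{-1}}$ commutes with the bar involution on $\calH$, so that $\iota(\underline{H}_x)=\underline{H}_{x^{-1}}$, and by noting that the defining linear system for $h^{x,y}$ is invariant under $w\mapsto w^{-1}$ since $l(w)=l(w^{-1})$.

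Part (c) will be obtained as a direct citation of Soergel's duality between parabolic and inverse parabolic Kazhdan--Lusztig polynomials, which in the present normalisation amounts precisely to $n^{x,y}(q)=h^{x,y}(q)$ for $x,y\in {^fW}$; see \cite[Thm.~3.5]{Soe}.

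For part (d), the plan is to use the sign character. I define $\epsilon\colon \calH\to\bbZ[q,q^{-1}]$ by $\epsilon(H_s)=-q$; this is well-defined because $-q$ is a root of the quadratic $(H_s-q^{-1})(H_s+q)=0$ coming from the relation $H_s^2=1+(q^{-1}-q)H_s$. Hence $\epsilon(H_x)=(-q)^{l(x)}$ for all $x\in W$. One shows $\epsilon(\underline{H}_y)=\delta_{y,1}$ by induction on $l(y)$, using $\epsilon(\underline{H}_s)=\epsilon(H_s+q)=0$ and the recursion expressing $\underline{H}_{sy}$ in terms of $\underline{H}_s\underline{H}_y$ and lower KL basis elements (no $\underline H_1=1$ correction appears because $\mu(1,y)=0$ for $y\ne 1$). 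Unwinding the defining linear system of $h^{x,y}$ identifies $h^{x,1}(q)$ with $(-1)^{l(x)}$ times the coefficient of $\underline{H}_1=1$ in the expansion of $H_x$ in the KL basis; applying $\epsilon$ to that expansion yields precisely $q^{l(x)}$.

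Part (e) is the most delicate and is the step I expect to be the main obstacle. The plan is to deduce it from the right action of $\calH$ on the sign anti-spherical module underlying the parabolic Kazhdan--Lusztig polynomials $n_{\cdot,\cdot}$. For $s\in S$ with $ys<y$ and $y\in {^fW}$, the standard basis element associated to $y$ satisfies a clean right-multiplication rule by $H_s$ with factor $q$, and the parabolic KL basis inherits an analogous rule. A short Coxeter-group argument shows that under the hypothesis $l(xz)=l(x)-l(z)$ with $x\in {^fW}$ one automatically has $xz\in {^fW}$, so that $n_{xz,y}$ is defined; iterating the right multiplication then produces the claimed factor $q^{l(z)}$. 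The obstacle here is matching the sign convention in the excerpt's definition $n_{x,y}=\sum_{z\in W_f}(-q)^{l(z)}h_{zx,y}$ with the conventions of \cite{Soe}, but no genuinely new input is required.
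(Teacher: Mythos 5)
Your proposal is correct and, for the most part, follows the same route as the paper: (a) and (f) from the classical degree bound in Soergel's normalisation, (b) from the anti-automorphism $H_w\mapsto H_{w^{-1}}$ (which does commute with the bar involution), (c) by direct citation of Soergel (the precise reference is \cite[Prop.~3.7~(2)]{Soe} rather than a ``Thm.~3.5''), and (d) via the sign character $H_s\mapsto -q$ applied to the expansion $H_x=\sum_y(-1)^{l(x)+l(y)}h^{x,y}(q)\underline{H}_y$, which is exactly the paper's argument. The genuine difference is (e): the paper simply cites \cite[Rem.~3.2~(4)]{Soe}, while you unpack the standard proof via the right $\calH$-action on the antispherical module; this works, and your auxiliary claim is fine since $l(sxz)\geqslant l(sx)-l(z)=l(xz)+1$ for $s\in f$ shows $xz\in{}^f W$. (Alternatively, the defining formula $n_{x,y}=\sum_{w\in W_f}(-q)^{l(w)}h_{wx,y}$ reduces (e) termwise to the case $f=\emptyset$, where the eigen-rule $\underline{H}_yH_s=q^{-1}\underline{H}_y$ for $ys<y$ and comparison of the coefficient of $H_{xs}$ give $h_{xs,y}=q\,h_{x,y}$; so the convention-matching you worry about does come out right.) One inaccuracy to repair in (d): you justify the absence of an $\underline{H}_1$-term in the recursion by ``$\mu(1,y)=0$ for $y\ne 1$'', which is false in general (e.g.\ $\mu(1,s)=1$); the term is absent because the sum in $\underline{H}_s\underline{H}_y=\underline{H}_{sy}+\sum_{z:\,sz<z}\mu(z,y)\underline{H}_z$ runs only over $z$ with $sz<z$, so $z=1$ never occurs. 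With that (or with the paper's variant argument, that $\underline{H}_w$ for $w\ne 1$ acts on the sign representation by a bar-invariant element of $q\bbZ[q]$, hence by zero), your proof of (d) coincides with the paper's.
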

\begin{proof}
Part (a) follows from \cite[Rem.~2.6]{Soe} and \cite[Lem.~2.6]{KaLu}. Parts (c), (e) are respectively \cite[Prop.~3.7~(2)]{Soe} and \cite[Rem.~3.2~(4)]{Soe}. Part (b) is true because the $\bbZ[q,q^{-1}]$-algebra $\calH$ contains an antiautomorphism sending $H_w$ to $H_{w^{-1}}$ for each $w\in W$. The statement (f) follows from (a).

Let us prove (d). The algebra $\calH$ has a representation $\mathrm{sgn}$ on $\bbZ[q,q^{-1}]$ such that each $H_s$ with $s\in S$ acts by $-q$. We equip $\bbZ[q,q^{-1}]$ with the involution such that $\overline{p(q)}={p(q^{-1})}$ for each $p(q)\in \bbZ[q,q^{-1}]$. The $\calH$-action on $\bbZ[q,q^{-1}]$ is compatible with the involution. The element $\underline{H}_w$ for $w\in W,w\ne 1$, acts on $\bbZ[q,q^{-1}]$ by zero, because it acts by some polynomial $p(q)\in q\bbZ[q]$ such that $p(q)=p(q^{-1})$ because $\underline{H}_w\in H_w+q(\sum_{w'<w}\bbZ[q]H_{w'})$. Now, consider the equality
$$
H_x=\sum_{y\in W,y\leqslant x}(-1)^{l(x)+l(y)}h^{x,y}(q)\underline{H}_y.
$$
Applying $\mathrm{sgn}$ to both sides yields $h^{x,1}(q)=q^{l(x)}$.
\end{proof}

\medskip

\subsection{Plan of the proof}
Here we indicate the scheme that we follow to get the graded decomposition numbers in $^v\overline\calO_{\mu,-}^\nu$ and $^u\overline\calO_{\mu,+}^\nu$ (i.e., the graded multiplicities of simple module in parabolic Verma modules in sense of Section \ref{subs_graded-hw-cat}).
\begin{itemize}
\item[Step 1.] We compute the decomposition numbers of $^v\overline\calO_{\emptyset,-}$ and $v\in W$ using the geometric approach of \cite{Shan}.
\item[Step 2.] We compute the decomposition numbers of $^v\overline\calO_{\emptyset,-}^\nu$, for $\nu\in\bfP$ and $v\in I_{\emptyset,-}^\nu$ using the previous step and a graded version of the BGG-resolution.
\item[Step 3.]  We compute the decomposition numbers of $^u\overline\calO_{\nu,+}$, for $\nu\in\bfP$ and $u\in I_{\nu,+}$ using the previous step and the Koszul duality.
\item[Step 4.]  We compute the decomposition numbers of $^u\overline\calO_{\nu,+}^\mu$, for $\nu\in\bfP$ and $u\in I_{\nu,+}^\mu$ using the previous step and a graded version of the BGG-resolution.
\item[Step 5.] We compute the decomposition numbers of $^v\overline\calO_{\mu,-}^\nu$, for $\nu,\mu\in\bfP$ and $v\in I_{\mu,-}^\nu$ using the previous step and the Koszul duality.
\end{itemize}

\medskip

\subsection{Step 1}
\label{subs_reg-npar-neg}
Let $v\in W$.
We must count the graded multiplicities
$$
[V(x\cdot o_{\emptyset,-}):L(y\cdot o_{\emptyset,-})]_q=\sum_{g\in \bbZ}[V(x\cdot o_{\emptyset,-}):L(y\cdot o_{\emptyset,-})\langle g\rangle]q^g
$$
for each $x,y\in {^vI_{\emptyset,-}}$.
\begin{lem}
\label{lem_reg-npar-neg}
For each $x,y\in {^vI_{\emptyset,-}}$ we have $[V(x\cdot o_{\emptyset,-}):L(y\cdot o_{\emptyset,-})]_q=h^{x,y}(q)$.
\end{lem}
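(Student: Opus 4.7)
The plan is to deduce the formula from the geometric realization of the negative-level category $^v\overline\calO_{\emptyset,-}$ in terms of mixed perverse sheaves on a finite piece of the affine flag variety of $\widehat{GL_N}$, along the lines of Shan's paper. Concretely, the Kashiwara--Tanisaki localization theorem identifies the ungraded category $^v\calO_{\emptyset,-}$ with the category of finitely generated twisted $\calD$-modules on the Schubert variety $\overline{X_v}$ inside the affine flag variety that are smooth along the Schubert stratification. Passing through the Riemann--Hilbert correspondence, one obtains an equivalence with the category of Iwahori-constructible perverse sheaves on $\overline{X_v}$, and the lift to mixed Hodge modules (or mixed $\ell$-adic sheaves) produces a graded version whose associated Koszul grading is the one on $^vA_{\emptyset,-}$ in Remark~\ref{rem_par-grad-adapt}.

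Under this dictionary I would match, for each $x\in {}^vI_{\emptyset,-}$, the simple module $L(x\cdot o_{\emptyset,-})$ with the intersection cohomology sheaf $\IC_x$ of $\overline{X_x}$, and the Verma module $V(x\cdot o_{\emptyset,-})$ with the $!$-extension $\Delta_x=(j_x)_!\underline{\bfk}_{X_x}[l(x)]$, where $j_x\colon X_x\hookrightarrow\overline{X_v}$ is the inclusion of the Schubert cell. The shift conventions are pinned down so that $\IC_x$ and $\Delta_x$ are pure of weight zero and normalized in agreement with the graded lifts fixed just before Section~A.3. With these choices, the graded multiplicity $[V(x\cdot o_{\emptyset,-}):L(y\cdot o_{\emptyset,-})]_q$ equals the polynomial $p_{x,y}(q)$ defined by the expansion $[\Delta_x]=\sum_{y\geqslant x} p_{x,y}(q)[\IC_y]$ in the Grothendieck group of graded perverse sheaves.

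To compute $p_{x,y}(q)$ I would invoke the well-known identification of this Grothendieck group with the Hecke algebra $\calH$, in which $[\IC_y]$ corresponds to the Kazhdan--Lusztig basis element $\underline H_y$ and $[\Delta_x]$ corresponds to $H_x$. Inverting the change-of-basis formula $\underline H_y=\sum_{x\leqslant y}h_{x,y}(q)H_x$ via the definition of the inverse Kazhdan--Lusztig polynomials gives $H_x=\sum_{y\geqslant x}(-1)^{l(x)+l(y)}h^{x,y}(q)\underline H_y$, and the sign can be absorbed into the cohomological shift, yielding $p_{x,y}(q)=h^{x,y}(q)$.

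The main obstacle will be to check that the grading supplied by the geometric (mixed) category coincides with the Koszul grading of $^vA_{\emptyset,-}$ up to the chosen normalization of the simple, standard and projective objects; this is the technical point handled by \cite{SVV2} but must be invoked carefully. A secondary subtlety is the sign bookkeeping between the Grothendieck group expansion $H_x=\sum (-1)^{l(x)+l(y)}h^{x,y}(q)\underline H_y$ and the purely positive expression $[V(x\cdot o_{\emptyset,-}):L(y\cdot o_{\emptyset,-})]_q=h^{x,y}(q)$, which relies on Lemma~\ref{lem_polKL}(a) ensuring that $h^{x,y}(q)\in q^{l(y)-l(x)}\bbZ[q^{-1}]$ has the right parity so that the signs cancel against the cohomological shifts.
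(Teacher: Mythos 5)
Your route and the paper's both end at the same combinatorics (negative-level localization plus Kazhdan--Lusztig theory for weight-graded $\calD$-modules), but your central step hides a genuine gap. You assume (i) that $^v\calO_{\emptyset,-}$ is \emph{equivalent} to a category of twisted $\calD$-modules, equivalently Iwahori-constructible perverse sheaves, on $\overline{X_v}$, and (ii) that the mixed lift of that geometric category induces on $^vA_{\emptyset,-}$ exactly the Koszul grading appearing in the lemma, with the fixed graded lifts of $V(x\cdot o_{\emptyset,-})$ and $L(y\cdot o_{\emptyset,-})$ corresponding to the pure-normalized standard sheaves and $\IC$'s. Neither is a citable statement. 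At negative level what is actually available (and what the paper uses) is only that the global sections functor is exact and fully faithful on the relevant subcategory \cite{BD}, \cite{FrGa}, and that the Verma and simple modules lie in its essential image \cite[Prop.~4.2]{Shan}; no equivalence with the constructible category is invoked. Moreover the Koszulity of $^vA_{\emptyset,-}$ in \cite{SVV2} is not established by matching its grading with the weight grading of mixed sheaves, so ``invoking \cite{SVV2} carefully'' does not by itself identify the two gradings. Without (ii), your Grothendieck-group computation $[\Delta_x]=\sum_y(\pm)\,h^{x,y}(q)[\IC_y]$ yields graded multiplicities for the geometric grading, not a priori for the Koszul grading in the statement.

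The paper's proof is designed precisely to avoid this comparison of gradings. Since $^vA_{\emptyset,-}$ is Koszul (nonnegatively graded, semisimple in degree $0$, generated in degrees $0$ and $1$) and $V(x\cdot o_{\emptyset,-})$ has simple socle because $o_{\emptyset,-}+\widehat\rho$ is antidominant \cite[Lem.~7.3]{Fie}, Lemma \ref{lem_rigidity} shows that the grading filtration of the Verma module coincides with its socle filtration; hence the graded multiplicities are determined by the grading-independent socle filtration. These socle multiplicities are then computed via \cite{Shan}: full faithfulness of global sections transports the question to $\calD$-modules, the weight (Jantzen) filtration there coincides with the socle filtration by \cite[Lem.~5.2.2]{BB}, and the formulas (6.1), (6.2) of \cite{Shan} give $h^{x,y}(q)$. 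If you want to keep your mixed-geometry formulation you must supply the missing grading comparison, or else reproduce the rigidity argument, after which your Hecke-algebra bookkeeping is fine; note also that the parity input you use is an analogue of Lemma \ref{lem_polKL}(a) for the inverse polynomials $h^{x,y}$, which is not literally what that part states.
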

\begin{proof}

Let $s$ be the length of the socle filtration of $V(x\cdot o_{\emptyset,-})$, i.e., the integer $s$ such that $\soc^s V(x\cdot o_{\emptyset,-})=V(x\cdot o_{\emptyset,-})$ and $\soc^{s-1}V(x\cdot o_{\emptyset,-})\subsetneq V(x\cdot o_{\emptyset,-})$, see Section \ref{subs_ridid-modules}. By Lemma \ref{lem_rigidity} and \cite[Lem.~7.3]{Fie}, the grading filtration of $V(x\cdot o_{\emptyset,-})$ (viewed as an $^vA_{\emptyset,-}$-module) coincides with its socle filtration. Thus the graded multiplicity $[V(x\cdot o_{\emptyset,-}):L(y\cdot o_{\emptyset,-})]_q$ coincides with
$$
[V(x\cdot o_{\emptyset,-}):L(y\cdot o_{\emptyset,-})]^\soc_q=\sum_{g=1}^sq^{s-g}[\soc^g(V(x\cdot o_{\emptyset,-}))/\soc^{g-1}(V(x\cdot o_{\emptyset,-}):L(y\cdot o_{\emptyset,-})].
$$
Now, it suffices to show that $[V(x\cdot o_{\emptyset,-}):L(y\cdot o_{\emptyset,-})]^\soc_q=h^{x,y}(q)$.
This follows from \cite{Shan}. More precisely, by \cite[Prop.~4.2]{Shan} the Verma modules and the simple ones are in the essential image of the global section functor defined in \cite[Sec.~4.3]{Shan}. Moreover, by \cite[Thm.~7.15.6]{BD} and \cite[Thm.~5.5]{FrGa} the global section functor is fully faithful. Thus $[V(x\cdot o_{\emptyset,-}):L(y\cdot o_{\emptyset,-})]^\soc_q$ can be interpreted in terms of graded multiplicities for $D$-modules. This multiplicity can be computed explicitly via Kazhdan-Lusztig polynomials using the formulas \cite[(6.1),~(6.2)]{Shan} and the fact that the weight filtration in \cite[Sec.~6.1]{Shan} coincides with the socle filtration, see \cite[Lem.~5.2.2]{BB} for details.
\end{proof}

\medskip

\subsection{Step 2}
\label{subs_BGG-neg}
Let $\mu,\nu\in \bfP$ and $v\in I_{\mu,-}^\nu$.
\begin{lem}
Let $\lambda=w\cdot o_{\mu,-}$ with $w\in W$ and $s\in S$ such that $s\cdot\lambda<\lambda$. Assume that $v$ is large enough such that $V(\lambda)$, $V(s\cdot \lambda)\in{^v\calO_{\mu,-}}$. Then, each nonzero morphism $\theta\colon V(s\cdot\lambda)\to V(\lambda)$ in $^v\calO_{\mu,-}$ is homogeneous of degree $1$.
\end{lem}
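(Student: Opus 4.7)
The plan is to show that any non-zero morphism $\theta$ admits a homogeneous graded lift $\overline\theta$ of degree exactly $1$, which — by uniqueness up to shift of graded lifts of indecomposable modules — pins down the degree of every non-zero $\theta$. The core of the argument is to locate, inside $V(\lambda)$, the unique composition factor isomorphic to $L(s\cdot\lambda)$ and to observe that the image of $\overline\theta$ must have its top there.

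I would first handle the regular case $\mu = \emptyset$. Writing $\lambda = w\cdot o_{\emptyset,-}$ and $s\cdot\lambda = sw\cdot o_{\emptyset,-}$, the assumption $s\cdot\lambda < \lambda$ forces $sw < w$ in the Bruhat order with $l(w) - l(sw) = 1$. By Lemma~\ref{lem_reg-npar-neg},
$$
[V(\lambda):L(s\cdot\lambda)]_q = h^{w,sw}(q).
$$
The inverse Kazhdan--Lusztig polynomials obey the same degree/positivity bound as in Lemma~\ref{lem_polKL}(a); combined with the fact that $h^{w,sw}(q) \in \bbZ[q]$ has leading term $q^{l(w)-l(sw)} = q$, this forces $h^{w,sw}(q) = q$ exactly. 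Hence $V(\lambda)$ contains a single composition factor $L(s\cdot\lambda)$, located in graded degree $1$. A graded lift $\overline\theta$ of $\theta$ of some degree $d$ has image $\overline\theta(V(s\cdot\lambda)) \cong V(s\cdot\lambda)\langle d\rangle$, whose graded top is a copy of $L(s\cdot\lambda)$ at degree $d$ inside $V(\lambda)$; since there is only one such copy and it lies in degree $1$, we conclude $d = 1$.

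For the general case $\mu \ne \emptyset$, I would invoke a graded translation functor $\overline T\colon {^v\overline\calO_{\emptyset,-}} \to {^v\overline\calO_{\mu,-}}$ onto the $\mu$-wall. Such a functor exists and is degree-preserving on (appropriately normalised) graded Verma modules by the Koszul-graded theory developed in \cite{SVV2} and \cite{MO}. Taking $w$ to be the shortest representative of its coset $wW_\mu$, translation sends the Verma embedding $V(sw\cdot o_{\emptyset,-}) \hookrightarrow V(w\cdot o_{\emptyset,-})$ of degree $1$ to a degree-$1$ morphism $V(s\cdot\lambda) \to V(\lambda)$ in $^v\overline\calO_{\mu,-}$; non-vanishing of the translated morphism follows because $L(s\cdot\lambda)$ persists as a composition factor of $V(\lambda)$.

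The main obstacle I anticipate is twofold: first, establishing $h^{w,sw}(q) = q$ for length-$1$ intervals, which requires the degree bound for the \emph{inverse} Kazhdan--Lusztig polynomials (Lemma~\ref{lem_polKL}(a) is stated only for $h_{\cdot,\cdot}$, so one has to deduce the analogous bound for $h^{\cdot,\cdot}$ from the inversion relation together with positivity); and second, executing the translation-functor reduction cleanly while tracking grading shifts so that the final morphism is indeed in degree $1$ and not degree $1+k$ for some stray $k$. Once these two points are in place, the rigidity of Verma modules (from the proof of Lemma~\ref{lem_reg-npar-neg}) takes care of the rest.
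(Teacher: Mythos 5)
Your treatment of the regular case is essentially correct, and it is a genuinely different argument from the paper's: you locate the top $L(s\cdot\lambda)$ of the image of the morphism, whereas the paper locates the socle. The point you flag about inverse Kazhdan--Lusztig polynomials is easily settled in the case you need: since $l(w)-l(sw)=1$, the Bruhat interval $[sw,w]$ is just $\{sw,w\}$, so the inversion relation (using that $h^{x,y}$ is supported on $x\geqslant y$) collapses to $h^{w,sw}(q)=h_{sw,w}(q)$, and $h_{sw,w}(q)=q$ follows from Lemma \ref{lem_polKL} (a) together with $h_{sw,w}\in\bbZ[q]$; no general degree bound for $h^{\cdot,\cdot}$ is required. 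With that, any nonzero homogeneous morphism of degree $d$ exhibits $L(s\cdot\lambda)\langle d\rangle$ as a constituent of $V(\lambda)$, forcing $d=1$, and hence every nonzero morphism is homogeneous of degree $1$.

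The genuine gap is the reduction of the singular case $\mu\ne\emptyset$ to the regular one via a ``graded translation functor onto the $\mu$-wall''. No such functor is constructed in this paper or in the references it relies on (\cite{SVV2}, \cite{MO}): you would need a translation functor between the truncated negative-level categories ${^v\calO_{\emptyset,-}}$ and ${^v\calO_{\mu,-}}$ admitting a graded lift compatible with the Koszul gradings, together with its precise normalization on graded Verma modules. Even in the finite-dimensional graded picture the shift produced by translation to a wall depends on the position of the element in its coset, so ``degree-preserving on graded Vermas'' is exactly the statement that requires proof, not a formal consequence; and your route produces only one degree-$1$ morphism, so to conclude that \emph{every} nonzero morphism is homogeneous of degree $1$ you would additionally need $\dim\Hom(V(s\cdot\lambda),V(\lambda))=1$, which you do not address. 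The paper avoids all of this with a uniform argument valid for every $\mu$: the antidominant Verma $V(o_{\mu,-})=L(o_{\mu,-})$ embeds in every Verma module of the block, by Lemma \ref{lem_reg-npar-neg} and $h^{x,1}(q)=q^{l(x)}$ (Lemma \ref{lem_polKL} (d)) this copy sits in graded degree $l(w)$ inside $V(\lambda)$ and in degree $l(w)-1$ inside $V(s\cdot\lambda)$, and since any nonzero map between Verma modules is injective by Kac--Kazhdan, comparing these two positions forces every homogeneous component of $\theta$ to have degree $1$. If you want to keep your argument, the same comparison works verbatim with $L(o_{\mu,-})$ in place of $L(s\cdot\lambda)$, with no translation functors needed.
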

\begin{proof}
We have $V(o_{\mu,-})=L(o_{\mu,-})$. It is a submodule of each Verma module $V(x\cdot o_{\mu,-})$, $x\in W$ because the weight $o_{\mu,-}+\widehat\rho$ is antidominant. By Lemma \ref{lem_reg-npar-neg} and Lemma \ref{lem_polKL} (d) we have $[V(\lambda):L(o_{\mu,-})]_q=q^{l(w)}$ and $[V(s\cdot\lambda):L(o_{\mu,-})]_q=q^{l(w)-1}$.
Thus $V(\lambda)$ and $V(s\cdot\lambda)$ contain a unique submodule isomorphic to $L(o_{\mu,-})$, which is concentrated in degree $l(w)$ and $l(w)-1$ respectively. Each nonzero morphism from $V(s\cdot\lambda)$ to $V(\lambda)$ is injective by \cite[Sec.~2]{KK}. Thus each morphism from $V(s\cdot\lambda)$ to $V(\lambda)$ is homogeneous of degree $1$.
\end{proof}

Let $v\in I_{\mu,-}^\nu$ and $\lambda=w\cdot o_{\mu,-}$, with $w\in {^vI_{\mu,-}^\nu}$.
See \cite[Chap.~6]{BGG} for the definition of the BGG resolution. Applying the exact functor $U(\frakg)\otimes_{U(\frakp)}\bullet$ to the BGG resolution of the simple module with highest weight $\lambda$ of the Levi subalgebra $\widehat\frakl_\nu\subset\widehat\frakp_\nu$, we get the parabolic BGG resolution of the parabolic Verma module $V^\nu(\lambda)$
$$
\cdots\to C_2\to C_1\to V(\lambda)\stackrel{\epsilon}{\to} V^\nu(\lambda)\to 0, \eqno (\mathrm{A}1)
$$
where
$$
C_t=\bigoplus_{x\in W_\nu, l(x)=t}V(x\cdot\lambda).
$$
\begin{coro}
\label{coro_graded-BGG-parab}
Each map except $\epsilon$ in $(\mathrm{A}1)$ is of degree $1$. The map $\epsilon$ is of degree $0$.
\end{coro}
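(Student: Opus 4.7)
The plan is to reduce Corollary \ref{coro_graded-BGG-parab} to the preceding lemma, applied componentwise to the edges of the BGG resolution.

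First I would observe that each differential $d_{t+1}\colon C_{t+1}\to C_t$ in (A1), inherited from the non-parabolic BGG resolution of the finite-dimensional simple $\widehat{\mathfrak{l}}_\nu$-module and then induced up to $\widehat{\frakg}$, decomposes as a signed sum of canonical Verma embeddings
$$
\iota_{x,x'}\colon V(x'\cdot\lambda)\hookrightarrow V(x\cdot\lambda),
$$
indexed by pairs $x,x'\in W_\nu$ with $l(x')=l(x)+1$ and $x'=sx$ for some simple reflection $s\in\nu$; in particular $x'\cdot\lambda = s\cdot(x\cdot\lambda)<x\cdot\lambda$. Writing $x\cdot\lambda=(wx)\cdot o_{\mu,-}$ and invoking the preceding lemma for the weight $x\cdot\lambda$ and the simple reflection $s$, each $\iota_{x,x'}$ is homogeneous of degree $1$. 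Hence $d_{t+1}$ itself is homogeneous of degree $1$.

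For $\epsilon$, I would argue as follows. Both $V(\lambda)$ and $V^\nu(\lambda)$ are indecomposable with simple top $L(\lambda)$, the latter by highest weight theory in $^v\calO_{\mu,-}^\nu$. With the normalization of graded lifts adopted in Section \ref{subs_graded-hw-cat}, the canonical surjections $V(\lambda)\to L(\lambda)$ and $V^\nu(\lambda)\to L(\lambda)$ are homogeneous of degree $0$. A graded lift of $\epsilon$ exists and is unique up to shift by \cite[Lem.~2.5.3]{BGS}, and the compatibility requirement that composing $\epsilon$ with the degree-$0$ map $V^\nu(\lambda)\to L(\lambda)$ recover the degree-$0$ map $V(\lambda)\to L(\lambda)$ forces $\epsilon$ to be of degree $0$.

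The only mildly delicate point is the ambient graded category: the Verma modules $V(x\cdot\lambda)$ for $x\in W_\nu$, $x\neq 1$, need not lie in $^v\overline{\calO}_{\mu,-}^\nu$ (their simple subquotients are indexed by elements of $W$ that are not $\nu$-dominant). This is handled by fixing a sufficiently large $v'\geqs v$ so that all the $V(x\cdot\lambda)$ appearing in (A1) lie in $^{v'}\overline{\calO}_{\mu,-}$; the preceding lemma can then be applied inside this larger graded category, and Remark \ref{rem_par-grad-adapt} guarantees that the inclusion $^v\overline{\calO}_{\mu,-}^\nu\subset{^{v'}\overline{\calO}_{\mu,-}}$ is compatible with the gradings, so the degree statement for $\epsilon$ is intrinsic to $^v\overline{\calO}_{\mu,-}^\nu$.
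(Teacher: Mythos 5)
Your overall route is the paper's: the degree-$0$ statement for $\epsilon$ via the normalization of the graded lifts (both surjections onto $L(\lambda)$ are homogeneous of degree $0$, and then either $\dim\Hom_{\calO_{\mu,-}}(V(\lambda),V^\nu(\lambda))=1$, as in the paper, or uniqueness of graded lifts of indecomposables, as you argue, forces $\deg\epsilon=0$), and the degree-$1$ statement by applying the preceding lemma componentwise to the differentials of the parabolic BGG resolution; your remark about enlarging the truncation parameter matches the ``$v$ large enough'' hypothesis already built into that lemma, so this point is fine.

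There is, however, one step that is false as written. The components of the differential $C_{t+1}\to C_t$ are indexed by \emph{all} Bruhat covering pairs $x\lessdot x'$ in $W_\nu$ (i.e. $x\leqslant x'$ with $l(x')=l(x)+1$), and such a pair need not satisfy $x'=sx$ for a simple reflection $s\in\nu$: already for a Levi factor of type $A_3$ one has coverings such as $1432\lessdot 3412$ (one-line notation), where both $x^{-1}x'$ and $x'x^{-1}$ are non-simple reflections. Consequently the preceding lemma, which is stated only for a simple reflection $s$ with $s\cdot\lambda<\lambda$, does not literally apply to every component, and your reduction has a gap exactly there. The gap is easily repaired, because the lemma's proof gives more than its statement: by Lemma \ref{lem_reg-npar-neg} and Lemma \ref{lem_polKL} (d) one has $[V(y\cdot o_{\mu,-}):L(o_{\mu,-})]_q=q^{l(y)}$, so each Verma module contains a unique copy of $L(o_{\mu,-})$, concentrated in degree $l(y)$; since every nonzero morphism between Verma modules is injective \cite{KK}, any nonzero morphism $V(x'\cdot\lambda)\to V(x\cdot\lambda)$ is homogeneous of degree $l(x'w)-l(xw)=l(x')-l(x)=1$ for a covering pair (using $l(xw)=l(x)+l(w)$, which holds because $\lambda=w\cdot o_{\mu,-}$ is $\nu$-dominant). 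This extended form of the lemma is what the paper implicitly uses, its own proof of the corollary being silent on the degree-$1$ part. A small additional slip: $x\cdot\lambda=(xw)\cdot o_{\mu,-}$, not $(wx)\cdot o_{\mu,-}$.
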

\begin{proof}
We must explain why $\epsilon$ is homogeneous of degree $0$. By definition of the graded lifts of (parabolic) Verma modules, the morphisms $V(\lambda)\to L(\lambda)$ and  $V^\nu(\lambda)\to L(\lambda)$ are homogeneous of degree zero. The morphism $V(\lambda)\to V^\nu(\lambda)$ is homogeneous because $\dim\Hom_{\calO_{\mu,-}}(V(\lambda),V^\nu(\lambda))=1$. Its degree is automatically zero.
\end{proof}

Assume that $\nu\in\bfP$, $v\in I_{\emptyset,-}^\nu$.
\begin{coro}
\label{coro_reg-par-neg}
Assume that $v\in I_{\emptyset,-}^\nu$. For each $x,y\in {^vI_{\emptyset,-}^\nu}$ we have
$$
[V^\nu(x\cdot o_{\emptyset,-}):L(y\cdot o_{\emptyset,-})]_q=\sum_{z\in W_\nu}(-q)^{l(z)}h^{zx,y}(q).
$$
\end{coro}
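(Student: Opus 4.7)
The plan is to apply the graded parabolic BGG resolution from Corollary \ref{coro_graded-BGG-parab} and take alternating sums in the graded Grothendieck group, then evaluate each term using the Step 1 formula (Lemma \ref{lem_reg-npar-neg}).

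Concretely, set $\lambda=x\cdot o_{\emptyset,-}$ and consider the resolution
$$
\cdots\to C_2\to C_1\to V(\lambda)\xrightarrow{\epsilon} V^\nu(\lambda)\to 0,\qquad C_t=\bigoplus_{\substack{z\in W_\nu \\ l(z)=t}}V(z\cdot \lambda).
$$
By Corollary \ref{coro_graded-BGG-parab}, $\epsilon$ is homogeneous of degree $0$ and each remaining differential is homogeneous of degree $1$. To convert every differential into a degree-$0$ map I will replace $C_t$ by $C_t\langle t\rangle$. A short bookkeeping check on gradings shows this turns the resolution into an exact sequence of graded modules with all maps of degree $0$. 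Passing to the graded Grothendieck group and using $[C_t\langle t\rangle]=q^t[C_t]$ yields
$$
[V^\nu(\lambda)]=\sum_{t\geqslant 0}(-1)^t[C_t\langle t\rangle]=\sum_{z\in W_\nu}(-q)^{l(z)}[V(zx\cdot o_{\emptyset,-})].
$$

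Taking the graded multiplicity of $L(y\cdot o_{\emptyset,-})$ and invoking Lemma \ref{lem_reg-npar-neg} on each summand gives
$$
[V^\nu(x\cdot o_{\emptyset,-}):L(y\cdot o_{\emptyset,-})]_q=\sum_{z\in W_\nu}(-q)^{l(z)}\,[V(zx\cdot o_{\emptyset,-}):L(y\cdot o_{\emptyset,-})]_q=\sum_{z\in W_\nu}(-q)^{l(z)}h^{zx,y}(q),
$$
which is the desired identity.

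The only point to be careful about is that some elements $zx$ with $z\in W_\nu$ may not lie in ${^vI_{\emptyset,-}}$, so the intermediate Verma modules $V(zx\cdot o_{\emptyset,-})$ and the resolution itself a priori live in a larger truncation $^{v'}\calO_{\emptyset,-}$ rather than in $^v\calO_{\emptyset,-}^\nu$. This is harmless: by Remark \ref{rem_par-grad-adapt} the inclusion of Grothendieck groups $[^v\overline\calO_{\emptyset,-}^\nu]\subset[^{v'}\overline\calO_{\emptyset,-}]$ respects the grading, so the computation of $[V^\nu(\lambda):L(y\cdot o_{\emptyset,-})]_q$ may be performed in the larger category; moreover, terms with $zx\not\leqslant y$ contribute $h^{zx,y}(q)=0$ and so drop out automatically. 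This compatibility verification is the main bookkeeping step, but no serious obstacle — all the substantive content has been pushed into Lemma \ref{lem_reg-npar-neg} (via the geometric argument from \cite{Shan}) and into Corollary \ref{coro_graded-BGG-parab} (via the explicit degree computation for BGG differentials).
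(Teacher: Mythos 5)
Your proposal is correct and follows essentially the same route as the paper: express $[V^\nu(x\cdot o_{\emptyset,-})]$ in the graded Grothendieck group as the alternating sum $\sum_{z\in W_\nu}(-q)^{l(z)}[V(zx\cdot o_{\emptyset,-})]$ via the graded parabolic BGG resolution of Corollary \ref{coro_graded-BGG-parab}, then evaluate each term with Lemma \ref{lem_reg-npar-neg}. The extra bookkeeping you add (shifting $C_t$ by $\langle t\rangle$ and enlarging the truncation so all Verma modules $V(zx\cdot o_{\emptyset,-})$ are present) is exactly what the paper leaves implicit, so there is no substantive difference.
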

\begin{proof}
By Corollary \ref{coro_graded-BGG-parab} we have
$$
[V^\nu(x\cdot o_{\emptyset,-})]=\sum_{z\in W_\nu}(-q)^{l(z)}[V(zx\cdot o_{\emptyset,-})]
$$
in $[^v\overline\calO_{\emptyset,-}]$.
Now, apply Lemma \ref{lem_reg-npar-neg}.
\end{proof}

\begin{coro}
\label{coro_inv-reg-par-neg}
For each $x\in {^vI_{\emptyset,-}^\nu}$ we have the following equality in $[^v\overline\calO_{\emptyset,-}^\nu]$:
$$
[L(x\cdot o_{\emptyset,-})]=\sum_{y\in {^v{I_{\emptyset,-}^\nu}}}(-1)^{l(x)+l(y)}h_{y^{-1},x^{-1}}(q)[V^\nu(y\cdot o_{\emptyset,-})].
$$
\qed
\end{coro}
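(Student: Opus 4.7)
The claim is the matrix inverse of Corollary \ref{coro_reg-par-neg}. My plan is to verify directly that the matrix of coefficients on the right-hand side,
\[
C_{x,y}(q) := (-1)^{l(x)+l(y)} h_{y^{-1}, x^{-1}}(q),
\]
is the left inverse of the decomposition matrix
\[
B_{x,y}(q) := \sum_{z \in W_\nu}(-q)^{l(z)}\, h^{zx, y}(q),
\]
both indexed by ${^vI_{\emptyset,-}^\nu}$. Since $\{[V^\nu(y \cdot o_{\emptyset,-})]\}_{y \in {^vI_{\emptyset,-}^\nu}}$ is a $\bbZ[q,q^{-1}]$-basis of $[^v\overline\calO_{\emptyset,-}^\nu]$, establishing $CB = I$ proves the corollary.

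To compute $(CB)_{x,w}$, first use Lemma \ref{lem_polKL}(b) to replace $h_{y^{-1}, x^{-1}}$ by $h_{y, x}$, and then reindex the double sum over $(y, z)$ by $u := zy$. Because $y \in {^vI_{\emptyset,-}^\nu}$ is the longest element of $W_\nu y$, one has $l(u) = l(y) - l(z)$, so $(-1)^{l(x)+l(y)+l(z)} = (-1)^{l(x)+l(u)}$ and $(-q)^{l(z)} = (-q)^{l(y)-l(u)}$. Substituting the key identity $h_{y,x}(q) = q^{-l(z)} h_{u,x}(q)$ (stated and proved below), the powers of $q$ cancel and the sum collapses to
\[
(CB)_{x,w} = \sum_{u \in W} (-1)^{l(x)+l(u)}\, h_{u,x}(q)\, h^{u,w}(q) = \delta_{x,w},
\]
the last equality being the defining relation of the inverse Kazhdan--Lusztig polynomials.

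The key identity is
\[
h_{zy,x}(q) = q^{l(z)}\, h_{y,x}(q),
\]
valid for $y$ longest in $W_\nu y$, $z \in W_\nu$, and $x \in {^vI_{\emptyset,-}^\nu}$ (equivalently, $sx < x$ for every simple reflection $s \in \nu$). I prove it by induction on $l(z)$: writing $z = s z'$ with $s \in \nu$ simple and $l(z') = l(z)-1$, the inductive step reduces to the single-reflection statement that if $sx < x$ and $su' < u'$ (with $s \in \nu$ simple), then $h_{su',x}(q) = q\, h_{u',x}(q)$. This last identity is extracted from the standard Kazhdan--Lusztig multiplication rule $\underline{H}_s\, \underline{H}_x = (q+q^{-1})\,\underline{H}_x$ --- which holds precisely because $sx < x$ --- by expanding both sides in the basis $(H_w)_{w \in W}$ and comparing the coefficients of $H_{u'}$, using $H_s H_{u'} = H_{su'} + (q^{-1}-q) H_{u'}$ for $su' < u'$.

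Finally, it remains to check that in the collapsed sum the index $u$ truly ranges over all of $W$, so that the inverse KL relation applies; equivalently, that for each $u$ with $h_{u,x}(q)\, h^{u,w}(q) \neq 0$, the longest element $y(u)$ of $W_\nu u$ lies in ${^vI_{\emptyset,-}^\nu}$. This follows from the standard lifting property of the Bruhat order: from $u \leq x$ and $sx < x$ one deduces $su \leq x$, and iterating with $s$ ranging over the simple reflections of $\nu$ shows that $u \leq x$ with $x \in {^vI_{\emptyset,-}^\nu}$ implies $y(u) \leq x \leq v$. I expect the Kazhdan--Lusztig polynomial identity above to be the main obstacle; once it is in place, the remaining sign bookkeeping and the invocation of the BGG resolution from Corollary \ref{coro_graded-BGG-parab} are routine.
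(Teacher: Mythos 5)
Your proof is correct and takes essentially the same route as the paper: it inverts the matrix of Corollary \ref{coro_reg-par-neg} by reindexing the double sum over $W_\nu\times{}^vI_{\emptyset,-}^\nu$ as a single sum and applying the defining relation of the inverse Kazhdan--Lusztig polynomials. The only differences are cosmetic: your key identity $h_{zy,x}(q)=q^{l(z)}h_{y,x}(q)$ is exactly Lemma \ref{lem_polKL}(e) (for $f=\emptyset$, combined with Lemma \ref{lem_polKL}(b)), which the paper cites rather than reproves, and your lifting-property verification of the range of summation is left implicit in the paper's argument.
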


\begin{proof}
We must prove that
$$
\sum_{t\in {^vI_{\emptyset,-}^\nu}}(-1)^{l(x)+l(t)}h_{t^{-1},x^{-1}}(q)[V^\nu(t\cdot o_{\emptyset,-}):L(y\cdot o_{\emptyset,-})]_q=\delta_{x,y}
$$
for each $x,y\in {^vI_{\emptyset,-}^\nu}$.
We have
\begin{eqnarray*}
&&\sum_{t\in {^vI_{\emptyset,-}^\nu}}(-1)^{l(x)+l(t)}h_{t^{-1},x^{-1}}(q)[V^\nu(t\cdot o_{\emptyset,-}):L(y\cdot o_{\emptyset,-})]_q\\
&=&\sum_{t\in {^vI_{\emptyset,-}^\nu},z\in W_\nu}(-q)^{l(z)}(-1)^{l(x)+l(t)}h_{t^{-1},x^{-1}}(q)h ^{zt,y}(q)\\
&=&\sum_{t\in {^vI_{\emptyset,-}^\nu},z\in W_\nu}(-1)^{l(x)+l(t)+l(z)}h_{t^{-1}z^{-1},x^{-1}}(q)h ^{zt,y}(q)\\
&=&\sum_{s\in {^vI_{\emptyset,-}}}(-1)^{l(x)+l(s)}h_{s^{-1},x^{-1}}(q)h^{s^{-1},y^{-1}}(q)\\
&=&\delta_{x,y}.
\end{eqnarray*}
Here the second equality is Lemma \ref{lem_polKL} (e), the third is Lemma \ref{lem_polKL} (b).
\end{proof}

\medskip

\subsection{Step 3}

Let $\nu\in\bfP$ and $u\in I_{\nu,+}$.
\begin{lem}
\label{lem_sing-npar-pos}
For each $x,y\in {^uI_{\nu,+}}$ we have
$$
[^uV(x\cdot o_{\mu,+}):L(y\cdot o_{\mu,+})]_q=h_{x,y}(q).
$$
\end{lem}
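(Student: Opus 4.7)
The plan is to derive this from Corollary~\ref{coro_reg-par-neg} (Step~2) by Koszul duality, as announced in the scheme for Step~3. Setting $v=u^{-1}\in I_{\emptyset,-}^\nu$ via the bijection $x\mapsto x^{-1}$ in the Remark above, I invoke \cite{SVV2} to identify the Koszul dual of $^vA_{\emptyset,-}^\nu$ with $^uA_{\nu,+}$. Theorem~\ref{thm_Koszul-duality} then provides a derived equivalence $K\colon\overline D^b(^vA_{\emptyset,-}^\nu)\to\overline D^b(^uA_{\nu,+})$ and Corollary~\ref{coro_Groth-isom-Koszul} a $\bbZ[q,q^{-1}]$-module isomorphism of Grothendieck groups under which the grading shift $q$ on the negative side corresponds to $-q^{-1}$ on the positive side. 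Under $K$, indecomposable projectives are sent to graded simples (concentrated in degree zero) via the parameter bijection $x\mapsto x^{-1}$, and standard modules are matched with costandards up to an explicit length-driven grading shift.

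The core of the computation is to transport the identity of Corollary~\ref{coro_inv-reg-par-neg} across $K$. The class of the simple $L(x\cdot o_{\emptyset,-})$ is mapped to (a shift of) the class of a costandard $^uV(x^{-1}\cdot o_{\nu,+})^\vee$, while each $V^\nu(y\cdot o_{\emptyset,-})$ is mapped to (a shift of) the class of a projective $^uP(y^{-1}\cdot o_{\nu,+})$. Extracting the multiplicity of a simple from the transported identity, and combining it with graded BGG reciprocity (Corollary~\ref{coro_grBGG2}) together with the standard rigidity relation $[V^\vee:L]_q=[V:L]_{q^{-1}}$ between a standard and its dual, will yield a formula for $[^uV(x\cdot o_{\nu,+}):L(y\cdot o_{\nu,+})]_q$ as a single Kazhdan-Lusztig polynomial. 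The residual sign $(-1)^{l(x)+l(y)}$ and the substitution $q\mapsto -q^{-1}$ are then absorbed using Lemma~\ref{lem_polKL}(f), while the index swap $x^{-1},y^{-1}\leftrightarrow x,y$ is handled by Lemma~\ref{lem_polKL}(b), producing the target value $h_{x,y}(q)$.

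The main obstacle is the precise normalization of $K$ on the distinguished modules (simples, standards, costandards, indecomposable projectives) together with their grading shifts. The relation $K(M\langle n\rangle)=K(M)\langle -n\rangle[-n]$ entangles the grading shift with the homological shift, so careful bookkeeping is required before the combinatorial matching to $h_{x,y}(q)$ via Lemma~\ref{lem_polKL} becomes a direct verification. Once these shifts are pinned down, the remainder reduces to the well-known fact that inverting the matrix of parabolic inverse Kazhdan-Lusztig polynomials indexed by shortest coset representatives produces the ordinary Kazhdan-Lusztig polynomials indexed by the corresponding longest coset representatives on the Koszul dual side.
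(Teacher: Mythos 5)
Your overall route is the one the paper itself follows for Step 3 (transport the inversion formula of Corollary \ref{coro_inv-reg-par-neg} through the Koszul duality of \cite[Thm.~3.12]{SVV2} via Corollary \ref{coro_Groth-isom-Koszul}, then finish with graded BGG reciprocity and Lemma \ref{lem_polKL} (b),(f)), but the execution has a genuine gap at exactly the point you flag as ``bookkeeping'': the effect of the duality on the distinguished classes. You state it twice and the two statements contradict each other (first: projectives $\mapsto$ simples and standards $\mapsto$ costandards; then: simples $\mapsto$ costandards and standards $\mapsto$ projectives), and neither is the matching actually supplied by \cite[Thm.~3.12]{SVV2} and used in the paper, namely
$[L(x\cdot o_{\emptyset,-})]\mapsto[{}^uP(x^{-1}\cdot o_{\nu,+})]$ and $[V^\nu(x\cdot o_{\emptyset,-})^\vee]\mapsto[{}^uV(x^{-1}\cdot o_{\nu,+})]$, with no extra shifts thanks to the fixed normalizations of the graded lifts.

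This is not harmless: the ``extraction'' step only works with the correct matching. One must first rewrite Corollary \ref{coro_inv-reg-par-neg} in terms of \emph{dual} parabolic Verma classes with $q\mapsto q^{-1}$ (using the duality as in the proof of Corollary \ref{coro_grBGG2}); transporting that identity then expresses $[{}^uP(x^{-1}\cdot o_{\nu,+})]$ in the basis of standard classes, so the coefficients are literally the graded Verma-filtration multiplicities $({}^uP(x^{-1}\cdot o_{\nu,+}):{}^uV(y^{-1}\cdot o_{\nu,+}))_q=h_{y^{-1},x^{-1}}(q)$ after Lemma \ref{lem_polKL} (f), and Corollary \ref{coro_grBGG2} converts this directly into $[{}^uV(x\cdot o_{\nu,+}):L(y\cdot o_{\nu,+})]_q=h_{x,y}(q)$. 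With the matching you actually propose to use (simples to costandards, Vermas to projectives), the transported identity instead expands costandard classes in terms of projective classes; since $[{}^uP:L]_q$ is itself quadratic in the unknown decomposition matrix, you obtain a relation in which the unknown appears on both sides and cannot be read off linearly --- your plan would stall there, or need a substantial extra argument (degree bounds and positivity) that you do not supply. So the missing ingredient is precisely the correct, shift-free normalization of the Grothendieck-group isomorphism on simples and costandards, which the paper imports from \cite{SVV2}, together with the preliminary pass from Vermas to dual Vermas before applying it.
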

\begin{proof}
Set $v=u^{-1}$.
As in the proof of Corollary \ref{coro_grBGG2}, we deduce from Corollary \ref{coro_inv-reg-par-neg} that we have
$$
[L(x\cdot o_{\emptyset,-})]=\sum_{y\in {^v{I_{\emptyset,-}^\nu}}}(-1)^{l(x)+l(y)}h_{y^{-1},x^{-1}}(q^{-1})[V^{\nu}(y\cdot o_{\emptyset,-})^\vee]
$$
in $[^v\overline\calO_{\emptyset,-}^\nu]$.
By \cite[Thm.~3.12]{SVV2} and Corollary \ref{coro_Groth-isom-Koszul} we have a $\bbZ[q,q^{-1}]$-module isomorphism
$$
[^v\overline\calO_{\emptyset,-}^\nu]\to\widetilde{[^u\overline\calO_{\nu,+}]},\quad [L(x\cdot o_{\emptyset,-})]\mapsto[^uP(x^{-1}\cdot o_{\nu,+})],~[V^\nu(x\cdot o_{\emptyset,-})^\vee]\mapsto [^uV(x^{-1}\cdot o_{\nu,+})]~\forall x\in {^vI_{\emptyset,-}^\nu}.
$$
Thus we get
$$
[^uP(x^{-1}\cdot o_{\nu,+})]=\sum_{y\in {^vI_{\emptyset,-}^\nu}}(-1)^{l(x)+l(y)}h_{y^{-1},x^{-1}}(-q)[^uV({y^{-1}}\cdot o_{\nu,+})]
$$
in $[^u\overline\calO_{\nu,+}]$.
Now, the statement follows from Corollary \ref{coro_grBGG2} and Lemma \ref{lem_polKL} (f).
\end{proof}

\medskip

\subsection{Step 4}
\label{subs_BGG-pos}
Let $\mu,\nu\in\bfP$ and $u,w\in I_{\nu,+}^\mu$ with $w\leqslant u$, and set $\lambda=w\cdot o_{\nu,+}$. There is an exact sequence
$$
\cdots\to C_2\to C_1\to V(\lambda)\stackrel{\epsilon}{\to} V^\mu(\lambda)\to 0
$$
in $\calO_{\nu,+}$, where
$$
C_t=\bigoplus_{x\in W_\mu, l(x)=t}V(x\cdot\lambda),
$$
compare Section \ref{subs_BGG-neg}.
The image of this exact sequence in $^u\calO_{\nu,+}$ yields an exact sequence
$$
\cdots\to {^uC_2}\to {^uC_1}\to {^uV(\lambda)}\stackrel{\epsilon}{\to}{^uV^\mu(\lambda)}\to 0,  \eqno (\mathrm{A}2)
$$
where
$$
^uC_t=\bigoplus_{x\in W_\mu, l(x)=t}{^uV(x\cdot\lambda)}.
$$
\begin{lem}
\label{lem_morph-Verma-tr-inj}
Let $u\in I_{\nu,+}$ with $x,y\in {^uI_{\nu,+}}$ and $x\geqslant y$. Then

    {\rm(a)} $\dim\Hom_{^u\calO_{\nu,+}}(^uV(x\cdot o_{\nu,+}),{^uV(y\cdot o_{\nu,+})})=1$,

    {\rm(b)} each nonzero morphism $^uV(x\cdot o_{\nu,+})\to {^uV(y\cdot o_{\nu,+})}$ is injective,

    {\rm(c)} $\soc({^uV(y\cdot o_{\nu,+})})={^uV(u\cdot o_{\nu,+})}$,

    {\rm(d)} the inclusion ${^uV(u\cdot o_{\nu,+})}\to{^uV(y\cdot o_{\nu,+})}$ is homogeneous of degree $l(u)-l(y)$,

    {\rm(e)} each nonzero morphism $^uV(x\cdot o_{\nu,+})\to {^uV(y\cdot o_{\nu,+})}$ is homogeneous of degree $l(x)-l(y)$.
\end{lem}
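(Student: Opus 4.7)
The plan is to derive parts (c) and (d) directly from the graded multiplicity formula in Lemma \ref{lem_sing-npar-pos} combined with the rigidity criterion Lemma \ref{lem_rigidity}, then handle (a) and (b) using classical affine Verma theory plus exactness of the Serre quotient, and finally read off (e) from socle bookkeeping.

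For (c) and (d), Lemma \ref{lem_sing-npar-pos} gives $[{^uV(y\cdot o_{\nu,+})}:L(z\cdot o_{\nu,+})]_q = h_{y,z}(q)$, which by Lemma \ref{lem_polKL}(a) vanishes unless $y\leqslant z\leqslant u$ and otherwise has top $q$-degree $l(z)-l(y)$ with leading coefficient $1$. Setting $y=u$ forces ${^uV(u\cdot o_{\nu,+})}=L(u\cdot o_{\nu,+})$, simple and concentrated in grading degree $0$. For general $y\leqslant u$, the maximal grading degree appearing in ${^uV(y\cdot o_{\nu,+})}$ is $l(u)-l(y)$, realized uniquely by one copy of $L(u\cdot o_{\nu,+})$. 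Since ${^uV(y\cdot o_{\nu,+})}$ has simple top $L(y\cdot o_{\nu,+})$ in degree $0$, Lemma \ref{lem_rigidity}(a) implies that its radical filtration coincides with its grading filtration up to shift, so the socle is exactly this extremal-degree component. This gives $\soc({^uV(y\cdot o_{\nu,+})})\simeq L(u\cdot o_{\nu,+})\simeq{^uV(u\cdot o_{\nu,+})}$ placed in degree $l(u)-l(y)$, proving (c) and (d) simultaneously.

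For (a) and (b), the starting point is the Kac--Kazhdan theorem for affine Kac--Moody Verma modules: $\dim\Hom_{\calO_{\nu,+}}(V(x\cdot o_{\nu,+}), V(y\cdot o_{\nu,+}))\leqslant 1$, nonzero iff $x\geqslant y$ in the Bruhat order, and every nonzero morphism is injective. Exactness of the Serre quotient $Q\colon\calO_{\nu,+}\to{^u\calO_{\nu,+}}$ converts the resulting inclusion $V(x\cdot o_{\nu,+})\hookrightarrow V(y\cdot o_{\nu,+})$ into an injection $\iota\colon {^uV(x\cdot o_{\nu,+})}\hookrightarrow {^uV(y\cdot o_{\nu,+})}$, nonzero because the top $L(x\cdot o_{\nu,+})$ survives in ${^uV(x\cdot o_{\nu,+})}$. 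This provides existence together with an injective representative. For the upper bound in (a), any nonzero $\phi\colon {^uV(x\cdot o_{\nu,+})}\to {^uV(y\cdot o_{\nu,+})}$ has image a submodule of ${^uV(y\cdot o_{\nu,+})}$ with simple top $L(x\cdot o_{\nu,+})$; lifting through the surjection $V(y\cdot o_{\nu,+})\twoheadrightarrow{^uV(y\cdot o_{\nu,+})}$ produces a submodule of $V(y\cdot o_{\nu,+})$ with the same simple top $L(x\cdot o_{\nu,+})$, since the kernel of the quotient has no composition factor with highest weight in ${^uI_{\nu,+}}$. By Kac--Kazhdan, such a submodule of $V(y\cdot o_{\nu,+})$ is unique up to isomorphism, and the corresponding space of surjections from $V(x\cdot o_{\nu,+})$ onto it is $1$-dimensional, so $\dim\Hom\leqslant 1$. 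Part (b) then follows since every nonzero morphism must be a scalar multiple of $\iota$, hence injective.

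Part (e) is now immediate: by (b), a nonzero $\phi\colon{^uV(x\cdot o_{\nu,+})}\to{^uV(y\cdot o_{\nu,+})}$ is injective and restricts to an isomorphism of the simple socles $L(u\cdot o_{\nu,+})\to L(u\cdot o_{\nu,+})$; by (d) these socles lie in grading degrees $l(u)-l(x)$ and $l(u)-l(y)$ respectively, forcing $\phi$ to be homogeneous of degree $l(x)-l(y)$. The main obstacle we foresee is the upper-bound step in (a): making rigorous the comparison between submodules of $V(y\cdot o_{\nu,+})$ and those of its Serre quotient ${^uV(y\cdot o_{\nu,+})}$ requires careful control of which simple tops are preserved. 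An alternative route, bypassing this entirely, is to transport the question via the Koszul duality of Theorem \ref{thm_Koszul-duality} to Hom spaces between dual parabolic Vermas in ${^v\overline\calO_{\emptyset,-}^\nu}$ with $v=u^{-1}$, where the bound follows from classical BGG theory at antidominant level together with the parabolic BGG resolution of Corollary \ref{coro_graded-BGG-parab}.
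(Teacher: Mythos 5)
Your derivation of (c) does not go through. Lemma \ref{lem_rigidity}(a) tells you that the radical filtration of $^uV(y\cdot o_{\nu,+})$ agrees with its grading filtration, and the Kazhdan--Lusztig degree bound tells you that the top graded layer is a single copy of $L(u\cdot o_{\nu,+})$ in degree $l(u)-l(y)$; but this does not imply that the socle \emph{equals} that layer. The socle always contains the last radical layer, yet it can be strictly larger: for a nonnegatively graded algebra generated in degrees $0,1$, a module with simple top can have a decomposable socle with simple summands in lower degrees, and nothing in your argument rules out such extra simple submodules of $^uV(y\cdot o_{\nu,+})$. In the paper, (c) is deduced from (a) and (b) (a simple submodule forces, via (b), an embedding of some $^uV(z\cdot o_{\nu,+})$, which must then be simple, hence $z=u$, and (a) bounds the multiplicity), and only afterwards is the rigidity-plus-KL-degree count used, exactly as you do, to get (d). So your degree bookkeeping for (d) is fine once (c) is available, but as written (c) is unproved, and your (e) depends on it.

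The second gap is the upper bound in (a), which you yourself flag. The object $^uV(y\cdot o_{\nu,+})$ is the image of $V(y\cdot o_{\nu,+})$ under a Serre quotient functor; it is not a quotient module of $V(y\cdot o_{\nu,+})$, so there is no ``surjection $V(y\cdot o_{\nu,+})\twoheadrightarrow{^uV(y\cdot o_{\nu,+})}$'' with a kernel to control, and a morphism $^uV(x\cdot o_{\nu,+})\to{^uV(y\cdot o_{\nu,+})}$ need not lift to a morphism $V(x\cdot o_{\nu,+})\to V(y\cdot o_{\nu,+})$ (Hom in a Serre quotient is a colimit over subobjects and can be strictly larger than the Hom in $\calO_{\nu,+}$). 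Moreover, Kac--Kazhdan gives uniqueness of embeddings between Verma modules, not uniqueness of submodules of $V(y\cdot o_{\nu,+})$ with a prescribed simple top, so even the classical step of your counting argument is unjustified. The paper's mechanism is different: the Ringel-type equivalence \cite[Prop.~3.9~(b)]{SVV2} identifies $\Hom_{^u\calO_{\nu,+}}({^uV(x\cdot o_{\nu,+})},{^uV(y\cdot o_{\nu,+})})$ with $\Hom_{\calO_{\nu,-}}(V(y\cdot o_{\nu,-}),V(x\cdot o_{\nu,-}))$, which is one-dimensional by \cite[Lem.~7.3]{Fie} and \cite[Sec.~2]{KK}; your proposed fallback via Koszul duality does not obviously compute these Hom spaces (in the appendix Koszul duality is only used on Grothendieck groups). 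By contrast, your existence argument for (b) --- apply the exact quotient functor to the Kashiwara--Tanisaki embedding --- is exactly the paper's, and your (e) is the paper's modulo the repairs above.
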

\begin{proof}
By the Ringel equivalence we have
$$
\Hom_{^u\calO_{\nu,+}}(^uV(x\cdot o_{\nu,+}),{^uV(y\cdot o_{\nu,+})})=\Hom_{\calO_{\nu,-}}(V(y\cdot o_{\nu,-}),{V(x\cdot o_{\nu,-})}),
$$
see \cite[Prop.~3.9~(b)]{SVV2}.
So, to prove (a) it is enough to prove that
$$
\dim\Hom_{\calO_{\nu,-}}(V(y\cdot o_{\nu,-}),{V(x\cdot o_{\nu,-})})=1.
$$
This follows from \cite[Lem.~7.3]{Fie} and \cite[Sec.~2]{KK}, see the proof of \cite[Thm.~4.2~(b)]{BGG}.

Now, to prove (b) it is enough to show that there exists one injective morphism $^uV(x\cdot o_{\nu,+})\to{^uV(y\cdot o_{\nu,+})}$. By \cite[Prop.~3.1]{KaTan} there exist an injective morphism $V(x\cdot o_{\nu,+})\to{V(y\cdot o_{\nu,+})}$. Its image in $^u\calO_{\nu,+}$ by the quotient functor (which is exact) must be also injective.

It is clear from (b) that the only simple module that can be contained in $\soc({^uV(y\cdot o_{\nu,+})})$ is ${^uV(u\cdot o_{\nu,+})}=L(u\cdot o_{\nu,+})$. Hence, by (a), this socle must be equal to ${^uV(u\cdot o_{\nu,+})}$. Part (c) is proved.

By Lemma \ref{lem_rigidity}, the radical filtration of $^uV(y\cdot o_{\nu,+})$ coincides with its grading filtration. Here we view $^uV(y\cdot o_{\nu,+})$ as an $^uA_{\nu,+}$-module. By Lemma \ref{lem_polKL} (a) and Lemma \ref{lem_sing-npar-pos}, for all $z\in {^uI_{\nu,+}}$ such that $z\ne u$ and $z\leqslant y$ we have $[^uV(y\cdot o_{\nu,+}):{L(z\cdot o_{\nu,+})}]_q\in q^{l(u)-l(y)-1}\bbZ[q^{-1}]$ and $[^uV(y\cdot o_{\nu,+}):{L(u\cdot o_{\nu,+})}]_q\in q^{l(u)-l(y)}(1+q^{-1}\bbZ[q^{-1}])$. Thus $\rad^{l(u)-l(y)}(^uV(y\cdot o_{\nu,+}))$ coincides with $\soc(^uV(y\cdot o_{\nu,+}))$. Part (d) follows.

The statement (e) follows from (a), (b), (c), (d).

\end{proof}
\begin{coro}
\label{coro_graded-BGG-parab-pos}
Each map in {\rm (A2)} except $\epsilon$
is homogeneous of degree $1$. The map $\epsilon$ is homogeneous of degree $0$. \qed
\end{coro}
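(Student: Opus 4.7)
The plan is to mirror the proof of the negative-level analogue, Corollary \ref{coro_graded-BGG-parab}, replacing the ad hoc argument about socles of Vermas with the systematic degree computation furnished by Lemma \ref{lem_morph-Verma-tr-inj}(e).

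First I handle the map $\epsilon$. Both $^uV(\lambda)$ and $^uV^\mu(\lambda)$ are graded lifts normalized so that the canonical projection onto $L(\lambda)$ is homogeneous of degree $0$. Since $V^\mu(\lambda)$ is a quotient of $V(\lambda)$, we have $\dim \Hom_{^u\calO_{\nu,+}}(^uV(\lambda), {^uV^\mu(\lambda)}) = 1$, so any nonzero morphism is forced to be homogeneous; the normalization pins down its degree as $0$. This is exactly parallel to the argument used for $\epsilon$ in Corollary \ref{coro_graded-BGG-parab}.

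For the remaining arrows, each map $^uC_t \to {^uC_{t-1}}$ decomposes into summands of the form $^uV(x\cdot\lambda) \to {^uV(y\cdot\lambda)}$ with $x, y \in W_\mu$ and $l_{W_\mu}(x) = l_{W_\mu}(y)+1$. To invoke Lemma \ref{lem_morph-Verma-tr-inj}(e), I rewrite the highest weights in the form $x\cdot\lambda = \tilde x\cdot o_{\nu,+}$ and $y\cdot\lambda = \tilde y \cdot o_{\nu,+}$ for unique $\tilde x, \tilde y \in {^uI_{\nu,+}}$ (the longest coset representatives in $xw W_\nu$ and $yw W_\nu$ respectively, where $\lambda = w \cdot o_{\nu,+}$ with $w \in {^uI_{\nu,+}^\mu}$). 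Lemma \ref{lem_morph-Verma-tr-inj}(e) then tells us that the map is homogeneous of degree $l(\tilde x) - l(\tilde y)$.

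The main obstacle, and the one bit that needs real work, is the combinatorial identity $l(\tilde x) - l(\tilde y) = 1$. The content here is that for $w \in I_{\nu,+}^\mu$ the assignment $x \mapsto \tilde x$ from $W_\mu$ to $I_{\nu,+}$ is length-additive in the relevant sense: if $y = sx$ for a simple reflection $s$ in $W_\mu$ with $l_{W_\mu}(y) = l_{W_\mu}(x)-1$, then $\tilde y = s \tilde x$ in $W$ with $l(\tilde y) = l(\tilde x) - 1$. This is a standard Coxeter-combinatorial fact about longest coset representatives, once one uses that $w$ is a longest $W_\nu$-coset representative and that $w \cdot o_{\nu,+}$ is $\mu$-dominant (which ensures compatibility between the $W_\mu$-length and the ambient $W$-length along the orbit). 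Granting this, Lemma \ref{lem_morph-Verma-tr-inj}(e) delivers degree $1$ on each summand, completing the corollary.
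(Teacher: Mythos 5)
Your proposal is correct and follows exactly the route the paper intends: the paper states this corollary without proof precisely because it is the combination of Lemma \ref{lem_morph-Verma-tr-inj}(e) with the length bookkeeping $l(xw)-l(yw)=l(x)-l(y)=1$ (using that $w\in I_{\nu,+}^\mu$ makes $xw$ the longest representative of $xwW_\nu$ with $l(xw)=l(x)+l(w)$), together with the same normalization argument for $\epsilon$ already used in Corollary \ref{coro_graded-BGG-parab}. Your write-up just makes these implicit steps explicit, so there is nothing genuinely different to compare.
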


Assume that $\nu,\mu\in\bfP$, $u\in I_{\nu,+}^\mu$.
\begin{coro}
\label{coro_sing-par-pos}
For each $x,y\in {^uI_{\nu,-}^\mu}$ we have
$$
[^uV^\mu(x\cdot o_{\nu,+}):L(y\cdot o_{\nu,+})]_q=n^\mu_{x,y}(q).
$$
\end{coro}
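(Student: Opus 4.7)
The plan is to follow the template of Corollary~\ref{coro_reg-par-neg} from Step~2 in the negative-level setting. The input is the graded exact sequence~(A2) together with Corollary~\ref{coro_graded-BGG-parab-pos}, which asserts that all differentials in (A2) have degree~$1$ while the augmentation $\epsilon$ has degree~$0$. First, I would shift the $t$-th term of the resolution by $\langle t\rangle$ so that every arrow becomes homogeneous of degree zero, and then pass to the graded Grothendieck group $[^u\overline{\calO}_{\nu,+}]$. This produces the identity
$$
[^uV^\mu(x\cdot o_{\nu,+})]=\sum_{z\in W_\mu}(-q)^{l(z)}[^uV(zx\cdot o_{\nu,+})],
$$
exactly analogous to the negative-level formula used in the proof of Corollary~\ref{coro_reg-par-neg}.

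Next, I would take the graded multiplicity of $L(y\cdot o_{\nu,+})$ on both sides. The left-hand side is the quantity to be computed, and each term on the right is evaluated via Lemma~\ref{lem_sing-npar-pos}. When $zx$ lies in $^uI_{\nu,+}$ the value is immediately $h_{zx,y}(q)$; otherwise one identifies $V(zx\cdot o_{\nu,+})$ with $V(\widetilde z\cdot o_{\nu,+})$, where $\widetilde z\in I_{\nu,+}$ is the longest representative of the coset $(zx)W_\nu$, which makes sense because $W_\nu$ stabilises $o_{\nu,+}$ under the dot action. Summing the contributions yields
$$
\sum_{z\in W_\mu}(-q)^{l(z)}h_{zx,y}(q)=n^\mu_{x,y}(q),
$$
which is precisely the definition of the parabolic Kazhdan--Lusztig polynomial.

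The main obstacle in this plan is the bookkeeping of graded shifts in the second step. Converting the graded lift of $V(zx\cdot o_{\nu,+})$ inherited from the resolution (A2) into the normalised graded lift used in Lemma~\ref{lem_sing-npar-pos} introduces a power of $q$ coming from $l(\widetilde z)-l(zx)$. This shift must be absorbed by the behaviour of Kazhdan--Lusztig polynomials under right multiplication by elements of $W_\nu$ that shorten the length, an analogue of Lemma~\ref{lem_polKL}(e). Once that compatibility is in place, the sum reassembles into the claimed formula and the corollary follows.
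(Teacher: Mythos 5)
Your proposal follows exactly the paper's route: the graded identity $[^uV^\mu(x\cdot o_{\nu,+})]=\sum_{z\in W_\mu}(-q)^{l(z)}[^uV(zx\cdot o_{\nu,+})]$ coming from (A2) and Corollary \ref{coro_graded-BGG-parab-pos}, followed by a termwise application of Lemma \ref{lem_sing-npar-pos} and the definition of $n^\mu_{x,y}(q)$; this is correct and is the paper's proof. The only comment concerns the ``main obstacle'' you leave conditional: it never arises. For $x\in{}^uI_{\nu,+}^\mu$ and $z\in W_\mu$ the element $zx$ is automatically the longest element of $zxW_\nu$: if some positive root $\beta$ of $W_\nu$ had $zx\beta>0$, then $x\beta$ would have to lie in $-\Pi^+_\mu$, and pairing $x(o_{\nu,+}+\widehat\rho)$ with $(-x\beta)^\vee$ gives $-\langle o_{\nu,+}+\widehat\rho,\beta^\vee\rangle=0$, contradicting the strict $\mu$-dominance of $x(o_{\nu,+}+\widehat\rho)$ forced by the integrality of the condition $(x\cdot o_{\nu,+})(\alpha^\vee)\geqslant 0$ for $\alpha\in\mu$. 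Hence $\widetilde z=zx$, the shift $l(\widetilde z)-l(zx)$ is zero, and no analogue of Lemma \ref{lem_polKL}(e) for the $h$-polynomials is needed; note also that even in your re-indexing, $V(zx\cdot o_{\nu,+})$ and $V(\widetilde z\cdot o_{\nu,+})$ are literally the same module with the same graded lift, so no grading conversion could enter at the module level. With that observation your argument closes exactly as in the paper.
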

\begin{proof}
By Corollary \ref{coro_graded-BGG-parab-pos} we have
$$
[^uV^\mu(x\cdot o_{\nu,+})]=\sum_{s\in W_\mu}(-q)^{l(s)}[^uV(sx\cdot o_{\nu,+})]
$$
in $[{^u\overline\calO_{\nu,+}}]$.
Now, from Lemma \ref{lem_sing-npar-pos} we get
$$
[^uV^\mu(x\cdot o_{\nu,+}):L(y\cdot o_{\nu,+})]_q=\sum_{s\in W_\mu}(-q)^{l(s)}h_{sx,y}(q)=n^\mu_{x,y}(q).
$$
\end{proof}

\medskip

\subsection{Step 5}
Assume that $\nu,\mu\in\bfP$ and $v\in I_{\nu,-}^\mu$.

\begin{lem}
\label{lem_inv-sing-par-neg}
For each $x\in {^vI_{\mu,-}^\nu}$ we have the following equality in $[^v\overline\calO_{\mu,-}^\nu]$
$$
[L(x\cdot o_{\mu,-})]=\sum_{y\in {^vI_{\mu,-}^\nu}}(-1)^{l(x)+l(y)}n^\mu_{y^{-1},x^{-1}}(q)[V^\nu(y\cdot o_{\mu,-})].
$$
\end{lem}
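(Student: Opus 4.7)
The plan is to transfer, via Koszul duality, the decomposition-number formula of Corollary \ref{coro_sing-par-pos} from $^u\overline\calO_{\nu,+}^\mu$ (with $u = v^{-1}$) over to $^v\overline\calO_{\mu,-}^\nu$, following the strategy used in the proof of Lemma \ref{lem_sing-npar-pos}, now applied in the full parabolic/singular setting.

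First, Corollary \ref{coro_sing-par-pos} combined with BGG reciprocity (Corollary \ref{coro_grBGG2}) in $^u\overline\calO_{\nu,+}^\mu$ gives $(^uP^\mu(y\cdot o_{\nu,+}) : {}^uV^\mu(x\cdot o_{\nu,+}))_q = n^\mu_{x,y}(q)$, hence the identity
\[
[^uP^\mu(y\cdot o_{\nu,+})] \;=\; \sum_{x \in {}^uI_{\nu,+}^\mu} n^\mu_{x,y}(q)\, [^uV^\mu(x\cdot o_{\nu,+})]
\]
in $[^u\overline\calO_{\nu,+}^\mu]$. By \cite[Thm.~3.12]{SVV2} and Corollary \ref{coro_Groth-isom-Koszul}, the Koszul duality between $^u A_{\nu,+}^\mu$ and $^v A_{\mu,-}^\nu$ induces a $\bbZ[q,q^{-1}]$-module isomorphism $[^u\overline\calO_{\nu,+}^\mu] \to \widetilde{[^v\overline\calO_{\mu,-}^\nu]}$. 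Extending the identification used in Step 3 to the parabolic case, this isomorphism sends $[^uP^\mu(y\cdot o_{\nu,+})] \mapsto [L(y^{-1}\cdot o_{\mu,-})]$ and $[^uV^\mu(x\cdot o_{\nu,+})] \mapsto [V^\nu(x^{-1}\cdot o_{\mu,-})^\vee]$; interpreting the displayed equation in $[^v\overline\calO_{\mu,-}^\nu]$, where the tilde twist replaces $q$ by $-q^{-1}$ in the coefficients, produces
\[
[L(y^{-1}\cdot o_{\mu,-})] \;=\; \sum_{x \in {}^uI_{\nu,+}^\mu} n^\mu_{x,y}(-q^{-1})\, [V^\nu(x^{-1}\cdot o_{\mu,-})^\vee].
\]

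Finally, I would use the parity identity $n^\mu_{x,y}(-q) = (-1)^{l(x)+l(y)} n^\mu_{x,y}(q)$, the analog of Lemma \ref{lem_polKL} (f) for parabolic Kazhdan--Lusztig polynomials (valid because each $h_{zx,y}(q)$ appearing in the defining sum has nonzero terms of only one parity of $q$-degree), and then apply the BGG duality $\overline D$ on $^v\overline\calO_{\mu,-}^\nu$, which fixes each $[L(\cdot)]$, exchanges $[V^\nu(\cdot)]$ with $[V^\nu(\cdot)^\vee]$, and acts on the Grothendieck group by $q \mapsto q^{-1}$. After relabelling the summation index $x \mapsto x^{-1}$ via the bijection ${}^uI_{\nu,+}^\mu \to {}^vI_{\mu,-}^\nu$, these two operations turn the last display into the desired formula. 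The main obstacle is pinning down the Koszul isomorphism at the Grothendieck-group level in the full parabolic/singular generality, but this is just the routine parabolic extension of the identification used in the proof of Lemma \ref{lem_sing-npar-pos}; everything else is careful bookkeeping of the substitutions $q \mapsto -q^{-1}$ (from the tilde twist) and $q \mapsto q^{-1}$ (from BGG duality).
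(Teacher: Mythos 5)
Your proposal is correct and is essentially the paper's own argument: the paper's proof is precisely "apply the Koszul duality of \cite[Thm.~3.12]{SVV2} to Corollary \ref{coro_sing-par-pos} as in the proof of Lemma \ref{lem_sing-npar-pos}", and your unpacking (graded BGG reciprocity at positive level, the Grothendieck-group isomorphism $[^u\overline\calO_{\nu,+}^\mu]\to\widetilde{[^v\overline\calO_{\mu,-}^\nu]}$ sending projectives to simples and parabolic Vermas to dual parabolic Vermas, then the BGG duality $q\mapsto q^{-1}$ and the parity of $n^\mu_{x,y}$) is exactly that computation with the bookkeeping of $q\mapsto -q^{-1}$ done correctly.
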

\begin{proof}
The statement follows from Corollary \ref{coro_sing-par-pos}, applying the Koszul duality in the same way as in the proof of Lemma \ref{lem_sing-npar-pos}.
\end{proof}


\begin{coro}
\label{coro_sing-par-neg}
For each $x,y\in {^vI_{\mu,-}^\nu}$ we have
$$
[V^\nu(x\cdot o_{\mu,-}):L(y\cdot o_{\mu,-})]_q=\sum_{z\in W_\nu}(-q)^{l(z)}h^{zx,y}(q).
$$
\end{coro}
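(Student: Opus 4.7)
The approach is to interpret the decomposition number $[V^\nu(x\cdot o_{\mu,-}):L(y\cdot o_{\mu,-})]_q$ as an entry of the matrix inverse of the change-of-basis matrix implicit in Lemma \ref{lem_inv-sing-par-neg}, and then to verify the explicit formula for this inverse by a direct Kazhdan--Lusztig computation paralleling the proof of Corollary \ref{coro_inv-reg-par-neg}.

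Concretely, setting $a_{x,y} = (-1)^{l(x)+l(y)}n^\mu_{y^{-1},x^{-1}}(q)$, Lemma \ref{lem_inv-sing-par-neg} asserts that $[L(x\cdot o_{\mu,-})] = \sum_y a_{x,y}[V^\nu(y\cdot o_{\mu,-})]$ in $[^v\overline\calO^\nu_{\mu,-}]$; consequently, writing $[V^\nu(x\cdot o_{\mu,-})] = \sum_y b_{x,y}[L(y\cdot o_{\mu,-})]$ with $b_{x,y} = [V^\nu(x\cdot o_{\mu,-}):L(y\cdot o_{\mu,-})]_q$, substitution yields $\sum_t a_{x,t}b_{t,y} = \delta_{x,y}$, so that $b$ is precisely the matrix inverse of $a$. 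The plan is therefore to verify that the proposed formula
$b'_{x,y} := \sum_{z\in W_\nu}(-q)^{l(z)}h^{zx,y}(q)$
also inverts $a$, i.e., satisfies $\sum_{t\in {^vI^\nu_{\mu,-}}} a_{x,t} b'_{t,y} = \delta_{x,y}$ for all $x,y \in {^vI^\nu_{\mu,-}}$.

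To establish this identity I would proceed by direct computation: expand $n^\mu_{t^{-1},x^{-1}}(q) = \sum_{s\in W_\mu}(-q)^{l(s)}h_{st^{-1},x^{-1}}(q)$ and apply Lemma \ref{lem_polKL}(b) to rewrite $h_{st^{-1},x^{-1}}(q) = h_{ts^{-1},x}(q)$. This turns $\sum_t a_{x,t}b'_{t,y}$ into a triple sum over $(t,s,z)\in {^vI^\nu_{\mu,-}}\times W_\mu \times W_\nu$. The length-additivity $l(ts^{-1}) = l(t)+l(s)$ for $t\in I_{\mu,-}$, $s\in W_\mu$, together with the length-subtraction $l(zt) = l(t)-l(z)$ arising from the $\nu$-dominance of $t\cdot o_{\mu,-}$, permits repeated applications of Lemma \ref{lem_polKL}(e) to absorb the factors $q^{l(s)}$ and $q^{l(z)}$ into the Kazhdan--Lusztig polynomials. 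After the change of variables $w = zts^{-1}$ (which bijects the triple index set with the appropriate subset of $W$), the sum collapses onto the fundamental orthogonality relation $\sum_w (-1)^{l(x)+l(w)}h_{w,x}(q)h^{w,y}(q) = \delta_{x,y}$, yielding the desired Kronecker delta.

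The main obstacle is the combinatorial bookkeeping arising from the simultaneous presence of the two parabolic structures: $W_\mu$ acting on the right via the minimal coset condition $t\in I_{\mu,-}$, and $W_\nu$ acting on the left via the $\nu$-dominance of $t\cdot o_{\mu,-}$. One must check that these two conditions, together with the constraint $t\leq v$, supply exactly the length identities needed at each application of Lemma \ref{lem_polKL}(e). Once this is established, the computation is a direct generalization of that in Corollary \ref{coro_inv-reg-par-neg}, which treats the special case $\mu = \emptyset$.
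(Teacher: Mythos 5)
Your proposal follows the paper's own proof: the paper likewise reduces the corollary to checking the orthogonality identity $\sum_{t\in {^vI_{\mu,-}^\nu},z\in W_\nu}(-q)^{l(z)}(-1)^{l(x)+l(t)}n^\mu_{t^{-1},x^{-1}}(q)h^{zt,y}(q)=\delta_{x,y}$, i.e.\ to showing that your formula inverts the matrix of coefficients from Lemma \ref{lem_inv-sing-par-neg}, and then verifies this by the same Kazhdan--Lusztig manipulation as in Corollary \ref{coro_inv-reg-par-neg}. Your sketch of that computation (expanding $n^\mu$, invoking Lemma \ref{lem_polKL} (b) and (e) with the requisite length identities, and collapsing to the defining orthogonality of $h_{\cdot,\cdot}$ and $h^{\cdot,\cdot}$) is precisely the argument the paper leaves implicit, so the two approaches coincide.
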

\begin{proof}
We need to check that
$$
\sum_{t\in {^vI_{\mu,-}^\nu},z\in W_\nu}(-q)^{l(z)}(-1)^{l(x)+l(t)}n^\mu_{t^{-1},x^{-1}}(q)h ^{zt,y}(q)=\delta_{x,y}.
$$
This can be done in the same way as in the proof of Corollary \ref{coro_inv-reg-par-neg}.
\end{proof}

\section*{Acknowledgements}
I would like  to thank Eric Vasserot for his guidance and helpful discussions during my work on this paper.



\begin{thebibliography}{9}
\bibitem{BB} A. Beilinson, J. Bernstein, \emph{A proof of Jantzen conjectures}, I. M. Gelfand Seminar, 1-50, Adv. Soviet Math., 16, Part 1, Amer. Math. Soc., Providence, RI, 1993.
\bibitem{BD} A. Beilinson, V. Drinfeld, \emph{Quantization of Hitchin's integrable system and Hecke eigen-sheaves}, Mathematics Subject Classification, 1991.
\bibitem{BGS} A. Beilinson, V. Ginzburg, and W. Soergel, \emph{Koszul duality patterns in representation theory}, J. Amer. Math. Soc., 9 (1996), 473-527.
\bibitem{BK09} J. Brundan, A. Kleshchev, \emph{Blocks of cyclotomic Hecke algebras and Khovanov-Lauda algebras}, Invent. Math.,
178 (2009), 451-484.
\bibitem{BK11} J. Brundan, A. Kleshchev, \emph{Graded decomposition numbers for cyclotomic Hecke algebras}, Adv. Math., 222 (2009), 1883-1942.
\bibitem{CG} N. Chriss, V. Ginzburg, \emph{Representation theory and complex geometry}, Birkh\"auser, 1997.
\bibitem{Don} S. Donkin, \emph{The q-Schur algebra, London Mathematical Society Lecture Note Series}, 253. Cambridge University Press, Cambridge, 1998.
\bibitem{Fie} P. Fiebig, \emph{Sheaves on moment graphs and a localization of Verma}, ags, Adv. Math. 217 (2008), 683-712.
\bibitem{FrGa} E. Frenkel, D. Gaitsgory. \emph{Localization of g-modules on the affine Grassmannian}, Ann. of
Math. (2), 170(3):1339-1381, 2009.
\bibitem{Gab} P. Gabriel, \emph{Des cat\'egories ab\'eliennes}, Bulletin de la Soci\'et\'e Math\'ematique de France, 90 (1962), p. 323-448.
\bibitem{GGOR} V. Ginzburg, N. Guay, E. Opdam, R. Rouquier, \emph{On the category O for rational Cherednik algebras}, Invent. Math. 154 (2003), 617-651.
\bibitem{HM} J. Hu and A. Mathas, Quiver Schur algebras for the linear quiver, I, preprint arXiv:1110.1699v2.
\bibitem{BGG} J. E. Humphreys, \emph{Representations of Semisimple Lie
Algebras in the BGG Category $\calO$}, 1991 Mathematics Subject Classification.
\bibitem{KK} V. Kac, D. Kazhdan, \emph{Structures of representations with highest weight of in¯nite dimensional Lie algebras}, Adv. in Math., 34 (1979), pp. 97-108.
\bibitem{KaTan} M. Kashiwara, T. Tanisaki, \emph{Characters of irreducible modules with non-critical highest weights over affine Lie algebras}, Representations and quantizations (Shanghai, 1998), 275-296, China High. Educ. Press, Beijing, 2000.
\bibitem{KaLu} D. Kazhdan and G. Lusztig, \emph{Representations of Coxeter groups and Hecke algebras},
Invent. Math. 53 (1979), 165-184.
\bibitem{LM} S. Lyle and A. Mathas, \emph{Blocks of cyclotomic Hecke algebras}, Advances Math. 216 (2007), 854-878.
\bibitem{MM} G. Malle and A. Mathas, \emph{Symmetric cyclotomic Hecke algebras}, J. Algebra 205 (1998), 275-293.
\bibitem{Mat} A. Mathas, \emph{The representation theory of the Ariki-Koike and cyclotomic $q$-Schur}, Representation theory of algebraic groups and quantum groups, Adv. Stud. Pure Math., 40, 261-320, Math. Soc. Japan, Tokyo, 2004.
\bibitem{MO} V. Mazorchuk, S. Ovsienko, \emph{A pairing in homology and the category of linear complexes of tilting modules for a quasi-hereditary algebra}, J. Math. Kyoto Univ. 45 (2005), 711-741.
\bibitem{R0} R. Rouquier, \emph{2-Kac-Moody algebras}, preprint, arXiv:0812.5023.
\bibitem{R} R. Rouquier, \emph{$q$-Schur algebras and complex reflection groups}, preprint arXiv:math/0509252v2.
\bibitem{Rouq2011} R. Rouquier, \emph{Quiver Hecke algebras and 2-Lie algebras}, preprint arXiv:1112.3619v1.
\bibitem{RSVV} R. Rouquier, P. Shan, M. Varagnolo, E. Vasserot, \emph{Categorification and cyclotomic rational double affine Hecke algebras}, preprint, arXiv1305.4456v2.
\bibitem{Shan} P. Shan, \emph{Graded decomposition matrices for $q$-Schur algebras via Jantzen filtration}, preprint, arXiv:1006.1545v2.
\bibitem{SVV2} P. Shan, M. Varagnolo, E. Vasserot, \emph{Koszul duality of affine Kac-Moody Lie algebra and cyclotomic rational double affine Hecke algebras}, preprint.
\bibitem{Soe} W. Soergel, Kazhdan-Lusztig polynomials and a combinatoric for tilting modules, Represent. Theory 1 (1997) 83-114.
\bibitem{SW} C. Stroppel, B. Webster, \emph{Quiver Schur algebras and q-Fock space}, preprint, arXiv:1110.1115.
\bibitem{U} D. Uglov, \emph{Canonical bases of higher-level q-deformed Fock spaces and Kazhdan-Lusztig polynomials}, Physical combinatorics (Kyoto, 1999), Progr. Math., vol. 191, Birkh¨auser, 2000, pp. 249-299.
\bibitem{VV} M. Varagnolo, E. Vasserot, \emph{Canonical bases and KLR-algebras}, Journal f\"ur die reine une angewandte Mathematik, 2011.
\bibitem{Web} B. Webster, \emph{Knot invariants and higher representation theory I: diagrammatic and geometric categorification of tensor products}, preprint, arXiv1001.2020v8.
\bibitem{Web2} B. Webster, \emph{Rouquier's conjecture and diagrammatic  algebra}, preprint, arXiv1306.0074.
\bibitem{cell} C. Xi, \emph{Cellular algebras}, Advanced School and Conference on Representation Theory and Related Topics, School of Mathematical Sciences, Beijing Normal University, 100875 Beijing, People's Republic of China, 2006.
\end{thebibliography}
\end{document}